%% file: MESP.tex
\documentclass[11pt]{article}

\input{template}
\usepackage{authblk}

\begin{document}

\title{From Majorization to Scaling: Advancing Convex Relaxations of Maximum Entropy Sampling Problem}

\author[1]{Lingqing Shen}
\author[1]{Fatma K{\i}l{\i}n\c{c}-Karzan}
\affil[1]{Tepper School of Business, Carnegie Mellon University}

\date{\today}

\maketitle

\begin{abstract}
In this paper we study the maximum entropy sampling problem (MESP) and its variants. MESP seeks to identify a small subset of variables that maximizes the determinant of a covariance submatrix, and is a fundamental model in optimal experimental design and information acquisition.  Although MESP is combinatorial and NP-hard, continuous relaxations, most notably linx and $\Gamma$ factorization, provide tractable approximations, yet their derivation, relative strength, and potential for systematic improvement remain poorly understood. 
We address this gap by introducing two main ideas: a unified majorization-based  framework for deriving and analyzing relaxations, and a novel scaling-based bound-enhancement technique, which we call \emph{double-scaling}. 
Our approach is motivated by the observation that the difficulty of MESP arises from two distinct sources: the combinatorial selection structure and the lack of permutation symmetry in the spectral objective. Majorization naturally resolves the latter by symmetrizing the spectral function and yielding its convex envelope. 
In the log-determinant setting, we establish the main theoretical properties of double-scaling and  prove that it strictly dominates previously known scaling bounds. Using our majorization-based alternative characterization of $\Gamma$ factorization relaxation, we also derive, for the first time, formal dominance relations between linx- and $\Gamma$ factorization-bounds, as well as between their scaling-strengthened variants. 
Our numerical results show that our double-scaled linx relaxation consistently and substantially outperforms existing scaling methods and compares quite favorably with other state-of-the-art relaxations in terms of both bound quality and computational efficiency.
\end{abstract}

\input{intro}

\input{majorization}

\input{other_relaxations}

\input{scaling}

\input{experiments}

\section*{Acknowledgements}
This research was supported in part by AFOSR [Grant FA9550-22-1-0365].

\bibliographystyle{abbrvnat}
\bibliography{ref} 

\appendix
\input{appendix}

\input{appendix_subgradient}

\end{document}

%% file: template.tex
\usepackage[margin=1in]{geometry}

\usepackage{parskip}                
\usepackage{amsmath}                
\usepackage{amssymb}                
\usepackage{amsthm}
\usepackage{mathtools}              
\usepackage{bm}                     
\usepackage{array}                  
\usepackage{booktabs}
\usepackage{multirow}
\usepackage{xcolor}                 
\usepackage[square]{natbib}

\usepackage[colorlinks=true,citecolor=blue!75!black,urlcolor=blue!75!black]{hyperref}  
\usepackage[capitalise]{cleveref}
\usepackage{algorithm}
\usepackage{algpseudocode}
\usepackage{tikz}
\usepackage[shortlabels]{enumitem}

\allowdisplaybreaks

\input{macros}

\newtheorem{theorem}{Theorem}
\newtheorem{proposition}{Proposition}
\newtheorem{lemma}{Lemma}
\newtheorem{corollary}{Corollary}

\crefname{assumption}{Assumption}{Assumptions}
\theoremstyle{definition}
\newtheorem{definition}{Definition}
\newtheorem{remark}{Remark}
\newtheorem{example}{Example}

\newcommand{\majorize}{\succcurlyeq_{m}}

\newcommand{\objldlog}{\bar{f}_{\textup{linx-d}}}       
\newcommand{\objld}{f_{\textup{linx-d}}}                
\newcommand{\objgg}{f_{\Gamma\textup{-g}}}              
\newcommand{\objggc}{f_{\Gamma^c\textup{-g}}}           
\newcommand{\objgglog}{\bar{f}_{\Gamma\textup{-g}}}

\newcommand{\objlglog}{\bar{f}_{\textup{linx-g}}}       
\newcommand{\objlg}{f_{\textup{linx-g}}}                

\newcommand{\objlo}{f_{\textup{linx-o}}}                
\newcommand{\objlolog}{\bar{f}_{\textup{linx-o}}}

\newcommand{\xigg}{\bm\xi^{\Gamma\textup{-g}}}          
\newcommand{\xiggc}{\bm\xi^{\Gamma^c\textup{-g}}}       
\newcommand{\xild}{\bm\xi^{\textup{linx-d}}}            
\newcommand{\xiggi}{\xi^{\Gamma\textup{-g}}_i}          
\newcommand{\xiggci}{\xi^{\Gamma^c\textup{-g}}_i}       
\newcommand{\xildi}{\xi^{\textup{linx-d}}_i}            

\newcommand{\linxo}{\textup{linx-o}}                       
\newcommand{\linxg}{\textup{linx-g}}                       
\newcommand{\linxd}{\textup{linx-d}}

\newcolumntype{R}{>{$}r<{$}}
\newcolumntype{L}{>{$}l<{$}}

\allowdisplaybreaks

%% file: macros.tex
\definecolor{gold}{rgb}{0.85,0.65,0}

\newcommand{\ip}[1]{\ensuremath{\left\langle #1 \right\rangle}}

\let\emptyset\varnothing
\newcommand{\set}[1]{\left\{#1\right\}}

\def\K{{\mathbb{K}}}

\def\R{{\mathbb{R}}}
\def\S{{\mathbb{S}}}

\def\bA{{\bm{A}}}
\def\bB{{\bm{B}}}
\def\bC{{\bm{C}}}
\def\bD{{\bm{D}}}
\def\bE{{\bm{E}}}

\def\bI{{\bm{I}}}

\def\bL{{\bm{L}}}
\def\bM{{\bm{M}}}

\def\bP{{\bm{P}}}
\def\bQ{{\bm{Q}}}

\def\bS{{\bm{S}}}

\def\bU{{\bm{U}}}
\def\bV{{\bm{V}}}
\def\bW{{\bm{W}}}
\def\bX{{\bm{X}}}
\def\bY{{\bm{Y}}}
\def\bZ{{\bm{Z}}}

\def\cA{{\cal A}}

\def\cC{{\cal C}}
\def\cD{{\cal D}}

\def\cF{{\cal F}}
\def\cG{{\cal G}}

\def\cP{{\cal P}}
\def\cQ{{\cal Q}}

\def\cU{{\cal U}}

\def\cX{{\cal X}}

\DeclareMathOperator{\Opt}{Opt}

\DeclareMathOperator{\rank}{rank}
\DeclareMathOperator{\Diag}{Diag}
\DeclareMathOperator{\diag}{diag}
\DeclareMathOperator{\tr}{tr}

\DeclareMathOperator{\range}{range}

\DeclareMathOperator{\spann}{span}

\DeclareMathOperator{\cl}{cl}

\DeclareMathOperator{\dom}{dom}

\DeclareMathOperator{\epi}{epi}

\DeclareMathOperator{\conv}{conv}

\DeclareMathOperator{\Proj}{Proj}

\newcommand{\ones}{{\bm{1}}}

\newcommand{\x}{{\bm x}}
\newcommand{\z}{{\bm z}}
\newcommand{\y}{{\bm y}}

\newcommand{\g}{{\bm g}}
\newcommand{\e}{{\bm e}}

\newcommand{\w}{{\bm w}}

\newcommand{\q}{{\bm q}}

\renewcommand{\a}{{\bm a}}

\renewcommand{\u}{{\bm u}}
\renewcommand{\v}{{\bm v}}

\renewcommand{\b}{{\bm b}}

%% file: intro.tex
\section{Introduction}

Dating back to \citep{shewry_maximum_1987}, the \emph{maximum-entropy sampling problem} (MESP) is a fundamental combinatorial optimization problem arising in statistics, experimental design, and machine learning (see \citep{ko_exact_1995, krause_near-optimal_2008} and \cite{fampa_maximum-entropy_2022,fampa_recent_2026} for recent comprehensive treatments). Given a positive semidefinite matrix $\bC\in\S^d_+$, the goal is to select a subset of indices $S\subseteq[d]\coloneqq\{1,\ldots,d\}$ of cardinality $s$ (where $s\le \rank(\bC)$) that maximizes the information content of the corresponding principal submatrix $\bC_{S,S}$. In the classical D-optimal design setting, this amounts to maximizing the log-determinant:
  \begin{align*}
    -\Opt(\bC,s) &:= \max_{S\subseteq[d],|S|=s} \set{\log\det(\bC_{S,S})} .
  \end{align*}
Despite its simple formulation, MESP is NP-hard \citep{ko_exact_1995}, which has motivated extensive research on relaxations and approximation methods.

MESP is part of a broader family of subset-selection problems with spectral objectives. Important variants include constrained MESP (CMESP), which adds linear side constraints, 
and A-optimal design problem (A-MESP), which replaces the determinant-based objective with the trace of the inverse covariance matrix, i.e., $\tr(\bC_{S,S}^{-1})$, as an alternative measure of information content. 
In the language of experimental design, MESP corresponds to the classical D-optimality criterion, while A-optimality and related criteria evaluate information through alternative functions of $\bC_{S,S}$. 
These variants share a common structural foundation that allows for a unified analytical treatment. 
To make this precise, consider a factorization $\bC=\bV^\top\bV$ with $\bV=(\v_1,\dots,\v_d)\in\R^{m\times d}$. For any subset $S\subseteq[d]$, let $\x\in\{0,1\}^d$ denote its indicator vector, so that $\ones^\top\x=s$. 
By letting $\Diag(\x)$ denote the diagonal matrix whose diagonal entries are given by $\x$, the principal submatrix  $\bC_{S,S}$ is given by $\Diag(\x)\bC\Diag(\x)$.
Moreover, it is well known that the nonzero eigenvalues of $\Diag(\x)\bC\Diag(\x)$ (and thus $\bC_{S,S}$) coincide with those of $\bV\Diag(\x)\bV^\top$. Thus, the principal submatrix $\bC_{S,S}$ appearing in MESP and its variants can equivalently be represented by the matrix $\bV\Diag(\x)\bV^\top$, which depends linearly on $\x \in \{0,1\}^d$.
Consequently, MESP can be equivalently written as
  \begin{align*}
    -\Opt(\bC,s)  
    &= \max_{\x\in\set{0,1}^d} \set{\sum_{i\in[s]}\log(\lambda_i(\bV\Diag(\x)\bV^\top)): ~\ones^\top\x=s}, 
  \end{align*}
where $\bm\lambda(\bM)$ denotes the vector of eigenvalues of a symmetric matrix $\bM$. 
This formulation reveals that the problem amounts to selecting a subset that maximizes a spectral functional of a matrix that depends linearly on $\x$.

Motivated by this formulation, given a subset size $s$, we consider the following general class of nonlinear integer programs:
\begin{align}\label{eq:mesp-general}
  \begin{aligned}
   {\Opt} \coloneqq \min_{\x\in\cX\cap\{0,1\}^d} f_s\left( \bm\lambda\left(\bV\Diag(\x)\bV^\top\right) \right),
  \end{aligned}
\end{align}
where 
$f_s(\bm\xi)=\sum_{i\in[s]}\varphi(\xi_i)$ for a closed, convex, and nonincreasing function $\varphi:\R_{++}\to\R$, and $\cX=\set{\x\in[0,1]^d:\ones^\top\x=s,\bA\x\le\b}$. 
Problem~\eqref{eq:mesp-general} unifies several important problems. 
For example, by setting $\varphi(\xi)=-\log(\xi)$ and $\cX=\set{\x\in[0,1]^d:\ones^\top\x=s}$, we arrive at the standardl MESP; including  additional nontrivial linear constraints, i.e., $\bA\x\leq\b$, in $\cX$ leads to the CMESP introduced by \cite{lee_constrained_1998}.
Whenever $\varphi(\xi)= 1/\xi$, we arrive at A-MESP \citep{li_best_2023} and its variants.

Significant effort on MESP research has focused on deriving convex relaxations that provide tight bounds for branch-and-bound algorithms 
 \citep{anstreicher_using_1999, anstreicher_maximum-entropy_2018, chen_mixing_2021}. Approximation algorithms, including local search heuristics \citep{ko_exact_1995} and randomized sampling techniques \citep{nikolov_randomized_2015, li_best_2023}, are often used for large-scale instances for which exact methods quickly become computationally intractable.

\textbf{Relaxation-based bounds:}
A major line of research focuses on deriving strong convex relaxations to obtain bounds on the optimal value of MESP (as our focus is on convex minimization based formulation in \eqref{eq:mesp-general}, all of these relaxations will provide \emph{lower bounds} on $\Opt$ as opposed to upper bounds). Early relaxations include the spectral bound \citep{ko_exact_1995}, masked spectral bounds \citep{anstreicher_masked_2004, burer_solving_2007}, and the Nonlinear Programming (NLP) bound \citep{anstreicher_continuous_1996,anstreicher_using_1999}, 
which is based on a novel continuous convex relaxation.
Later on, through a lifted semidefinite programming framework, the Boolean Quadratic Polytope (BQP) bound \citep{anstreicher_maximum-entropy_2018} was introduced and shown to produce one of the tightest known bounds, but at a significant computational cost. To improve scalability,  \cite{anstreicher_efficient_2020}  proposed the \emph{linx} relaxation by leveraging a determinant identity.  
Based on a low-dimensional factorization of the covariance matrix $\bC=\bV^\top\bV$, \citep{nikolov_randomized_2015} proposed a novel convex relaxation to MESP, later referred to as the $\Gamma$ \emph{factorization} (or \emph{DDFact}) relaxation. Subsequently, \citep{li_best_2023} showed that the $\Gamma$ \emph{factorization}  relaxation admits a dual interpretation as the double Lagrangian dual of the continuous relaxation of the original nonconvex problem. 
More recent work by  \citep{li_augmented_2024} introduced an \emph{augmented factorization} variant that offers a slightly better bound quality while retaining its favorable computational properties.
It is well known that the factorization $\Gamma$ bound dominates the spectral bound for standard MESP and the BQP bound is too expensive for scalability. 
Empirically, the $\Gamma$-factorization and linx relaxations are widely regarded as the state-of-the-art due to their favorable balance between tightness and scalability.

\textbf{Bound enhancement techniques:}
In the case of D-optimal design, i.e., $\varphi(\xi)=-\log(\xi)$, 
several bound-enhancement techniques have been proposed to strengthen existing relaxations without changing their underlying structure; see \citep{anstreicher_continuous_1996,anstreicher_using_1999, anstreicher_masked_2004,chen_mixing_2021} and \citep{fampa_maximum-entropy_2022} for a comprehensive survey. These include \emph{scaling} (rescaling the covariance matrix $\bC$), \emph{complementation} (relying on a determinant identity related to complementary subsets), \emph{masking} (using diagonal selection matrices), and \emph{mixing} (combining multiple relaxation objectives and taking their pointwise maximum to leverage the tightest bound attainable among them). 

Among these bound enhancement techniques, \emph{scaling} technique \citep{anstreicher_continuous_1996,anstreicher_using_1999} has received particular attention due to its empirical effectiveness.
The \emph{ordinary scaling} (o-scaling) technique introduces a scalar parameter $\gamma_0>0$ to scale the covariance matrix $\bC \to \gamma_0\bC$, thereby modifying the curvature of the objective function of the relaxation. Although theoretically simple, when applied judiciously, o-scaling is empirically shown to substantially tighten some relaxation bounds (see \citep{chen_mixing_2021,chen_generalized_2024} and references therein), especially linx and BQP, which are both non-invariant under scaling \citep{anstreicher_maximum-entropy_2018, anstreicher_efficient_2020}. 
On the other hand, it is known that the spectral bound and the $\Gamma$ factorization relaxation are invariant under o-scaling. 

Recently, 
\cite{chen_generalized_2024} introduced \emph{generalized scaling} (g-scaling), extending o-scaling to a vector of positive scaling coefficients and thus enabling component-wise rescaling of $\bC$. They demonstrated that, for the linx and BQP bounds, the strengthened relaxation obtained from g-scaling is concave in the logarithm of the scaling vector, thus ensuring that the optimal g-scaled linx bound can be obtained efficiently through convex optimization. They further extended g-scaling to the $\Gamma$ factorization relaxation (recall that  $\Gamma$ factorization relaxation is invariant to o-scaling). As they could not establish a convex-concave structural result for g-scaled $\Gamma$ factorization relaxation, they instead relied on generalized differentiability results to theoretically justify the use of quasi-Newton and active-set methods in optimizing the g-scaled $\Gamma$ factorization bound, and provided quite promising numerical results for g-scaled linx and g-scaled $\Gamma$ factorization bounds. 
While g-scaling presented a major advance with notable practical impact, 
its existing analysis lacks a unified theoretical explanation, particularly regarding its convexity properties and invariance behavior for $\Gamma$ factorization relaxation. Moreover, so far no theoretical connection between the quality of the state-of-the-art linx and $\Gamma$ relaxations or their scaled variants is known.

In this paper we address these gaps and advance the theoretical foundation and practical effectiveness of convex relaxations for the MESP and its variants by introducing two main ideas: a \emph{unified majorization-based  framework} for deriving and analyzing convex relaxations, and a \emph{novel scaling-based bound enhancement} technique.
Our developments are motivated by the observation that the nonconvexity of MESP arises from two different sources. The first is the cardinality-constrained selection structure. The second is more subtle: 
even when the underlying scalar function is convex, the induced spectral objective need not be convex because it lacks permutation-symmetry. 
We show that majorization naturally addresses this second difficulty.  
By symmetrizing the spectral function, it produces the corresponding convex envelope and leads to a tractable convex relaxation. 
Based on this perspective, we then place classical relaxations and scaling techniques under a common umbrella and also provide the first systematic comparison of their relative theoretical strength in the literature.
Our key contributions, along with the organization of the paper, are as follows:
\begin{itemize}

\item 
We begin in \cref{sec:prelim} with preliminaries on majorization and spectral functions. In \cref{sec:majorization}, we introduce a majorization-based convex relaxation for \eqref{eq:mesp-general}, applicable to general convex non-increasing functions $\varphi$ and general polyhedral domains $\cX$. We analyze its convex hull and exactness guarantees in \cref{sec:majorization:properties}. 
For the cases of $\varphi(\xi)=-\log(\xi)$ and $\varphi(\xi)={1/\xi}$, we show in \cref{sec:equivalent} that our majorization-based relaxation coincides exactly with the $\Gamma$ (factorization) relaxation. 
This yields a unified and more transparent derivation of $\Gamma$ factorization relaxation,
while immediately extending the framework beyond the cases previously studied. 
Moreover, this majorization-based perspective plays a crucial role in our developments of \cref{sec:gamma-g-scaling,sec:connection}.

\item We next focus on the determinant-based case, i.e., $\varphi(\xi)=-\log(\xi)$, and study scaling techniques for the linx and $\Gamma$ factorization relaxations in \cref{sec:scaling,sec:gamma-g-scaling}, respectively. Building on the classical linx relaxation and the existing ordinary and generalized scaling techniques (see \cref{sec:o-g-scaling}), we introduce a novel \emph{double-scaling} approach in   \cref{sec:linx-double-scaling} that strictly generalizes the earlier methods. Furthermore, in \cref{prop:d-scaling-linx-convexity}, we show that our double-scaled linx relaxation admits a convex-concave saddle point formulation. 

In \cref{sec:gamma-g-scaling}, we analyze the g-scaled $\Gamma$  relaxation. 
Leveraging our machinery for establishing the convex-concave structure of double-scaled  linx relaxation, we prove in \cref{prop:gamma-g-scaling-convexity} that g-scaled $\Gamma$ relaxation also possesses such a structure, thereby resolving an  open question of  \cite{chen_generalized_2024}. 
These saddle-point representations provide the missing theoretical foundation for solving the scaling-enhanced linx and $\Gamma$ relaxations with modern first-order methods. 

Furthermore, using the convex-concave reformulation of g-scaled $\Gamma$ relaxation, we show in \cref{prop:g-scaling-Gamma-invariant} that when $\cX=\{\x\in[0,1]^d:\ones^\top\x=s\}$, g-scaling does not strengthen the $\Gamma$  relaxation. This then resolves another open question from  \cite{chen_generalized_2024} and 
provides a theoretical explanation for previously observed empirical behavior.

\item In \cref{sec:connection}, we establish the first theoretical relationship between the  bounds obtained from the linx and $\Gamma$  factorization relaxations and their respective scaled variants. 
In particular, we prove that the double-scaled linx bound dominates the average of the g-scaled $\Gamma$ and g-scaled complementary $\Gamma^c$ bounds, and the o-scaled linx bound dominates the average of the standard $\Gamma$ and  complementary $\Gamma^c$ bounds (see \cref{prop:connection,cor:linx-gamma-connection}). Utilizing this relation, in \cref{cor:linx-integrality-gap} we derive the first explicit integrality gap bound for the o-scaled linx relaxation as well. 

\item Finally, in Section \ref{sec:numerical}, we provide a numerical study on standard MESP instances from the literature with $\varphi(\xi)=-\log(\xi)$ and $\cX=\{\x\in[0,1]^d:\ones^\top\x=s\}$. The results show that our double-scaled linx relaxation substantially strengthens the relaxation bound compared to existing o- and g-scaling techniques, while remaining computationally efficient. Moreover, across nearly all tested instances, it 
consistently achieves the strongest bounds among state-of-the-art relaxations, including the $\Gamma$ factorization relaxation, its complementary $\Gamma^c$ variant, and their mixing strengthened version.

\end{itemize}
Collectively, our results unify and extend prior approaches based on convex relaxations and scaling-based bound enhancement techniques, establish new structural and dominance relations among major relaxations, and contribute both theoretical insights and computational advances for MESP and its variants.

\subsection{Notation}

We let $\overline{\R}:=\R\cup\set{+\infty}$. Given a positive integer $d$, let $[d]\coloneqq\set{1,2,\dots,d}$. For $k\in[d]$, we define $(k,d]:=\set{k+1,\ldots,d}$ and $[k,d):=\set{k,\ldots,d-1}$. 
For $i\in[d]$, let $\e_i$ be the $i$\textsuperscript{th} standard basis vector in $\R^d$.
We denote the vectors of all zeros and all ones in $\R^d$ with ${\bm 0}$ and $\ones$, respectively.
Given $\x\in\R^d$, for any $i\in[d]$, we let $x_i$ be the $i$\textsuperscript{th} component of $\x$, we define $x_{[i]}$ to denote the $i$\textsuperscript{th} largest component of $\x$, and we let $\x_{1:s}\in\R^s$ be the vector formed by the first $s$ coordinates of $\x$. We use $\|\x\|_0$ to represent the sparsity of $\x$, i.e., the number of nonzero entries in $\x$. 
With slight abuse of notation, we denote $\exp(\x)\coloneq (\exp(x_1), \dots, \exp(x_d))$ to be the vector obtained by applying the exponential component-wise, and for $\alpha\in\R$, we denote $\x^{\alpha}\coloneq(x_1^{\alpha}, \dots, x_d^{\alpha})$ analogously. 
Given two vectors $\u,\v \in \R^d$, we use the notation $\u \majorize \v$ 
to denote that $\u$ majorizes 
$\v$. 
We define the polyhedral \emph{ordering} cone to be $\K_{o,d}\coloneqq\set{\x\in\R^d:x_1\geq\cdots\geq x_d}$. 

We let $\S^d$ be the vector space of $d\times d$ real symmetric matrices. In $\S^d$, we denote the identity matrix, the matrix of all zeros, and the cone of positive semidefinite matrices by $\bI_d$,  $\bm 0$, and $\S^d_+$, respectively. 
For $\bX\in\S^d$, we write $\bX\succeq 0$ (resp.\ $\bX\succ 0$) to denote that $\bX$ is positive semidefinite (resp.\ positive definite).
Given $\bX\in\S^d$, let $\lambda_i(\bX)$ be the $i$\textsuperscript{th} largest eigenvalue of $\bX$, and $\bm\lambda(\bX)\coloneqq(\lambda_1(\bX),\dots,\lambda_d(\bX))\in\R^d$ be the vector of eigenvalues sorted in non-increasing order. For $\bX,\bY\in\S^d$, we use the Frobenious inner product, i.e., $\langle\bX,\bY\rangle\coloneqq\tr(\bX\bY)$. We denote the Hadamard (element-wise) product of $\bX,\bY\in\S^d$ by $\bX\circ\bY$. 
For $\x\in\R^d$, we represent the diagonal matrix with diagonal entries $\x$ by $\Diag(\x)\in\S^d$. 
We define $\bE_{ii}\coloneqq\Diag(\e_i)$. 
For a square matrix $\bM\in\R^{d\times d}$, we denote the vector consisting of its diagonal entries by $\diag(\bM)\in\R^d$ 
and 
its trace by $\tr(\bM)$. 
Given an index set $S\subseteq [d]$, a vector $\x\in\R^d$, and a matrix $\bM\in\R^{d\times d}$, we let 
$\bM_{S,S}$ denote the principal submatrix of $\bM$ indexed by $S$. Likewise, for any vector $\bm\gamma\in\R^d$, let $\bm\gamma_{S}$ be the subvector consisting of entries indexed by $S$.

For a function $f:X\to\overline{\R}$, 
we denote its domain by $\dom(f)\coloneqq\set{\x\in X:\ f(x)\in\R }$ and 
its epigraph by $\epi(f)\coloneqq\set{(t,\x)\in \R \times X:\ t\ge f(x) }$. 
Given a convex function $f:\R^d\to \overline{\R}$ 
and a vector $\x\in\R^d$, we let $\partial f(\x)$ be the subdifferential of $f$ at the point $\x$, and we denote  a {gradient} of $f$ at $\x$ by $\nabla f(\x)$.

%% file: majorization.tex
\section{Preliminaries on majorization and spectral functions}\label{sec:prelim}
In this section, we review key concepts and results from majorization theory and spectral functions (see \citep{marshall_inequalities_2011} and \citep{lewis_convex_1996} for comprehensive treatments). We also present a new {convex hull result for a spectral set that will play an important role in our subsequent developments.}

\begin{definition}\label{def:majorization}
  For any $\u,\w\in\R^d$, we say that $\u$ \emph{majorizes} $\w$ (denoted by $\u\majorize\w$) if $\ones^\top\u=\ones^\top\w$ and
  \begin{align*}
    \sum_{i\in[j]}u_{[i]} \geq \sum_{i\in[j]}w_{[i]},\quad\forall j\in[d-1].
  \end{align*}
\end{definition}

Majorization relation among two vectors in fact admits the following well-known characterization as well. 
\begin{lemma}[{\citep[Theorem I.2.B.2, Corollary I.2.B.3]{marshall_inequalities_2011}}]\label{thm:char:majorization}
Given two vectors $\u,\w\in\R^d$, we have $\u\majorize \w$ if and only if 
$ 
\w\in\conv\set{\bP\u:\,\bP\in\R^{d\times d}\textup{ is a permutation matrix}},
$ 
which holds if and only if there exists a doubly stochastic matrix $\bQ\in\R^{d\times d}$ such that $\w=\bQ\u$.
\end{lemma}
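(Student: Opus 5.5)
We would prove the equivalences in the cycle: (i) $\u\majorize\w$ implies $\w=\bQ\u$ for some doubly stochastic $\bQ$; (ii) $\w=\bQ\u$ with $\bQ$ doubly stochastic implies $\w\in\conv\set{\bP\u}$; (iii) $\w\in\conv\set{\bP\u}$ implies $\u\majorize\w$.

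\textbf{Step (ii).} This follows from Birkhoff's theorem: the doubly stochastic matrices are exactly the convex hull of the permutation matrices. Writing $\bQ=\sum_k\theta_k\bP_k$ with $\theta_k\ge 0$ and $\sum_k\theta_k=1$ gives $\w=\sum_k\theta_k\bP_k\u$.

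\textbf{Step (iii).} Let $\w=\sum_k\theta_k\bP_k\u$. Since each $\bP_k$ preserves coordinate sums, $\ones^\top\w=\ones^\top\u$. For the partial sums, the map $\v\mapsto\sum_{i\in[j]}v_{[i]}=\max_{|J|=j}\sum_{i\in J}v_i$ is a maximum of linear functions, hence convex, and it is permutation invariant. Therefore
\[
\sum_{i\in[j]}w_{[i]}\le\sum_k\theta_k\sum_{i\in[j]}(\bP_k\u)_{[i]}=\sum_{i\in[j]}u_{[i]}.
\]

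\textbf{Step (i).} This is the main obstacle, and we would use the classical Hardy--Littlewood--P\'olya construction with T-transforms. After permuting both vectors, which is absorbed into the doubly stochastic matrix, assume $\u,\w\in\K_{o,d}$. We then induct on the number of indices with $u_i\neq w_i$.

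If $\u\neq\w$, let $j$ be the largest index with $u_j>w_j$, and let $k>j$ be the smallest index with $u_k<w_k$. Such a $k$ exists because the sums are equal and the partial-sum inequalities hold. Set $\delta=\min(u_j-w_j,\,w_k-u_k)$ and define $\u'=\u-\delta\e_j+\delta\e_k$.

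We need to check three things about $\u'$:
\begin{itemize}
\item $\u'$ remains nonincreasing. This uses $u_j\ge w_j\ge w_k\ge u_k$ and the choice of $j,k$ as extreme indices.
\item $\u'\majorize\w$ still holds. The partial sums of $\u'$ drop by $\delta$ only for indices in $[j,k)$, and there they exceed those of $\w$ by at least $\delta$, since $u_i=w_i$ strictly between $j$ and $k$.
\item $\u'$ agrees with $\w$ in at least one more coordinate.
\end{itemize}

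Moreover, $\u'=\bigl((1-t)\bI+t\bP_{jk}\bigr)\u$, where $\bP_{jk}$ is the transposition of $j$ and $k$ and $t=\delta/(u_j-u_k)\in[0,1]$; this matrix is doubly stochastic. Since products of doubly stochastic matrices are doubly stochastic, at most $d-1$ such steps yield $\w=\bQ\u$.

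The delicate bookkeeping in this step is verifying that the choice of $j,k$ preserves both the ordering and the majorization.
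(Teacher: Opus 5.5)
Your proof is correct. Your cycle is exactly the classical Hardy--Littlewood--P\'olya argument of Marshall--Olkin--Arnold (Lemma 2.B.1, Theorem 2.B.2, Corollary 2.B.3), which is the reference the paper cites instead of giving its own proof: T-transforms give majorization $\Rightarrow$ doubly stochastic, Birkhoff's theorem gives doubly stochastic $\Rightarrow$ convex hull of permutations, and convexity plus permutation-invariance of the top-$j$ sum gives convex hull $\Rightarrow$ majorization; your choice of $j,k,\delta$ and the checks of ordering, preserved majorization, and progress are all sound.
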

\cref{thm:char:majorization} states that any vector majorized by $\u\in\R^d$ must come from the convex hull of all permutations of the given vector $\u$. 

A generalization of \cref{thm:char:majorization} is possible in the matrix majorization case as well. 
\begin{theorem}[{\citep[Exercise III.30]{kilinc-karzan_essential_2025}}]\label{thm:char:majorization:matrix}
Given two matrices $\bU,\bW\in\S^d$, we have $\bU\majorize\bW$, i.e., $\bm\lambda(\bU)\majorize\bm\lambda(\bW)$, if and only if 
\[ 
\bW\in\conv\set{\bQ^\top\bU\bQ:\,\bQ\in\R^{d\times d}\textup{ is an orthogonal matrix}}.
\] 
\end{theorem}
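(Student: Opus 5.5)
We prove the two inclusions separately, reducing each to the vector characterization in \cref{thm:char:majorization} together with standard spectral inequalities. Throughout, let $\bU=\bP\Diag(\bm\lambda(\bU))\bP^\top$ be an eigendecomposition with $\bP$ orthogonal, and define $\cC\coloneqq\conv\set{\bQ^\top\bU\bQ:\bQ \text{ orthogonal}}$.

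\emph{($\Leftarrow$).} Suppose $\bW=\sum_k \theta_k \bQ_k^\top\bU\bQ_k$ with $\theta\ge0$ and $\sum_k\theta_k=1$.
\begin{itemize}
\item The trace is linear and orthogonally invariant, so $\tr(\bW)=\tr(\bU)$, i.e., $\ones^\top\bm\lambda(\bW)=\ones^\top\bm\lambda(\bU)$.
\item For each $j\in[d-1]$, Ky Fan's maximum principle gives $\sum_{i\in[j]}\lambda_i(\bX)=\max\set{\tr(\bY^\top\bX\bY):\bY\in\R^{d\times j},\ \bY^\top\bY=\bI_j}$. This is a pointwise maximum of linear functions of $\bX$, hence convex, and it is orthogonally invariant.
\item Convexity and invariance then yield $\sum_{i\in[j]}\lambda_i(\bW)\le\sum_k\theta_k\sum_{i\in[j]}\lambda_i(\bQ_k^\top\bU\bQ_k)=\sum_{i\in[j]}\lambda_i(\bU)$.
\end{itemize}
Together these give $\bm\lambda(\bU)\majorize\bm\lambda(\bW)$.

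\emph{($\Rightarrow$).} Suppose $\bm\lambda(\bU)\majorize\bm\lambda(\bW)$, and write $\bW=\bR\Diag(\bm\lambda(\bW))\bR^\top$ with $\bR$ orthogonal.
\begin{itemize}
\item By \cref{thm:char:majorization}, $\bm\lambda(\bW)=\sum_k\theta_k\bP_k\bm\lambda(\bU)$ for permutation matrices $\bP_k$ and convex weights $\theta_k$.
\item Hence $\Diag(\bm\lambda(\bW))=\sum_k\theta_k\bP_k\Diag(\bm\lambda(\bU))\bP_k^\top$.
\item Substituting $\Diag(\bm\lambda(\bU))=\bP^\top\bU\bP$ gives
\[
\bW=\sum_k\theta_k(\bP\bP_k^\top\bR^\top)^\top\,\bU\,(\bP\bP_k^\top\bR^\top).
\]
\item Each $\bQ_k\coloneqq\bP\bP_k^\top\bR^\top$ is a product of orthogonal matrices and hence orthogonal, so $\bW\in\cC$.
\end{itemize}

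\emph{Main obstacle.} The only nontrivial ingredient is the convexity of the top-$j$ eigenvalue sum. If we prefer to avoid citing Ky Fan, we can argue via Schur's theorem instead: $\diag(\bX)$ is majorized by $\bm\lambda(\bX)$. Applying this in an eigenbasis of $\bW$ expresses $\sum_{i\in[j]}\lambda_i(\bW)$ as a sum of diagonal entries of $\sum_k\theta_k\bR^\top\bQ_k^\top\bU\bQ_k\bR$. Each summand's top-$j$ diagonal sum is bounded by $\sum_{i\in[j]}\lambda_i(\bU)$ by Schur's theorem, so the same inequality follows. Everything else is bookkeeping with orthogonal changes of basis.
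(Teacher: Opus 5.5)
Your proof is correct and complete. The paper gives no proof of its own here (it only cites the exercise), and your route is the standard argument: for ``if'', the convexity and orthogonal invariance of $\bX\mapsto\sum_{i\in[j]}\lambda_i(\bX)$; for ``only if'', \cref{thm:char:majorization} followed by conjugation with eigenvector and permutation matrices. It uses exactly the ingredients the paper itself relies on in \cref{thm:char:majorization} and in the proof of \cref{lem:majorization-convex-representation}.
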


\begin{lemma}\label{lem:majorization-convex-representation}
Both sets $\cA_v \coloneqq \set{(\u,\w)\in\K_{o,d}\times\R^d:\,\u\majorize\w}$ and $\cA_m \coloneqq \set{(\xi,\bX)\in \K_{o,d}\times\S^d:\,\xi\majorize\bm\lambda(\bX)}$ are closed and convex.
\end{lemma}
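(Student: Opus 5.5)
The key observation is that once $\u$ is restricted to the ordering cone $\K_{o,d}$, the partial sums $\sum_{i\in[j]}u_{[i]}$ become the linear functions $\sum_{i\in[j]}u_i$. The only remaining nonlinearity is on the $\w$ side, where it enters through the Ky Fan–type functions $s_j(\w)\coloneqq\sum_{i\in[j]}w_{[i]}$. Each $s_j$ is the sum of the $j$ largest entries of $\w$. It can be written as $s_j(\w)=\max\set{\sum_{i\in S}w_i:\,S\subseteq[d],\,|S|=j}$, a pointwise maximum of finitely many linear functions, and is therefore convex, continuous, and in fact polyhedral. I would therefore write
\[
\cA_v=\set{(\u,\w):\ u_1\ge\cdots\ge u_d,\ \ones^\top\u=\ones^\top\w,\ \sum_{i\in[j]}u_i-s_j(\w)\ge 0\ \ \forall j\in[d-1]}.
\]
The first two groups of constraints are linear equalities and inequalities. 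Each remaining constraint $s_j(\w)\le \sum_{i\in[j]}u_i$ is a sublevel set of the jointly convex continuous function $(\u,\w)\mapsto s_j(\w)-\sum_{i\le j}u_i$, so it defines a closed convex set. An intersection of closed convex sets is closed and convex, which proves the claim for $\cA_v$. Since each $s_j$ is polyhedral, $\cA_v$ is in fact a polyhedron: the constraint $s_j(\w)\le\sum_{i\le j}u_i$ is equivalent to the finite family $\sum_{i\in S}w_i\le\sum_{i\le j}u_i$ for all $|S|=j$.

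For $\cA_m$, I would do the same with $\w$ replaced by $\bm\lambda(\bX)$. The trace condition $\ones^\top\xi=\tr(\bX)$ is linear in $(\xi,\bX)$. The partial sum $\sum_{i\in[j]}\lambda_i(\bX)$ is convex and continuous in $\bX$ by Ky Fan's maximum principle, which gives the variational formula $\sum_{i\in[j]}\lambda_i(\bX)=\max\set{\tr(\bU^\top\bX\bU):\,\bU\in\R^{d\times j},\ \bU^\top\bU=\bI_j}$. This is a supremum of linear functions of $\bX$, hence convex; continuity follows because eigenvalues are continuous, or because a finite convex function is continuous. Alternatively, it equals $s_j(\bm\lambda(\bX))$, a spectral function induced by a symmetric convex function, which is convex by Lewis's theorem. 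Consequently $\cA_m$ is again the intersection of the polyhedral constraints $\xi\in\K_{o,d}$ and $\ones^\top\xi=\tr(\bX)$ with the closed convex sublevel sets $\set{(\xi,\bX):\sum_{i\le j}\lambda_i(\bX)\le\sum_{i\le j}\xi_i}$, and is therefore closed and convex.

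The only step needing care is the convexity of the $\bX$ partial-eigenvalue sums, which I would simply cite as Ky Fan's principle. The other point to flag in the write-up is that the restriction $\u\in\K_{o,d}$ is essential. Without it, $\sum_{i\le j}u_{[i]}$ would be convex in $\u$ and appear on the wrong side of the inequality, and the set $\set{(\u,\w):\u\majorize\w}$ would not be convex in general.
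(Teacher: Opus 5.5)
Your proof is correct and follows essentially the same route as the paper. Both use $\u\in\K_{o,d}$ to make the left-hand partial sums linear, and then write majorization as a linear sum/trace equality together with convex sublevel constraints on the sum-of-$j$-largest entries (or eigenvalues). The only differences are that you justify convexity of these functions explicitly (a maximum over index subsets, and Ky Fan's principle) where the paper cites Hiriart-Urruty--Lemar\'echal, and that you also observe $\cA_v$ is a polyhedron.
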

\begin{proof}
Recall that, for any $j\in[d]$, the function defined as the sum of $j$ largest entries in $\w$, i.e., $q_j(\w)\coloneqq \sum_{i\in[j]}w_{[i]}$ is a closed convex function of $\w$, and also the function defined as the sum of $j$ largest eigenvalues of $\bX\in\S^d$, i.e., $Q_j(\bX)\coloneqq q_j(\bm\lambda(\bX))$, is a closed convex function of $\bX$ (see  \cite[B.1.3.(e)]{hiriart-urruty_fundamentals_2001}). 
Then, $\cA_v$ is closed and convex because for any $\u\in\K_{o,d}$ and any $j\in[d]$, we have $\sum_{i\in[j]}u_{[i]}=\sum_{i\in[j]}u_{i}$ and thus the constraint $\u\majorize\w$ can be represented equivalently by 
the system of convex constraints $\sum_{i\in[j]} u_i \ge q_j(\w)$, for all $j\in[d-1]$, and $\sum_{i\in[d]} u_i = \sum_{i\in[d]} w_i$. Similarly, $\cA_m$ is closed and convex because we also have $\sum_{i\in[d]} \lambda_i(\bX) = \tr(\bX)$, which is a linear function of $\bX$. 
\end{proof}

\cref{thm:char:majorization:matrix} states that any matrix majorized by $\bU\in\S^d$ must come from the convex hull of all rotations of the given matrix $\bU$. 
This characterization also yields a convenient convex hull result that will be instrumental in analyzing the convex hull structure of sets arising in MESP problems.  
\begin{proposition}\label{prop:conv-general}
  Suppose $\cU\subseteq\R^l$ and $\cC\subseteq  \cU\times\K_{o,d} $ are convex sets and $\cU$ is closed. Then, for any set of the form 
  \[
      \overline{\cA} \coloneqq \set{
     (\u,\bm\xi,\bX) \in \cU \times \K_{o,d} \times \S_+^d  :
     	\begin{array}{>{\displaystyle}l} 
       		\bm\xi=\bm\lambda(\bX),\\ 
		(\u,\bm\xi)\in\cC
	\end{array}
	} , 
  \]
  we have $\conv(\overline{\cA}) = \widehat{\cA}$ and $\cl \conv(\overline{\cA}) =\cA^{\cl}$, where
  \begin{align*}
    \widehat{\cA} &\! \coloneqq \set{ \! (\u,\bm\xi,\bX) \in \cU \times \K_{o,d} \times \S_+^d  :
    \begin{array}{>{\displaystyle}l} 
    \bm\xi\majorize\bm\lambda(\bX),\\
    (\u,\bm\xi)\in\cC
    \end{array}
  \! }, \! \\
     \cA^{\cl} &\! \coloneqq \set{\!  (\u,\bm\xi,\bX) \in \cU \times \K_{o,d} \times \S_+^d : 
     \begin{array}{>{\displaystyle}l} 
      \bm\xi\majorize\bm\lambda(\bX),\\ 
      (\u,\bm\xi)\in\cl(\cC) 
     \end{array}
  \! } \! . 
  \end{align*}
\end{proposition}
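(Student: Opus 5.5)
We prove $\conv(\overline{\cA})=\widehat{\cA}$ by two inclusions and then deduce the closure statement.

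\emph{Step 1: $\conv(\overline{\cA})\subseteq\widehat{\cA}$.} Since $\overline{\cA}\subseteq\widehat{\cA}$ (because $\bm\xi=\bm\lambda(\bX)$ implies $\bm\xi\majorize\bm\lambda(\bX)$), it suffices to show that $\widehat{\cA}$ is convex. It is the intersection of three convex sets:
\begin{itemize}
\item $\cU\times\K_{o,d}\times\S^d_+$;
\item the set $\{(\u,\bm\xi,\bX):(\u,\bm\xi)\in\cC\}$, which is convex because $\cC$ is;
\item the set $\{(\u,\bm\xi,\bX):(\bm\xi,\bX)\in\cA_m\}$, which is convex by \cref{lem:majorization-convex-representation}.
\end{itemize}

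\emph{Step 2: $\widehat{\cA}\subseteq\conv(\overline{\cA})$.} Fix $(\u,\bm\xi,\bX)\in\widehat{\cA}$ and let $\bU\coloneqq\Diag(\bm\xi)$. Then $\bm\lambda(\bU)=\bm\xi$ because $\bm\xi\in\K_{o,d}$, so $\bU\majorize\bX$.
\begin{itemize}
\item By \cref{thm:char:majorization:matrix}, $\bX=\sum_k\theta_k\bQ_k^\top\Diag(\bm\xi)\bQ_k$ for some orthogonal matrices $\bQ_k$ and convex weights $\theta_k\ge0$ summing to one.
\item Each point $(\u,\bm\xi,\bQ_k^\top\Diag(\bm\xi)\bQ_k)$ lies in $\overline{\cA}$. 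Indeed, its eigenvalue vector is exactly $\bm\xi$, and $(\u,\bm\xi)\in\cC$ holds by assumption.
\item For positive semidefiniteness of these matrices we need $\bm\xi\ge\bm 0$. This follows because $\bX\succeq0$ and majorization give $\xi_d\ge\lambda_d(\bX)\ge0$: from the trace equality and the inequality on the first $d-1$ partial sums, the smallest entry of $\bm\xi$ is at least the smallest eigenvalue of $\bX$.
\end{itemize}
Averaging these points with weights $\theta_k$ recovers $(\u,\bm\xi,\bX)$. Note that $\u$ and $\bm\xi$ stay fixed, so only the matrix coordinate is being convexified. This is the key point: the spectral constraint is decoupled from $\cC$.

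\emph{Step 3: closure, via the same two-sided argument.}
\begin{itemize}
\item $\cA^{\cl}$ contains $\widehat{\cA}$. It is closed: $\cU$, $\K_{o,d}$, $\S^d_+$ and $\cl(\cC)$ are closed, and $\cA_m$ is closed by \cref{lem:majorization-convex-representation}. It is convex by the argument of Step 1. Hence $\cl\conv(\overline{\cA})=\cl(\widehat{\cA})\subseteq\cA^{\cl}$.
\item Conversely, let $(\u,\bm\xi,\bX)\in\cA^{\cl}$ and choose $(\u^n,\bm\xi^n)\in\cC$ converging to $(\u,\bm\xi)$. The $\bm\xi^n$ need not be majorization-compatible with $\bX$, so we perturb the matrix. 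Write $\bX=\bP\Diag(\bm\lambda(\bX))\bP^\top$. Using the doubly stochastic representation from \cref{thm:char:majorization}, choose $\bX=\bP\Diag(\bm D\bm\xi)\bP^\top$ with $\bm D$ doubly stochastic, which is possible since $\bm\xi\majorize\bm\lambda(\bX)$. Then set $\bX^n\coloneqq\bP\Diag(\bm D\bm\xi^n)\bP^\top$, so that $\bm\xi^n\majorize\bm\lambda(\bX^n)$ and $\bX^n\to\bX$.
\end{itemize}

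\emph{Main obstacle.} The subtle step is keeping $\bX^n\succeq0$ and $\bm\xi^n\in\K_{o,d}$ in the perturbation. Membership in $\K_{o,d}$ is automatic, since $\cC\subseteq\cU\times\K_{o,d}$. For positive semidefiniteness, note that $\bm D\bm\xi^n\ge\bm 0$ whenever $\bm\xi^n\ge\bm 0$. If some $\bm\xi^n$ has negative entries, we would instead handle this by taking a convex combination with a fixed point of $\cC$ having $\bm\xi\ge\bm 0$. Such points exist if $\widehat{\cA}\neq\emptyset$; the empty case is trivial, because then $\cl(\cC)$ contains no such $\bm\xi$ either, so $\cA^{\cl}$ is empty as well. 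With the perturbation in place, $(\u^n,\bm\xi^n,\bX^n)\in\widehat{\cA}$ converges to $(\u,\bm\xi,\bX)$, which gives $\cA^{\cl}\subseteq\cl(\widehat{\cA})$.
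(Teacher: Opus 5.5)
Your proof has a genuine gap in the third bullet of Step 2, and it cannot be patched as stated. The inequality there is reversed. From $\sum_{i\in[d-1]}\xi_i\ge\sum_{i\in[d-1]}\lambda_i(\bX)$ and equal totals you get $\xi_d\le\lambda_d(\bX)$, not $\xi_d\ge\lambda_d(\bX)$. So $\bm\xi\majorize\bm\lambda(\bX)$ with $\bX\succeq 0$ says nothing about the sign of $\bm\xi$.

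Without a sign restriction the proposition is in fact false. Take $d=2$, $\cU=\R$, $\cC=\R\times\K_{o,2}$, and the point $(0,(3,-1),\bI_2)$. We have $(3,-1)\in\K_{o,2}$ and $(3,-1)\majorize(1,1)$, so the point lies in $\widehat{\cA}=\cA^{\cl}$. But every point of $\overline{\cA}$ has $\bm\xi=\bm\lambda(\bX)\ge\bm 0$ with $\bX\succeq 0$, so every point of $\cl\conv(\overline{\cA})$ has $\bm\xi\ge\bm 0$. The matrices $\bQ_k^\top\Diag(3,-1)\bQ_k$ your decomposition produces are exactly the non-PSD points that fall outside $\overline{\cA}$.

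The missing hypothesis is a sign restriction such as $\cC\subseteq\cU\times(\K_{o,d}\cap\R^d_+)$. It holds in the paper's main application, $\cC=\R^0\times(\K_{o,d}\cap\R^d_+)$. The paper's own proof uses it silently when it asserts $(\bar\u,\bar{\bm\xi},\bX^\ell)\in\overline{\cA}$ just because $\bm\lambda(\bX^\ell)=\bar{\bm\xi}$.

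With that hypothesis, your Steps 1 and 2 coincide with the paper's argument. Your Step 3 then becomes a genuine proof of $\cA^{\cl}\subseteq\cl(\widehat{\cA})$: since $\bm\xi^n\ge\bm 0$ and $\bm D$ is entrywise nonnegative, $\bm D\bm\xi^n\ge\bm 0$ automatically, so $\bX^n=\bP\Diag(\bm D\bm\xi^n)\bP^\top\succeq 0$ and $(\u^n,\bm\xi^n,\bX^n)\in\widehat{\cA}$ converges to $(\u,\bm\xi,\bX)$. The paper only asserts this closure identity ``from the definitions,'' although closure does not commute with intersection in general, so your perturbation argument is a real improvement.

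You should drop the ``main obstacle'' fallback paragraph, including the claim about the empty case. It is unnecessary under the extra hypothesis and incorrect without it. For example, $\cU=\{0\}$ and $\cC=\{(0,(1,-1-t)):t\in(0,1]\}$ give $\widehat{\cA}=\emptyset$, since every $\bm\xi$ in $\cC$ has negative sum and no PSD matrix has negative trace. Yet $(0,(1,-1),\bm 0)\in\cA^{\cl}$.
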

\begin{proof}
In general, $\overline{\cA}$ is nonconvex because, as a function, each eigenvalue function $\lambda_i(\bX)$ is a nonconvex function of $\bX$. 
In contrast, {by \cref{lem:majorization-convex-representation},} $\widehat\cA$ is convex.  Moreover, $\overline{\cA}\subseteq \widehat\cA$. Hence, $\conv(\overline{\cA})\subseteq\widehat\cA$. To complete the proof, we will show that $\widehat\cA\subseteq\conv(\overline{\cA})$. Consider any $(\bar\u, \bar{\bm\xi}, \bar\bX)\in\widehat\cA$.  As $\bar{\bm\xi}\majorize\bm\lambda(\bar\bX)$, we have $\Diag(\bar{\bm\xi})\majorize \bar\bX$ and thus by \cref{thm:char:majorization:matrix} 
there exist orthogonal matrices $\bQ^{\ell}\in\R^{d\times d}$ and convex combination weights $\alpha_\ell\ge0$ such that $\sum_{\ell}\alpha_\ell=1$ and $\bar\bX=\sum_{\ell}\alpha_{\ell}(\bQ^{\ell})^\top\Diag(\bar{\bm\xi})\bQ^{\ell}$. Let $\bX^{\ell}:=(\bQ^{\ell})^\top\Diag(\bar{\bm\xi})\bQ^{\ell}$. Then, for all $\ell$, we have $(\bar\u,\bar{\bm\xi},\bX^{\ell})\in\overline{\cA}$ as $\bm\lambda(\bX^\ell)=\bar{\bm\xi}$. Moreover, $(\bar\u,\bar{\bm\xi},\bar\bX) = \sum_{\ell}\alpha_{\ell}(\bar{\u},\bar{\bm\xi},\bX^{\ell})$. Thus, $(\bar\u,\bar{\bm\xi},\bar\bX)\in\conv(\overline{\cA})$, as desired. 

Finally, taking the closure of both sides of $\conv(\overline{\cA})=\widehat{\cA}$, we get  $\cl\conv(\overline{\cA})=\cl(\widehat{\cA}) =\cA^{\cl}$, where the last inequality follows from the definitions of the sets $\widehat{\cA}$ and $\cA^{\cl}$ and the premise that $\cU$ is closed.
\end{proof}

We next recall two classical results from \citet{marshall_inequalities_2011} that link majorization and convex functions. Recall that $f_s(\bm\xi)=\sum_{i\in[s]}\varphi(\xi_i)$, where $\varphi(\cdot)$ is convex, and hence $f_s(\cdot)$ is convex as well.

\begin{lemma}[{{\cite[Proposition I.3.C.1]{marshall_inequalities_2011}}}]\label{lem:monotone}
For any $\bm\xi,\bm\xi'\in\R^d$ such that  $\bm\xi_{1:s}\majorize\bm\xi_{1:s}'$, we have $f_s(\bm\xi)\geq f_s(\bm\xi')$. 
\end{lemma}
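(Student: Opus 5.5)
The argument reduces to the classical Schur-convexity of separable convex sums. Write $\u\coloneqq\bm\xi_{1:s}$ and $\w\coloneqq\bm\xi'_{1:s}$, both in $\R^s$, and note that $f_s(\bm\xi)=\sum_{i\in[s]}\varphi(u_i)$ and $f_s(\bm\xi')=\sum_{i\in[s]}\varphi(w_i)$ depend only on these truncations. The hypothesis $\u\majorize\w$ lets us apply \cref{thm:char:majorization} in dimension $s$. This gives $\w\in\conv\set{\bP\u:\ \bP\in\R^{s\times s}\text{ a permutation matrix}}$, so $\w=\sum_{\ell}\alpha_\ell\bP^{\ell}\u$ for some convex weights $\alpha_\ell\ge 0$ with $\sum_\ell\alpha_\ell=1$. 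Alternatively, there is a doubly stochastic $\bQ$ with $\w=\bQ\u$.

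The key step is the convexity inequality. Define $g(\y)\coloneqq\sum_{i\in[s]}\varphi(y_i)$ on $\R^s$. This function is convex because $\varphi$ is convex. It is also permutation invariant, so $g(\bP\u)=g(\u)$ for every permutation matrix $\bP$. Jensen's inequality then gives
\[
g(\w)=g\Bigl(\sum_\ell\alpha_\ell\bP^\ell\u\Bigr)\le\sum_\ell\alpha_\ell\, g(\bP^\ell\u)=g(\u).
\]
This is exactly $f_s(\bm\xi')\le f_s(\bm\xi)$.

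An equivalent coordinatewise route uses the doubly stochastic form $w_i=\sum_j Q_{ij}u_j$. Each row of $\bQ$ is a probability vector, so $\varphi(w_i)\le\sum_jQ_{ij}\varphi(u_j)$. Summing over $i$ and using that the columns of $\bQ$ sum to one yields $\sum_i\varphi(w_i)\le\sum_j\varphi(u_j)$.

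The only real care needed is with domains, since $\varphi$ is defined only on $\R_{++}$ and may be extended by $+\infty$. If some $u_j\le 0$, then $f_s(\bm\xi)=+\infty$ and the inequality is trivial. Otherwise every $u_j>0$, and each $w_i$ is a convex combination of the $u_j$, so $w_i>0$ as well. Jensen's inequality therefore applies within the domain of $\varphi$, and no obstacle remains beyond this bookkeeping.
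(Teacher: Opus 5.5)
Your proof is correct. It is the classical argument behind the result that the paper simply cites from Marshall--Olkin (the paper gives no proof of its own): $\sum_{i\in[s]}\varphi(y_i)$ is symmetric and convex, hence Schur-convex, via the permutation-hull (equivalently doubly stochastic) characterization of majorization and Jensen's inequality, and your handling of the domain of $\varphi$ is adequate.
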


We close this section with a fundamental fact on spectral functions. 
Recall that a function  $f:\R^d\to\overline{\R}$ is permutation-symmetric if $f(\x)=f(\bP\x)$ for any permutation matrix $\bP\in\R^{d\times d}$ and any $\x\in\R^d$.
\begin{proposition}[{\citep[Theorem 3.1]{lewis_convex_1996}}]\label{prop:eigenvalueFunctions}\label{lem:conjugate-subdifferential}
Let $f:\R^d\to\overline{\R}$ be a convex and permutation-symmetric function. Then, for any $\bX\in\S^d$, the spectral function $F(\bX) \coloneqq f(\lambda(\bX))$ satisfies 
$ 
F(\bX) = \max_{\bQ\in\R^{n\times n}}\set{f(\diag(\bQ\bX\bQ^\top)) :~ \bQ^\top \bQ = \bI_n}.
$ 
Thus, $F(\bX)$ is a convex function of $\bX\in\S^d$,
{and $\bY\in\partial F(\bX)$ if and only if $\bm\lambda(\bY)\in\partial f(\bm\lambda(\bX))$ and there exists an orthogonal matrix $\bQ$ with $\bQ^\top\bX\bQ=\Diag(\bm\lambda(\bX))$ and $\bQ^\top\bY\bQ=\Diag(\bm\lambda(\bY))$.}
\end{proposition}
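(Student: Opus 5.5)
The plan has three steps: prove the variational formula from the Schur--Horn theorem, read off convexity from it, and get the subdifferential characterization from a conjugacy computation combined with von Neumann's trace inequality. Throughout I read $n=d$ in the displayed formula.

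\emph{Variational formula.} Fix $\bX\in\S^d$ and an orthogonal $\bQ$. By the Schur--Horn theorem, the diagonal of $\bQ\bX\bQ^\top$ is majorized by its spectrum, that is, $\bm\lambda(\bX)\majorize\diag(\bQ\bX\bQ^\top)$. By \cref{thm:char:majorization}, $\diag(\bQ\bX\bQ^\top)=\sum_\ell\alpha_\ell\bP^\ell\bm\lambda(\bX)$ for some permutation matrices $\bP^\ell$ and convex weights $\alpha_\ell$. Convexity and permutation symmetry of $f$ then give
\[
f(\diag(\bQ\bX\bQ^\top))\le\sum_\ell\alpha_\ell f(\bP^\ell\bm\lambda(\bX))=f(\bm\lambda(\bX)).
\]
Equality is attained by any $\bQ$ with $\bQ\bX\bQ^\top=\Diag(\bm\lambda(\bX))$, so the maximum equals $F(\bX)$. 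Each map $\bX\mapsto f(\diag(\bQ\bX\bQ^\top))$ is a convex function composed with a linear map, so $F$ is convex as a pointwise supremum of convex functions.

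\emph{Conjugate.} Next I will compute $F^*=f^*\circ\bm\lambda$. For the upper bound, von Neumann's trace inequality $\langle\bX,\bY\rangle\le\bm\lambda(\bX)^\top\bm\lambda(\bY)$ gives
\[
F^*(\bY)=\sup_{\bX}\{\langle\bX,\bY\rangle-f(\bm\lambda(\bX))\}\le\sup_{\bX}\{\bm\lambda(\bX)^\top\bm\lambda(\bY)-f(\bm\lambda(\bX))\}\le f^*(\bm\lambda(\bY)).
\]
For the reverse bound, let $\bU$ be orthogonal with $\bY=\bU\Diag(\bm\lambda(\bY))\bU^\top$. In $f^*(\bm\lambda(\bY))=\sup_{\x}\{\x^\top\bm\lambda(\bY)-f(\x)\}$ we may restrict to nonincreasingly sorted $\x$: sorting $\x$ leaves $f(\x)$ unchanged by symmetry and does not decrease $\x^\top\bm\lambda(\bY)$ by the rearrangement inequality. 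For sorted $\x$, the test matrix $\bX=\bU\Diag(\x)\bU^\top$ has $\bm\lambda(\bX)=\x$ and $\langle\bX,\bY\rangle=\x^\top\bm\lambda(\bY)$. This shows $F^*(\bY)\ge f^*(\bm\lambda(\bY))$.

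\emph{Subdifferential.} For $\bX\in\dom F$, we have $\bY\in\partial F(\bX)$ if and only if $F(\bX)+F^*(\bY)=\langle\bX,\bY\rangle$. Combining the Fenchel--Young inequality for $f$ with von Neumann's inequality gives the chain
\[
F(\bX)+F^*(\bY)=f(\bm\lambda(\bX))+f^*(\bm\lambda(\bY))\ge\bm\lambda(\bX)^\top\bm\lambda(\bY)\ge\langle\bX,\bY\rangle.
\]
Hence equality holds throughout if and only if two things hold. First, the first inequality is tight, which is exactly $\bm\lambda(\bY)\in\partial f(\bm\lambda(\bX))$. Second, von Neumann's inequality is tight, which happens exactly when some orthogonal $\bQ$ simultaneously gives $\bQ^\top\bX\bQ=\Diag(\bm\lambda(\bX))$ and $\bQ^\top\bY\bQ=\Diag(\bm\lambda(\bY))$.

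I expect this last equality case to be the main obstacle. The ``if'' direction is immediate. For ``only if'', I would expand $\langle\bX,\bY\rangle$ in the eigenbases of $\bX$ and $\bY$; this writes it as $\bm\lambda(\bX)^\top\bS\bm\lambda(\bY)$ with $\bS$ doubly stochastic (the squared entries of an orthogonal matrix). Equality with $\bm\lambda(\bX)^\top\bm\lambda(\bY)$ should then force $\bS$ to respect the blocks of equal eigenvalues. Within each such block one can then rotate the eigenvectors to obtain a common ordered eigenbasis.
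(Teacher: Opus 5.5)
The paper gives no proof of this statement; it imports it from \citet{lewis_convex_1996}. Your argument is essentially the standard proof from that literature. You use the Schur/Davis majorization argument for the max formula, which gives convexity of $F$ without assuming $f$ is closed. You then use the conjugacy identity $F^*=f^*\circ\bm\lambda$ via von Neumann's trace inequality, and the Fenchel--Young equality characterization of $\partial F$. Parts one and two, and the reduction in part three, are correct as written. The case $\bX\notin\dom F$ is trivial, since both sides of the subdifferential claim are then empty.

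The one step that is not yet a proof is the equality case of von Neumann's inequality (Theobald's theorem). You may simply cite it, as the paper effectively does. If you prove it, ``$\bS$ respects the blocks'' is too loose. $\bX$ and $\bY$ have different eigenvalue block structures, and $\bS$ need not be block diagonal for any common partition. For example, with $\bX=\Diag(1,0)$, $\bY=\bI_2$ and the eigenbasis of $\bY$ rotated by $45^\circ$, you get $\bS=\frac12\ones\ones^\top$, yet equality holds.

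What equality actually forces is nesting of spectral subspaces. Write $\a=\bm\lambda(\bX)$, $\b=\bm\lambda(\bY)$, and $\a=\sum_{k\in[d]}\alpha_k\ones_{[k]}$, where $\ones_{[k]}$ is the indicator of $[k]$ and $\alpha_k=a_k-a_{k+1}\ge 0$ for $k<d$. Expand $\b$ the same way with coefficients $\beta_l$. Since $\bS$ is doubly stochastic, the terms with $k=d$ or $l=d$ cancel, and
\[
\a^\top\b-\a^\top\bS\b=\sum_{k,l\in[d-1]}\alpha_k\beta_l\bigl(\min(k,l)-\ones_{[k]}^\top\bS\,\ones_{[l]}\bigr),
\]
with every bracket nonnegative. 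With $S_{ij}=(\u_i^\top\v_j)^2$, a bracket vanishes exactly when two spans are nested, the smaller inside the larger. These are the span of the top $k$ eigenvectors of $\bX$ and the span of the top $l$ eigenvectors of $\bY$.

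So equality forces all such subspaces at breakpoints ($a_k>a_{k+1}$, $b_l>b_{l+1}$) to form a single chain. These spans are independent of the choice of eigenbasis. Take an orthonormal basis adapted to this merged flag, that is, orthonormal bases of the successive orthogonal differences. Its $i$-th vector lies in the $a_i$-eigenspace of $\bX$ and in the $b_i$-eigenspace of $\bY$. This gives the common ordered eigenbasis $\bQ$, which is the precise version of your ``rotate within each block'' step.
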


\section{Majorization-based convex relaxation}
\label{sec:majorization}

In this section, we will show how majorization can be used as a tool for deriving a convex relaxation for \eqref{eq:mesp-general}. Recall that  \eqref{eq:mesp-general} is given by
\begin{align*}
    &\min_{\x\in\cX\cap\{0,1\}^d} f_s\left( \bm\lambda\left(\bV\Diag(\x)\bV^\top\right) \right) \\
    &= \min_{\x\in\cX\cap\{0,1\}^d} F_s\left(\bV\Diag(\x)\bV^\top\right)  ,
\end{align*}
where $F_s(\bX)\coloneqq f_s(\bm\lambda(\bX))$. Note that the domain of $F_s$ is the set of positive semidefinite matrices $\bX$ with $\rank(\bX) \ge s$ (recall that $\varphi:\R_{++}\to\R$ and $\cX\subseteq \{\x\in[0,1]^d:\, \ones^\top\x=s\}$), and thus it is indeed convex. 
There are two sources of nonconvexities in this problem. 
First, the domain of $\x$, i.e., $\cX\cap\{0,1\}^d$, is nonconvex due to the integrality restriction. Second, even though $f_s(\cdot)$ is a convex function, this function is not  permutation symmetric and as a result the function $F_s(\bX)$ 
is not necessarily  convex either. Indeed, 
\citet[Propositions 1 and 7]{li_best_2023} 
prove the nonconvexity of this function in the cases of $\varphi(\xi)=-\log(\xi)$ and $\varphi(\xi)=\frac{1}{\xi}$, respectively. Our focus will be on addressing this second source of nonconvexity. 

Note that our problem, i.e., \eqref{eq:mesp-general}, can be equivalently written as 
  \begin{align*}
    & \min_{t\in\R,\x\in\cX\cap\{0,1\}^d} \set{t:~ (t,\x)\in\cP(\overline{\Omega}) } 
   , \\\
\text{where }\quad 
    \overline{\Omega} \! & \coloneqq \!\set{\! (\bm\xi,\bX)\in\R^d_+\times\S^d_+:\,\bm\xi = \bm\lambda\left(\bX\right),\ \bm\xi \in\K_{o,d} \! },\!  \\
        \cP (\Omega)\! &\coloneqq \! \set{ \!
      (t,\x) \in \R \times \cX : \!
       \begin{array}{>{\displaystyle}l}
        \exists \bm\xi \in\R^d_+ \textup{ s.t. } t\geq f_s(\bm\xi), \!\! \\
        (\bm\xi, \bV\Diag(\x)\bV^\top) \in\Omega \!
       \end{array} \!
    } \! .
  \end{align*}
Here, the set $\overline{\Omega}$ is introduced based on the fact that the vector of eigenvalues are naturally sorted.   
Note that $\overline{\Omega}$ is nonconvex due to the constraints $ \bm\xi = \bm\lambda\left(\bX\right)$.

Given that $f_s$ is a convex function, it is immediate to observe that the set $\cP (\Omega)$ is convex if and only if the set $\Omega$ is convex. As the set $\overline{\Omega}$ is nonconvex in general, so is $\cP (\overline{\Omega})$.  
In the light of \cref{prop:conv-general}, we  introduce the set
\[
    \widehat{\Omega} \coloneqq \set{ (\bm \xi,\bX)\in\R^d_+\times\S^d_+:\,\bm \xi  \majorize \bm\lambda\left(\bX\right),\ \bm{\xi}\in\K_{o,d} }. 
\]
Note that $ \widehat{\Omega}$ is in fact closed and convex {(see \cref{lem:majorization-convex-representation})}. 
Then, by applying \cref{prop:conv-general} 
with $\cU$ and $\cC$ taken as $\cU=\R^0$ and $\cC=\R^0 \times (\K_{o,d}\cap\R^d_+)$ 
which is closed, we conclude that
\[
\cl\conv(\overline{\Omega}) = \conv(\overline{\Omega}) = \widehat{\Omega}  .
\]
This then leads to the natural convex relaxation $\cP( \widehat{\Omega}) \supseteq \cl \conv\left(\cP(\overline{\Omega}) \right)$, and by dropping the integrality requirement on $\x$ we arrive at the following convex relaxation of \eqref{eq:mesp-general} given by
\begin{align}
&\min_{t\in\R,\x\in\cX} \set{t:~ (t,\x)\in\cP(\widehat{\Omega}) } \notag \\
&= \min_{\x\in\cX,\bm\xi\in\R^d_+} \set{f_s(\bm\xi): \begin{array}{l}
\bm \xi  \majorize \bm\lambda\left(\bV\Diag(\x)\bV^\top\right),\!\!\!\! \\
\bm{\xi}\in\K_{o,d} 		
\end{array}
 }   \label{eq:Gamma_rel} \\
&= \min_{\x\in\cX}\set{g_s(\bm\lambda\left(\bV\Diag(\x)\bV^\top\right) )}
, \notag
\end{align}
where
\begin{align} \label{eq:h-rep:non-sparse}
     g_s(\bm\lambda) = \min_{\bm\xi\in\R_+^d}\set{f_s(\bm\xi):~ \bm\xi\majorize\bm\lambda,\ \bm\xi\in\K_{o,d} }. 
   \end{align}
Note that $\dom(g_s)=\{\bm\lambda\in\R^d_+: \|\bm\lambda\|_0\ge s\}$.

Now note that the majorization relations $\bm\xi\majorize\bm\lambda$ are invariant with respect to permuting the coordinates of $\bm\lambda$. Thus, we immediately observe that $g_s(\cdot)$ is a permutation symmetric function. Moreover, it is convex as we establish next.
\begin{proposition}\label{lem:h-convex}
The function  $g_s$ defined in \eqref{eq:h-rep:non-sparse} is convex. Thus, the function $G_s(\bX)\coloneqq g_s(\bm\lambda(\bX))$, where $\dom(G_s)$ is the set of positive semidefinite matrices $\bX$ with $\rank(\bX)\ge s$, is a convex function of $\bX$. Consequently, $\x \mapsto g_s\left( \bm\lambda\left(\bV\Diag(\x)\bV^\top\right) \right)$ is a convex function of $\x$. 
\end{proposition}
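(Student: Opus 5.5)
The plan is to show that $g_s$ is a partial minimization of a jointly convex function over a convex set. By \cref{lem:majorization-convex-representation}, the set $\cA_v=\set{(\bm\xi,\bm\lambda)\in\K_{o,d}\times\R^d:\,\bm\xi\majorize\bm\lambda}$ is closed and convex. Intersecting it with the convex set $\R^d_+\times\R^d$ keeps it convex. Next define $h(\bm\xi,\bm\lambda)\coloneqq f_s(\bm\xi)+\delta_{\cA_v\cap(\R^d_+\times\R^d)}(\bm\xi,\bm\lambda)$. Since $f_s(\bm\xi)=\sum_{i\in[s]}\varphi(\xi_i)$ is convex, $h$ is jointly convex in $(\bm\xi,\bm\lambda)$, where $\varphi$ is extended by $+\infty$ outside $\R_{++}$ (or by its closure at $0$).

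Then $g_s(\bm\lambda)=\inf_{\bm\xi}h(\bm\xi,\bm\lambda)$. The infimal projection of a jointly convex function is convex: its strict epigraph is the linear projection of the convex strict epigraph of $h$. Concretely, take $\bm\lambda^1,\bm\lambda^2$ in the domain, $\theta\in[0,1]$, and feasible $\bm\xi^1,\bm\xi^2$ with values within $\epsilon$ of $g_s(\bm\lambda^j)$. The point $\theta\bm\xi^1+(1-\theta)\bm\xi^2$ lies in $\K_{o,d}\cap\R^d_+$ and majorizes $\theta\bm\lambda^1+(1-\theta)\bm\lambda^2$ by convexity of $\cA_v$. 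Convexity of $f_s$ then gives $g_s(\theta\bm\lambda^1+(1-\theta)\bm\lambda^2)\le\theta g_s(\bm\lambda^1)+(1-\theta)g_s(\bm\lambda^2)+\epsilon$. Letting $\epsilon\to0$ gives convexity. It does not matter whether the minimum in \eqref{eq:h-rep:non-sparse} is attained, because the argument only uses near-optimal points.

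For the matrix statement, I would first note that $g_s$ is permutation symmetric. Indeed, $\bm\xi\majorize\bm\lambda$ depends only on the sorted entries $\lambda_{[i]}$, so $g_s(\bP\bm\lambda)=g_s(\bm\lambda)$. Applying \cref{prop:eigenvalueFunctions} to the convex, permutation-symmetric extended-value function $g_s$ then shows that $G_s(\bX)=g_s(\bm\lambda(\bX))$ is convex on $\S^d$. Its domain is as stated, because $\dom(g_s)=\{\bm\lambda\in\R^d_+:\|\bm\lambda\|_0\ge s\}$. Finally, $\x\mapsto\bV\Diag(\x)\bV^\top$ is linear, so $\x\mapsto G_s(\bV\Diag(\x)\bV^\top)$ is convex as a convex function composed with an affine map.

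The main obstacle is the joint convexity of the majorization constraint in $(\bm\xi,\bm\lambda)$, and \cref{lem:majorization-convex-representation} handles it because $\bm\xi$ is restricted to the ordering cone. Without the constraint $\bm\xi\in\K_{o,d}$, $\sum_{i\le j}\xi_{[i]}$ would be convex rather than linear in $\bm\xi$, and the constraint set would fail to be convex. The remaining care is the extended-value bookkeeping: $\varphi$ takes the value $+\infty$ at the boundary $\xi_i=0$, and \cref{prop:eigenvalueFunctions} must be applied to a proper convex function taking values in $\overline{\R}$, which that result allows.
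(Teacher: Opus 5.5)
Your proposal is correct and follows essentially the same route as the paper. Convexity of $g_s$ comes from partial minimization of $f_s(\bm\xi)$ over the convex set given by \cref{lem:majorization-convex-representation}; the paper phrases this as $\epi(g_s)$ being a projection of a convex set, and your $\epsilon$-argument makes the same step explicit without needing attainment. This is followed, as in the paper, by permutation symmetry and \cref{prop:eigenvalueFunctions} for $G_s$, and then by composition with the linear map $\x\mapsto\bV\Diag(\x)\bV^\top$.
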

\begin{proof}
As $f_s(\bm\xi)=\sum_{i\in[s]}\varphi(\xi_i)$ and $\varphi$ is convex, the function $\tilde{g}_s(\bm\lambda,\bm\xi) \coloneq f_s(\bm\xi)$  is a convex function of $(\bm\lambda,\bm\xi)$.
  Then, by definition of $g_s$, we have 
 \[
  g_s(\bm\lambda)=\min_{\bm\xi\in\R_+^d}\set{\tilde{g}_s(\bm\lambda,\bm\xi):~  \bm\xi\majorize\bm\lambda,\ \bm\xi\in\K_{o,d} } .
 \] 
Thus, $\epi(g_s)=\left\{ (t,\bm\lambda):\, \exists \bm\xi\in \R^d_+ \textup{ s.t. } t\ge \tilde{g}_s(\bm\lambda,\bm\xi),\  \bm\xi\majorize\bm\lambda,\ \bm\xi\in\K_{o,d}  \right\}$. Moreover, by \cref{lem:majorization-convex-representation}  the constraints $\bm\xi\in\K_{o,d}$ and $\bm\xi\majorize\bm\lambda$ together admit a convex representation. So, $\epi(g_s)$ is the projection of a convex set, and as the projection operation preserves convexity, we conclude $\epi(g_s)$ is convex and so $g_s$ is a convex function. 
 Now, as $g_s$ is a convex permutation-symmetric function, by \cref{prop:eigenvalueFunctions},  $G_s(\bX)=g_s(\bm\lambda(\bX))$ is a convex function of $\bX$. Moreover, $\x\mapsto \bV\Diag(\x)\bV^\top$ is a linear map, and thus the composition of $G_s(\bX)$ with this linear map, i.e., the function $\x\mapsto g_s(\bm\lambda(\bV\Diag(\x)\bV^\top))$ results in a convex function of $\x$, proving the last claim.
\end{proof}

\begin{remark}\label{rem:sparse}
Recall that in our problems of interest the domain for $\x$ is given as $\cX\cap\{0,1\}^d \subseteq \{\x\in\{0,1\}^d:\,  \sum_{i\in[d]} x_i = s\}$. Then, whenever $\x \in \{0,1\}^d$ the requirement $ \sum_{i\in[d]} x_i = s$ enforces that $\rank\left(\bV\Diag(\x)\bV^\top \right) \leq s$ and so  $\left\|\bm\lambda\left(\bV\Diag(\x)\bV^\top \right) \right\|_0 \leq s$ must hold. 
Therefore, one may wonder if there is any benefit to studying the smaller-yet-valid relaxation set 
\[
   \widetilde{\Omega}_s  \coloneqq \set{\! (\bm\xi,\bX)\in\R^d_+\times\S^d_+:
    \begin{array}{>{\displaystyle}l} 
    \bm\xi \majorize \bm\lambda\left(\bX\right),\ \bm\xi \in\K_{o,d},\\
    \|\bm\xi\|_0 \leq s 
    \end{array} \!
    }  
\]
instead of $\widehat{\Omega}$. 
Note also that the seemingly nonconvex requirement $\|\bm\xi\|_0 \leq s$ in $\widetilde{\Omega}_s$ admits a convex representation of the form $\xi_{s+1}=\ldots = \xi_d=0$ due to the presence of the conic constraint $\bm\xi \in\K_{o,d}$ and the sign restrictions $\bm\xi \in\R^d_+$. Hence, we can equivalently write $\widetilde{\Omega}_s$ as the convex set 
\[
\widetilde{\Omega}_s = \set{\! (\bm\xi,\bX)\in\R^d_+\times\S^d_+:
      \begin{array}{>{\displaystyle}l}
      \bm\xi \majorize \bm\lambda\left(\bX\right), \, \bm\xi \in\K_{o,d} ,\\
      \xi_{s+1}=\ldots=\xi_d=0 
      \end{array} \!
}.
\]

When considering $\widetilde{\Omega}_s$, the function analogous to $g_s$ defined in \eqref{eq:h-rep:non-sparse} is given by
\begin{align}\label{eq:h-rep}
  \bar{g}_s(\bm\lambda) \coloneqq \min_{\bm\xi\in\R_+^d}\set{f_s(\bm\xi):
   \begin{array}{>{\displaystyle}l}
    \bm\xi\majorize\bm\lambda,\ \bm\xi\in\K_{o,d},\\ 
    \|\bm\xi\|_0\leq s 
    \end{array}
}. 
\end{align}
We next show that whenever $\varphi$ is nonincreasing, which is our case, there is no benefit in including the additional restriction $\|\bm\xi\|_0 \leq s$.
 That is, whenever $\varphi$ is nonincreasing, we have $ \bar{g}_s(\bm\lambda)=g_s(\bm\lambda)$ for all $\bm\lambda\in\R^d$. 
  To see this, consider any $\bm\lambda\in\R^d$. Then, it is immediate that $\bar{g}_s(\bm\lambda) \geq g_s(\bm\lambda)$. To see the reverse direction, consider any $\bm\xi$ feasible to \eqref{eq:h-rep:non-sparse}, and define $\bar{\bm\xi}$ by 
  \begin{align*}
    \bar\xi_j = \begin{cases}
      \xi_1 + \sum_{i\in(s,d]}\xi_i, & \text{if } j\in\{1\}, \\
      \xi_j, & \text{if } j\in(1,s], \\
      0, & \text{if } j\in(s,d]. 
    \end{cases}
  \end{align*}
  Then, $\|\bar{\bm\xi}\|_0\leq s$ and $\bar{\bm\xi}\majorize\bm\lambda$ which follows from $\bm\xi\majorize\bm\lambda$. Thus, $\bar{\bm\xi}$ is feasible to \eqref{eq:h-rep}. Moreover, as $\varphi(\cdot)$ is nonincreasing and we have $\bar{\bm\xi}_{1:s}\geq\bm\xi_{1:s}$, we conclude that  $f_s(\bar{\bm\xi})\leq f_s(\bm\xi)$. This shows $\bar{g}_s(\bm\lambda) \leq g_s(\bm\lambda)$. 
  
Finally, if we further assume $\varphi$ is strictly decreasing, then for any $\|\bm\xi\|_0>s$ we have $\bar\xi_1>\xi_1$, thus $f_s(\bar{\bm\xi})<f_s(\bm\xi)$. This means any $\bm\xi$ that is not $s$-sparse cannot achieve the minimum in \eqref{eq:h-rep:non-sparse}. 
\end{remark}

We close this section with a simple observation. 
\begin{lemma}\label{lem:g_d:exactness}
When $s=d$, we have $g_s(\bm\lambda)=f_s(\bm\lambda)$ for all $\bm\lambda\in\R^d_+$.
\end{lemma}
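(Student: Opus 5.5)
The plan is to prove the two inequalities $g_d(\bm\lambda)\le f_d(\bm\lambda)$ and $g_d(\bm\lambda)\ge f_d(\bm\lambda)$ separately. Both rest on one observation. When $s=d$, the function $f_d(\bm\xi)=\sum_{i\in[d]}\varphi(\xi_i)$ sums $\varphi$ over all coordinates, so it is permutation symmetric. The lack of symmetry that motivated the majorization relaxation therefore disappears.

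For the upper bound, I will exhibit a feasible point of \eqref{eq:h-rep:non-sparse}. Fix $\bm\lambda\in\R^d_+$ and let $\bm\lambda^\downarrow$ be its non-increasing rearrangement, $\lambda^\downarrow_i=\lambda_{[i]}$. Then $\bm\lambda^\downarrow\in\R^d_+\cap\K_{o,d}$. Also $\bm\lambda^\downarrow\majorize\bm\lambda$ holds trivially, since both vectors have the same sorted entries, so every partial sum in \cref{def:majorization} is equal. Hence $\bm\lambda^\downarrow$ is feasible, and permutation symmetry gives
\[
g_d(\bm\lambda)\le f_d(\bm\lambda^\downarrow)=f_d(\bm\lambda).
\]

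For the lower bound, take any feasible $\bm\xi$ in \eqref{eq:h-rep:non-sparse}, so $\bm\xi\majorize\bm\lambda$. With $s=d$ we have $\bm\xi_{1:d}=\bm\xi$ and $\bm\lambda_{1:d}=\bm\lambda$, so \cref{lem:monotone} applies directly and gives $f_d(\bm\xi)\ge f_d(\bm\lambda)$. This is just Schur-convexity of a separable convex sum. Minimizing over feasible $\bm\xi$ yields $g_d(\bm\lambda)\ge f_d(\bm\lambda)$.

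The only delicate point is the boundary of the domain, where $\varphi$ is defined only on $\R_{++}$. If some $\lambda_i=0$, then $f_d(\bm\lambda)=+\infty$ under the extended-value convention. On the other side, $\dom(g_d)=\{\bm\lambda:\|\bm\lambda\|_0\ge d\}$, so $g_d(\bm\lambda)=+\infty$ as well, and the identity holds in $\overline{\R}$. I would check that \cref{lem:monotone} is being used with this convention, or else restrict it to $\R^d_{++}$ and treat the zero case separately as just described. Beyond this, no real obstacle is expected.
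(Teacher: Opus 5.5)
Your proof is correct and follows essentially the same route as the paper. The lower bound comes from applying \cref{lem:monotone} to any feasible $\bm\xi\majorize\bm\lambda$, and the upper bound from exhibiting $\bm\lambda$ itself as a feasible point. Your use of the sorted rearrangement $\bm\lambda^\downarrow$, together with the permutation symmetry of $f_d$, is a small but genuine tightening: the paper's proof declares $\bm\lambda$ optimal without noting that $\bm\lambda$ need not lie in $\K_{o,d}$ unless it is already sorted.
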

\begin{proof}
Given any $\bm\lambda\in\R^d_+$, by definition of $g_s$ in \eqref{eq:h-rep:non-sparse}   we have 
$
     g_d(\bm\lambda) = \min_{\bm\xi\in\R_+^d}\set{f_d(\bm\xi):~ \bm\xi\majorize\bm\lambda,\ \bm\xi\in\K_{o,d} }. 
$ 
In particular, every $\bm\xi$ feasible to this problem satisfies $\bm\xi\majorize\bm\lambda$  and thus \cref{lem:monotone} implies $f_d(\bm\xi)\geq f_d(\bm\lambda)$. Therefore, $\bm\lambda$ is an optimum solution to this problem and $g_d(\bm\lambda)=f_d(\bm\lambda)$ holds. 
\end{proof}

\subsection{Properties of the majorization-based relaxation}\label{sec:majorization:properties}

In this section, we examine some convex hull and exactness properties of certain sets and functions related to  $\widehat{\Omega}$. All proofs for this section are given in  \cref{sec:app:proofs}.

We start by establishing that the function $G_s$ provides the convex envelope of $F_s$, i.e., $G_s$ is the best convex under estimator of $F_s$. 
\begin{corollary}\label{cor:f+Omega_s:conv}
Given $F_s(\bX)=f_s(\bm\lambda(\bX))$ and $G_s(\bX)=g_s(\bm\lambda(\bX))$, we have $\conv(\epi(F_s)) = \epi(G_s)$, where their epigraphs are given by
$ 
\epi(F_s) 
= \{(t,\bX)\in\R\times\S^d:~ \exists \bm\xi\in\R^d \textup{ s.t. } t\ge f_s(\bm\xi),~  (\bm\xi,\bX)\in\overline{\Omega} \}, 
$ 
and 
$ \epi(G_s) 
 = \{(t,\bX)\in\R\times\S^d:~ \exists \bm\xi\in\R^d \textup{ s.t. } t\ge f_s(\bm\xi),~  (\bm\xi,\bX)\in\widehat{\Omega} \} . 
$ 
\end{corollary}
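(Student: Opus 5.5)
We will derive the corollary directly from \cref{prop:conv-general}. Take $\cU=\R$ (the $t$ coordinate, which is closed) and
\[
\cC\coloneqq\set{(t,\bm\xi)\in\R\times\K_{o,d}:~\bm\xi\in\R^d_+,\ t\ge f_s(\bm\xi)}.
\]
This set is convex because $f_s$ is convex and $\K_{o,d}\cap\R^d_+$ is a convex cone.

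With this choice, the set $\overline{\cA}$ of \cref{prop:conv-general} is the lifted set
\[
\set{(t,\bm\xi,\bX):\ \bm\xi=\bm\lambda(\bX),\ t\ge f_s(\bm\xi)},
\]
and $\widehat{\cA}$ is the same set with $\bm\xi=\bm\lambda(\bX)$ replaced by $\bm\xi\majorize\bm\lambda(\bX)$. By the descriptions in the statement, $\epi(F_s)$ is the projection of $\overline{\cA}$ onto the $(t,\bX)$ coordinates, and $\epi(G_s)$ is the projection of $\widehat{\cA}$. For $\epi(G_s)$ this uses the definition of $g_s$ in \eqref{eq:h-rep:non-sparse}, where the minimum is attained; if it were not attained, we would work with the infimum and handle $t> g_s$ versus $t=g_s$ separately.

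Linear maps commute with taking convex hulls, so
\[
\conv(\epi(F_s))=\conv(\Proj\overline{\cA})=\Proj\conv(\overline{\cA})=\Proj\widehat{\cA}=\epi(G_s).
\]
The third equality is \cref{prop:conv-general}.

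The only delicate points are the hypotheses of \cref{prop:conv-general} and the attainment issue. \cref{prop:conv-general} also requires $\bX\in\S^d_+$, which holds because $\dom(F_s)$ consists of PSD matrices. The definition of $\widehat\Omega$ includes $\bm\xi\ge0$, which we have built into $\cC$.

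For attainment in $g_s$, note that $\set{\bm\xi\in\R^d_+\cap\K_{o,d}:\ \bm\xi\majorize\bm\lambda}$ is compact: its coordinates are nonnegative and sum to $\ones^\top\bm\lambda$. Since $f_s$ is lower semicontinuous ($\varphi$ is closed), the minimum is attained whenever it is finite. Hence $t\ge g_s(\bm\lambda(\bX))$ holds if and only if some $\bm\xi$ with $(\bm\xi,\bX)\in\widehat\Omega$ satisfies $t\ge f_s(\bm\xi)$.

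Finally, we observe that the closure statement of \cref{prop:conv-general} is not needed here, since only $\conv$ appears in the claim.
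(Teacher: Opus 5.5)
Your proposal is correct and follows the paper's proof essentially step for step: lift to $(t,\bm\xi,\bX)$, apply \cref{prop:conv-general} with $\cU=\R$ and $\cC$ the epigraph of $f_s$ over ordered vectors, commute the projection with the convex hull, and identify the projection of the majorization-constrained set with $\epi(G_s)$. Your two additions tighten points the paper passes over silently. First, you build $\bm\xi\in\R^d_+$ into $\cC$; the paper's $\cC$ omits this, and it is what makes the rotated matrices $(\bQ^\ell)^\top\Diag(\bar{\bm\xi})\bQ^\ell$ in the proof of \cref{prop:conv-general} positive semidefinite. Second, your compactness argument shows that the minimum defining $g_s$ is attained, which is needed to replace the paper's infimum by $g_s$.
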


Investigation of the strength of $\cP( \widehat{\Omega})$, and thus of \eqref{eq:Gamma_rel}, as well as its relation to the existing relaxations for MESP and its variants brings some quite natural questions. To this end, our next result states that the integer points of $\cP( \widehat{\Omega})$ and of $\cP( \overline{\Omega})$ match exactly. 
\begin{proposition}\label{prop:P-relax-integrality}\label{prop:relaxation-quality}
$
      \cP(\overline{\Omega}) \cap \left(\R\times\{0,1\}^d\right) 
      = \cP(\widehat{\Omega}) \cap \left(\R\times\{0,1\}^d\right). 
$
Thus, for any $\x\in\cX\cap\{0,1\}^d$, we have $g_s\left( \bm\lambda\left(\bV\Diag(\x)\bV^\top\right) \right) = f_s \left( \bm\lambda\left(\bV\Diag(\x)\bV^\top\right) \right)$.
\end{proposition}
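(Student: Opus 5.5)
Since $\overline{\Omega}\subseteq\widehat{\Omega}$, we have $\cP(\overline{\Omega})\subseteq\cP(\widehat{\Omega})$, so the inclusion ``$\subseteq$'' holds at once. The work is in the reverse inclusion. My plan is to prove the functional statement first: for binary $\x\in\cX\cap\{0,1\}^d$, I will show $g_s(\bm\lambda)=f_s(\bm\lambda)$ where $\bm\lambda=\bm\lambda(\bV\Diag(\x)\bV^\top)$. The set equality then follows. For binary $(t,\x)\in\cP(\widehat{\Omega})$ there is a $\bm\xi$ with $(\bm\xi,\bV\Diag(\x)\bV^\top)\in\widehat{\Omega}$ and $t\ge f_s(\bm\xi)$, hence $t\ge g_s(\bm\lambda)=f_s(\bm\lambda)$. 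Taking $\bm\xi=\bm\lambda$, which is sorted and nonnegative, shows $(t,\x)\in\cP(\overline{\Omega})$.

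For the functional identity, $g_s\le f_s$ on sorted nonnegative $\bm\lambda$ is immediate, since $\bm\xi=\bm\lambda$ is feasible in \eqref{eq:h-rep:non-sparse}. For the reverse inequality I use the structure noted in \cref{rem:sparse}. Because $\ones^\top\x=s$ and $\x$ is binary, $\rank(\bV\Diag(\x)\bV^\top)\le s$, so $\lambda_{s+1}=\dots=\lambda_d=0$.

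Now take any feasible $\bm\xi$: it satisfies $\bm\xi\majorize\bm\lambda$, $\bm\xi\in\K_{o,d}$, and $\bm\xi\ge0$. The majorization gives $\sum_{i\in[s]}\xi_i\ge\sum_{i\in[s]}\lambda_i=\sum_{i\in[d]}\lambda_i$. The partial sum inequalities $\sum_{i\in[j]}\xi_i\ge\sum_{i\in[j]}\lambda_i$ hold for all $j\le s$. Hence the truncation satisfies $\bm\xi_{1:s}\succeq_w\bm\lambda_{1:s}$, i.e.\ $\bm\xi_{1:s}$ weakly submajorizes $\bm\lambda_{1:s}$, but it need not have equal total sum. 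I will then reduce to genuine majorization by decreasing the last coordinate. Set $\bm\xi'$ equal to $\bm\xi_{1:s}$ except that $\xi'_s=\xi_s-\delta$, where $\delta=\sum_{i\in[s]}\xi_i-\sum_{i\in[s]}\lambda_i\ge0$. Then $\bm\xi'\majorize\bm\lambda_{1:s}$: the partial sums for $j<s$ are unchanged, the totals agree, and $\bm\xi'$ stays sorted.

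The main obstacle is this last step, in two respects. First, $\xi'_s$ could become small, but it still dominates $\lambda_s\ge0$ in the majorization sense, which is what the argument needs. Second, because $\varphi$ is nonincreasing, lowering a coordinate can only raise $\varphi$, so $f_s(\bm\xi)\ge f_s(\bm\xi')$ in the sense $\sum_{i\in[s]}\varphi(\xi_i)\ge\sum_{i\in[s]}\varphi(\xi'_i)$ fails in the direction I would want. I therefore need the cleaner tool: Tomić--Weyl, i.e.\ weak submajorization combined with a convex nonincreasing $\varphi$. This is exactly \citep[3.C.1.b]{marshall_inequalities_2011}, which gives $\sum_{i\in[s]}\varphi(\xi_i)\ge\sum_{i\in[s]}\varphi(\lambda_i)$ directly from $\bm\xi_{1:s}\succeq_w\bm\lambda_{1:s}$.

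Alternatively, I can derive it from \cref{lem:monotone} without citing the weak-majorization result. Shrink the first coordinate instead, setting $\xi''_1=\xi_1-\delta$. This keeps all partial sums at least those of $\bm\lambda_{1:s}$, using $\xi_1-\delta\ge\lambda_1$ which follows from summing, and if sortedness breaks I re-sort. Then $\bm\xi''\majorize\bm\lambda_{1:s}$ and $f_s(\bm\xi)\ge f_s(\bm\xi'')\ge f_s(\bm\lambda)$, where the first inequality uses that $\varphi$ is nonincreasing and the second uses \cref{lem:monotone}. Either route yields $g_s(\bm\lambda)\ge f_s(\bm\lambda)$, completing the proof. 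I would present the Tomić--Weyl route as the primary argument for brevity.
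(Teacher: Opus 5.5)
Your reduction to the functional identity, the inclusion ``$\subseteq$'', and the bound $g_s\le f_s$ are fine. However, both routes you actually rely on for $f_s(\bm\xi)\ge f_s(\bm\lambda)$ fail as written.

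\textbf{The Tomi\'c--Weyl route.} The Tomi\'c--Weyl inequality under weak submajorization requires $\varphi$ to be \emph{nondecreasing}; for convex nonincreasing $\varphi$ one needs weak \emph{super}majorization. In the situation you are guarding against, where $\bm\xi_{1:s}$ has a strictly larger total than $\bm\lambda_{1:s}$, your claimed conclusion is false. Take $s=1$, $\xi_1=2$, $\lambda_1=1$, $\varphi=-\log$: then $\varphi(\xi_1)=-\log 2<0=\varphi(\lambda_1)$.

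\textbf{The fallback route.} It repeats the sign error you yourself diagnosed in your first attempt. Replacing $\xi_1$ by $\xi_1-\delta$ \emph{raises} $\varphi(\xi_1)$. So you obtain $f_s(\bm\xi'')\ge f_s(\bm\xi)$, which does not chain with $f_s(\bm\xi'')\ge f_s(\bm\lambda)$. Moreover, $\xi_1-\delta\ge\lambda_1$ does not follow from the partial-sum inequalities. For example, $\bm\xi_{1:2}=(3,3)$ and $\bm\lambda_{1:2}=(2,1)$ give $\delta=3$ and $\xi_1-\delta=0<\lambda_1$.

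\textbf{The missing idea.} The case $\delta>0$ never occurs. You listed $\bm\xi\ge\bm0$ but never used it. Since $\lambda_{s+1}=\dots=\lambda_d=0$,
\[
\sum_{i\in[s]}\xi_i\;\ge\;\sum_{i\in[s]}\lambda_i\;=\;\sum_{i\in[d]}\lambda_i\;=\;\sum_{i\in[d]}\xi_i\;\ge\;\sum_{i\in[s]}\xi_i .
\]
So equality holds throughout, and $\xi_{s+1}=\dots=\xi_d=0$. Together with the partial-sum inequalities for $j<s$, this gives genuine majorization $\bm\xi_{1:s}\majorize\bm\lambda_{1:s}$. Then \cref{lem:monotone} yields $f_s(\bm\xi)\ge f_s(\bm\lambda)$ directly, with no weak-majorization result or coordinate surgery.

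This is exactly the step implicit in the paper's proof, which sets $\bm\xi'=\bm\lambda(\bV\Diag(\x)\bV^\top)$ and applies \cref{lem:monotone} directly. Nonnegativity is genuinely needed here. For $d=2$, $s=1$, $\bm\lambda=(1,0)$, the sorted vector $(2,-1)$ majorizes $\bm\lambda$, yet $f_1((2,-1))=-\log 2<0=f_1(\bm\lambda)$.
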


In spite of \cref{prop:relaxation-quality}, even when $\varphi(\cdot)=-\log(\cdot)$, we have $\conv\left(\cP(\overline{\Omega})\cap(\R\times\set{0,1}^d)\right) \neq \cP(\widehat{\Omega} )$ (see \cref{ex:P+Omega_s:not_conv} for a numerical example illustrating this in \cref{sec:app:proofs}). 
To better understand the relaxation defined by $\cP(\widehat{\Omega})$, and to emphasize that the negative example in \cref{ex:P+Omega_s:not_conv} is not an isolated instance, we next examine the lifted sets associated with $\cP(\overline{\Omega})$ and $\cP(\widehat{\Omega})$.
Recall that for any $\Omega\subseteq\R^d\times\S^d$, we have defined 
$
        \cP (\Omega) 
        \coloneqq \set{
      (t,\x) \in \R \times \cX :\ \exists \bm\xi \in\R^d_+ \textup{ s.t. }
        t\geq f_s(\bm\xi),\ 
        (\bm\xi, \bV\Diag(\x)\bV^\top) \in\Omega
    } ,
$ 
and now we also define 
 \begin{align}\label{eq:cQ-Omega}
     \cQ(\Omega) \! &\coloneq \!\set{ \!
      (t,\bm\xi,
      \x)\in\R\times\R_+^d\times
      \cX : 
      \begin{array}{>{\displaystyle}l}
        t\ge f_s(\bm\xi), \\
        (\bm\xi, \bV\Diag(\x)\bV^\top 
        )\in \Omega 
      \end{array} \!
    } . 
 \end{align}
 Then, 
 $\cP(\overline{\Omega}) = \Proj_{t,\x}\left( \cQ(\overline{\Omega}) \right)$ and $\cP(\widehat{\Omega}) = \Proj_{t,\x}\left(\cQ(\widehat{\Omega}) \right)$. 
Although we have established favorable convex hull properties for certain sets associated with $\widehat{\Omega}$, the next result demonstrates that, in general, $\conv(\cQ(\overline{\Omega})) \neq \cQ(\widehat{\Omega})$, and that this discrepancy arises in many settings.
\begin{proposition}\label{prop:Q+Omega_s:not_conv}
For any matrix $\bC$ (and in fact even when $\bC$ is diagonal) with $\rank(\bC)=d$, where $d\ge 2$, and any $s\in\set{2,3,\ldots,d}$, we have $\conv(\cQ(\overline{\Omega})) \neq \cQ(\widehat{\Omega})$. 
\end{proposition}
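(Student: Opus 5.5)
The plan is to exhibit one point of $\cQ(\widehat{\Omega})$ that cannot lie in $\conv(\cQ(\overline{\Omega}))$. The idea is that in $\cQ(\overline{\Omega})$ the vector $\bm\xi$ is pinned down by $\x$ through $\bm\xi=\bm\lambda(\bV\Diag(\x)\bV^\top)$. At an extreme point of $\cX$ no nontrivial averaging in $\x$ is possible, so the convex hull cannot loosen $\bm\xi$ there. By contrast, $\widehat{\Omega}$ only asks that $\bm\xi$ majorize the eigenvalue vector. Throughout I take $\cX=\set{\x\in[0,1]^d:\ones^\top\x=s}$, or more generally any $\cX$ containing a binary point; this is the only assumption on $\cX$ the argument needs.

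\textbf{Step 1 (extreme point).} Fix $S\subseteq[d]$ with $|S|=s$ and let $\bar\x$ be its indicator vector. Since $\bar\x$ is a vertex of $[0,1]^d$ and $\bar\x\in\cX\subseteq[0,1]^d$, it is an extreme point of $\cX$. Now suppose $(\bar t,\bar{\bm\xi},\bar\x)=\sum_k\alpha_k(t^k,\bm\xi^k,\x^k)$ is a convex combination with positive weights of points of $\cQ(\overline{\Omega})$. Then every $\x^k\in\cX$, so extremality forces $\x^k=\bar\x$ for all $k$. The defining constraint of $\overline{\Omega}$ then gives $\bm\xi^k=\bm\lambda(\bV\Diag(\bar\x)\bV^\top)\eqqcolon\bar{\bm\lambda}$ for all $k$. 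Hence every point of $\conv(\cQ(\overline{\Omega}))$ with $\x$-component $\bar\x$ has $\bm\xi$-component exactly $\bar{\bm\lambda}$.

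\textbf{Step 2 (the witness in $\cQ(\widehat{\Omega})$).} The nonzero eigenvalues of $\bV\Diag(\bar\x)\bV^\top$ are those of $\bC_{S,S}$. Because $\rank(\bC)=d$, $\bC$ is positive definite, so $\bC_{S,S}\succ0$. Therefore $\bar\lambda_1\ge\cdots\ge\bar\lambda_s>0=\bar\lambda_{s+1}=\cdots=\bar\lambda_d$. For $0<\varepsilon<\bar\lambda_s$ define $\bm\xi^\varepsilon\coloneqq\bar{\bm\lambda}+\varepsilon(\e_1-\e_s)$, which is a valid perturbation because $s\ge2$. I will check three properties:
\begin{itemize}
\item $\bm\xi^\varepsilon\in\K_{o,d}\cap\R^d_+$: raising the first entry keeps the ordering, and lowering the $s$-th entry keeps $\xi^\varepsilon_{s-1}\ge\xi^\varepsilon_s>0=\xi^\varepsilon_{s+1}$.
\item $\bm\xi^\varepsilon\majorize\bar{\bm\lambda}$: the totals agree, the partial sums for $j<s$ exceed those of $\bar{\bm\lambda}$ by $\varepsilon$, and the partial sums for $j\ge s$ coincide.
\item $f_s(\bm\xi^\varepsilon)$ is finite, since $\bm\xi^\varepsilon_{1:s}>0$.
\end{itemize}
Hence $(f_s(\bm\xi^\varepsilon),\bm\xi^\varepsilon,\bar\x)\in\cQ(\widehat{\Omega})$. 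Since $\bm\xi^\varepsilon\neq\bar{\bm\lambda}$, Step 1 shows this point is not in $\conv(\cQ(\overline{\Omega}))$, which proves the proposition.

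\textbf{Main obstacle.} The only delicate point is making sure the perturbed $\bm\xi$ stays in $\K_{o,d}\cap\R^d_+$ even when $\bar{\bm\lambda}$ has ties. A swap between adjacent coordinates, such as $\e_1-\e_2$, could break the ordering if $\bar\lambda_2=\bar\lambda_3$. Moving mass from position $s$ to position $1$ avoids this, because position $s$ borders the block of zeros. This is exactly where the hypotheses $s\ge2$ and $\rank(\bC)=d$ are used: the latter guarantees $\bar\lambda_s>0$, so there is room to decrease $\xi_s$. Diagonality of $\bC$ plays no role, which is consistent with the parenthetical remark in the statement.
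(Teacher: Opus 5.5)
Your proof is correct, but it takes a different route from the paper's.

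\textbf{The paper's route.} The paper works at an arbitrary $\bar\x\in\cX$, possibly fractional. Its witness is $\bar{\bm\xi}=(\tr(\bar\bX)-(s-1)\epsilon,\epsilon,\dots,\epsilon,0,\dots,0)$, and the separation from $\conv(\cQ(\overline{\Omega}))$ is quantitative. For every $\x\in\cX$ it shows $\lambda_s(\bV\Diag(\x)\bV^\top)\ge\lambda_d(\bC)\,x_{[s]}\ge\lambda_d(\bC)/d$. Hence the linear inequality $\xi_s\ge\lambda_d(\bC)/d$ holds on $\cQ(\overline{\Omega})$ and on its convex hull, and the witness violates it once $\epsilon<\lambda_d(\bC)/d$.

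\textbf{Your route.} You use extremality of a binary $\bar\x$: over $\bar\x$, the $\bm\xi$-fiber of $\conv(\cQ(\overline{\Omega}))$ collapses to the single point $\bar{\bm\lambda}$. Any other ordered majorizing vector is then a witness, such as your $\bar{\bm\lambda}+\varepsilon(\e_1-\e_s)$. Your checks of ordering, majorization and finiteness are right.

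\textbf{What each buys.} Your argument is more elementary: it needs no Loewner-order eigenvalue bound and no counting argument for $x_{[s]}\ge 1/d$. For the standard $\cX$ it also only needs some $S$ with $|S|=s$ and $\bC_{S,S}\succ0$, i.e.\ $\rank(\bC)\ge s$ rather than full rank. The paper's argument gives a single closed valid inequality. So its separation persists for $\cl\conv(\cQ(\overline{\Omega}))$, shows the gap above every $\x\in\cX$ rather than only at vertices, and needs nothing of $\cX$ beyond nonemptiness.

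\textbf{Removing your assumption on $\cX$.} Your restriction to $\cX$ containing a binary point can be dropped. Take any extreme point $\bar\x$ of the polytope $\cX$. Then $\bV\Diag(\bar\x)\bV^\top$ has rank $r=\|\bar\x\|_0\ge s\ge 2$. Perturbing by $\varepsilon(\e_1-\e_r)$ with $0<\varepsilon<\bar\lambda_r$ works for the same reason your perturbation does, since $\bar\lambda_{r-1}\ge\bar\lambda_r>\bar\lambda_r-\varepsilon>0$ and all later entries are zero.
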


Finally, we close this section by providing a condition under which the relaxation \eqref{eq:Gamma_rel} is exact. 

\begin{lemma}\label{lem:exact-s-1}
  Suppose that $\bA$ is a totally unimodular matrix and $\b$ is a vector of integers such that $\cX\neq\emptyset$. If $s=1$, then {any optimum solution of \eqref{eq:Gamma_rel} is an optimum solution of the original integer program \eqref{eq:mesp-general} and vice versa.} 
\end{lemma}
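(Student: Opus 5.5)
The plan is to compute $g_1$ in closed form when $s=1$, show that the relaxed objective is linear in $\x$, and then use total unimodularity to get an integral optimum.

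\textbf{Step 1: evaluate $g_1$.} By \cref{rem:sparse}, since $\varphi$ is nonincreasing, we may restrict to $\bm\xi$ with $\xi_2=\cdots=\xi_d=0$. The majorization constraint $\bm\xi\majorize\bm\lambda$ then forces $\xi_1=\ones^\top\bm\lambda$. This choice of $\bm\xi$ is feasible: for $\bm\lambda\in\R^d_+$, every partial sum of the largest entries of $\bm\lambda$ is at most $\ones^\top\bm\lambda$. Hence
\[
g_1(\bm\lambda)=\varphi(\ones^\top\bm\lambda)
\]
on its domain. With $\bm\lambda=\bm\lambda(\bV\Diag(\x)\bV^\top)$ we have $\ones^\top\bm\lambda=\tr(\bV\Diag(\x)\bV^\top)=\sum_i x_i\|\v_i\|^2=\sum_i C_{ii}x_i$. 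So \eqref{eq:Gamma_rel} becomes
\[
\min_{\x\in\cX}\ \varphi\bigl(\diag(\bC)^\top\x\bigr),
\]
and I would also record the optimal $\bm\xi=(\diag(\bC)^\top\x)\e_1$.

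\textbf{Step 2: exactness.} Because $\varphi$ is nonincreasing, minimizing $\varphi(\bm c^\top\x)$ over $\cX$, with $\bm c=\diag(\bC)$, is attained at any maximizer of the linear function $\bm c^\top\x$ over $\cX$. Here $\cX=\{\x\in[0,1]^d:\ones^\top\x=1,\ \bA\x\le\b\}$. Since $\bA$ is totally unimodular, I take this to mean the full constraint system (including the row $\ones^\top$ and the bounds) is TU; otherwise the hypothesis should be read that way. Then $\cX$ is an integral polytope and, being nonempty and bounded, has a vertex maximizer, which is integral. The relaxation value therefore equals the minimum over $\cX\cap\{0,1\}^d$. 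At integer points the relaxation objective coincides with the original objective by \cref{prop:relaxation-quality} (directly: $\bV\Diag(\e_i)\bV^\top$ has the single eigenvalue $C_{ii}$). So the optimal values agree, and every integer optimum of \eqref{eq:mesp-general} is optimal for \eqref{eq:Gamma_rel}.

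\textbf{Main obstacle: the converse direction.} A fractional $\x\in\cX$ can be optimal for \eqref{eq:Gamma_rel} when $\bm c^\top\x$ has ties, or when $\varphi$ is flat. Such an $\x$ is not an integer point, so I would interpret ``any optimum solution of \eqref{eq:Gamma_rel} is optimal for \eqref{eq:mesp-general}'' as the statement that optimal integer solutions and values coincide, or more precisely that every optimal $\x$ is a convex combination of integral optimal vertices. I would prove the latter via the face argument. If $\varphi$ is strictly decreasing, the optimal set of \eqref{eq:Gamma_rel} is exactly the optimal face of the LP $\max\{\bm c^\top\x:\x\in\cX\}$, which is integral. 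If $\varphi$ is merely nonincreasing, the optimal set is $\{\x\in\cX:\bm c^\top\x\ge\tau\}$ for some threshold $\tau$, and its integral points are exactly the integer optima. I would state carefully that exactness here means equality of optimal values together with integer optima coinciding.
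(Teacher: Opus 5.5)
Your proposal is correct and follows the paper's proof. For $s=1$ the relaxation collapses to $\min_{\x\in\cX}\varphi(\diag(\bC)^\top\x)$, i.e., an LP over $\cX$ with an integral vertex optimum by total unimodularity, and \cref{prop:relaxation-quality} matches the objectives at integer points. Both of your caveats are genuine, and the paper's proof passes over them. First, integrality requires the system including the row $\ones^\top\x=s$ to be TU: with $d=3$, the TU constraints $x_1\le x_2\le x_3$ together with $\ones^\top\x=1$ give the fractional vertex $(\tfrac13,\tfrac13,\tfrac13)$, which is the unique relaxation optimum when $\bC=\Diag(10,10,1)$. Second, the ``any optimum'' claim holds only for integral optima, since ties such as $\bC=\bI_d$ make every point of $\cX$ optimal.
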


%% file: other_relaxations.tex
\subsection{Relaxations from literature}

\subsubsection{The \texorpdfstring{$\Gamma$}{Gamma} (factorization) relaxation}
\label{sec:equivalent}
  
One of the strongest relaxations for MESP  and its variants is the \emph{$\Gamma$ relaxation} (or the \emph{factorization bound}), which is introduced by \cite{nikolov_randomized_2015} and \cite{li_best_2023}.    
To introduce this relaxation, we first recall a preliminary result and define some notation. 
\begin{lemma}{\citep[Lemma 14]{nikolov_randomized_2015}}
  \label{lem:index-k}
  For any $\bm\lambda\in\R^d_+$ 
  and $1\leq s\leq d$, there exists a unique integer $0\leq k<s$ such that 
  \begin{align*}
    \lambda_{[k]} > \frac{1}{s-k}\sum_{i\in(k,d]}\lambda_{[i]} \geq \lambda_{[k+1]}, 
  \end{align*}
  where $\lambda_{[0]}\coloneqq \infty$ by convention. 
\end{lemma}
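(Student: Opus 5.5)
Without loss of generality I sort $\bm\lambda$ so that $\lambda_1\ge\cdots\ge\lambda_d\ge 0$, so that $\lambda_{[i]}=\lambda_i$, and write $T_k\coloneqq\sum_{i\in(k,d]}\lambda_i$ and $a_k\coloneqq T_k/(s-k)$ for $k=0,1,\dots,s-1$. With this notation the condition in \cref{lem:index-k} reads $\lambda_k>a_k\ge\lambda_{k+1}$, where $\lambda_0=\infty$.

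The key step is an algebraic identity linking consecutive averages:
\[
(s-k)a_k=\lambda_{k+1}+(s-k-1)a_{k+1}.
\]
It shows that $a_k$ is a weighted average of $\lambda_{k+1}$ and $a_{k+1}$, with positive weights whenever $k\le s-2$. Two equivalences follow, valid for $k\le s-2$:
\[
a_k\ge\lambda_{k+1}\iff a_{k+1}\ge\lambda_{k+1},\qquad
\lambda_{k+1}>a_{k+1}\iff \lambda_{k+1}>a_k .
\]
So for each $k$ we let $P(k)$ be the statement $a_k\ge\lambda_{k+1}$. Index $k$ satisfies the lemma exactly when $P(k)$ holds and either $k=0$ or $P(k-1)$ fails, because $\neg P(k-1)$ is the statement $\lambda_k>a_{k-1}$, and by the second equivalence this is the same as $\lambda_k>a_k$.

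Next I would show monotonicity: once $P(k)$ holds, $P(k+1)$ holds as well. From $P(k)$ the first equivalence gives $a_{k+1}\ge\lambda_{k+1}\ge\lambda_{k+2}$, which is $P(k+1)$. For existence I check the last index: $P(s-1)$ says $T_{s-1}\ge\lambda_s$, which is true because $T_{s-1}=\lambda_s+\sum_{i>s}\lambda_i$ and $\bm\lambda\ge 0$. Hence the set $\{k\in\{0,\dots,s-1\}:P(k)\}$ is nonempty and upward closed. I then take $k$ to be its minimum. Then $P(k)$ holds, and either $k=0$, where the left inequality is trivially true since $\lambda_0=\infty$, or $P(k-1)$ fails, and by the reformulation above this gives $\lambda_k>a_k$.

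For uniqueness, suppose some $k'$ satisfies the condition. Then $P(k')$ holds, and $\neg P(k'-1)$ holds if $k'\ge 1$. Upward closure means every index below $k'$ fails $P$, since otherwise $P(k'-1)$ would hold. So $k'$ is the minimum of the set, and therefore $k'=k$.

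The main obstacle is keeping the edge cases clean. The case $k=0$ is handled by the convention $\lambda_0=\infty$. The reformulation of $\neg P(k-1)$ uses the equivalences at index $k-1\le s-2$, which is always within their range of validity. Ties among the $\lambda_i$ cause no trouble, because the argument only uses the sorted values and never the labels attached to them.
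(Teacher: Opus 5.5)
Your proof is correct and complete. The identity $(s-k)a_k=\lambda_{k+1}+(s-k-1)a_{k+1}$ does two jobs: it gives upward closure of $P$, and it converts the left inequality $\lambda_k>a_k$ into $\neg P(k-1)$. Your check of $P(s-1)$ is exactly where nonnegativity of $\bm\lambda$ is needed; without it the lemma fails, e.g.\ $\bm\lambda=(1,-1)$ with $s=1$. The edge cases are all covered: $k=0$ via $\lambda_{[0]}=\infty$, $s=1$ where the equivalences are vacuous, ties, and $\bm\lambda$ with fewer than $s$ nonzero entries.

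The paper itself gives no proof of this lemma; it imports it from \citet{nikolov_randomized_2015}. So there is no in-paper argument to compare against, and yours serves as a self-contained justification. As a by-product, your argument characterizes $k$ as the least index with $a_k\ge\lambda_{k+1}$, so $k$ can be found by a single scan of the sorted eigenvalues.
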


We define the function $h_s$ over the domain $\R_+^d$ as 
\begin{align}\label{def:h}
  h_s(\bm\lambda) \coloneqq \sum_{i\in[k]} \varphi(\lambda_{[i]})+(s-k)\varphi\left(\frac{1}{s-k}\sum_{i\in(k,d]}\lambda_{[i]}\right), 
\end{align}
where for a given $\bm\lambda\in\R^d_+$ the index $k$ is defined as in \cref{lem:index-k} for this $\bm\lambda$. 
Based on this function, in the case of $\varphi(\xi)=-\log(\xi)$, \cite{nikolov_randomized_2015} and \cite{li_best_2023} defined the \emph{$\Gamma$ relaxation} (or the \emph{factorization bound})  as 
\begin{align}
& \Opt_{\Gamma}\coloneqq \min_{\x\in\cX}\set{h_s\left( \bm\lambda \left(\bV\Diag(\x)\bV^\top\right) \right)  }. \label{eq:opt-gamma} 
\end{align}

Note that by its definition $h_s$ is permutation-symmetric.
Establishing that $h_s(\bm\lambda)$ is a convex function of $\bm\lambda$ requires a few steps due to its reliance on the ordered components $\lambda_{[i]}$, and to the best of our knowledge was not done in the literature. 
In constrast, in the case of $\varphi(\cdot)=-\log(\cdot)$, using a connection to a dual problem   and a subdifferential characterization, \cite[Theorem 15]{nikolov_randomized_2015} established that $h_s(\bm\lambda(\bX))$ is a convex function of $\bX$. Later on, using a Lagrangian dual perspective \cite[Lemma 3 and Theorem 2]{li_best_2023} provided an alternative proof of the convexity of $h_s(\bm\lambda(\bX))$ in $\bX$ for the case of $\varphi(\xi)=-\log(\xi)$; this was also extended to the case of $\varphi(\xi)=\frac{1}{\xi}$ in \cite[{{Theorem 9}}]{li_best_2023}. 
Hence, these results collectively imply that $\x \mapsto h_s\left( \bm\lambda \left(\bV\Diag(\x)\bV^\top\right) \right)$ is a convex function of $\x$ whenever $\varphi(\xi)=-\log(\xi)$ or $\varphi(\xi)=\frac{1}{\xi}$.

We next show that in fact the convex relaxation~\eqref{eq:Gamma_rel} is equivalent to the  $\Gamma$ relaxation given by \eqref{eq:opt-gamma}, and this is so for a broader class of  general convex functions $\varphi(\xi)$ beyond the special cases of $\varphi(\xi)=-\log(\xi)$ or $\varphi(\xi)=\frac{1}{\xi}$. To this end, we establish the following result that provides the closed-form solution for the minimization problem \eqref{eq:h-rep:non-sparse} 
defining $g_s(\bm\lambda)$.

\begin{proposition}\label{prop:h-rep}
  Consider any $\bm\lambda\in\R_{+}^d\cap\K_{o,d}$. Define 
  \begin{align}\label{eq:optimal-xi}
    \lambda_i' \coloneq \begin{cases}
      \lambda_i, & \textup{if } i\in[k], \\
      \frac{1}{s-k}\sum_{j\in(k,d]}\lambda_j, & \textup{if } i\in(k,s], \\
      0, & \textup{if } i\in(s,d], 
    \end{cases}
  \end{align}
  where $k$ is the index as defined in \cref{lem:index-k}.
  Then, {$\bm\lambda'\majorize\bm\lambda$, and} for any $\bm\xi\in\R^d_+\cap\K_{o,d}$ such that $\|\bm\xi\|_0\leq s$ and $\bm\xi\majorize \bm\lambda$, we have $\bm\xi\majorize \bm\lambda'$ as well, which implies $f_s(\bm\xi)\geq f_s(\bm\lambda')$.
  Consequently, for the given $\bm\lambda\in\R_{+}^d\cap\K_{o,d}$, the vector $\bm\lambda'$ is an optimum solution to the optimization problem \eqref{eq:h-rep:non-sparse} 
   defining $g_s$, and $g_s(\bm\lambda)=f_s(\bm\lambda')=h_s(\bm\lambda)$. 
\end{proposition}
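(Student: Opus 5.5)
The plan is to verify the partial-sum inequalities directly, using that $\bm\lambda$ is sorted, so $\lambda_{[i]}=\lambda_i$. Throughout write $T\coloneqq\sum_{j\in(k,d]}\lambda_j$ and $\mu\coloneqq T/(s-k)$, and let $q_j(\cdot)$ denote the sum of the $j$ largest entries. The proof has three steps.

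\textbf{Step 1: $\bm\lambda'\in\R^d_+\cap\K_{o,d}$ and $\bm\lambda'\majorize\bm\lambda$.} By \cref{lem:index-k} we have $\lambda_k>\mu\ge\lambda_{k+1}\ge0$, so $\bm\lambda'$ is nonincreasing and nonnegative. Its total is $\sum_{i\in[k]}\lambda_i+T=\ones^\top\bm\lambda$. For $j\le k$ the partial sums of the two vectors coincide. For $j\in(k,s]$ the partial sum of $\bm\lambda'$ is $\sum_{i\in[k]}\lambda_i+(j-k)\mu$. This is at least $\sum_{i\in[j]}\lambda_i$ because the entries $\lambda_{k+1},\dots,\lambda_j$ are each at most $\lambda_{k+1}\le\mu$. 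For $j>s$ the partial sum of $\bm\lambda'$ already equals the total, which dominates every partial sum of the nonnegative vector $\bm\lambda$.

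\textbf{Step 2: any feasible $\bm\xi$ satisfies $\bm\xi\majorize\bm\lambda'$.} Take $\bm\xi\in\R^d_+\cap\K_{o,d}$ with $\|\bm\xi\|_0\le s$ and $\bm\xi\majorize\bm\lambda$. Since the entries are sorted and nonnegative, $\xi_{s+1}=\dots=\xi_d=0$, and totals agree by transitivity of equality of sums.
\begin{itemize}
\item For $j\le k$: $\sum_{i\in[j]}\xi_i\ge\sum_{i\in[j]}\lambda_i=\sum_{i\in[j]}\lambda'_i$.
\item For $j\ge s$: the partial sum of $\bm\xi$ equals its total, which equals the total of $\bm\lambda'$ and hence dominates every partial sum of $\bm\lambda'$.
\item For $j\in(k,s)$: this is the main obstacle. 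Since $\bm\xi$ is nonincreasing, the map $j\mapsto\sum_{i\in[j]}\xi_i$ is concave on $[k,s]$. The partial-sum map of $\bm\lambda'$ is linear on $[k,s]$. At the endpoints $j=k$ and $j=s$ the first dominates the second by the two cases above. Concavity versus linearity then gives domination at every intermediate $j$.
\end{itemize}
This yields $\bm\xi\majorize\bm\lambda'$. In particular $\bm\xi_{1:s}\majorize\bm\lambda'_{1:s}$, because the two $s$-vectors have equal sums (both equal the full total) and the partial-sum inequalities hold for $j<s$. Then \cref{lem:monotone} gives $f_s(\bm\xi)\ge f_s(\bm\lambda')$.

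\textbf{Step 3: conclude.} By \cref{rem:sparse}, since $\varphi$ is nonincreasing, the minimum in \eqref{eq:h-rep:non-sparse} equals the minimum in \eqref{eq:h-rep}, i.e.\ with the extra sparsity restriction. The vector $\bm\lambda'$ is feasible for that restricted problem by Step 1, and it is optimal by Step 2. Hence
\[
g_s(\bm\lambda)=f_s(\bm\lambda')=\sum_{i\in[k]}\varphi(\lambda_i)+(s-k)\varphi(\mu)=h_s(\bm\lambda),
\]
where the last equality is the definition \eqref{def:h}, using that $\bm\lambda$ is sorted.
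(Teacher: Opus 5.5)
Your proof is correct and follows essentially the same route as the paper. Your concavity-versus-linearity argument for $j\in(k,s)$ is a repackaging of the paper's comparison of tail averages $\frac{1}{s-j}\sum_{i\in[j+1,s]}$, which also uses only that $\bm\xi$ is sorted, that the two vectors have equal totals, and that the partial-sum inequalities hold for $j\le k$. Your Step 1 checks $\bm\lambda'\majorize\bm\lambda$ in more detail than the paper does, and your Step 3 invokes \cref{rem:sparse} in the same way the paper does.
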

\begin{proof}
  Clearly, $\bm\lambda'$ is $s$-sparse and by the choice of $k$ we also have $\R_{+}^d\cap\K_{o,d}$. Moreover, $\bm\lambda'\majorize\bm\lambda$ follows from the definition of majorization and the choice of $k$. Thus, $\bm\lambda'$ is feasible to \eqref{eq:h-rep:non-sparse}. 
  
  Next, consider any $s$-sparse vector $\bm\xi$ that is feasible to \eqref{eq:h-rep:non-sparse}. We claim that $\bm\xi\majorize\bm\lambda'$, or equivalently $\bm\xi_{1:s}\majorize\bm\lambda'_{1:s}$ as $\bm\xi$, $\bm\lambda'$ are both $s$-sparse. To see this, first note that as both $\bm\xi$ and $\bm\lambda'$ majorize $\bm\lambda$ and are both $s$-sparse, and so $\sum_{i\in[s]}\xi_i=\sum_{i\in[d]}\lambda_i=\sum_{i\in[s]}\lambda_i'$. Also, {from the majorization relation $\bm\xi\majorize\bm\lambda$ and the definition of $\bm\lambda'$,} for any $j\in[k]$, we have $\sum_{i\in[j]}\xi_i\geq\sum_{i\in[j]}\lambda_i=\sum_{i\in[j]}\lambda_i'$. 
  {
  Hence, we deduce $\sum_{i\in[k+1,s]}\xi_i \leq \sum_{i\in[k+1,s]}\lambda_i'$. Therefore, for any $j\in[k,s-1]$, we have
    \begin{align*}
    \frac{1}{s-j} \sum_{i\in[j+1,s]}\lambda_i' &= \frac{1}{s-k} \sum_{i\in[k+1,s]}\lambda_i' \\
    & \ge \frac{1}{s-k} \sum_{i\in[k+1,s]} \xi_i \\ 
    & \ge \frac{1}{s-j} \sum_{i\in[j+1,s]} \xi_i ,
    \end{align*}
  where the equality follows from the definition of $\bm\lambda'$, and the last inequality holds because $\bm\xi\in\K_{o,d}$. Then, once again using $\sum_{i\in[s]}\xi_i=\sum_{i\in[s]}\lambda_i'$ on this relation, we conclude that
      \begin{align*}
 	\sum_{i\in[j]}\lambda_i' & \leq \sum_{i\in[j]}\xi_i,~\forall j\in[k,s-1],  
      \end{align*}
      i.e., the remaining majorization relations also hold.
    }
  Therefore, we have shown $\bm\xi_{1:s}\majorize\bm\lambda'_{1:s}$ as desired.   
  By \cref{lem:monotone}, the relation $\bm\xi_{1:s}\majorize\bm\lambda'_{1:s}$ implies $f_s(\bm\xi)\geq f_s(\bm\lambda')$. 
  
    Finally, as $\varphi$ is a nonincreasing function, by \cref{rem:sparse} 
   without loss of generality we can assume that the optimum solution of   \eqref{eq:h-rep:non-sparse} is $s$-sparse.
Since the relation $f_s(\bm\xi)\geq f_s(\bm\lambda')$ holds for any $s$-sparse vector $\bm\xi$ feasible to \eqref{eq:h-rep:non-sparse}, we conclude that $\bm\lambda'$ is optimal to \eqref{eq:h-rep:non-sparse}, and $g_s(\bm\lambda)=f_s(\bm\lambda')=h_s(\bm\lambda)$. 
\end{proof}

\begin{corollary}\label{cor:relaxation-h}
  The function $\x \mapsto h_s \left( \bm\lambda\left(\bV\Diag(\x)\bV^\top\right) \right)$ is a convex function of $\x$, and the convex relaxations given by \eqref{eq:Gamma_rel} and \eqref{eq:opt-gamma}  are the same, i.e., 
  \begin{align}\label{eq:opt-gamma:equivalence}
    \min_{\x\in\cX} g_s\left( \bm\lambda\left(\bV\Diag(\x)\bV^\top\right) \right) 
    &=  \min_{\x\in\cX} h_s \left( \bm\lambda\left(\bV\Diag(\x)\bV^\top\right) \right) = \Opt_{\Gamma}. 
  \end{align}
\end{corollary}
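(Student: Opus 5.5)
The plan is to reduce both claims to \cref{prop:h-rep} and \cref{lem:h-convex}, using that $g_s$ and $h_s$ are both permutation-symmetric.

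First, I will show $g_s(\bm\lambda)=h_s(\bm\lambda)$ for every $\bm\lambda\in\R^d_+$, not only for sorted vectors. Fix $\bm\lambda\in\R^d_+$ and let $\bm\lambda^{\downarrow}\in\R^d_+\cap\K_{o,d}$ be its non-increasing rearrangement, so that $\lambda^{\downarrow}_i=\lambda_{[i]}$. The constraint $\bm\xi\majorize\bm\lambda$ in \eqref{eq:h-rep:non-sparse} depends on $\bm\lambda$ only through its sorted entries, so $g_s(\bm\lambda)=g_s(\bm\lambda^{\downarrow})$. Likewise, \eqref{def:h} and the index $k$ from \cref{lem:index-k} are defined through $\lambda_{[i]}$, so $h_s(\bm\lambda)=h_s(\bm\lambda^{\downarrow})$. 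Applying \cref{prop:h-rep} to $\bm\lambda^{\downarrow}$ then gives $g_s(\bm\lambda^{\downarrow})=h_s(\bm\lambda^{\downarrow})$, and hence $g_s=h_s$ on $\R^d_+$.

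One point needs care: the domains, i.e.\ which vectors give $+\infty$. Since $\varphi:\R_{++}\to\R$, the function $h_s(\bm\lambda)$ is finite exactly when $\lambda_{[i]}>0$ for $i\in[k]$ and $\sum_{i\in(k,d]}\lambda_{[i]}>0$. I will check that this matches $\dom(g_s)=\{\bm\lambda:\|\bm\lambda\|_0\ge s\}$, or at least that \cref{prop:h-rep} already covers these boundary cases through the identity $f_s(\bm\lambda')=h_s(\bm\lambda)$, read with the value $+\infty$ allowed. I expect this bookkeeping to be the main (mild) obstacle. The cleanest route is probably to note that $\bm\lambda'$ is positive in its first $s$ coordinates precisely when $\bm\lambda$ has at least $s$ positive entries. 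That holds because $\lambda_{[k+1]}>0$ forces $\sum_{i\in(k,d]}\lambda_{[i]}>0$, and the choice of $k$ handles the rest.

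With the pointwise identity established, substituting $\bm\lambda=\bm\lambda(\bV\Diag(\x)\bV^\top)\in\R^d_+$ gives
\[
h_s\left(\bm\lambda(\bV\Diag(\x)\bV^\top)\right)=g_s\left(\bm\lambda(\bV\Diag(\x)\bV^\top)\right)
\]
for every $\x\in\cX$. The convexity of the map $\x\mapsto h_s(\bm\lambda(\bV\Diag(\x)\bV^\top))$ then follows directly from the last claim of \cref{lem:h-convex}. Taking minima over $\cX$ on both sides yields \eqref{eq:opt-gamma:equivalence}, and the final equality there is simply the definition \eqref{eq:opt-gamma} of $\Opt_\Gamma$. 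The second equality in \eqref{eq:Gamma_rel} identifies the left side with the relaxation \eqref{eq:Gamma_rel}.
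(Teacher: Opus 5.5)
Your proposal is correct and is essentially the paper's own argument: you combine the identity $g_s(\bm\lambda)=h_s(\bm\lambda)$ from \cref{prop:h-rep} with the convexity statement in \cref{lem:h-convex}. The identity on sorted vectors already suffices, since $\bm\lambda(\bV\Diag(\x)\bV^\top)\in\R^d_+\cap\K_{o,d}$. Your permutation-symmetry extension to unsorted $\bm\lambda$ and your domain bookkeeping are harmless extras. The bookkeeping is in fact automatic, because $g_s(\bm\lambda)=f_s(\bm\lambda')=h_s(\bm\lambda)$ holds as an identity of extended-real values.
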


Note that the definition of $h_s$ depends on a special index $k$, which complicates both its analysis and that of the $\Gamma$ (factorization) relaxation. In contrast, the equivalent representation of $h_s$ given by $g_s$ in \cref{prop:h-rep} and \cref{cor:relaxation-h} allows us to easily conclude that $h_s$ is convex and also yields a cleaner representation of $\Opt_{\Gamma}$ given by the majorization-based relaxation \eqref{eq:Gamma_rel}. As shown later in \cref{sec:connection}, using both representations interchangeably enables us to study the strengths of several well-known MESP relaxations effectively.

\subsubsection{Linx relaxation}\label{sec:linx}

The most well-studied case of \eqref{eq:mesp-general} is when $\varphi(\cdot)=-\log(\cdot)$. 
In this setting, 
{
exploiting the following simple determinant identity which holds for any $\x\in\cX\cap\{0,1\}^d$  and $S\coloneqq\set{i\in[d]:\, x_i=1}$,
 \begin{align}
-\log\det(\bC_{S,S}) =
-\sum_{i\in[s]} \log\left( \lambda_i( \Diag(\x)\bC\Diag(\x) )\right) = -\frac12 \log\det\left(\bC\Diag(\x)\bC + \bI_d - \Diag(\x)\right) , 
\label{eq:linx:relaxation-relation}
\end{align}
}
\citep{anstreicher_efficient_2020} suggested the following high quality convex relaxation called \emph{linx}: 
\[
\Opt_{\textup{linx}}\coloneqq \min_{\x\in\cX} \set{-\frac12 \log\det\left(\bC\Diag(\x)\bC + \bI_d - \Diag(\x)\right) } .
\]

Whenever $\varphi(\cdot)=-\log(\cdot)$,  for any $\bZ\in\S^d_+$, we have $f_d(\bm\lambda(\bZ)) = \sum_{i\in[d]} \varphi(\lambda_i(\bZ)) = -\sum_{i\in[d]} \log(\lambda_i(\bZ)) = -\log \det (\bZ)$. Thus, together with \cref{lem:g_d:exactness} we conclude that
 \begin{align}\label{eq:linx}
   \Opt_{\textup{linx}} = \min_{\x\in\cX}\set{\frac12\, g_d\left(\bm\lambda\left(\bC\Diag(\x)\bC + \bI_d - \Diag(\x)\right)\right)}. 
  \end{align}
  {
  As the linx relaxation is due to the determinant identity \eqref{eq:linx:relaxation-relation}, we immediately observe that it is exact at the integer points, i.e., for any $\bar\x\in\cX\cap\{0,1\}^d$ and when $\varphi(\cdot)=-\log(\cdot)$, we have $\frac12\, g_d\left(\bm\lambda\left(\bC\Diag(\bar\x)\bC + \bI_d - \Diag(\bar\x)\right)\right)  =-\sum_{i\in[s]}\log(\lambda_i(\Diag(\bar\x)\bC\Diag(\bar\x)) = f_s \left( \bm\lambda\left(\bV\Diag(\bar\x)\bV^\top\right) \right)$.
  }

\subsubsection{Complementary problem}\label{sec:complementary}

We next introduce the notion of the complementary problem in the case of $\varphi(\cdot)=-\log(\cdot)$ and $\bC$ is invertible. 
The complementary problem was first introduced by \cite{anstreicher_continuous_1996,anstreicher_using_1999} to derive lower bounds on $\Opt(\bC,s)$ via its complementary counterpart. 
The main idea arises from a fundamental relationship between the determinants of certain principal submatrices of $\bC$ and  $\bC^{-1}$: For an invertible matrix $\bC\in\S_{++}^d$, the determinant of its principal submatrix $\bC_{S,S}$ is linked to that of the complementary principal submatrix $(\bC^{-1})_{[d]\setminus S,[d]\setminus S}$ of the inverse matrix $\bC^{-1}$ through the following identity (see \cite[Section 0.8.4]{horn_matrix_1985})
\begin{align*}
  \log\det(\bC_{S,S}) = \log\det(\bC) + \log\det((\bC^{-1})_{[d]\setminus S,[d]\setminus S}). 
\end{align*}
Based on this observation, when $\varphi (\cdot)=-\log(\cdot)$ 
and $\cX=\set{\x\in[0,1]^d:\ones^\top\x=s}$, 
the objective value of \eqref{eq:mesp-general} also satisfies  
\begin{align*}
  \Opt(\bC,s) = \Opt(\bC^{-1},d-s) - \log\det(\bC). 
\end{align*}
As a result, we can derive lower bounds for $\Opt(\bC,s)$ from any lower bound on $\Opt(\bC^{-1},d-s)$ adjusted by the constant $- \log\det(\bC)$. 
Formally, let $\cX_{d-s}\coloneqq\{\check\x\in[0,1]^d:\ones^\top \check\x=d-s\}$ (by properly defining the set $\cX_{d-s}$ we can also handle the case when $\cX=\set{\x\in[0,1]^d:\ones^\top\x=s,\, \bA \x \le \b}$ as well, but to ease the exposition we are leaving this discussion out) and $\bC^{-1}=\bW^\top\bW\in\S^d_{+}$ where $\bW=(\w_1,\dots,\w_d)\in\R^{m\times d}$, then the \emph{complementary problem} for the given data $(\bC,s)$ is given by
  \begin{align*}
    \Opt(\bC^{-1},d-s) &:= \min_{S\subseteq[d],|S|=d-s} \set{-\log\det((\bC^{-1})_{S,S})} \\
    &= \min_{\check\x\in\set{0,1}^d} \set{-\sum_{i\in[d-s]}\log(\lambda_i(\bW\Diag(\check\x)\bW^\top)): \ones^\top \check\x=d-s}. 
  \end{align*}

Now, since $\Opt(\bC^{-1},d-s)$ has the same structure as $\Opt(\bC,s)$, we can apply any of the relaxation techniques developed for $\Opt(\bC,s)$ to $\Opt(\bC^{-1},d-s)$ as well. In particular, the lower bound on  $\Opt(\bC,s)$ derived from applying the $\Gamma$ relaxation to the complementary problem is given by 
\begin{align}\label{eq:opt-gamma-complementary} 
 \Opt_{\Gamma^c} &\coloneqq \min_{\check\x\in\cX_{d-s}}\set{g_{d-s}\left( \bm\lambda \left(\bW\Diag(\check\x)\bW^\top\right) \right) }  -\log\det(\bC) , 
\end{align}
and the lower bound derived from the linx relaxation applied to the complementary problem is
  \begin{align}\label{eq:linx-complementary}
  \Opt_{\textup{linx}^c} &\coloneqq \min_{\check\x\in\cX_{d-s}} \left\{-\frac12\log\det(\bC^{-1}\Diag(\check\x)\bC^{-1}+(\bI_d-\Diag(\check\x))) \right\} - \log\det(\bC) . 
 \end{align}
 While \citep[Lemma 2]{anstreicher_efficient_2020} show that, when $\cX=\set{\x\in[0,1]^d:\ones^\top\x=s}$, the linx relaxation is invariant to the complementary technique, i.e., $ \Opt_{\textup{linx}} = \Opt_{\textup{linx}^c}$, 
in general, the bounds obtained from the $\Gamma$ relaxation applied to MESP and its complementary problem, i.e., $\Opt_{\Gamma}$ and $\Opt_{\Gamma^c}$, differ, even for this choice of $\cX$.

%% file: scaling.tex
\section{Scaling methods for linx relaxation}
\label{sec:scaling}

{
From this section onward, we focus on the case $\varphi(\cdot)=-\log(\cdot)$. 
}

\subsection{Ordinary and generalized scaling methods}\label{sec:o-g-scaling}

\citep{anstreicher_efficient_2020} introduced a \emph{scaling} technique based on the observation that the MESP objective  is not invariant under positive scaling of $\bC$. Indeed, for any $\gamma\in\R_{++}$ and any $S$ satisfying $S\subseteq[d]$, we have $[\gamma\bC]_{S,S}=\gamma \bC_{S,S}$  as well as $\lambda_i(\gamma \bC) =\gamma \lambda_i(\bC)$ for all $i\in[d]$, and thus  $\log \det \left( \bC_{S,S} \right) = \log \det \left( [\gamma \bC]_{S,S} \right)  - s \log(\gamma)$.
Consequently, we deduce that for every $\gamma\in\R_{++}$ we have 
\[
\Opt(\bC,s) = \Opt(\gamma\bC,s) + s \log(\gamma) , 
\]
where recall that $ \Opt(\bC,s)= \min_{S\subseteq[d],|S|=s} \set{-\log\det(\bC_{S,S})}$ is the optimum objective value of the MESP problem \eqref{eq:mesp-general} associated with the matrix $\bC$, $\varphi(\cdot)=-\log(\cdot)$, and $\cX=\set{\x\in[0,1]^d:\ones^\top\x=s}$. 

By a change of variables of $\gamma=\gamma_0^{1/2}$ for some $\gamma_0\in\R_{++}$, we obtain 
$-\log \det \left( \bC_{S,S} \right) 
= -\log \det \left( [\gamma_0^{1/2} \bC]_{S,S} \right) + {1\over 2} s \log(\gamma_0)$. 
Also, by setting $x_i=1$ if $i\in S$ and $x_i=0$ otherwise and 
recalling the linx relaxation {determinant identity~\eqref{eq:linx:relaxation-relation},} 
we arrive at  
\begin{align*}
-\log\det(\bC_{S,S}) & =  -\log\det([\gamma_0^{1/2}\bC]_{S,S}) + {1\over 2} s \log(\gamma_0) \\ & {=}  -\frac12 \log\det\left(\gamma_0\bC\Diag(\x)\bC + \bI_d - \Diag(\x)\right) + {1\over 2} s \log(\gamma_0) .
\end{align*}

This observation motivates the \emph{scaling} enhancement of the linx relaxation introduced by \citep{anstreicher_efficient_2020}, which optimizes over the scaling parameter $\gamma_0\in\R_{++}$ to maximize the relaxation bound obtained by applying  linx  to the scaled matrix $\gamma_0^{1/2}\bC$: 
\begin{align}
&\Opt_{\linxo}\coloneqq \min_{\x\in\cX} \max_{\gamma_0\in\R_{++}} \set{ \objlolog(\x,\gamma_0) },  \label{eq:o-scaling} \\
\text{where } & \objlolog(\x,\gamma_0) \coloneqq  -\frac12\log\det \left(\gamma_0\bC\Diag(\x)\bC+(\bI_d-\Diag(\x)) \right) \vphantom{\sum_{i\in[d]}} 
+ \frac12 s\log(\gamma_0). \label{eq:o-scaling:f}
\end{align}
We refer to the formulation in \eqref{eq:o-scaling} as the \emph{o-scaling} of the linx relaxation. 
It is well known that the $\Gamma$ relaxation is invariant under o-scaling \citep[Theorem 3.4.14]{fampa_maximum-entropy_2022}. 
Moreover, as discussed in \cref{sec:complementary}, the linx relaxation applied to the complementary problem coincides with the linx relaxation applied to the original problem, i.e., $\Opt_{\textup{linx}} = \Opt_{\textup{linx}^c}$. In addition, 
the same invariance holds under o-scaling as well \cite[Proposition 3.3.16]{fampa_maximum-entropy_2022}.

More recently, \citep[Section 3]{chen_generalized_2024} introduced a more general  scaling technique based on the following observation, which relies on the well-known Schur complement determinant formula and extends  \citep[Lemma 1]{anstreicher_efficient_2020}.
\begin{lemma}[{{\citep[Proof of Theorem 2]{chen_generalized_2024}}}]\label{lem:linx:main-idea}
 Let $\bQ\in\R^{d \times d}$ (not necessarily symmetric). For any nonempty set $S\subseteq[d]$, and by letting $x_i=1$ if $i\in S$ and $x_i=0$ otherwise,  we have 
  \begin{align*}
    \log\det(\bQ_{S,S}\bQ_{S,S}^\top) =  \log\det(\bQ\Diag(\x)\bQ^\top + \bI_d - \Diag(\x)) . 
  \end{align*}
\end{lemma}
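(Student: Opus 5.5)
Write $\bar S\coloneqq[d]\setminus S$ and $\bD\coloneqq\Diag(\x)$, so that $\bD$ is the orthogonal projector onto the coordinates in $S$. The identity is invariant under a simultaneous permutation of rows and columns. Indeed, replacing $\bQ$ by $\bP\bQ\bP^\top$ and $\x$ by $\bP\x$ for a permutation matrix $\bP$ conjugates both $\bQ\bD\bQ^\top$ and $\bI_d-\bD$ by $\bP$, which leaves the determinant unchanged. The submatrix $\bQ_{S,S}$ is carried to the corresponding block, up to a reordering within $S$ that does not affect $\det(\bQ_{S,S}\bQ_{S,S}^\top)$. Hence I will assume without loss of generality that $S=\{1,\dots,s\}$ and write
\[
\bQ=\begin{pmatrix}\bQ_{11}&\bQ_{12}\\ \bQ_{21}&\bQ_{22}\end{pmatrix},
\]
where $\bQ_{11}=\bQ_{S,S}$, $\bQ_{21}=\bQ_{\bar S,S}$, and so on.

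Next I compute the matrix $\bM\coloneqq\bQ\bD\bQ^\top+\bI_d-\bD$ blockwise. Since $\bD=\Diag(\bI_s,\bm 0)$, only the first block column of $\bQ$ survives in $\bQ\bD\bQ^\top$. This gives
\[
\bM=\begin{pmatrix}\bQ_{11}\bQ_{11}^\top & \bQ_{11}\bQ_{21}^\top\\ \bQ_{21}\bQ_{11}^\top & \bQ_{21}\bQ_{21}^\top+\bI_{d-s}\end{pmatrix}.
\]
The bottom-right block $\bB\coloneqq\bQ_{21}\bQ_{21}^\top+\bI_{d-s}$ is positive definite, hence invertible. It is therefore natural to apply the Schur complement determinant formula with respect to this block (not the top-left one, which may be singular):
\[
\det\bM=\det(\bB)\,\det\bigl(\bQ_{11}\bQ_{11}^\top-\bQ_{11}\bQ_{21}^\top\bB^{-1}\bQ_{21}\bQ_{11}^\top\bigr)=\det(\bB)\,\det\bigl(\bQ_{11}(\bI_s-\bQ_{21}^\top\bB^{-1}\bQ_{21})\bQ_{11}^\top\bigr).
\]
This step is where the proof could go wrong. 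The factor $\det(\bB)$ does not cancel, so I need a precise identity for $\bI_s-\bQ_{21}^\top\bB^{-1}\bQ_{21}$.

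By the push-through/Woodbury identity, $\bI_s-\bQ_{21}^\top(\bI+\bQ_{21}\bQ_{21}^\top)^{-1}\bQ_{21}=(\bI_s+\bQ_{21}^\top\bQ_{21})^{-1}$. By Sylvester's determinant identity, $\det(\bI_{d-s}+\bQ_{21}\bQ_{21}^\top)=\det(\bI_s+\bQ_{21}^\top\bQ_{21})$. Combining these, $\det\bM=\det(\bQ_{11})^2=\det(\bQ_{11}\bQ_{11}^\top)$, where multiplicativity of the determinant applies because all factors are $s\times s$. Taking logarithms gives the claim, with the convention $\log 0=-\infty$ on both sides when $\bQ_{S,S}$ is singular.

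As a cross-check, there is a cleaner route. One can factor $\bM=\bN\bN^\top$ with
\[
\bN=\bQ\bD+\bI_d-\bD=\begin{pmatrix}\bQ_{11}&\bm 0\\ \bQ_{21}&\bI_{d-s}\end{pmatrix}.
\]
This works because $\bD(\bI_d-\bD)=\bm 0$ and $(\bI_d-\bD)^2=\bI_d-\bD$, so the cross terms vanish. Since $\bN$ is block lower triangular, $\det\bN=\det\bQ_{11}$, and hence $\det\bM=(\det\bQ_{11})^2$ directly. I would present this factorization as the main proof, since it avoids the Schur-complement bookkeeping entirely.
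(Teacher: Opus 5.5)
Your argument is correct, including the singular case, and both of your routes go through. It differs from the paper's route. The paper does not prove this lemma itself: it attributes it to \citet[Proof of Theorem 2]{chen_generalized_2024} and only remarks that it rests on the Schur complement determinant formula, extending \citet[Lemma 1]{anstreicher_efficient_2020}. The natural form of that argument pivots on the top-left block $\bQ_{S,S}\bQ_{S,S}^\top$. When $\bQ_{S,S}$ is invertible, the Schur complement $\bQ_{\bar S,S}\bQ_{\bar S,S}^\top+\bI_{d-|S|}-\bQ_{\bar S,S}\bQ_{S,S}^\top(\bQ_{S,S}\bQ_{S,S}^\top)^{-1}\bQ_{S,S}\bQ_{\bar S,S}^\top$ collapses to $\bI_{d-|S|}$, so the identity follows in one line. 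A singular $\bQ_{S,S}$ must then be handled separately, by continuity or a rank argument. Your first route pivots instead on the always-invertible bottom-right block. This covers the singular case at the price of invoking the Woodbury and Sylvester identities. Your factorization $\bQ\Diag(\x)\bQ^\top+\bI_d-\Diag(\x)=\bm{N}\bm{N}^\top$, with $\bm{N}=\bQ\Diag(\x)+\bI_d-\Diag(\x)$, is the most economical of the three. It needs no invertibility hypothesis, and it does not even need the permutation reduction. The columns of $\bm{N}$ indexed by $[d]\setminus S$ are standard basis vectors, so Laplace expansion along them gives $\det\bm{N}=\det\bQ_{S,S}$ directly. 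Leading with the factorization, as you propose, is the right choice.
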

Then, for any $\bm\gamma\in\R^d_{++}$, by considering the scaled matrix $\bQ\coloneqq \Diag(\bm\gamma)\bC$ in \cref{lem:linx:main-idea}, we arrive at
\[
    \log\det\left(\Diag(\bm\gamma)_{S,S}\bC_{S,S}^2\Diag(\bm\gamma)_{S,S} \right) =  \log\det\left(\Diag(\bm\gamma)_{S,S}\bC_{S,S}\Diag(\x)\bC_{S,S} \Diag(\bm\gamma)_{S,S}+ \bI_d - \Diag(\x)\right) . 
\]
 Now, by the determinant product rule, we have $\log\det\left(\Diag(\bm\gamma)_{S,S}\bC_{S,S}^2\Diag(\bm\gamma)_{S,S} \right) =  2\log\det(\bC_{S,S}) +2 \sum_{i\in S} \log(\gamma_i) $. Combining these two leads to
\[
\log\det(\bC_{S,S}) = \frac12 \log\det\left(\Diag(\bm\gamma)_{S,S}\bC_{S,S}\Diag(\x)\bC_{S,S} \Diag(\bm\gamma)_{S,S}+ \bI_d - \Diag(\x)\right) -   \sum_{i\in S} \log(\gamma_i). 
\]
Once again, to obtain the best bound for $\Opt(\bC,s)$ we can consider  the linx relaxation applied to the scaled matrix $\Diag(\bm\gamma)\bC$ and optimize over the scaling vector $\bm\gamma\in\R^d_{++}$. 
The resulting formulation is given by  
\begin{align}
&\Opt_{\linxg} \coloneqq \min_{\x\in\cX} \max_{\bm\gamma\in\R^d_{++}}  \set{ \objlglog(\x,\bm\gamma) }, \label{eq:g-scaling} \\
\text{where } & \objlglog(\x,\bm\gamma) \coloneqq -\frac12\log\det \left(\Diag(\bm\gamma)\bC\Diag(\x)\bC\Diag(\bm\gamma) + (\bI_d-\Diag(\x)) \right) + \sum_{i\in[d]}x_i\log\gamma_i. \label{eq:g-scaling:f}
\end{align}
We refer to  \eqref{eq:g-scaling} as \emph{g-scaling} of the linx relaxation. 
Taking $\bm\gamma=\gamma_0^{1/2}\ones$ shows that o-scaling is a special case of g-scaling, and thus $\Opt_{\linxg} \ge \Opt_{\linxo}$.
\cite[Appendix]{chen_generalized_2024} shows on a small numerical instance that linx relaxation enhanced by g-scaling can strictly outperform the one with o-scaling, i.e., $\Opt_{\linxg} > \Opt_{\linxo}$, and also quite notable improvements were reported in the numerical study in \cite[Section 6]{chen_generalized_2024}, especially for larger instances.

Finally, analogous to $\Opt_{\textup{linx}}$ and $\Opt_{\linxo}$, the g-scaled linx bound $\Opt_{\linxg}$  is invariant under complementation \citep[page 32]{chen_generalized_2024}.

\subsection{Double-scaling method}
\label{sec:linx-double-scaling}
In this section we introduce a new scaling technique for the linx relaxation that we refer to as \emph{double-scaling}. 

{
Let $\bD,\hat{\bD}$ be diagonal positive definite matrices. For any nonempty set $S\subseteq [d]$, we have
\begin{align}
   \log \det\left( [\bD \bC \hat{\bD}]_{S,S} [\bD \bC \hat{\bD}]_{S,S}^\top \right) 
  &= \log\det \left( \bD_{S,S} \bC_{S,S} \hat{\bD}_{S,S}^2 \bC_{S,S} \bD_{S,S}\right) \notag \\
  &= 2 \log \det \left( \bC_{S,S} \right) + 2\log \det \left( \bD_{S,S} \right)  + 2\log \det \left( \hat{\bD}_{S,S} \right), \label{eq:d-scaling-derivation1}
\end{align}
where 
the first 
equality follows from the observation that $[\bD \bC \hat{\bD}]_{S,S} = \bD_{S,S} \bC_{S,S} \hat{\bD}_{S,S}$ for diagonal matrices $\bD,\hat{\bD}\in\S_+^d$, 
and the last one from the multiplicativity of the determinant. 
By defining $x_i=1$ if $i\in S$ and $x_i=0$ otherwise and using \cref{lem:linx:main-idea} with $\bQ=\bD \bC \hat{\bD}$, we also observe 
\begin{align}
 &\log \det\left( [\bD \bC \hat{\bD}]_{S,S} [\bD \bC \hat{\bD}]_{S,S}^\top \right)  \notag \\
& = \log\det \left( [\bD \bC \hat{\bD}]\Diag(\x)[\bD \bC \hat{\bD}]^\top  + \bI_d - \Diag(\x) \right) \notag \\
& =  \log\det \left( \bD \bC \hat{\bD}^2 \Diag(\x) \bC \bD   + \bI_d - \Diag(\x) \right)  \notag  \\
& = \log\det \left( \bC \hat{\bD}^2 \Diag(\x) \bC    + \bD^{-2} (\bI_d - \Diag(\x) ) \right) +  2 \log\det(\bD)  ,  \label{eq:d-scaling-derivation2}
\end{align}
where the second equality is just rearrangement due to the matrices $\hat{\bD}$ and $\Diag(\x)$ both being diagonal,
and the last relation again follows from the property of the determinant. 
Putting together \eqref{eq:d-scaling-derivation1} and \eqref{eq:d-scaling-derivation2} leads to
\begin{align*}
& \log \det \left( \bC_{S,S} \right) \\
& = {1\over 2} \log\det \left( \bC \hat{\bD}^2 \Diag(\x) \bC    + \bD^{-2} (\bI_d - \Diag(\x) ) \right) +  \log\det(\bD) - \log \det \left( \bD_{S,S} \right)  - \log \det \left( \hat{\bD}_{S,S} \right) .
\end{align*}

Now set $\bD\coloneqq (\Diag(\bm\mu))^{-1/2},\hat{\bD}\coloneqq (\Diag(\bm\gamma))^{1/2}$ where $\bm\gamma,\bm\mu\in\R^d_{++}$. Then, for any $\x\in\cX\cap\{0,1\}^d$ and $\bm\gamma,\bm\mu\in\R^d_{++}$, the MESP objective $-\log\det(\bC_{S,S})$ is equal to the function $\objldlog(\x,\bm\gamma,\bm\mu)$ defined as
}
\begin{align}
\objldlog(\x,\bm\gamma,\bm\mu) 
& \coloneq 
 -\frac12\log\det(\bC\Diag(\bm\gamma)\Diag(\x)\bC+\Diag(\bm\mu)(\bI_d-\Diag(\x))) \vphantom{\sum_{i\in[d]}} \notag \\
 &\qquad\qquad\qquad + \frac12\sum_{i\in[d]}x_i\log\gamma_i + \frac12\sum_{i\in[d]}(1-x_i)\log\mu_i  .  \label{eq:double-scaling} 
\end{align}
This motivates the double-scaling for linx relaxation, which is defined as
\[
\Opt_{\linxd} \coloneq \min_{\x\in\cX} \max_{\bm\gamma,\bm\mu\in\R^d_{++}} \set{ \objldlog(\x,\bm\gamma,\bm\mu) }.
\]

The preceding discussion immediately shows that the double-scaling for linx \eqref{eq:double-scaling}  is a valid relaxation for MESP  \eqref{eq:mesp-general}  and the objective function values of \eqref{eq:mesp-general} and \eqref{eq:double-scaling}  at integer feasible solutions match as well. We state this formally in \cref{prop:double-scaling-valid}. 
\begin{proposition}\label{prop:double-scaling-valid}
  Double-scaling for linx \eqref{eq:double-scaling} is a valid relaxation of MESP \eqref{eq:mesp-general} where $\varphi(\cdot)=-\log(\cdot)$. Moreover, at any binary $\x\in\set{0,1}^d$ and for any $\bm\gamma,\bm\mu\in\R^d_{++}$, we have $\objldlog(\x,\bm\gamma,\bm\mu) = f_s\left( \bm\lambda\left(\Diag(\x)\bC\Diag(\x)\right) \right) = f_s\left( \bm\lambda\left(\bV\Diag(\x)\bV^\top\right) \right)$ where $\bC=\bV^\top\bV$,   
  (that is, the objective function values of \eqref{eq:double-scaling} and \eqref{eq:mesp-general} are equal). 
\end{proposition}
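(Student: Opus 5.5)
The plan is to verify the pointwise identity at binary points first, and then deduce validity from it. Fix $\x\in\{0,1\}^d$ with support $S\coloneqq\{i:\,x_i=1\}$, and fix $\bm\gamma,\bm\mu\in\R^d_{++}$. Set $\bD\coloneqq\Diag(\bm\mu)^{-1/2}$ and $\hat\bD\coloneqq\Diag(\bm\gamma)^{1/2}$, and retrace the chain \eqref{eq:d-scaling-derivation1}--\eqref{eq:d-scaling-derivation2}.

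By \cref{lem:linx:main-idea} with $\bQ=\bD\bC\hat\bD$, together with the diagonality of $\bD$, $\hat\bD$ and $\Diag(\x)$, we get
\[
2\log\det(\bC_{S,S})+2\log\det(\bD_{S,S})+2\log\det(\hat\bD_{S,S}) = \log\det\left(\bC\hat\bD^2\Diag(\x)\bC+\bD^{-2}(\bI_d-\Diag(\x))\right)+2\log\det(\bD).
\]
Substituting the definitions gives:
\begin{itemize}
\item $\hat\bD^2=\Diag(\bm\gamma)$ and $\bD^{-2}=\Diag(\bm\mu)$;
\item $\log\det(\bD)-\log\det(\bD_{S,S})=-\frac12\sum_{i\notin S}\log\mu_i=-\frac12\sum_i(1-x_i)\log\mu_i$;
\item $\log\det(\hat\bD_{S,S})=\frac12\sum_i x_i\log\gamma_i$.
\end{itemize}
Rearranging then yields $-\log\det(\bC_{S,S})=\objldlog(\x,\bm\gamma,\bm\mu)$.

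It remains to connect this to the spectral form. The nonzero eigenvalues of $\Diag(\x)\bC\Diag(\x)$ are exactly those of $\bC_{S,S}$, padded with zeros. Since $|S|=s$ for feasible $\x$ and $\bC_{S,S}$ is positive definite whenever the objective is finite, the $s$ largest eigenvalues are the eigenvalues of $\bC_{S,S}$. Hence $-\log\det(\bC_{S,S})=f_s(\bm\lambda(\Diag(\x)\bC\Diag(\x)))$. Writing $\bC=\bV^\top\bV$, the matrices $\Diag(\x)\bV^\top\bV\Diag(\x)$ and $\bV\Diag(\x)\bV^\top$ share their nonzero spectrum, as noted in the introduction, which gives the second equality.

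For validity, take any $\x\in\cX$ and any $\bm\gamma,\bm\mu$. The value $\max_{\bm\gamma,\bm\mu}\objldlog(\x,\cdot,\cdot)$ at a binary feasible $\x$ equals the MESP objective, because the function is constant in $(\bm\gamma,\bm\mu)$ there. Therefore
\[
\Opt_{\linxd}=\min_{\x\in\cX}\max_{\bm\gamma,\bm\mu}\objldlog(\x,\bm\gamma,\bm\mu)\le\min_{\x\in\cX\cap\{0,1\}^d}\max_{\bm\gamma,\bm\mu}\objldlog(\x,\bm\gamma,\bm\mu)=\Opt,
\]
since minimizing over the larger set $\cX\supseteq\cX\cap\{0,1\}^d$ can only lower the value. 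So $\Opt_{\linxd}$ is a valid lower bound.

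The main obstacle is bookkeeping rather than any deep step. The care points are: degenerate cases where $\bC_{S,S}$ is singular, where both sides equal $+\infty$ and we adopt the convention $\log 0=-\infty$; the case $S=\emptyset$, which cannot occur for $s\ge1$; and a precise justification of $[\bD\bC\hat\bD]_{S,S}=\bD_{S,S}\bC_{S,S}\hat\bD_{S,S}$ for diagonal scalings.
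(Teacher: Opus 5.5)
Your proposal is correct and follows the paper's own argument. The paper likewise retraces \eqref{eq:d-scaling-derivation1}--\eqref{eq:d-scaling-derivation2} with $\bD=\Diag(\bm\mu)^{-1/2}$ and $\hat\bD=\Diag(\bm\gamma)^{1/2}$, and obtains validity because the inner maximum is constant at binary feasible points, so minimizing over $\cX$ can only lower the value. Your restriction to binary $\x$ with $\ones^\top\x=s$ for the spectral identity is also the right reading of the statement, since for $|S|\neq s$ the value $f_s(\bm\lambda(\Diag(\x)\bC\Diag(\x)))$ would not equal $-\log\det(\bC_{S,S})$.
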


\begin{remark}\label{rem:scaling-comparisons}
Both o-scaling \citep{anstreicher_efficient_2020} and g-scaling \citep{chen_generalized_2024} methods for linx, given respectively in \eqref{eq:o-scaling} and \eqref{eq:g-scaling}, are special cases of double-scaling method. 
In particular, by setting $\bm\gamma=\gamma_0\ones$, $\bm\mu=\ones$ for some $\gamma_0\in\R_{++}$, we observe that for any $\x\in\cX$ (and thus $\ones^\top \x=s$) the double-scaling objective function reduces to the o-scaling one, i.e., $\objldlog(\x,\gamma_0\ones,\ones)=\objlolog(\x,\gamma_0)$ (see \eqref{eq:double-scaling} and \eqref{eq:o-scaling:f}).

To see that double-scaling also generalizes g-scaling,  by using the multiplicative prperty of the determinant 
and rearranging terms in \eqref{eq:double-scaling}, we first observe that  
\begin{align}\label{eq:double-scaling-alt} 
\objldlog(\x,\bm\gamma,\bm\mu) 
& =
  -\frac12 \log \det\left( \Diag(\bm\mu)^{-1/2} \bC \Diag(\bm\gamma) \Diag(\x) \bC  \Diag(\bm\mu)^{-1/2}  + \bI_d - \Diag(\x)  \right) \notag \\
 &\qquad \qquad\qquad + {1\over 2} \sum_{i\in [d]} x_i \log \gamma_i  - {1\over 2} \sum_{i\in [d]} x_i \log \mu_i .
\end{align}
Now, by setting $\bm\gamma=\ones$ and $\bm\mu$ such that $\mu_i=\Upsilon^{-2}_i$ for some $\bm\Upsilon\in\R_{++}^d$, we have $(\Diag(\bm\mu))^{-1/2} = \Diag(\bm\Upsilon)$, $\log\mu_i = - 2 \log \Upsilon_i$ and so 
\begin{align*}
& \objldlog(\x,\bm\gamma,\bm\mu)  
=  \objldlog(\x,\ones,\bm\Upsilon^{-2})  \notag \\
  &\qquad = -\frac12\log\det\left( \Diag(\bm\Upsilon)\bC\Diag(\x)\bC\Diag(\bm\Upsilon)+\bI_d-\Diag(\x) \right) + \sum_{i\in[d]}x_i\log\Upsilon_i
  =  \objlglog(\x,\bm\Upsilon)
  , 
  \end{align*}
where the last equality follows from \eqref{eq:g-scaling:f}. 
Thus, double-scaling of the linx relaxation yields a bound at least as strong as its g-scaling, i.e., $\Opt_{\linxd} \ge \Opt_{\linxg}$. Recall also that $\Opt_{\linxg} \ge \Opt_{\linxo}$ and  g-scaling can strictly improve upon o-scaling in certain cases, we obtain the hierarchy 
\[
\Opt_{\linxd} \ge \Opt_{\linxg} \ge \Opt_{\linxo} \ge \Opt_{\textup{linx}}.
\] 
Our numerical results in \cref{sec:numerical} confirm that these inequalities can be strict, with substantial performance differences among the scaling variants. 
\end{remark}

We next observe that, when $\bC$ is full rank, applying double-scaling to the linx relaxation of the complementary problem (given in \eqref{eq:linx-complementary}) yields no further improvement: it produces the same bound as the double-scaled linx relaxation itself; see \cref{sec:linx-complementary} for the proof. 

\begin{proposition}\label{prop:linx-complementary}
If $\bC$ is full rank, then applying double-scaling to the linx relaxation yields exactly the same formulation as applying double-scaling to the linx relaxation of the complementary problem; that is, both lead to \eqref{eq:double-scaling}.
\end{proposition}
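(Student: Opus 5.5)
We plan to write down the double-scaled linx objective for the complementary data $(\bC^{-1},d-s)$ and show that, after reparametrizing the scaling vectors and substituting $\check\x=\ones-\x$, it coincides identically with $\objldlog(\x,\bm\gamma,\bm\mu)$ from \eqref{eq:double-scaling}. Concretely, applying double-scaling to \eqref{eq:linx-complementary} with scalings $\check{\bm\gamma},\check{\bm\mu}\in\R^d_{++}$ gives the function
\begin{align*}
\check f(\check\x,\check{\bm\gamma},\check{\bm\mu}) &\coloneqq -\frac12\log\det\left(\bC^{-1}\Diag(\check{\bm\gamma})\Diag(\check\x)\bC^{-1}+\Diag(\check{\bm\mu})(\bI_d-\Diag(\check\x))\right)\\
&\qquad +\frac12\sum_{i\in[d]}\check x_i\log\check\gamma_i+\frac12\sum_{i\in[d]}(1-\check x_i)\log\check\mu_i-\log\det(\bC).
\end{align*}
The map $\x\mapsto\ones-\x$ is a bijection between $\cX=\{\x\in[0,1]^d:\ones^\top\x=s\}$ and $\cX_{d-s}$, so it suffices to exhibit a bijection of $\R^d_{++}\times\R^d_{++}$ under which the objectives agree pointwise.

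The key step is the determinant manipulation. With $\check\x=\ones-\x$ we have $\Diag(\check\x)=\bI_d-\Diag(\x)$. We factor $\bC^{-1}$ out on both sides:
\[
\det\left(\bC^{-1}\Diag(\check{\bm\gamma})(\bI_d-\Diag(\x))\bC^{-1}+\Diag(\check{\bm\mu})\Diag(\x)\right)=\det(\bC)^{-2}\det\left(\Diag(\check{\bm\gamma})(\bI_d-\Diag(\x))+\bC\Diag(\check{\bm\mu})\Diag(\x)\bC\right).
\]
The second matrix has exactly the form $\bC\Diag(\bm\gamma)\Diag(\x)\bC+\Diag(\bm\mu)(\bI_d-\Diag(\x))$ with $\bm\gamma=\check{\bm\mu}$ and $\bm\mu=\check{\bm\gamma}$. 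Therefore $-\frac12\log\det(\cdot)$ contributes $+\log\det(\bC)$, which cancels the constant $-\log\det(\bC)$. The linear terms also match: $\sum_i \check x_i\log\check\gamma_i=\sum_i(1-x_i)\log\mu_i$ and $\sum_i(1-\check x_i)\log\check\mu_i=\sum_i x_i\log\gamma_i$. Hence $\check f(\ones-\x,\bm\mu,\bm\gamma)=\objldlog(\x,\bm\gamma,\bm\mu)$ for all $\x$ and all positive $\bm\gamma,\bm\mu$. Taking the inner max over the swapped scalings and the outer min over $\x$ then gives identical formulations.

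The main (mild) obstacle is bookkeeping. We must make sure the swap $(\check{\bm\gamma},\check{\bm\mu})\leftrightarrow(\bm\mu,\bm\gamma)$ is a bijection of $\R^d_{++}\times\R^d_{++}$, which is immediate. We also need invertibility of $\bC$, which is exactly where full rank is used, so that the factorization through $\bC^{-1}$ and the term $\log\det(\bC)$ are well defined. Finally, the linear scaling terms must pair with the correct complementary indicator; the swap handles this pairing.
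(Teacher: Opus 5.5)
Your proposal is correct and follows essentially the same route as the paper's proof. You factor $\bC^{-1}$ out of the determinant so that the $\log\det(\bC)$ terms cancel, substitute $\x=\ones-\check\x$, and recognize \eqref{eq:double-scaling} with the two scaling vectors interchanged; your explicit swap $(\check{\bm\gamma},\check{\bm\mu})\leftrightarrow(\bm\mu,\bm\gamma)$ simply spells out a relabeling that the paper leaves implicit.
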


\subsection{Convex-concave saddle point formulation for double-scaled linx relaxation}\label{sec:d-scaling:convex-concave}

{In this section, we will establish that the double-scaled linx relaxation admits a convex-concave saddle point formulation (see \cref{prop:d-scaling-linx-convexity}). To do so, we will first provide a general structural result in \cref{lem:gamma-g-scaling-hessian}, which establishes the convexity of a composition function involving the matrix function $G_s(\bX)\coloneqq g_s(\bm\lambda(\bX))$, where $g_s$ is defined in \eqref{eq:h-rep:non-sparse}. }

A key ingredient for \cref{lem:gamma-g-scaling-hessian} is the characterization of the subdifferential of $G_s$ when $\varphi(\cdot)=-\log(\cdot)$, which was established in \cite[Proposition 2]{li_best_2023}. To simplify the notation, we introduce a linear mapping $\R_+^d\ni\bm\lambda\mapsto\bm\beta(\bm\lambda)$: 
\begin{align}\label{eq:beta-map}
  \beta_\ell = \beta_\ell(\bm\lambda) \coloneq \begin{cases} 
    \lambda_\ell, &\text{if }\ell\in[k], \\ 
    \frac{1}{s-k}\sum_{\iota\in[k+1,d]}\lambda_{\iota}, &\text{if }\ell\in[k+1,d],  
  \end{cases} 
\end{align}
where $k$ is the unique index determined by $\bm\lambda$ according to \cref{lem:index-k}. We use a slightly different notation from \cite{li_best_2023}, specifically, our $\beta_\ell$ is the reciprocal of theirs, to make the relationship between $\bm\lambda$ and $\bm\beta$ more transparent.

\begin{theorem}\label{lem:gamma-g-scaling-hessian}
{Suppose $\varphi(\cdot)=-\log(\cdot)$.}  
  Given $\bS_i\succeq0$ for $i\in[m]$ such that $\sum_{i\in[m]}\bS_i\succ0$, define the function $\Psi(\bm\nu)$ as  
  \begin{align}\label{eq:gs-sum-exp}
    \Psi(\bm\nu) \coloneqq -G_s\left(\sum_{i\in[m]}e^{\nu_i}\bS_i\right). 
  \end{align}
  If $\bm\nu\in\R^m$ is such that the matrix $\bM(\bm\nu)\coloneqq\sum_{i\in[m]}e^{\nu_i}\bS_i$ has distinct eigenvalues, then the Hessian $\nabla^2\Psi(\bm\nu)\succeq0$. Moreover, if there exists $\bar{\bm\nu}\in\R^m$ such that $\bM(\bar{\bm\nu})$ has distinct eigenvalues, then $\Psi$ is convex over $\R^m$. 
\end{theorem}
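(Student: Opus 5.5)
Since $G_s$ is not $C^2$ everywhere, I will use the explicit form $G_s(\bX)=h_s(\bm\lambda(\bX))$ from \cref{prop:h-rep} and \cref{cor:relaxation-h}, valid on a neighborhood of any $\bX$ with distinct eigenvalues. With $\varphi=-\log$,
\[
-G_s(\bX)=\sum_{\ell\in[k]}\log\lambda_\ell(\bX)+(s-k)\log\Big(\tfrac{1}{s-k}\big(\tr\bX-\textstyle\sum_{\ell\in[k]}\lambda_\ell(\bX)\big)\Big).
\]
Here $k$ is the index of \cref{lem:index-k}. If the strict inequality $\lambda_{k}>\frac1{s-k}\sum_{i>k}\lambda_i$ holds and the other one is also strict, $k$ is locally constant near $\bM(\bm\nu)$. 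Distinct eigenvalues make each $\lambda_\ell$ real-analytic there. So $\Psi$ is $C^2$ near $\bm\nu$, and its Hessian is obtained from the classical first- and second-order perturbation formulas for simple eigenvalues. At a boundary point where $k$ switches, I will use that $h_s$ is $C^1$ and that both branches agree to first order (this is what the $\bm\beta$-map in \eqref{eq:beta-map} encodes: $\nabla_{\bm\lambda}h_s=-1/\bm\beta$). The Hessian is then interpreted via one-sided limits, and I will show nonnegativity for each branch.

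Computation: let $\bM=\bM(\bm\nu)=\bU\Diag(\bm\lambda)\bU^\top$ and $\bm a_i=\diag(\bU^\top\bS_i\bU)$, so that $\partial_{\nu_i}\lambda_\ell=e^{\nu_i}(a_i)_\ell$. The second-order term is
\[
\partial^2_{\nu_i\nu_j}\lambda_\ell=\delta_{ij}e^{\nu_i}(a_i)_\ell+2e^{\nu_i+\nu_j}\sum_{p\ne\ell}\frac{(\bU^\top\bS_i\bU)_{\ell p}(\bU^\top\bS_j\bU)_{\ell p}}{\lambda_\ell-\lambda_p}.
\]
Writing $w_\ell=1/\beta_\ell$ as in \eqref{eq:beta-map}, the chain rule gives the quadratic form $\bm t^\top\nabla^2\Psi\,\bm t$ as a sum of three parts.
(i) A diagonal first-order part $\sum_\ell w_\ell\,\dot\lambda_\ell$ with $\dot\lambda_\ell=\sum_i t_i e^{\nu_i}(a_i)_\ell$; this is nonnegative when combined with the $\delta_{ij}$ term.
(ii) A curvature part from $\log$: $-\sum_{\ell\le k}\dot\lambda_\ell^2/\lambda_\ell^2-(s-k)\big(\sum_{\ell>k}\dot\lambda_\ell\big)^2/\big(\sum_{\ell>k}\lambda_\ell\big)^2$.
(iii) An eigenvector-rotation part $\sum_{\ell}\sum_{p\ne\ell} w_\ell\,2|B_{\ell p}|^2/(\lambda_\ell-\lambda_p)$ with $\bB=\sum_i t_ie^{\nu_i}\bU^\top\bS_i\bU$. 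Symmetrizing over pairs turns this into $\sum_{\ell<p}|B_{\ell p}|^2\,\frac{2(w_\ell-w_p)}{\lambda_\ell-\lambda_p}$.

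The key positivity step combines (i) and (ii) with the diagonal of $\bB$ through a Cauchy–Schwarz bound. Since $\bS_i\succeq0$, define $\bm T=\sum_i t_i^2e^{\nu_i}\bU^\top\bS_i\bU$ on the same support. Then $\dot\lambda_\ell^2=\big(\sum_i t_ie^{\nu_i}(a_i)_\ell\big)^2\le \lambda_\ell\cdot T_{\ell\ell}$, using $\lambda_\ell=\sum_ie^{\nu_i}(a_i)_\ell$ and $(a_i)_\ell\ge0$. Hence $\dot\lambda_\ell^2/\lambda_\ell^2\le T_{\ell\ell}/\lambda_\ell$. Summing against the diagonal term $\sum_\ell w_\ell T_{\ell\ell}$ kills (ii) for $\ell\le k$. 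For the pooled block $\ell>k$ the same estimate applies with $\beta=\frac1{s-k}\sum_{\ell>k}\lambda_\ell$, because the factor $(s-k)$ cancels.

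The main obstacle is the rotation part (iii). For $\ell<p$ both in $[k]$ or both in $(k,d]$ it is nonnegative, since $w=1/\lambda$ is decreasing or $w$ is constant, respectively. Cross pairs $\ell\le k<p$ need $w_\ell\le w_p$, i.e.\ $\lambda_\ell\ge\beta_p$, which is exactly the defining inequality of $k$ in \cref{lem:index-k}. In all cases $(w_\ell-w_p)/(\lambda_\ell-\lambda_p)$ has the right sign, but (iii) enters with a sign I must track carefully against the minus in $\Psi=-G_s$. If it comes out with the wrong sign, I would instead absorb it by a Lieb/Ando-type argument: $-G_s$ is a concave spectral function composed with $e^{\nu}$.

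For the global claim: $\bM(\bm\nu)$ having distinct eigenvalues is a Zariski-open condition in $e^{\bm\nu}$, so if it holds at one $\bar{\bm\nu}$ it holds on an open dense full-measure set. $\Psi$ is continuous (and locally Lipschitz, since $\sum\bS_i\succ0$ keeps $\bM\succ0$). It is convex along almost every line, with nonnegative second derivative off a finite set on each generic line. By continuity it is convex along every line, hence convex on $\R^m$.
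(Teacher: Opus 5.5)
Your decomposition of $\bm t^\top\nabla^2\Psi\,\bm t$ into $\sum_\ell w_\ell T_{\ell\ell}$, the log-curvature part (ii), and the rotation part (iii) is exactly the paper's $\bA^{(1)}+\bA^{(2)}+\bA^{(3)}$, and your handling of (ii) is right. The proof breaks at (iii), because your sign analysis there is reversed. As $\ell$ increases, $\lambda_\ell$ decreases while $w_\ell=1/\beta_\ell$ is nondecreasing: $1/\lambda_\ell$ grows on $[k]$, $w$ is constant on $(k,d]$, and $1/\lambda_\ell<1/\bar\lambda$ for $\ell\le k$, where $\bar\lambda\coloneqq\frac1{s-k}\sum_{\ell>k}\lambda_\ell<\lambda_k$. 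So every coefficient $2(w_\ell-w_p)/(\lambda_\ell-\lambda_p)$ with $\ell<p$ is $\le0$. There is no further sign to track: your $w=\nabla_{\bm\lambda}(-h_s)$ already belongs to $\Psi$, and (iii) is the nonpositive eigenvector-rotation curvature of the concave function $-G_s$. Already for $s=d$, where $\Psi=\log\det\bM(\bm\nu)$, (iii) equals $-2\sum_{\ell<p}B_{\ell p}^2/(\lambda_\ell\lambda_p)$. Your Cauchy--Schwarz step $\dot\lambda_\ell^2\le\lambda_\ell T_{\ell\ell}$ spends the entire diagonal term on cancelling (ii), so nothing is left to absorb (iii), and your inequalities say nothing about the sign of the Hessian. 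The Lieb/Ando fallback does not rescue this. A concave spectral function composed with a Loewner-convex map like $\bm\nu\mapsto\sum_ie^{\nu_i}\bS_i$ is not convex in general, and convexity of $\Psi$ rests on a specific cancellation you have not exhibited.

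The missing ingredient, which the paper's proof rests on, is the matrix form of Cauchy--Schwarz. Since each $\bS_i\succeq0$,
\[
\begin{pmatrix}\bm T&\bB\\ \bB&\Diag(\bm\lambda)\end{pmatrix}=\sum_{i\in[m]}e^{\nu_i}\begin{pmatrix}t_i^2&t_i\\ t_i&1\end{pmatrix}\otimes\bU^\top\bS_i\bU\succeq0 .
\]
A Schur complement then gives $\bm T\succeq\bB\Diag(\bm\lambda)^{-1}\bB$, i.e., $T_{\ell\ell}\ge\sum_pB_{\ell p}^2/\lambda_p$ for every $\ell$. (Entrywise scalar Cauchy--Schwarz only gives $B_{\ell p}^2\le\lambda_pT_{\ell\ell}$ for each $p$ separately, which loses a factor $d$ after summing, so the Schur-complement form is what is needed.) Hence $\sum_\ell w_\ell T_{\ell\ell}\ge\sum_{\ell,p}(w_\ell/\lambda_p)B_{\ell p}^2$, and the terms with $p=\ell$ cancel (ii) exactly as in your argument. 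Write (iii) over ordered pairs as $\sum_{\ell\ne p}\frac{w_\ell-w_p}{\lambda_\ell-\lambda_p}B_{\ell p}^2$. Each remaining term then combines into
\[
\Bigl(\frac{w_\ell}{\lambda_p}+\frac{w_\ell-w_p}{\lambda_\ell-\lambda_p}\Bigr)B_{\ell p}^2=\frac{\lambda_\ell w_\ell-\lambda_pw_p}{\lambda_p(\lambda_\ell-\lambda_p)}\,B_{\ell p}^2\ge0 .
\]
This holds because $\lambda_\ell w_\ell=\lambda_\ell/\beta_\ell$ equals $1$ on $[k]$ and $\lambda_\ell/\bar\lambda\le1$ on $(k,d]$ (using $\lambda_{k+1}\le\bar\lambda$ from \cref{lem:index-k}), so it is nonincreasing in $\ell$ like $\bm\lambda$.

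Two smaller remarks. Your branchwise treatment of points where $k$ switches is appropriate, since distinct eigenvalues alone do not make $k$ locally constant. In the global step, however, continuity and local Lipschitzness do not suffice to pass from a nonnegative second derivative off a discrete set to convexity; $t\mapsto-|t|$ is a counterexample. You need $\Psi\in C^1$. This follows because $G_s$ is convex and differentiable, hence continuously differentiable, on $\S^d_{++}$. With that added, your line-by-line argument is a fine, more elementary substitute for the paper's appeal to Dudley's theorem.
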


\begin{proof}
  {As $\bS_i\succeq0$ for $i\in[m]$ and $\sum_{i\in[m]}\bS_i\succ0$,  we immediately deduce that $\bM(\bm\nu)=\sum_{i\in[m]}e^{\nu_i}\bS_i\succ0$ for any $\bm\nu\in\R^d$. 
  
  Let us first assume that $\bM(\bm\nu)$  has distinct eigenvalues.
}
  Let $\bQ\Diag(\bm\lambda)\bQ^\top=\bM(\bm\nu)$ be the eigenvalue decomposition of $\bM(\bm\nu)$ with orthogonal matrix $\bQ$ and eigenvalues $\bm\lambda=(\lambda_1,\dots,\lambda_d)$ in {sorted in a decreasing} order and corresponding eigenvectors $\q_1,\ldots,\q_d$. In addition, define $\bm\beta\coloneqq\bm\beta(\bm\lambda)$ as in \eqref{eq:beta-map}. 
  Since $\lambda_1>\cdots>\lambda_d>0$, by its definition, the index $k$ remains constant for all $\bm\nu'$ in a small neighborhood of $\bm\nu$. 
  By \cite[Proposition 2]{li_best_2023}, the first derivative of $\Psi$ with respect to $\nu_i$ is given by 
  \begin{align*}
    \frac{\partial\Psi}{\partial\nu_i} 
    = \left\langle\frac{\partial G_s}{\partial\bM}, \frac{\partial\bM}{\partial\nu_i}\right\rangle
    &= \left\langle\bQ\Diag(\bm\beta)^{-1}\bQ^\top, e^{\nu_i}\bS_i\right\rangle 
    = \left\langle\sum_{\ell\in[d]}\frac{1}{\beta_\ell}\q_\ell\q_\ell^\top, e^{\nu_i}\bS_i\right\rangle 
    = \sum_{\ell\in[d]}\frac{1}{\beta_\ell}\q_\ell^\top(e^{\nu_i}\bS_i)\q_\ell. 
  \end{align*}
  
  According to \cite[Theorem 6.3.12]{horn_matrix_1985}, 
  \begin{align}\label{eq:grad-eigenvalue}
    \frac{\partial\lambda_\ell}{\partial\nu_j} 
    = \q_\ell^\top\frac{\partial\bM}{\partial\nu_j}\q_\ell
    = \q_\ell^\top(e^{\nu_j}\bS_j)\q_\ell. 
  \end{align}
  According to \cite{magnus_differentiating_1985}, 
  \begin{align}\label{eq:grad-eigenvector}
    {\frac{\partial\q_\ell}{\partial\nu_j}} 
    = \sum_{\iota\in[d],\iota\neq\ell}\frac{1}{\lambda_\ell-\lambda_\iota}\q_\iota\q_\iota^\top\frac{\partial\bM}{\partial\nu_j}\q_\ell
    = \sum_{\iota\in[d],\iota\neq\ell}\frac{1}{\lambda_\ell-\lambda_\iota}\q_\iota\q_\iota^\top(\e^{\nu_j}\bS_j)\q_\ell. 
  \end{align}
  Thus, the second derivative of $\Psi$ can be computed as 
  \begin{align*}
    \frac{\partial^2\Psi}{\partial\nu_j\partial\nu_i} 
    &= \frac{\partial}{\partial\nu_j}\left(\sum_{\ell\in[d]}\frac{1}{\beta_\ell}\q_\ell^\top(e^{\nu_i}\bS_i)\q_\ell\right) \\
    &= \underbrace{\sum_{\ell\in[d]}\frac{1}{\beta_\ell}\q_\ell^\top\frac{\partial}{\partial\nu_j}(e^{\nu_i}\bS_i)\q_\ell}_{\eqcolon A_{ij}^{(1)}} + \underbrace{\sum_{\ell\in[d]}\frac{\partial}{\partial\nu_j}\left(\frac{1}{\beta_\ell}\right)\q_\ell^\top(e^{\nu_i}\bS_i)\q_\ell}_{\eqcolon A_{ij}^{(2)}} + \underbrace{2\sum_{\ell\in[d]}\frac{1}{\beta_\ell}\q_\ell^\top(e^{\nu_i}\bS_i)\frac{\partial}{\partial\nu_j}(\q_\ell)}_{\eqcolon A_{ij}^{(3)}}. 
  \end{align*}
  We compute each term separately. The first term is 
  \begin{align*}
    A_{ij}^{(1)} 
    &= \delta_{ij}\sum_{\ell\in[d]}\frac{1}{\beta_\ell}\q_\ell^\top(e^{\nu_i}\bS_i)\q_\ell, 
  \end{align*}
  where $\delta_{ii}=1$ for all $i\in[d]$ and $\delta_{ij}=0$ for all $i\neq j$. 
  The second component involves the definition of $\bm\beta$ and thus the derivative of eigenvalues \eqref{eq:grad-eigenvalue}: 
  \begin{align*}
    A_{ij}^{(2)} 
    &= -\sum_{\ell\in[k]}\frac{1}{\lambda_\ell^2}\frac{\partial\lambda_\ell}{\partial\nu_j}\q_\ell^\top(e^{\nu_i}\bS_i)\q_\ell - \sum_{\ell\in[k+1,d]}\frac{1}{\bar\lambda^2}\frac{\partial\bar\lambda}{\partial\nu_j}\q_\ell^\top(e^{\nu_i}\bS_i)\q_\ell \\
    &= -\sum_{\ell\in[k]}\frac{1}{\lambda_\ell^2}[\q_\ell^\top(e^{\nu_j}\bS_j)\q_\ell][\q_\ell^\top(e^{\nu_i}\bS_i)\q_\ell] - \sum_{\ell\in[k+1,d]}\frac{1}{\bar\lambda^2}\left(\frac{1}{s-k}\sum_{\iota\in[k+1,d]}\q_\iota^\top(e^{\nu_j}\bS_j)\q_\iota\right)\q_\ell^\top(e^{\nu_i}\bS_i)\q_\ell \\
    &= -\sum_{\ell\in[k]}\frac{1}{\lambda_\ell^2}[\q_\ell^\top(e^{\nu_j}\bS_j)\q_\ell][\q_\ell^\top(e^{\nu_i}\bS_i)\q_\ell] - \sum_{\ell\in[k+1,d]}\sum_{\iota\in[k+1,d]}\frac{1}{(s-k)\bar\lambda^2}[\q_\iota^\top(e^{\nu_j}\bS_j)\q_\iota][\q_\ell^\top(e^{\nu_i}\bS_i)\q_\ell]. 
  \end{align*}
  The last term involves the derivative of eigenvectors \eqref{eq:grad-eigenvector}: 
  \begin{align*}
    A_{ij}^{(3)} 
    &= 2\sum_{\ell\in[d]}\frac{1}{\beta_\ell}\q_\ell^\top(e^{\nu_i}\bS_i)\frac{\partial}{\partial\nu_j}(\q_\ell) \\
    &= 2\sum_{\ell\in[d]}\frac{1}{\beta_\ell}\q_\ell^\top(e^{\nu_i}\bS_i)\left(\sum_{\iota\in[d],\iota\neq\ell}\frac{1}{\lambda_\ell-\lambda_\iota}\q_\iota\q_\iota^\top(e^{\nu_j}\bS_j)\q_\ell\right) \\
    &= 2\sum_{\ell\in[d]}\sum_{\iota\in[d],\iota\neq\ell}\frac{1}{\beta_\ell(\lambda_\ell-\lambda_\iota)}[\q_\ell^\top(e^{\nu_i}\bS_i)\q_\iota][\q_\iota^\top(e^{\nu_j}\bS_j)\q_\ell] \\
    &= \sum_{\ell\in[d]}\sum_{\iota\in[d],\iota\neq\ell}\frac{1}{\beta_\ell(\lambda_\ell-\lambda_\iota)}[\q_\ell^\top(e^{\nu_i}\bS_i)\q_\iota][\q_\iota^\top(e^{\nu_j}\bS_j)\q_\ell]  \\
    &\qquad - \sum_{\iota\in[d]}\sum_{\ell\in[d],\ell\neq\iota}\frac{1}{\beta_\ell(\lambda_\iota-\lambda_\ell)}[\q_\ell^\top(e^{\nu_i}\bS_i)\q_\iota][\q_\iota^\top(e^{\nu_j}\bS_j)\q_\ell] \\
    &= \sum_{\ell\in[d]}\sum_{\iota\in[d],\iota\neq\ell}\left(\frac{1}{\beta_\ell}-\frac{1}{\beta_\iota}\right)\frac{1}{\lambda_\ell-\lambda_\iota}[\q_\ell^\top(e^{\nu_i}\bS_i)\q_\iota][\q_\iota^\top(e^{\nu_j}\bS_j)\q_\ell]. 
  \end{align*}
  To show that $\nabla^2\Psi(\bm\nu)=\bA^{(1)}+\bA^{(2)}+\bA^{(3)}\succeq0$, we consider its quadratic form. For any $\a\in\R^m$, denote $\bP(\a)\coloneqq\sum_{i\in[m]}a_ie^{\nu_i}\bS_i$, then
    \begin{align*}
    \a^\top \bA^{(1)}\a 
    &= \sum_{i\in[m]}\sum_{j\in[m]}A_{ij}^{(1)}a_ia_j 
    = \sum_{i\in[m]}A_{ii}^{(1)}a_i^2\\
    &= \sum_{\ell\in[d]}\frac{1}{\beta_\ell}\q_\ell^\top\left(\sum_{i\in[m]}a_i^2e^{\nu_i}\bS_i\right)\q_\ell \sum_{\iota\in[d]}\frac{1}{\lambda_\iota}\q_\iota^\top\left(\sum_{i\in[m]}e^{\nu_i}\bS_i\right)\q_\iota \\
    &= \sum_{\ell\in[d]}\sum_{\iota\in[d]}\frac{1}{\beta_\ell\lambda_\iota}\left(\sum_{i\in[m]}a_i^2e^{\nu_i}(\q_\ell^\top\bS_i\q_\ell)\right)\left(\sum_{i\in[m]}e^{\nu_i}(\q_\iota^\top\bS_i\q_\iota)\right) \\
    &\geq \sum_{\ell\in[d]}\sum_{\iota\in[d]}\frac{1}{\beta_\ell\lambda_\iota}\left(\sum_{i\in[m]}a_ie^{\nu_i}(\q_\ell^\top\bS_i\q_\iota)\right)^2 \\
    &= \sum_{\ell\in[d]}\sum_{\iota\in[d]}\frac{1}{\beta_\ell\lambda_\iota}[\q_\ell^\top\bP(\a)\q_\iota]^2 \\
    &{=  \sum_{\ell\in[d]}\sum_{\iota\in[d],\iota\neq \ell}\frac{1}{\beta_\ell\lambda_\iota}[\q_\ell^\top\bP(\a)\q_\iota]^2 + \sum_{\ell\in[d]}\frac{1}{\beta_\ell\lambda_\ell}}[\q_\ell^\top\bP(\a)\q_\ell]^2, 
  \end{align*}
where the third equality follows from the using the identity $\sum_{\iota\in[d]}\lambda_\iota^{-1}\q_\iota^\top\bM(\bm\nu)\q_\iota = \Diag(\bm\lambda)^{-1}\bQ^\top\bM(\bm\nu)\bQ=\bI_d$ and the inequality follows from applying Cauchy-Schwarz inequality {(note that the terms inside the sums are nonnegative)}. 

{
In addition, we have
  \begin{align*}
    \a^\top \bA^{(2)}\a 
    &= \sum_{i\in[m]}\sum_{j\in[m]}A_{ij}^{(2)}a_ia_j \\
    &= - \sum_{i\in[m]}  \sum_{j\in[m]} \sum_{\ell\in[k]}\frac{1}{\lambda_\ell^2}[\q_\ell^\top(a_j e^{\nu_j}\bS_j)\q_\ell][\q_\ell^\top(a_i e^{\nu_i}\bS_i)\q_\ell] \\ 
    &\qquad - \sum_{i\in[m]}  \sum_{j\in[m]} \sum_{\ell\in[k+1,d]}\sum_{\iota\in[k+1,d]}\frac{1}{(s-k)\bar\lambda^2}[\q_\iota^\top(a_j e^{\nu_j}\bS_j)\q_\iota][\q_\ell^\top(a_i e^{\nu_i}\bS_i)\q_\ell] \\ 
    &= -\sum_{\ell\in[k]}\frac{1}{\lambda_\ell^2}[\q_\ell^\top\bP(\a)\q_\ell]^2 - \frac{1}{(s-k)\bar\lambda^2} \sum_{\ell\in[k+1,d]}\sum_{\iota\in[k+1,d]}[\q_\iota^\top\bP(\a)\q_\iota][\q_\ell^\top\bP(\a)\q_\ell] \\
    &= -\sum_{\ell\in[k]}\frac{1}{\lambda_\ell^2}[\q_\ell^\top\bP(\a)\q_\ell]^2 - \frac{1}{(s-k)\bar\lambda^2} \left( \sum_{\ell\in[k+1,d]} \q_\ell^\top\bP(\a)\q_\ell \right)^2 ,
  \end{align*}
as well as
  \begin{align*}
    \a^\top \bA^{(3)}\a 
    &= \sum_{i\in[m]}\sum_{j\in[m]}A_{ij}^{(3)}a_ia_j \\
    &=  \sum_{i\in[m]}  \sum_{j\in[m]} \sum_{\ell\in[d]}\sum_{\iota\in[d],\iota\neq\ell}\left(\frac{1}{\beta_\ell}-\frac{1}{\beta_\iota}\right)\frac{1}{\lambda_\ell-\lambda_\iota}[\q_\ell^\top( e^{\nu_i}\bS_i)\q_\iota][\q_\iota^\top(e^{\nu_j}\bS_j)\q_\ell] a_i a_j \\    
    &=  \sum_{\ell\in[d]}\sum_{\iota\in[d],\iota\neq\ell}\left(\frac{1}{\beta_\ell}-\frac{1}{\beta_\iota}\right)\frac{1}{\lambda_\ell-\lambda_\iota}[\q_\ell^\top\bP(\a)\q_\iota][\q_\iota^\top\bP(\a)\q_\ell]  \\
    &=  \sum_{\ell\in[d]}\sum_{\iota\in[d],\iota\neq\ell}\left(\frac{1}{\beta_\ell}-\frac{1}{\beta_\iota}\right)\frac{1}{\lambda_\ell-\lambda_\iota}[\q_\ell^\top\bP(\a)\q_\iota]^2 .       
  \end{align*}
}
  
Finally, putting things together and denoting $\bP\coloneq \bP(\a)$ we arrive at
  \begin{align*}
    &\a^\top(\bA^{(1)} + \bA^{(3)} + \bA^{(2)})\a \\
    &\geq 
    {\sum_{\ell\in[d]}\sum_{\iota\in[d], \iota\neq\ell}\frac{1}{\beta_\ell\lambda_\iota}[\q_\ell^\top\bP\q_\iota]^2 + \sum_{\ell\in[d]}\frac{1}{\beta_\ell\lambda_\ell}[\q_\ell^\top\bP\q_\ell]^2 }
    +\sum_{\ell\in[d]}\sum_{\iota\in[d],\iota\neq\ell}\left(\frac{1}{\beta_\ell}-\frac{1}{\beta_\iota}\right)\frac{1}{\lambda_\ell-\lambda_\iota}[\q_\ell^\top\bP\q_\iota]^2 \\
    &\qquad  -\sum_{\ell\in[k]}\frac{1}{\lambda_\ell^2}[\q_\ell^\top\bP\q_\ell]^2 
    - \frac{1}{(s-k)\bar\lambda^2} \left( \sum_{\ell\in[k+1,d]} \q_\ell^\top\bP\q_\ell \right)^2
    \\
    &{\geq \sum_{\ell\in[k+1,d]}\frac{1}{\bar\lambda\lambda_{\ell}}[\q_\ell^\top\bP\q_\ell]^2 - \frac{1}{(s-k)\bar\lambda^2}\left(\sum_{\ell\in[k+1,d]}\q_\ell^\top\bP\q_\ell\right)^2} \\
    &\geq 0, 
  \end{align*}
  where the second inequality follows from {${1\over \beta_\ell \lambda_\iota} = {1\over \lambda_\ell^2}$} for all $\ell=\iota \in[k]$ and the observation that $\frac{1}{\beta_\ell\lambda_\iota} + \left(\frac{1}{\beta_\ell}-\frac{1}{\beta_\iota}\right)\frac{1}{\lambda_\ell-\lambda_\iota} = \frac{\lambda_\ell/\beta_\ell - \lambda_\iota/\beta_\iota}{\lambda_{\iota}(\lambda_\ell-\lambda_\iota)} \geq 0$ for any $\ell\neq\iota$ as $\lambda_\ell\neq\lambda_\iota>0$ and both $(\lambda_1,\dots,\lambda_d)$ and $(\lambda_1/\beta_1, \dots, \lambda_d/\beta_d)=(1,\dots,1,\lambda_{k+1}/\bar\lambda,\ldots,\lambda_d/\bar\lambda)$ are sorted in a nonincreasing order {(recall that $\lambda_{k+1}\le \bar{\lambda}$ as $k$ is the unique index from \cref{lem:index-k})}. 
  Finally, the last inequality follows from Cauchy-Schwarz inequality and noting that $(s-k)\bar\lambda=\sum_{\iota\in[k+1,d]}\lambda_\iota$. 
This then shows that $\nabla^2\Psi(\bm\nu)\succeq0$ holds for any $\bm\nu\in\R^m$ such that $\bM(\bm\nu)$ has distinct eigenvalues. 

  To establish convexity of $\Psi$ on $\R^m$, it suffices to show that (i) the set of such $\bm\nu$ where $\bM(\bm\nu)$ has distinct eigenvalues is dense in $\R^m$, and (ii) $\Psi$ is continuously differentiable. Convexity of $\Psi$ then follows from \cite[Theorem 3.3]{dudley_second_1977}. The denseness claim follows because the map $\bm\nu\mapsto\bM(\bm\nu)$ is analytic, and hence the discriminant $\Delta(\bm\nu)$ of its characteristic polynomial is analytic. 
  By the premise of the theorem, there exists $\bar{\bm\nu}\in\R^m$ such that $\bM(\bar{\bm\nu})$ has distinct eigenvalues, and thus $\Delta(\bar{\bm\nu})\neq0$. By the identity theorem, the set $\set{\bm\nu\in\R^m:\Delta(\bm\nu)\neq0}$ is dense in $\R^m$, which coincides with the set where  $\nabla^2\Psi(\bm\nu)\succeq0$. 
  Finally, as $G_s$ is convex and also differentiable over $\S_{++}^d$ (see  \citep[Proposition 2]{li_best_2023}), by \cite[Remark 6.2.6]{hiriart-urruty_fundamentals_2001}, it is continuously differentiable. Therefore, $\Psi(\bm\nu)=G_s(\bM(\bm\nu))$ is continuously differentiable, completing the argument.
\end{proof}

We next present another structural result that will be useful in establishing the desired convex-concave saddle point form for the double-scaled  linx relaxation. The proof of \cref{lem:distinct-eigenvalues} is given in \cref{sec:distinct-eigenvalues}. 

\begin{lemma}\label{lem:distinct-eigenvalues}
  Suppose $\bS_i=\bB^\top\e_i\e_i^\top\bB$ for $i\in[d]$, where $\bB\in\R^{d\times d}$ is invertible, and $\bS_i\in\S_+^d$ for $i\in[d+1,m]$ {if $m>d$}. Then, there exists $\bar{\bm\nu}\in\R^m$ such that $\bM(\bar{\bm\nu})=\sum_{i\in[m]}e^{\bar\nu_i}\bS_i$ has distinct eigenvalues. 
\end{lemma}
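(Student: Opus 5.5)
The plan is to choose $\bar{\bm\nu}$ so that the first $d$ terms dominate and have a prescribed spectrum, and then to let the extra terms ($i>d$) be negligible perturbations. Note that $\sum_{i\in[d]} e^{\nu_i}\bS_i = \bB^\top\Diag(e^{\nu_1},\dots,e^{\nu_d})\bB$.

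The approach has two steps.

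\textbf{Step 1: handle the first $d$ terms alone.} I would show there exists $\bm\theta\in\R^d$ such that $\bN(\bm\theta)\coloneqq\bB^\top\Diag(e^{\bm\theta})\bB$ has distinct eigenvalues.
\begin{itemize}
\item The nonzero eigenvalues of $\bB^\top\bD\bB$ coincide with those of $\bD^{1/2}\bB\bB^\top\bD^{1/2}$, which is similar to $\bD\,\bB\bB^\top$. Since $\bB$ is invertible, $\bK\coloneqq\bB\bB^\top\succ0$.
\item Pick strongly separated weights, $\bD=\Diag(t^{d-1},t^{d-2},\dots,1)$ with $t\to\infty$.
\item Rescale $\bD^{1/2}\bK\bD^{1/2}$ coordinate-wise and use the successive Schur complements of $\bK$. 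The $j$-th largest eigenvalue then behaves like $t^{d-j}\,\sigma_j$, where $\sigma_j=\det\bK_{[j],[j]}/\det\bK_{[j-1],[j-1]}>0$.
\item A clean way to make this rigorous is the characterization $\prod_{i\le j}\lambda_i = $ the largest $j\times j$ principal-type minor expansion. Concretely, $e_j(\bm\lambda)=\sum_{|T|=j}\det(\bD\bK)_{T,T}=\sum_{|T|=j}\det\bD_{T,T}\det\bK_{T,T}$, whose dominant term as $t\to\infty$ is $T=[j]$ with coefficient $\det\bK_{[j],[j]}>0$.
\item From the asymptotics of the elementary symmetric functions, the ratios $\lambda_j/\lambda_{j+1}\sim t\cdot(\text{positive constant})\to\infty$. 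Hence for $t$ large the eigenvalues are distinct.
\end{itemize}

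\textbf{Step 2: add the remaining terms as a perturbation.} If $m>d$, set $\bar\nu_i=\theta_i$ for $i\le d$ and $\bar\nu_i=-R$ for $i>d$.
\begin{itemize}
\item Then $\bM(\bar{\bm\nu})=\bN(\bm\theta)+e^{-R}\sum_{i>d}\bS_i$.
\item Eigenvalues are $1$-Lipschitz in the operator norm (Weyl's inequality). The minimum gap $\delta>0$ of the spectrum of $\bN(\bm\theta)$ therefore survives once $e^{-R}\|\sum_{i>d}\bS_i\|<\delta/2$.
\end{itemize}
This yields $\bar{\bm\nu}$ with $\bM(\bar{\bm\nu})$ having distinct eigenvalues. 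If $m=d$, Step 1 alone suffices.

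\textbf{Main obstacle.} The delicate part is the asymptotic separation in Step 1. The key fact is that each leading principal minor of $\bK$ is positive, which holds because $\bK\succ0$; this is exactly where invertibility of $\bB$ is used.

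If the elementary-symmetric asymptotics become messy, I would fall back on an analytic-genericity argument. The discriminant of the characteristic polynomial of $\bN(\bm\theta)$ is a polynomial in $e^{\bm\theta}$, and it suffices to exhibit one point where it is nonzero, by the same identity-theorem reasoning used in \cref{lem:gamma-g-scaling-hessian}. The scaling $t\to\infty$ supplies such a point, and the only task is verifying that the leading coefficient of the discriminant in $t$ is a nonzero product of the $\sigma_j$ differences, which are in fact nonzero because the exponents of $t$ differ.
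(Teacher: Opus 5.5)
Your proposal is correct. Its overall structure is the paper's, but Step~1 certifies distinct eigenvalues with a different tool.

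\textbf{What matches.} You put exponentially separated weights on the first $d$ terms; your choice $\theta_i=(d-i)\log t$ is an instance of the paper's ``sufficiently dispersed'' $\bar\nu_1>\cdots>\bar\nu_d$. You then make the terms with $i>d$ negligible by sending $\bar\nu_i\to-\infty$ and using Weyl's inequality, which you state more precisely than the paper does.

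\textbf{What differs.} The paper applies Courant--Fischer after the substitution $\a\mapsto\bB\a$, testing on $\spann\{\e_1,\dots,\e_i\}$ and $\spann\{\e_i,\dots,\e_d\}$. This gives the two-sided bound $e^{\bar\nu_i}\lambda_d(\bB\bB^\top)\le\lambda_i(\bM(\bar{\bm\nu}))\le e^{\bar\nu_i}\lambda_1(\bB\bB^\top)$. It immediately yields an explicit, non-asymptotic sufficient condition, $\bar\nu_i-\bar\nu_{i+1}>\log\bigl(\lambda_1(\bB\bB^\top)/\lambda_d(\bB\bB^\top)\bigr)$, using nothing beyond the min-max principle. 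Your principal-minor expansion of $e_j$ gives finer information (the Schur-complement pivots $\sigma_j$).

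\textbf{A step you should add.} You assert, rather than prove, the passage from the asymptotics of $e_j$ to those of the individual $\lambda_j$. One line closes it. Since the eigenvalues of $\bD\bB\bB^\top$ are positive and sorted,
\[
\prod_{i\le j}\lambda_i\le e_j(\bm\lambda)\le\tbinom{d}{j}\prod_{i\le j}\lambda_i .
\]
Together with $e_j=\Theta(t^{a_j})$, where $a_j\coloneqq\sum_{i\le j}(d-i)$, this gives $\prod_{i\le j}\lambda_i=\Theta(t^{a_j})$. Hence $\lambda_j=\Theta(t^{d-j})$ and $\lambda_j/\lambda_{j+1}=\Theta(t)\to\infty$, which is all you need; the exact constants $\sigma_j$ are not required.

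With that inserted, the discriminant fallback is unnecessary. It relies on the same asymptotics anyway, and its leading coefficient would be $\prod_{i<j}\sigma_i^2$, not a product of differences of the $\sigma$'s.
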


We are now ready to prove that the relaxation obtained from applying double-scaling to linx given in \eqref{eq:double-scaling} indeed admits a convex-concave saddle point problem formulation after a change of variables of $(\bm\rho,\bm\omega)\coloneqq(\log\bm\gamma,\log\bm\mu)\in\R^d\times\R^d$. 
To this end, let us define 
\begin{align}
  &\objld(\x,\bm\rho,\bm\omega) 
  \coloneqq \objldlog(\x,\exp(\bm\rho),\exp(\bm\omega)) \notag\\
  &= -\frac12\log\det(\bC\Diag(\exp(\bm\rho))\Diag(\x)\bC+\Diag(\exp(\bm\omega)){\Diag(\ones-\x)} ) + \frac12\sum_{i\in[d]}x_i\rho_i + \frac12\sum_{i\in[d]}(1-x_i)\omega_i. \label{eq:double-linx-convex-concave}
\end{align}
After this change of variables, the relaxation \eqref{eq:double-scaling} becomes 
\begin{equation}\label{eq:d-scaling:reformulated}
\Opt_{\linxd} = \min_{\x\in\cX} \max_{\bm\rho,\bm\omega\in\R^d} \objld(\x,\bm\rho,\bm\omega).
\end{equation} 
It is straightforward to see that $\objld(\x,\bm\rho,\bm\omega)$ is convex in $\x$ for every fixed $(\bm\rho,\bm\omega)$. To establish that this function is also concave in $(\bm\rho,\bm\omega)$ for every fixed $\x\in[0,1]^d$, we apply the structural result from \cref{lem:gamma-g-scaling-hessian} with $s=d$ and a specific choice of $\bS_1,\dots,\bS_{2d}$.

\begin{proposition}\label{prop:d-scaling-linx-convexity}
  Suppose $\bC$ is full rank.
  The function $\objld(\x,\bm\rho,\bm\omega)$ is convex in $\x$ for every fixed $(\bm\rho,\bm\omega)$ and concave in $(\bm\rho,\bm\omega)$ for every fixed $\x\in[0,1]^d$. 
\end{proposition}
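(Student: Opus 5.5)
Convexity in $\x$ comes first. For fixed $(\bm\rho,\bm\omega)$, the matrix $\bC\Diag(\exp(\bm\rho))\Diag(\x)\bC+\Diag(\exp(\bm\omega))\Diag(\ones-\x)$ is affine in $\x$. For $\x\in[0,1]^d$ it equals $\sum_i x_i e^{\rho_i}\bC\e_i\e_i^\top\bC+\sum_i(1-x_i)e^{\omega_i}\bE_{ii}$, which is positive semidefinite. Since $-\log\det$ is convex on the positive semidefinite cone (taking the value $+\infty$ on singular matrices), its composition with an affine map is convex. The remaining terms $\frac12\sum_i x_i\rho_i+\frac12\sum_i(1-x_i)\omega_i$ are linear in $\x$, so $\objld(\cdot,\bm\rho,\bm\omega)$ is convex. (The sum above is in fact positive definite for all $\x\in[0,1]^d$: at any $\x$ we have $x_i>0$ or $1-x_i>0$ for each $i$, and $\bC$ is invertible.)

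For concavity in $(\bm\rho,\bm\omega)$ at fixed $\x\in[0,1]^d$, the linear terms are harmless, so it suffices to show that $(\bm\rho,\bm\omega)\mapsto -\frac12\log\det(\bM)$ is concave. Set $m=2d$ and $\bm\nu=(\bm\rho+\log\x,\ \bm\omega+\log(\ones-\x))$ on the coordinates where $x_i>0$, respectively $1-x_i>0$. Define $\bS_i=x_i\bC\e_i\e_i^\top\bC$ for $i\in[d]$ and $\bS_{d+i}=(1-x_i)\bE_{ii}$. It is cleaner to keep $\bm\nu=(\bm\rho,\bm\omega)$ and absorb the weights $x_i$ and $1-x_i$ into the $\bS_i$. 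Then $\bM(\bm\nu)=\sum_{i\in[2d]}e^{\nu_i}\bS_i$, each $\bS_i\succeq0$, and $\sum_i\bS_i\succ0$ by the observation above. With $s=d$, \cref{lem:g_d:exactness} gives $G_d(\bX)=f_d(\bm\lambda(\bX))=-\log\det\bX$. Hence $-\frac12\log\det\bM(\bm\nu)=\frac12\cdot\bigl(-G_d(\bM(\bm\nu))\bigr)\cdot(-1)$, that is, $-\frac12\log\det\bM=-\frac12\Psi$ up to sign conventions. Concretely, $\Psi(\bm\nu)=-G_d(\bM(\bm\nu))=\log\det\bM(\bm\nu)$, and \cref{lem:gamma-g-scaling-hessian} asserts $\Psi$ is convex. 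Therefore $-\frac12\log\det\bM=-\frac12\Psi$ is concave in $\bm\nu$, and hence in $(\bm\rho,\bm\omega)$, as desired.

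The main obstacle is verifying the hypothesis of \cref{lem:gamma-g-scaling-hessian}, namely the existence of some $\bar{\bm\nu}$ for which $\bM(\bar{\bm\nu})$ has distinct eigenvalues. \cref{lem:distinct-eigenvalues} requires $\bS_i=\bB^\top\e_i\e_i^\top\bB$ with $\bB$ invertible for $i\in[d]$, which fails when some $x_i=0$. I would handle this in two cases. If all $x_i\in(0,1]$, take $\bB=\bC\Diag(\x)^{1/2}$, or rather write $\bS_i=\bB^\top\e_i\e_i^\top\bB$ with $\bB=\Diag(\sqrt{\x})\bC$, which is invertible because $\bC$ is full rank; the remaining $\bS_{d+i}\succeq0$ are allowed by \cref{lem:distinct-eigenvalues}. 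If some $x_i=0$, the corresponding $\bS_i$ vanishes and $\bM$ does not depend on $\rho_i$, so I would reorder the generators instead. Whenever $x_i<1$, the matrix $(1-x_i)\bE_{ii}$ has the form $\bB^\top\e_i\e_i^\top\bB$ with $\bB$ diagonal. So for each $i$ I choose the $i$-th generator as $x_i\bC\e_i\e_i^\top\bC$ if $x_i>0$ and as $(1-x_i)\bE_{ii}$ otherwise. The resulting $\bB$ has rows $\sqrt{x_i}\,\e_i^\top\bC$ or $\e_i^\top$, and I need this mixed matrix to be invertible. Invertibility may fail, so my first fallback is a limiting argument: concavity holds for $\x\in(0,1)^d$, and for fixed $(\bm\rho,\bm\omega)$ the function is continuous in $\x$ on $[0,1]^d$ because $\bM$ stays positive definite. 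Pointwise limits of concave functions are concave, which covers the boundary points.
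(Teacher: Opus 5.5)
Your proposal is correct and is essentially the paper's proof. You use the same generators $\bS_i=x_i\bC\e_i\e_i^\top\bC$ and $\bS_{d+i}=(1-x_i)\bE_{ii}$, the same factor $\bB=\Diag(\x)^{1/2}\bC$ to invoke \cref{lem:distinct-eigenvalues,lem:gamma-g-scaling-hessian} with $s=d$ on $(0,1]^d$, and the same continuity/limit step for points with some $x_i=0$ (the paper cites \cite[Theorem 10.8]{rockafellar_convex_1970} there, which is your pointwise-limit fallback). The generator-reordering detour is unnecessary, though it would in fact work: $\bC\succ0$ makes every principal submatrix $\bC_{T,T}$ nonsingular, so the mixed $\bB$ is invertible. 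That same fact, rather than mere invertibility of $\bC$, is what justifies $\sum_i\bS_i\succ0$ on all of $[0,1]^d$.
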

\begin{proof}
  Suppose $(\bm\rho,\bm\omega)$ is fixed. Then, 
  $\objld(\x,\bm\rho,\bm\omega)$ is the sum of a $-\log\det(\cdot)$ term and a linear function, and the $-\log\det(\cdot)$ term is a composite of convex matrix function $-\log\det(\cdot)$ with an affine mapping of $\x$ and so it is a convex function of $\x$. 
  Therefore, $\objld(\x,\bm\rho,\bm\omega)$ is convex in $\x$ for any fixed $(\bm\rho,\bm\omega)$. 

  Now, suppose $\x\in(0,1]^d$ is fixed. Let us set $m=2d$ and define the matrices
  \begin{align*}
    \bS_i &\coloneq \bC\e_i\e_i^\top\Diag(\x)\bC \succeq0,\quad \forall i\in[d], \\
    \bS_{d+i} &\coloneq \e_i\e_i^\top\Diag(\ones-\x) \succeq0,\quad \forall i\in[d].
  \end{align*}
  Then,
  \[
  	\bC\Diag(\exp(\bm\rho))\Diag(\x)\bC+\Diag(\exp(\bm\omega)) {\Diag(\ones-\x)}
	=\sum_{i\in[d]} \exp(\rho_i) S_i + \sum_{i\in[d]} \exp(\omega_i) S_{i+d}, 
  \]
  and thus $\objld(\x,\bm\rho,\bm\omega)=  -\frac12\log\det(\sum_{i\in[d]} \exp(\rho_i) S_i + \sum_{i\in[d]} \exp(\omega_i) S_{i+d} ) + \frac12\sum_{i\in[d]}x_i\rho_i + \frac12\sum_{i\in[d]}(1-x_i)\omega_i$, i.e.,
   the $\log\det(\cdot)$ term in $\objld(\x,\bm\rho,\bm\omega)$ is an instance of $\Psi(\bm\nu)$ in \cref{lem:gamma-g-scaling-hessian} with $s=d$ up to a factor of $\frac12$ in terms of the variables $\bm\nu\coloneqq (\bm\rho,\bm\omega)\in\R^{2d}$. 
  In addition, we have $\sum_{i\in[2d]}\bS_i=\bC\Diag(\x)\bC+\Diag(\ones-\x)\succ0$ for any $\x\in[0,1]^d$ (see \citep[Lemma 3.3.2]{fampa_maximum-entropy_2022}). Note also that the matrices $\bS_i$ satisfy the structural assumption of \cref{lem:distinct-eigenvalues} by letting $\bB:=\Diag(\x)^{1/2}\bC$ which is invertible {as $\x\in(0,1]^d$ and $\bC$ is full rank}.
  Then, by \cref{lem:gamma-g-scaling-hessian,lem:distinct-eigenvalues}, we conclude that $\objld(\x,\bm\rho,\bm\omega)$ in concave in $(\bm\rho,\bm\omega)$ for $\x\in(0,1]^d$. By \cite[Theorem 10.8]{rockafellar_convex_1970} and the continuity of $\objld$, we conclude that it is concave in $(\bm\rho,\bm\omega)$ for all $\x\in[0,1]^d$. 
\end{proof}

As an immediate consequence of \cref{prop:d-scaling-linx-convexity}, together with the observation in \cref{rem:scaling-comparisons}, we recover that the objective functions arising from o-scaling and g-scaling of the linx relaxation also admit a convex–concave structure after an appropriate change of variables.  
For ease of notation, we denote these o-scaling and g-scaling objective functions by
\begin{align}
  \objlo(\x,\rho_0) &\coloneqq \objlolog(\x,\exp(\rho_0)) \label{eq:f-linx-o} \\
  &= -\frac12\log\det(\exp(\rho_0)\bC\Diag(\x)\bC+ {\Diag(\ones-\x)} ) + \frac12 s\rho_0,  \notag \\
  \objlg(\x,\bm\rho) &\coloneqq \objlglog(\x,\exp(\bm\rho)) \label{eq:f-linx-g} \\
  &= -\frac12\log\det \left(\Diag(\exp(\bm\rho))\bC\Diag(\x)\bC\Diag(\exp(\bm\rho)) + {\Diag(\ones-\x)} \right) + \sum_{i\in[d]}x_i\rho_i.  \notag
\end{align}
Then, \cref{prop:d-scaling-linx-convexity} implies that $\objlo$ is convex in $\x$ for every fixed $\rho_0\in\R$ and concave in $\rho_0$ for every fixed $\x\in[0,1]^d$. Similarly, $\objlg$ is convex in $\x$ for every fixed $\bm\rho\in\R^d$ and concave in $\bm\rho$ for every fixed $\x\in[0,1]^d$.

\section{Scaling methods for \texorpdfstring{$\Gamma$}{Gamma} relaxation}
\label{sec:gamma-g-scaling}

In this section, we study the scaling enhancement of $\Gamma$ relaxation \eqref{eq:opt-gamma:equivalence} when $\varphi(\cdot)=-\log(\cdot)$. Recall that the $\Gamma$ relaxation is invariant to o-scaling {(see \cite[Theorem 3.4.14]{fampa_maximum-entropy_2022})}. 
To further strengthen it, \cite{chen_generalized_2024} proposed the following \emph{g-scaling} (general scaling) enhancement: 
\begin{align}\label{eq:gamma-g-scaling}
  \Opt_{\Gamma\text{-g}} \coloneqq \min_{\x\in\cX} \max_{\bm\gamma\in\R^d_{++}}\set{\objgglog (\x,\bm\gamma)  
  \coloneqq G_s\left(\bV\Diag(\bm\gamma)\Diag(\x)\bV^\top\right) + \sum_{i\in[d]}x_i\log\gamma_i}.
\end{align}

As in the case of g-scaled linx relaxation, we perform the change of variables $\bm\rho\coloneqq\log\bm\gamma\in\R^d$ and define
\begin{align}\label{eq:f-Gamma}
  \objgg(\x,\bm\rho) 
  &\coloneqq G_s\left(\bV\Diag(\exp(\bm\rho))\Diag(\x)\bV^\top\right) + \sum_{i\in[d]}x_i\rho_i . 
\end{align}
Thus, the g-scaled $\Gamma$ relaxation can be written as $\min_{\x\in\cX}\max_{\bm\rho\in\R^d}\objgg(\x,\bm\rho)$. 
When $\bm\rho=\bm0$, this reduces to the original $\Gamma$ relaxation. 

We first analyze the convexity properties of the g-scaled $\Gamma$ relaxation in \cref{sec:SP:g-scalled-Gamma}. This structural result further allows us to show in \cref{sec:g-scalled-Gamma-invariance} that, when $\cX=\set{\x\in[0,1]^d:\, \ones^\top \x=s}$, the $\Gamma$ relaxation is invariant under g-scaling.

\subsection{Convex-concave saddle point formulation for g-scaled \texorpdfstring{$\Gamma$}{Gamma} relaxation}\label{sec:SP:g-scalled-Gamma}

To obtain a tractable saddle point formulation, we need $\objgg(\x,\bm\rho)$ to be convex in $\x$ and concave in $\bm\rho$.
For each fixed $\bm\gamma\in\R^d_{++}$ (equivalently, fixed $\bm\rho\in\R^d$), \cite[Theoerem 6.ii]{chen_generalized_2024} shoved that the function 
$\objgglog(\x,\bm\gamma)$ (and thus $\objgg(\x,\bm\rho)$) is convex in $\x$. However, concavity of $\objgg(\x,\bm\rho)$ with respect to the g-scaling vector $\bm\rho$ was left unresolved, leaving the computational study of g-scaling for the $\Gamma$ relaxation relaxation without a theoretical foundation. In \cref{prop:gamma-g-scaling-convexity}, we close this gap by establishing the full convex–concave structure of $\objgg(\x,\bm\rho)$.

\begin{proposition}\label{prop:gamma-g-scaling-convexity}
{Suppose $\varphi(\cdot)=-\log(\cdot)$ and $\bV^\top$ is invertible.}
 For any fixed $\bm\rho\in\R^d$, the function $\objgg(\x,\bm\rho)$ is convex in $\x\in\R^d$, and  for any fixed $\x\in\cX$, the function $\objgg(\x,\bm\rho)$ is concave in $\bm\rho\in\R^d$. 
\end{proposition}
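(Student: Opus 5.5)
The plan follows the template of \cref{prop:d-scaling-linx-convexity}: treat the two halves separately, and obtain the concavity half directly from \cref{lem:gamma-g-scaling-hessian} together with \cref{lem:distinct-eigenvalues}.

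\textbf{Convexity in $\x$.} For fixed $\bm\rho$, the map $\x\mapsto \bV\Diag(\exp(\bm\rho))\Diag(\x)\bV^\top$ is linear. By \cref{lem:h-convex}, $G_s(\bX)=g_s(\bm\lambda(\bX))$ is convex in $\bX$, so its composition with this linear map is convex in $\x$. The remaining term $\sum_i x_i\rho_i$ is linear in $\x$. Hence $\objgg(\cdot,\bm\rho)$ is convex. This recovers \cite[Theorem 6.ii]{chen_generalized_2024} without using the index-$k$ formula for $h_s$, thanks to the majorization representation in \cref{cor:relaxation-h}.

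\textbf{Concavity in $\bm\rho$.} The linear term $\sum_i x_i\rho_i$ does not affect concavity, so it suffices to show that $\bm\rho\mapsto -G_s(\sum_{i\in[d]} e^{\rho_i}\bS_i)$ is convex, where $\bS_i\coloneqq x_i\v_i\v_i^\top\succeq 0$. This is exactly $\Psi$ from \cref{lem:gamma-g-scaling-hessian} with $m=d$. The steps are as follows.

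\begin{enumerate}
\item \emph{Reduce to $\x\in(0,1]^d$.} First fix $\x\in\cX$ with all $x_i>0$. Then $\sum_i \bS_i=\bV\Diag(\x)\bV^\top\succ 0$, since $\bV$ is square and invertible.
\item \emph{Verify the hypothesis of \cref{lem:distinct-eigenvalues}.} Write $\bS_i=\bB^\top\e_i\e_i^\top\bB$ with $\bB\coloneqq\Diag(\x)^{1/2}\bV^\top$. This $\bB$ is invertible because $\bV^\top$ is invertible and $x_i>0$. So \cref{lem:distinct-eigenvalues} provides some $\bar{\bm\nu}$ at which $\bM(\bar{\bm\nu})$ has distinct eigenvalues.
\item \emph{Conclude for positive $\x$.} \cref{lem:gamma-g-scaling-hessian} now gives that $\Psi$ is convex on $\R^d$, so $\objgg(\x,\cdot)$ is concave whenever $\x\in\cX\cap(0,1]^d$.
\end{enumerate}

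\textbf{Extension to the boundary (main obstacle).} The delicate step is passing to $\x\in\cX$ with some $x_i=0$. There $\bV\Diag(\x)\bV^\top$ is singular, but it has rank $\|\x\|_0\ge s$, since $\ones^\top\x=s$ with $x_i\le1$. Hence it still lies in $\dom(G_s)$.

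\emph{Route 1 (continuity).} Take $\x^\epsilon\coloneqq(1-\epsilon)\x+\epsilon\frac{s}{d}\ones$, which lies in $\cX\cap(0,1]^d$ when $\cX$ is the cardinality simplex. Show that $\objgg(\x^\epsilon,\bm\rho)\to\objgg(\x,\bm\rho)$ pointwise in $\bm\rho$. This uses continuity of $g_s$ on $\{\bm\lambda\ge 0:\|\bm\lambda\|_0\ge s\}$, which follows from the closed form $h_s$ of \cref{prop:h-rep}, together with continuity of eigenvalues. A pointwise limit of concave functions is concave.

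For general polyhedral $\cX$, the points $\x^\epsilon$ need not be feasible. However, concavity in $\bm\rho$ only uses $\x\in[0,1]^d$ and the rank condition, so the same perturbation works within $\{\x\in[0,1]^d:\ones^\top\x=s\}$. Alternatively, one can invoke \cite[Theorem 10.8]{rockafellar_convex_1970}, as in the proof of \cref{prop:d-scaling-linx-convexity}.

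\emph{Route 2 (direct reduction).} If the limiting argument proves awkward, reduce to the support $T=\{i:x_i>0\}$. Write $\bV\Diag(\exp(\bm\rho))\Diag(\x)\bV^\top=\bV_T\Diag(\cdot)\bV_T^\top$, which depends only on $\bm\rho_T$. The columns of $\bV_T$ are linearly independent. After an orthogonal change of basis to their span, apply \cref{lem:gamma-g-scaling-hessian} in dimension $|T|\ge s$, noting that $G_s$ only depends on the nonzero eigenvalues. Then extend trivially in the coordinates $\bm\rho_{[d]\setminus T}$, on which the function does not depend.
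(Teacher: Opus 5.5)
Your proposal is correct and follows the paper's proof essentially step for step. Convexity in $\x$ comes from \cref{lem:h-convex} composed with a linear map. Concavity for $\x\in\R^d_{++}$ comes from \cref{lem:gamma-g-scaling-hessian} and \cref{lem:distinct-eigenvalues} with $\bS_i=x_i\v_i\v_i^\top$ and $\bB=\Diag(\x)^{1/2}\bV^\top$. Boundary points of $\cX$ are then reached by a limiting argument, which the paper carries out via continuity from \cite[Corollary 9]{chen_generalized_2024} and \cite[Theorem 10.8]{rockafellar_convex_1970}. Your Route 2, which restricts to the support $T$ and applies the theorem in dimension $|T|\ge s$, is a valid alternative that avoids the limiting argument altogether. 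You also correctly identify that the limiting argument really needs continuity of $\objgg(\cdot,\bm\rho)$ in $\x$ at points with $\|\x\|_0\ge s$, not continuity in $\bm\rho$.
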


\begin{proof}
  Let $\bm\rho$ be fixed. Then, 
  $\objgg(\x,\bm\rho)$ is the sum of a function of the form $G_s(\cdot)$ and a linear function, where the $G_s$ term is a composition of the convex matrix function $G_s$ (see \cref{lem:h-convex}) and an affine mapping of $\x$ and so it is a convex function of $\x$. 
  Therefore, $\objgg(\x,\bm\rho)$ is convex in $\x$ for any fixed $\bm\rho$. 

  Now, suppose $\x\in\R_{++}^d$ is fixed. Let us define the matrices
  \begin{align*}
    \bS_i &\coloneq \bV\e_i\e_i^\top\Diag(\x)\bV^\top \succeq0,\quad \forall i\in[d]. 
  \end{align*}
  Then,
  \[
  	\bV\Diag(\exp(\bm\rho))\Diag(\x)\bV^\top
	=\sum_{i\in[d]} \exp(\rho_i) \bS_i, 
  \]
  and thus $\objgg(\x,\bm\rho) = G_s \left(\sum_{i\in[d]} \exp(\rho_i) \bS_i \right) + \sum_{i\in[d]}x_i\rho_i$. In addition, for any $\x\in\R_{++}^d$, we have $\sum_{i\in[d]}\bS_i=\bV\Diag(\x)\bV^\top\succ0$ and $\bS_i=\bB^\top\e_i\e_i^\top\bB$ with $\bB=\Diag(\x)^{1/2}\bV^\top$ invertible (as $\x\in\R_{++}^d$ and $\bV^\top$ is invertible by the premise of the proposition). Then, by \cref{lem:gamma-g-scaling-hessian,lem:distinct-eigenvalues}, we conclude that $\objgg(\x,\bm\rho)$ is concave in $\bm\rho$ for any $\x\in\cX\cap\R^d_{++}$. 
  Finally, recall that $\objgg$ is continuous in $\bm\rho$ by \cite[Corollary 9]{chen_generalized_2024}, and thus from \cite[Theorem 10.8]{rockafellar_convex_1970} we conclude that it is concave in $\bm\rho$ for all $x\in\cX$.
\end{proof}

\subsection{G-scaling invariance of \texorpdfstring{$\Gamma$}{Gamma} relaxation}\label{sec:g-scalled-Gamma-invariance}

It is well-known that o-scaling does not improve the $\Gamma$ relaxation bound $\Opt_{\Gamma}$ given in \eqref{eq:opt-gamma:equivalence}. This raises the natural question of whether g-scaling can strengthen it. In fact, \cite[Remark 5, Section 7]{chen_generalized_2024} posed this as an open question for the standard MESP with $\cX=\set{\x\in[0,1]^d:\, \ones^\top \x=s}$.
In \cref{prop:g-scaling-Gamma-invariant}, we resolve this question by showing that, for the standard MESP, g-scaling does not improve upon the $\Gamma$ relaxation bound. 
Our proof relies critically on the convex-concave structure of $\objgg(\x,\bm\rho)$ established in \cref{prop:gamma-g-scaling-convexity};
see \cref{sec:g-scaling-Gamma-invariant} for the proof.

\begin{theorem}\label{prop:g-scaling-Gamma-invariant}
  For the standard MESP, where $\cX=\set{\x\in[0,1]^d:\, \ones^\top \x=s}$ and $\varphi(\cdot)=-\log(\cdot)$, applying g-scaling to the $\Gamma$ relaxation yields no improvement; that is,  
  \[
\Opt_{\Gamma\text{-g}} = \min_{\x\in\cX}\max_{\bm\rho\in\R^d} \objgg(\x,\bm\rho)
=\min_{\x\in\cX}\objgg(\x,\bm0) =\Opt_{\Gamma}.
\]
\end{theorem}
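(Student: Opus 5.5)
We will prove the two inequalities separately. The inequality $\Opt_{\Gamma\text{-g}}\ge\Opt_{\Gamma}$ is immediate: for every $\x\in\cX$ we have $\max_{\bm\rho}\objgg(\x,\bm\rho)\ge\objgg(\x,\bm0)$, and \cref{cor:relaxation-h} gives $\min_{\x\in\cX}\objgg(\x,\bm0)=\Opt_\Gamma$.

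For the reverse inequality, the plan is to exhibit a saddle point of the form $(\x^*,\bm0)$. Let $\x^*$ be an optimum of the $\Gamma$ relaxation. By \cref{prop:gamma-g-scaling-convexity}, $\objgg(\x^*,\cdot)$ is concave. It therefore suffices to show that $\bm0$ maximizes it, which in turn follows if $\bm0$ is a stationary point. Then
\[
\Opt_{\Gamma\text{-g}}\le\max_{\bm\rho}\objgg(\x^*,\bm\rho)=\objgg(\x^*,\bm0)=\Opt_\Gamma .
\]
I will first treat the case where $\bM\coloneqq\bV\Diag(\x^*)\bV^\top\succ0$ and $G_s$ is differentiable at $\bM$ (by \cite[Proposition 2]{li_best_2023}). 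Write $\nabla G_s(\bM)=-\bY$ with $\bY\coloneqq\bQ\Diag(\bm\beta)^{-1}\bQ^\top$, as in the proof of \cref{lem:gamma-g-scaling-hessian}. By the chain rule:
\[
\frac{\partial\objgg}{\partial x_i}(\x^*,\bm0)=-\v_i^\top\bY\v_i,\qquad
\frac{\partial\objgg}{\partial\rho_i}(\x^*,\bm0)=x^*_i\bigl(1-\v_i^\top\bY\v_i\bigr).
\]
So stationarity in $\bm\rho$ amounts to showing $\v_i^\top\bY\v_i=1$ for every $i$ with $x_i^*>0$.

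Two ingredients will be combined. First, a trace identity:
\[
\sum_i x_i^*\v_i^\top\bY\v_i=\langle\bY,\bM\rangle=\sum_{\ell}\lambda_\ell/\beta_\ell=k+\frac{1}{\bar\lambda}\sum_{\ell>k}\lambda_\ell=s ,
\]
so the weighted average of $\v_i^\top\bY\v_i$ over the weights $x^*_i$, which sum to $s$, equals $1$. Second, the KKT conditions of $\min_{\x\in\cX}\objgg(\x,\bm0)$ with multiplier $\nu$ for $\ones^\top\x=s$. These give $\v_i^\top\bY\v_i=c$ for all fractional coordinates $0<x_i^*<1$, $\v_i^\top\bY\v_i\ge c$ when $x^*_i=1$, and $\v_i^\top\bY\v_i\le c$ when $x^*_i=0$. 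If there are no coordinates with $x_i^*=1$, the average identity forces $c=1$, and we are done.

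The main obstacle is the coordinates with $x^*_i=1$, where KKT only gives the one-sided inequality $\v_i^\top\bY\v_i\ge c$. My first attempt is to exploit the freedom in choosing $\x^*$: o-scaling invariance, together with the $s$-sparse structure of the optimal $\bm\xi=\bm\lambda'$ from \cref{prop:h-rep}, may let me perturb mass between coordinates and show that a strict inequality at some $x_i^*=1$ contradicts optimality.

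If this fails, I will use a fallback. The domain $\cX$ is compact and convex, so Sion's minimax theorem applies and gives $\Opt_{\Gamma\text{-g}}=\sup_{\bm\rho}\min_{\x\in\cX}\objgg(\x,\bm\rho)$. It then suffices to show, for each fixed $\bm\rho$, that $\min_{\x}\objgg(\x,\bm\rho)\le\Opt_\Gamma$. To prove this I would evaluate the scaled objective at a reweighted point derived from $\x^*$, or bound it via the Lagrangian dual of the scaled $\Gamma$ relaxation \citep{li_best_2023}, using dual certificates of the form above. Any degenerate case (singular $\bM$ or nondifferentiability) would be handled by perturbing $\bV$, or restricting to $\x\in\R^d_{++}$, and passing to the limit using the continuity of $\objgg$ and \cite[Theorem 10.8]{rockafellar_convex_1970}.
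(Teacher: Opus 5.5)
Your reduction is the right one and matches the paper's strategy. You take a minimizer $\x^*$ of $\objgg(\cdot,\bm0)$, use concavity in $\bm\rho$ from \cref{prop:gamma-g-scaling-convexity}, and reduce the claim to $\v_i^\top\bY\v_i=1$ on the support of $\x^*$. Your trace identity $\sum_i x_i^*\v_i^\top\bY\v_i=s$ is also exactly what the paper uses. However, the proposal stops at the decisive step. The threshold (KKT) conditions and the average identity allow $c<1$ on the fractional coordinates, compensated by $\v_i^\top\bY\v_i>1$ on coordinates with $x_i^*=1$. No first-order perturbation of $\x^*$ can exclude this, since moving mass off such a coordinate only increases the objective to first order. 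Exploiting the choice of $\x^*$ does not help either: the conclusion in fact holds for every minimizer. The Sion fallback merely restates the claim.

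What is missing is a structural upper bound:
\[
x_i^*\,\v_i^\top\bY\v_i\le 1\qquad\forall i\in[d].
\]
To see it, note that $x_i^*\v_i^\top\bY\v_i$ is the $i$-th diagonal entry of $\bX^*\coloneqq\Diag(\x^*)^{1/2}\bV^\top\bY\bV\Diag(\x^*)^{1/2}\succeq0$. The nonzero eigenvalues of $\bX^*$ are those of $\Diag(\bm\beta)^{-1/2}\bQ^\top\bM\bQ\Diag(\bm\beta)^{-1/2}=\Diag(\bm\lambda)\Diag(\bm\beta)^{-1}$, namely $\lambda_\ell/\beta_\ell\in[0,1]$. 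They lie in $[0,1]$ because $\bm\beta\ge\bm\lambda$ by the defining property of $k$ in \cref{lem:index-k}. Combined with your trace identity, the point $\widehat{\x}$ with $\widehat{x}_i\coloneqq x_i^*\v_i^\top\bY\v_i$ therefore lies in $\cX$.

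The paper plugs $\widehat{\x}$ into the first-order condition $(\g^*)^\top(\widehat{\x}-\x^*)\ge0$, which gives $\sum_i x_i^*(\v_i^\top\bY\v_i)^2\le s$. Cauchy--Schwarz against $\sum_i x_i^*\v_i^\top\bY\v_i=s$ then forces $\v_i^\top\bY\v_i=1$ on the support. Your own KKT route also closes once you have this bound. For $x_i^*=1$ you get $c\le\v_i^\top\bY\v_i\le1$, and fractional coordinates have value $c$. Since the weighted average equals $1$, this forces $c=1$ whenever a fractional coordinate exists, and $\v_i^\top\bY\v_i=1$ wherever $x_i^*=1$.

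The degenerate case is also not handled adequately. When $\bV$ is invertible (as needed for \cref{prop:gamma-g-scaling-convexity}), $\bM=\bV\Diag(\x^*)\bV^\top$ is singular exactly when $\x^*$ has a zero entry, which is common rather than an edge case. Perturbing $\bV$ does not remove zeros of $\x^*$, and passing to the limit in a min--max identity would need an argument you do not give.

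No perturbation is needed. Since $\rank(\bM)\ge s$, you still have $\bm\beta>\bm0$ and the trace identity. By \cref{lem:subdifferential}, every subgradient of $G_s$ at $\bM$ has the form $-\bQ(\Diag(\bm\beta)^{-1}+\Diag(\bm\eta))\bQ^\top$, with $\bm\eta\ge\bm0$ supported on the null space of $\bM$. Hence all subgradients give the same $x_i$-partials for $i$ in the support of $\x^*$, which is all your threshold argument uses. The paper uses this to show that $\g^*=-\diag(\bV^\top\bY\bV)$ itself satisfies the variational inequality. Likewise, differentiability of $\objgg(\x^*,\cdot)$ at $\bm0$, with gradient $\x^*-\x^*\circ\diag(\bV^\top\bY\bV)$, has to be justified in this case. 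It follows from concavity in $\bm\rho$ combined with the chain-rule inclusion for regular subgradients, as in the paper.
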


Although in \cref{prop:g-scaling-Gamma-invariant} we have shown that g-scaling does not improve $\Gamma$ relaxation when $\cX=\{\x\in[0,1]^d:\ones^\top\x=s \}$, the numerical study in \cite{chen_generalized_2024} shows that g-scaling may improve $\Gamma^c$ in the case of CMESP, i.e., when additional linear constraints $\bA\x\le \b$ are present in $\cX$. 

Recall that $\bC=\bV^\top\bV$. When $\bV$ is full rank, we can write $\bC^{-1}=\bW^\top\bW\in\S^d_{+}$ by selecting $\bW=\bV^{-\top}$. In this case, the complementary  $\Gamma$ relaxation (see \eqref{eq:opt-gamma-complementary}) is then itself  an instance of $\Gamma$ relaxation (up to an additive constant), and g-scaling can be applied to it in the same manner as in \eqref{eq:gamma-g-scaling}--\eqref{eq:f-Gamma} to obtain 
\begin{align*}
  \min_{\check\x\in\cX_{d-s}} \max_{\bm\omega\in\R^d}\set{G_{d-s}\left(\bW\Diag(\exp(\bm\omega))\Diag(\check\x)\bW^\top\right) {- \log\det(\bC)} + \sum_{i\in[d]}\check x_i\omega_i}.
\end{align*}
With a change of variables $\x=\ones-\check\x\in\cX$ and fixing $\bW=\bV^{-\top}$, 
 we define the g-scaled complementary relaxation as 
\begin{align}
  \Opt_{\Gamma^c\text{-g}} &\coloneqq  \min_{\x\in\cX} \max_{\bm\omega\in\R^d}\set{  \objggc(\x,\bm\omega) },  \label{eq:Opt-Gamma-c} \\ 
  \text{where }~  \objggc(\x,\bm\omega) &\coloneqq G_{d-s}\left(\bV^{-\top}\Diag(\exp(\bm\omega))\Diag(\ones-\x)\bV^{-1}\right) - \log\det(\bC) + \sum_{i\in[d]}(1-x_i)\omega_i . \label{eq:f-Gamma-c}
\end{align}
In the next section, we will establish a connection between double-scaling for linx and g-scaling for both $\Gamma$ and $\Gamma^c$.

\section{Connections between linx, \texorpdfstring{$\Gamma$}{Gamma}, \texorpdfstring{$\Gamma^c$}{Gamma\^{}c} relaxations, and their scalings}
\label{sec:connection}

In this section, we explore the relationships among the linx, $\Gamma$, $\Gamma^c$ relaxations and their scaled variants. Numerical evidence in \citep{li_best_2023,chen_computing_2023,chen_generalized_2024} suggests that none of these relaxations uniformly dominates the others. Nevertheless, in \cref{prop:connection} and \cref{cor:linx-gamma-connection}, we establish precise connections between their bounds. In particular, we show that the bound obtained from applying o-scaling technique to linx is at least as strong as the average of the $\Gamma$ and $\Gamma^c$ bounds. We further prove an analogous result for double-scaling of linx and g-scaling of $\Gamma$ and $\Gamma^c$ relaxations. Both results rely on the following key observation.

\begin{proposition}\label{prop:connection} 
  Suppose {$\bC=\bV^\top\bV$ with $\bV$} full rank. Then, for
  any $\x\in\cX$ and any $\bm\rho,\bm\omega\in\R^d$, there exists $\kappa\in\R$ such that $\objld(\x,\bm\rho + \kappa\ones,\bm\omega) \geq \frac12(\objgg(\x,\bm\rho) + \objggc(\x,\bm\omega))$.  
\end{proposition}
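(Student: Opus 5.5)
The plan is to first strip away the factor $\bV$ and the scaling variables, so that the claim becomes a purely spectral inequality between two positive semidefinite matrices. That inequality would then be proved with a determinant inequality of Fiedler type combined with the closed form of $g_s$ from \cref{prop:h-rep}. Throughout I write $\bm\gamma=\exp(\bm\rho)$, $\bm\mu=\exp(\bm\omega)$ and $\bD_{\x}=\Diag(\x)$.

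\emph{Step 1 (reduction).} Since $\bC=\bV^\top\bV$ with $\bV$ invertible, I would factor
\[
\bC\Diag(\bm\gamma)\bD_{\x}\bC+\Diag(\bm\mu)\Diag(\ones-\x)=\bV^\top\bigl(\bA+\bB\bigr)\bV,
\]
where $\bA\coloneqq\bV\Diag(\bm\gamma)\bD_{\x}\bV^\top$ and $\bB\coloneqq\bV^{-\top}\Diag(\bm\mu)\Diag(\ones-\x)\bV^{-1}$. Hence
\[
\objld(\x,\bm\rho+\kappa\ones,\bm\omega)=-\tfrac12\log\det\bC-\tfrac12\log\det(e^{\kappa}\bA+\bB)+\tfrac12\ones^\top\x\,\kappa+\tfrac12\sum_i x_i\rho_i+\tfrac12\sum_i(1-x_i)\omega_i .
\]
By \eqref{eq:f-Gamma} and \eqref{eq:f-Gamma-c}, the right-hand side of the claim equals $\tfrac12\bigl(G_s(\bA)+G_{d-s}(\bB)-\log\det\bC+\sum_i x_i\rho_i+\sum_i(1-x_i)\omega_i\bigr)$.

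Two facts simplify this. First, $\ones^\top\x=s$ on $\cX$. Second, $G_s(e^\kappa\bA)=G_s(\bA)-s\kappa$, because $g_s$ is $-\log$-homogeneous: $\bm\lambda'$ in \eqref{eq:optimal-xi} scales linearly and $k$ is scale invariant. Using these, after cancelling the common terms the claim reduces to showing that, for some $\kappa$ and with $\bA_\kappa\coloneqq e^\kappa\bA$,
\begin{equation}\label{eq:plan-key}
\log\det(\bA_\kappa+\bB)\le -G_s(\bA_\kappa)-G_{d-s}(\bB)=\sum_{i\in[s]}\log\lambda_i'(\bA_\kappa)+\sum_{i\in[d-s]}\log\lambda_i'(\bB).
\end{equation}
Here $\bm\lambda'(\cdot)$ is the vector from \cref{prop:h-rep}, taken with sparsity $s$ for $\bA_\kappa$ and $d-s$ for $\bB$.

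\emph{Step 2 (Fiedler bound).} For $\bA_\kappa,\bB\succeq0$ with eigenvalues $\bm\alpha,\bm\beta$ sorted nonincreasingly, Fiedler's inequality gives $\det(\bA_\kappa+\bB)\le\prod_{i\in[d]}(\alpha_i+\beta_{d+1-i})$. The target in \eqref{eq:plan-key} is exactly $\sum_i\log(\alpha'_i+\beta'_{d+1-i})$, where $\bm\alpha'=\bm\lambda'(\bA_\kappa)$ is $s$-sparse and $\bm\beta'=\bm\lambda'(\bB)$ is $(d-s)$-sparse. Indeed, the reversed $\bm\beta'$ occupies exactly the last $d-s$ slots, so the two supports interleave with no overlap. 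It therefore remains to prove
\[
\textstyle\sum_i\log(\alpha_i+\beta_{d+1-i})\le\sum_i\log(\alpha_i'+\beta'_{d+1-i})
\]
for a suitable $\kappa$.

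\emph{Step 3 (replacing eigenvalues by their majorizers; main obstacle).} We know $\bm\alpha'\majorize\bm\alpha$ and $\bm\beta'\majorize\bm\beta$. The difficulty is that $\bm t\mapsto\sum_i\log(t_i+c_i)$ with increasing shifts $c_i$ is not Schur-monotone in general. This is where the freedom in $\kappa$ must be used. I would first try choosing $\kappa$ so that the two ``averaged tails'' coincide, namely $\bar\alpha\coloneqq\frac1{s-k_A}\sum_{i>k_A}\alpha_i$ equals $\bar\beta\coloneqq\frac1{d-s-k_B}\sum_{i>k_B}\beta_i$; this is possible because $\bar\alpha$ scales with $e^\kappa$ while $\bar\beta$ and $k_A,k_B$ are unaffected. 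With $\bar\alpha=\bar\beta\eqqcolon\tau$, the vector $\alpha'_i+\beta'_{d+1-i}$ equals $\alpha_i$ on the top $k_A$ indices, $\beta_{d+1-i}$ on the bottom $k_B$ indices, and $\tau$ on the $d-k_A-k_B$ middle indices.

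The sum $\sum_{\text{middle}}(\alpha_i+\beta_{d+1-i})$ equals $(s-k_A)\tau+(d-s-k_B)\tau$, because $\bm\alpha$ and $\bm\beta$ vanish beyond ranks $s$ and $d-s$ respectively. So AM--GM gives that the middle block of Fiedler's product is at most $\tau^{\,d-k_A-k_B}$. For the extreme indices I would check $\alpha_i+\beta_{d+1-i}\le\alpha_i$, i.e. $\beta_{d+1-i}=0$ for $i\le k_A$, using the rank bounds; symmetrically for the bottom indices. If these rank/overlap conditions fail, which can happen when $\x$ is fractional so that $\bA$ and $\bB$ have full rank, I would instead bound the whole product by AM--GM over the middle block enlarged to include the overlap. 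This uses $\alpha_i\ge\tau$ for $i\le k_A$ and concavity of $\log$, and the choice of $k_A,k_B$ from \cref{lem:index-k} should make this bookkeeping go through. That verification is the step I expect to require the most care.
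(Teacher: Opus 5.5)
Your Step 1 reduction is correct, including $G_s(e^\kappa\bA)=G_s(\bA)-s\kappa$. Fiedler's bound $\det(\bA_\kappa+\bB)\le\prod_i(\alpha_i+\beta_{d+1-i})$ is correctly oriented, and your tail-matching $\kappa$ is exactly the paper's. The paper argues differently. It uses the Lin--Wolkowicz block majorization $(\bm\lambda(\bA_\kappa+\bB),\bm 0)\majorize(\bm\lambda(\bA_\kappa),\bm\lambda(\bB))$ to control the top $k_A+k_B$ partial sums. A tail-averaging argument then gives $\bm\lambda(\bA_\kappa+\bB)\majorize\widehat{\bm\xi}\coloneqq(\alpha_1,\dots,\alpha_{k_A},\tau,\dots,\tau,\beta_1,\dots,\beta_{k_B})$, and Schur-convexity finishes.

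Your Step 3 has a genuine gap. The middle-block identity and the claim $\beta_{d+1-i}=0$ for $i\le k_A$ rest on the rank bounds $\rank\bA\le s$ and $\rank\bB\le d-s$. But $\rank\bA=\|\x\|_0$ and $\rank\bB=\|\ones-\x\|_0$. On $\cX$ both bounds hold simultaneously only for binary $\x$, which is the trivial case where every $\kappa$ gives equality. At a generic fractional point such as $\x=\tfrac sd\ones$, both matrices are nonsingular. For that case you only gesture at a fix. Read literally, an AM--GM over a block enlarged to contain some $i\le k_A$ overshoots, since $\bigl(\tfrac{\alpha_i+M\tau}{M+1}\bigr)^{M+1}>\alpha_i\tau^{M}$ whenever $\alpha_i>\tau$. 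Only the overlap mass should be moved.

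Here is how to close it. Set $M\coloneqq d-k_A-k_B$, $\epsilon_i\coloneqq\beta_{d+1-i}$ for $i\in[k_A]$, $\delta_j\coloneqq\alpha_{d+1-j}$ for $j\in[k_B]$, and $E\coloneqq\sum_i\epsilon_i+\sum_j\delta_j$. Since $k_A+k_B\le d-2$, the middle $\alpha$-indices $(k_A,d-k_B]$ together with the $\delta$-indices exhaust the tail $(k_A,d]$, and likewise for $\beta$. Therefore
\[
\sum_{i=k_A+1}^{d-k_B}(\alpha_i+\beta_{d+1-i})=(s-k_A)\tau+(d-s-k_B)\tau-E=M\tau-E .
\]
All $\alpha_i+\beta_{d+1-i}>0$, since their product dominates $\det(\bA_\kappa+\bB)>0$. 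So AM--GM and concavity of $\log$ give
\[
\sum_{\mathrm{mid}}\log(\alpha_i+\beta_{d+1-i})\le M\log(\tau-E/M)\le M\log\tau-E/\tau .
\]
For the outer indices, use $\alpha_i>\tau$ for $i\le k_A$ and $\beta_j>\tau$ for $j\le k_B$ (\cref{lem:index-k}). The tangent-line bound then gives $\log(\alpha_i+\epsilon_i)\le\log\alpha_i+\epsilon_i/\tau$ and $\log(\beta_j+\delta_j)\le\log\beta_j+\delta_j/\tau$. Summing, the $E/\tau$ terms cancel, leaving $\sum_{i\le k_A}\log\alpha_i+M\log\tau+\sum_{j\le k_B}\log\beta_j$, which is exactly your target. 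Alternatively, you can show $(\alpha_i+\beta_{d+1-i})_i\majorize\widehat{\bm\xi}$ directly, as the paper does for $\bm\lambda(\bA_\kappa+\bB)$.

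With this filled in, your proof is a valid alternative route. Fiedler's inequality absorbs all the noncommutativity, leaving a scalar inequality: the paper's key inequality specialized to commuting, anti-aligned diagonal matrices. The paper instead handles both parts at once through majorization.
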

\begin{proof}
  Fix $\x\in\cX$, $\bm\rho,\bm\omega\in\R^d$. 
  Denote $\bM\coloneqq\bV\Diag(\exp(\bm\rho))\Diag(\x)\bV^\top$ and $\bP\coloneqq\bV^{-\top}\Diag(\exp(\bm\omega))(\bI_d - \Diag(\x))\bV^{-1}$. Observe that $\bV^\top(e^\kappa\bM+\bP)\bV=e^\kappa\bC\Diag(\exp(\bm\rho))\Diag(\x)\bC+\Diag(\exp(\bm\omega))(\bI_d-\Diag(\x))=\bC\Diag(\exp(\bm\rho+\kappa\ones))\Diag(\x)\bC+\Diag(\exp(\bm\omega))(\bI_d-\Diag(\x))$. 
  By the definition of $\objld$ in \eqref{eq:double-linx-convex-concave}, we have 
  \begin{align*}
    &\objld(\x,\bm\rho + \kappa\ones,\bm\omega) \\
    &= -\frac12\log\det(\bV^\top(e^\kappa\bM+\bP)\bV) + \frac12\sum_{i\in[d]}x_i(\rho_i+\kappa) + \frac12\sum_{i\in[d]}(1-x_i)\omega_i \\
    &= -\frac12\log\det(e^\kappa\bM+\bP) - \frac12\log\det\bV^\top - \frac12\log\det\bV + \frac12\sum_{i\in[d]}x_i\rho_i + \frac12\sum_{i\in[d]}(1-x_i)\omega_i + \frac12s\kappa \\
    &= \frac12G_d(e^\kappa\bM+\bP) - \frac12\log\det\bC + \frac12\sum_{i\in[d]}x_i\rho_i + \frac12\sum_{i\in[d]}(1-x_i)\omega_i + \frac12s\kappa \\
    &= \min_{\bm\xi\in\R^d_+}\set{-\frac12\sum_{i\in[d]}\log\xi_i - \frac12\log\det\bC + \frac12\sum_{i\in[d]}x_i \rho_i + \frac12\sum_{i\in[d]}(1-x_i)\omega_i + \frac12s\kappa: \right. \\
    &\hspace{10em}\left.\vphantom{\frac12\sum_{i\in[d]}} \bm\xi\majorize\bm\lambda\left(e^\kappa\bM + \bP \right),\ \bm\xi \in \K_{o,d}
    }. 
  \end{align*}
  Let $\xild\coloneq \xild(\x,\bm\rho+\kappa\ones,\bm\omega)$ be the optimal solution to the above minimization problem given by \cref{prop:h-rep}. 
  Consider the optimization problems defining $\objgg(\x,\bm\rho)$ (see \eqref{eq:f-Gamma}) and $\objggc(\x,\bm\omega)$ (see \eqref{eq:f-Gamma-c}), respectively. That is, in our notation, these problems are given by 
  \begin{align*}
    \objgg(\x,\bm\rho)
    &= \min_{\bm\xi\in\R^d_+}\set{-\sum_{i\in[s]}\log\xi_i + \sum_{i\in[d]}x_i\rho_i: \,  
    \bm\xi\majorize\bm\lambda(\bM),\ \bm\xi \in \K_{o,d}}, \\
    \objggc(\x,\bm\omega)
    &= \min_{\bm\xi\in\R^d_+}\set{-\sum_{i\in[d-s]}\log\xi_i + \sum_{i\in[d]}(1-x_i)\omega_i - \log\det\bC: \, 
    \bm\xi\majorize\bm\lambda(\bP),\ \bm\xi \in \K_{o,d}}.
  \end{align*}
  Let $\xigg:=\xigg(\x,\bm\rho)$ and $\xiggc:=\xiggc(\x,\bm\omega)$ be the optimal solutions corresponding to these two problems given by \cref{prop:h-rep}. 
  
  We claim that there exists $\kappa\in\R$ such that $\xild\majorize\begin{pmatrix}
    e^\kappa \xigg_{1:s} \\ \xiggc_{1:d-s}
  \end{pmatrix}$. Recall by the definition of majorization (see \cref{def:majorization}) that {the vectors $\bm\xi,\bm\zeta\in \R^d$ satisfy} $\bm\xi\majorize\bm\zeta$ if and only if 
  \begin{align}
    \sum_{i\in[j]}\xi_{[i]} &\geq \sum_{i\in[j]}\zeta_{[i]}, \quad j\in[d-1], \label{eq:majorization-def-ineq}\\
    \sum_{i\in[d]}\xi_{[i]} &= \sum_{i\in[d]}\zeta_{[i]}. \label{eq:majorization-def-eq}
  \end{align}

  By \cref{prop:h-rep}, the closed-form expressions for $\xild,\xigg,\xiggc$ are given as 
  \begin{gather*}
    \xild = \bm\lambda(e^\kappa\bM+\bP), \\
    \xigg = \left(\lambda_1(\bM), \cdots, \lambda_k(\bM), 
    \tau(\bM), \ldots, \tau(\bM), 
    0, \dots, 0\right)^\top, \\
    \xiggc = \left(\lambda_1(\bP), \cdots, \lambda_{k^c}(\bP), 
     \tau(\bP), \ldots, \tau(\bP), 
      0, \dots, 0\right)^\top, 
  \end{gather*}
  where the indices $k$ and $k^c$ are as defined in \cref{lem:index-k} for the vectors $\bm\lambda(\bM)$ and $\bm\lambda(\bP)$, respectively and we define $\tau(\bM)\coloneqq \frac{1}{s-k}\sum_{i\in[k+1,d]}\lambda_i(\bM)$ and $\tau(\bP) \coloneqq \frac{1}{d-s-k^c}\sum_{i\in[k^c+1,d]}\lambda_i(\bP)$.
  Here, the first $s$ coordinates of $\xigg$ and the first $d-s$ coordinates of $\xiggc$ are nonzero. In addition, $k\in[s-1]$ and $k^c\in[d-s-1]$ are such that $\xigg,\xiggc\in\K_{o,d}$. 
  In addition, note that for any  $\kappa\in\R$ we have 
  \begin{align}\label{eq:stacked-majorization} 
  \begin{pmatrix}\xild \\ \bm0\end{pmatrix} = 
  \begin{pmatrix}\bm\lambda(e^\kappa\bM+\bP) \\ \bm0 \end{pmatrix}
  \majorize \bm\lambda \begin{pmatrix} e^\kappa \bM & \bm0  \\ \bm0 & \bP \end{pmatrix}
  =\begin{pmatrix} e^\kappa\bm\lambda(\bM) \\ \bm\lambda(\bP) \end{pmatrix} ,
  \end{align} 
  where the majorization relation follows from \citep[Theorem 1.1]{lin_eigenvalue_2011} which is applicable since  $\bM,\bP\succeq0$. 
  
  Let $\kappa\in\R$ be such that $e^\kappa\cdot \tau(\bM) = \tau(\bP)$, i.e., 
  \[
    e^\kappa \cdot \frac{1}{s-k}\sum_{i\in[k+1,d]}\lambda_i(\bM) = \frac{1}{d-s-k^c}\sum_{i\in[k^c+1,d]}\lambda_i(\bP) . 
  \]
  Define 
  \[
    \widehat{\bm\xi} \coloneqq \begin{pmatrix}
      e^\kappa\xigg_{1:s} \\ \xiggc_{1:d-s}
    \end{pmatrix} 
    = \begin{pmatrix}
      e^\kappa\, \bm\lambda_{1:k}(\bM) \\ e^\kappa\ \tau(\bM) \ones_{s-k} \\ \bm\lambda_{1:k^c}(\bP) \\ \tau(\bP) \ones_{d-s - k^c} 
    \end{pmatrix}   
    .
  \] 
  Hence, by our choice of $\kappa$, for any $\ell\in [k+k^c]$, the $\ell$-th largest coordinate of $\widehat{\bm\xi} 
  $ coincides with the $\ell$-th largest coordinate of $\begin{pmatrix}
    e^\kappa\bm\lambda(\bM) \\ \bm\lambda(\bP)
  \end{pmatrix}$, and the rest $d-(k+k^c)$ coordinates of $\widehat{\bm\xi} 
  $ are all equal to $e^\kappa\cdot \tau(\bM) = \tau(\bP)$. 
  We claim that $\xild\majorize\widehat{\bm\xi}$ holds. 
  Using the definition of $\widehat{\bm\xi}$ and \eqref{eq:stacked-majorization}, we see that  the first $k+k^c$ inequalities of \eqref{eq:majorization-def-ineq} in the definition of the majorization relation $\xild\majorize\widehat{\bm\xi}
  $ hold.
  In addition,  the equation \eqref{eq:majorization-def-eq} in the definition of majorization relation $\xild\majorize\widehat{\bm\xi}$ is satisfied because 
  \begin{align*}
    \sum_{i\in[d]}\xildi
    &= \tr(e^\kappa\bM+\bP) \\
    &= e^\kappa\tr(\bM)+\tr(\bP) \\
    &= e^\kappa\sum_{i\in[d]}\lambda_i(\bM) + \sum_{i\in[d]}\lambda_i(\bP) \\
    &= e^\kappa\sum_{i\in[s]}\xiggi + \sum_{i\in[d-s]}\xiggci \\
    &=  \sum_{i\in[d]} \widehat{\xi}_i ,
  \end{align*}
  which holds by the definitions of $\xild,\xigg,\xiggc$. 
    Moreover,  as $e^\kappa\cdot \tau(\bM) = \tau(\bP)$, the $d-(k+k^c)$ smallest coordinates of $ \widehat{\bm\xi}
  $ 
  are all equal. 
  {
    Since the first $k+k^c$ inequalities of \eqref{eq:majorization-def-ineq} hold, and moreover the equality \eqref{eq:majorization-def-eq} holds, we have 
  \begin{align*}
    \sum_{i\in[k+k^c+1,d]}\xild_i \leq \sum_{i\in[k+k^c+1,d]}\widehat{\bm\xi}_i. 
  \end{align*}
  Therefore, for any $j\in[k+k^c,d-1]$, we have
  \begin{align*}
    \frac{1}{d-j}\sum_{i\in[j+1,d]}\widehat{\bm\xi}_i 
    &= \frac{1}{d-k-k^c}\sum_{i\in[k+k^c+1,d]}\widehat{\bm\xi}_i \\
    &\geq \frac{1}{d-k-k^c}\sum_{i\in[k+k^c+1,d]}\xild_i \\
    &\geq \frac{1}{d-j}\sum_{i\in[j+1,d]}\xild_i. 
  \end{align*}
  Here, the equality follows from the choice of $\kappa$ which guarantees that $\widehat{\bm\xi}_i$'s are all equal for $i\in[k+k^c+1,d]$, and the last inequality follows from the fact that $\xild\in\K_{o,d}$. Combined with \eqref{eq:majorization-def-eq}, this implies 
  \begin{align*}
    \sum_{i\in[j]}\widehat{\bm\xi}_i \leq \sum_{i\in[j]}\xild_i, ~~\forall j\in[k+k^c,d-1],
  \end{align*}
  i.e., the remaining inequalities of \eqref{eq:majorization-def-ineq} hold true as well. 
  Therefore, we indeed have $\xild\majorize\widehat{\bm\xi}$ as desired. 
  } 
  Then, by {\cref{lem:monotone} and the} Schur-convexity of the function $\bm\xi\mapsto-\frac12\sum_{i\in[d]}\log(\xi_i)$, the majorization relation $\xild\majorize\widehat{\bm\xi}$ implies 
  \begin{align*}
    -\frac12\sum_{i\in[d]}\log(\xildi) 
    &\geq -\frac12\sum_{i\in[s]}\log(e^\kappa\xiggi) - \frac12\sum_{i\in[d-s]}\log(\xiggci) \\ 
    &= -\frac12\sum_{i\in[s]}\log(\xiggi) - \frac12\sum_{i\in[d-s]}\log(\xiggci) - \frac12s\kappa. 
  \end{align*}
  {Recalling that $\xild$, $\xigg$, and $\xiggc$ are the optimal solutions corresponding to the minimization problems defining $\objld(\x,\bm\rho+\kappa\ones,\bm\omega)$, $\objgg(\x,\bm\rho)$, and $\objggc(\x,\bm\omega)$, respectively, we conclude that this inequality }
  shows $\objld(\x,\bm\rho+\kappa\ones,\bm\omega)\geq\frac12(\objgg(\x,\bm\rho)+\objggc(\x,\bm\omega))$ for the value of $\kappa$ chosen as $\kappa = \log(\tau(\bP) / \tau(\bM)) $. 
\end{proof}

As \cref{prop:connection} holds for all $\x\in\cX$ and all $\bm\rho,\bm\omega\in\R^d$, we arrive at the following corollary, 
which reveals an interesting  connection between the bounds obtained by applying double-scaling (or o-scaling) to linx and the average of bounds from  g-scaled (or no scaling) $\Gamma$ and $\Gamma^c$ relaxations; see \cref{sec:linx-gamma-connection} for the proof of this corollary. 

\begin{corollary}\label{cor:linx-gamma-connection}
 Suppose {$\bC=\bV^\top\bV$ with $\bV$} full rank. Then, the following relations hold for both MESP and CMESP:
\begin{enumerate}[(i)]
\item $\Opt_{\linxo} \ge \frac12 \left[ \Opt_{\Gamma} +\Opt_{\Gamma^c} \right]$,
\item $\Opt_{\linxd} \ge \frac12 \left[ \Opt_{\Gamma\textup{-g}} +\Opt_{\Gamma^c\textup{-g}} \right]$. 
\end{enumerate}
\end{corollary}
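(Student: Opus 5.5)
Both parts will follow from \cref{prop:connection} by taking inner maxima and then outer minima; the only real work is bookkeeping of the scaling parameters. For (ii), fix any $\x\in\cX$. For arbitrary $\bm\rho,\bm\omega\in\R^d$, \cref{prop:connection} provides $\kappa\in\R$ with $\objld(\x,\bm\rho+\kappa\ones,\bm\omega)\ge\frac12(\objgg(\x,\bm\rho)+\objggc(\x,\bm\omega))$. This gives
\[
\max_{\bm\rho',\bm\omega'\in\R^d}\objld(\x,\bm\rho',\bm\omega')\ \ge\ \frac12\big(\objgg(\x,\bm\rho)+\objggc(\x,\bm\omega)\big).
\]
The left side does not depend on $(\bm\rho,\bm\omega)$, so we take the supremum over $\bm\rho$ and $\bm\omega$ separately on the right. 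The right side is separable, so this yields $\frac12(\max_{\bm\rho}\objgg(\x,\bm\rho)+\max_{\bm\omega}\objggc(\x,\bm\omega))$, with suprema in place of maxima if they are not attained. Next we use $\max_{\bm\rho}\objgg(\x,\bm\rho)\ge\min_{\x'\in\cX}\max_{\bm\rho}\objgg(\x',\bm\rho)=\Opt_{\Gamma\textup{-g}}$ and the analogous bound $\max_{\bm\omega}\objggc(\x,\bm\omega)\ge\Opt_{\Gamma^c\textup{-g}}$, where $\Opt_{\Gamma^c\textup{-g}}$ is defined over the same $\cX$ via \eqref{eq:Opt-Gamma-c}. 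Minimizing the left side over $\x\in\cX$ then gives (ii). The same argument works for MESP and CMESP, because only the common feasible set $\cX$ is used, together with the complementary relaxation written in terms of $\x=\ones-\check\x$.

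For (i), the plan is to specialize to $\bm\rho=\bm\omega=\bm0$. Then \cref{prop:connection} gives a $\kappa$ with $\objld(\x,\kappa\ones,\bm0)\ge\frac12(\objgg(\x,\bm0)+\objggc(\x,\bm0))$. By \eqref{eq:double-linx-convex-concave}, and using $\ones^\top\x=s$, we have $\objld(\x,\kappa\ones,\bm0)=-\frac12\log\det(e^\kappa\bC\Diag(\x)\bC+\Diag(\ones-\x))+\frac12 s\kappa=\objlo(\x,\kappa)$. This is exactly the reduction noted in \cref{rem:scaling-comparisons}. It follows that $\max_{\rho_0}\objlo(\x,\rho_0)\ge\frac12(\objgg(\x,\bm0)+\objggc(\x,\bm0))$.

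It remains to identify the two terms on the right. First, $\objgg(\x,\bm0)=G_s(\bV\Diag(\x)\bV^\top)$, whose minimum over $\cX$ is $\Opt_\Gamma$ by \cref{cor:relaxation-h}. Second, I will check that $\objggc(\x,\bm0)=G_{d-s}(\bW\Diag(\ones-\x)\bW^\top)-\log\det\bC$ with $\bW=\bV^{-\top}$, which satisfies $\bW^\top\bW=\bV^{-1}\bV^{-\top}=\bC^{-1}$. Under the substitution $\check\x=\ones-\x$, which maps $\cX$ bijectively onto $\cX_{d-s}$ in the standard case, this is exactly the objective of \eqref{eq:opt-gamma-complementary}. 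Minimizing each term separately over $\cX$ then gives (i).

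The main obstacle is this last identification, since the $\Gamma^c$ bound must not depend on the factorization of $\bC^{-1}$. I expect it to hold because $G_{d-s}$ depends only on the nonzero eigenvalues of $\bW\Diag(\check\x)\bW^\top$, and these coincide with those of $\Diag(\check\x)^{1/2}\bC^{-1}\Diag(\check\x)^{1/2}$ for any factorization $\bC^{-1}=\bW^\top\bW$. A second minor point is the CMESP case, where I will take $\cX_{d-s}:=\ones-\cX$, in line with the remark in \cref{sec:complementary}. Apart from this, every step is an elementary $\min$/$\max$ inequality.
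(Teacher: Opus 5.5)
Your proposal is correct, but it gets there by a more elementary route than the paper.

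\textbf{The paper's route.} The paper first asserts that saddle points exist for the o-scaled linx, g-scaled $\Gamma$, g-scaled $\Gamma^c$, and double-scaled linx problems. It justifies this by the convex--concave structure from \cref{prop:d-scaling-linx-convexity,prop:gamma-g-scaling-convexity}, continuity, and boundedness of $\cX$. It then chains saddle-point inequalities. In (ii), the essential ingredient is the max--min side, $\Opt_{\Gamma\textup{-g}}\le\objgg(\x,\bm\rho^{\Gamma})$ for all $\x\in\cX$, together with its $\Gamma^c$ analogue. The paper applies \cref{prop:connection} at the fixed scalings $(\bm\rho^{\Gamma},\bm\omega^{\Gamma^c})$ and finally evaluates at the double-scaled linx minimizer $\x^*$.

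\textbf{Your route.} You apply \cref{prop:connection} pointwise in $\x$. You then take suprema over $\bm\rho$ and $\bm\omega$ separately, which is exactly where the two independent scaling blocks of double-scaling are used. Each supremum is bounded below by the corresponding min--max value using only the trivial inequality $\sup_{\bm\rho}\objgg(\x,\bm\rho)\ge\inf_{\x'\in\cX}\sup_{\bm\rho}\objgg(\x',\bm\rho)$, and you finish by taking the infimum over $\x$.

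\textbf{Comparison.} Your argument needs neither the convex--concave propositions nor attainment of any optimum. This matters because the scaling variables range over all of $\R^d$, and existence of a saddle point there is not implied by compactness of $\cX$ alone; it would need an extra attainment argument for the outer maximization. What the paper's route gives in exchange is a certificate: for every $\x\in\cX$, the fixed pair $(\bm\rho^{\Gamma},\bm\omega^{\Gamma^c})$ built from the optimal $\Gamma$ and $\Gamma^c$ scalings already attains the averaged bound, after an $\x$-dependent shift $\kappa\ones$ in the first block. That suggests a natural initialization for double-scaling.

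\textbf{Part (i).} Your reduction $\objld(\x,\kappa\ones,\bm0)=\objlo(\x,\kappa)$ and the identifications $\Opt_{\Gamma}=\min_{\x\in\cX}\objgg(\x,\bm0)$ and $\Opt_{\Gamma^c}=\min_{\x\in\cX}\objggc(\x,\bm0)$ are the same ones the paper uses.
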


The relations in \cref{cor:linx-gamma-connection} apply to both MESP and CMESP. While g-scaling does not improve the $\Gamma$ relaxation bound in the standard MESP setting, it can yield strictly stronger bounds in certain CMESP instances {\citep{chen_generalized_2024}. 

It is worth emphasizing a key distinction between double-scaling and g-scaling. In light of \cref{cor:linx-gamma-connection}, one might ask whether the inequality  $\Opt_{\linxg} \ge \frac12 \left[ \Opt_{\Gamma\textup{-g}} +\Opt_{\Gamma^c\textup{-g}} \right]$ also holds. We suspect that this is not the case.   
As reflected in the proof of \cref{cor:linx-gamma-connection}, the $\Gamma$ and $\Gamma^c$ relaxations may attain their optimal bounds at different g-scaling vectors. The double-scaling formulation for linx can simultaneously accommodate both scaling directions, whereas g-scaling applied to linx cannot. This highlights the added flexibility, and strength, of double-scaling relative to g-scaling for linx.

To the best of our knowledge, no integrality gap has been established in the literature for the linx relaxation or its scaling variants {even in the standard MESP case}. 
This contrasts with the $\Gamma$ relaxation and its complementary problem 
{for standard MESP}, 
for which \cite[Corollary 1]{li_best_2023} proved that (note that their objective function is the negative of ours)
  \begin{align*}
    \Opt &\geq \Opt_{\Gamma} 
    \geq \Opt - s\log s + s\log d - \log{\binom{d}{s}}, \\
    \Opt &\geq \Opt_{\Gamma^c} 
    \geq \Opt - (d-s)\log(d-s) + (d-s)\log d - \log{\binom{d}{d-s}}. 
  \end{align*}
  (Recall $\Opt_{\Gamma}=\min_{\x\in\cX} {\objgg(\x,\bm0)}$ and $\Opt_{\Gamma^c}=\min_{\x\in\cX} {\objggc(\x,\bm0)}$.)
  Leveraging the connection between o-scaled linx relaxation and the $\Gamma$ relaxations given in \cref{cor:linx-gamma-connection}(a),  these preceding integrality gaps for $\Gamma$ and $\Gamma^c$ immediately yield an integrality gap for the o-scaled linx relaxation {for the standard MESP}.
    
\begin{corollary}\label{cor:linx-integrality-gap}
  Suppose $\cX=\set{\x\in[0,1]^d:\,\ones^\top \x=s}$. Then, the integrality gap of the o-scaled linx relaxation satisfies: 
  \begin{align*}
    \Opt_{\linxo} 
    \geq \Opt - \frac12 \left[s\log s + (d-s)\log(d-s) - d\log d \right] - \log{\binom{d}{s}}. 
  \end{align*} 
\end{corollary}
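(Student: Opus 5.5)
Since $\cX=\set{\x\in[0,1]^d:\ones^\top\x=s}$ is the standard MESP setting, I will start from \cref{cor:linx-gamma-connection}(i), which gives $\Opt_{\linxo}\ge\frac12[\Opt_{\Gamma}+\Opt_{\Gamma^c}]$. This needs $\bV$ (equivalently $\bC$) full rank, which the complementary problem implicitly assumes. I will then substitute the two integrality-gap bounds quoted from \cite[Corollary 1]{li_best_2023}:
\[
\Opt_{\Gamma}\ge \Opt - s\log s + s\log d - \log\tbinom{d}{s},
\]
\[
\Opt_{\Gamma^c}\ge \Opt - (d-s)\log(d-s) + (d-s)\log d - \log\tbinom{d}{d-s}.
\]
Both bounds lower-bound the same quantity $\Opt$. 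For $\Gamma^c$ the constant $-\log\det(\bC)$ is already built into $\Opt_{\Gamma^c}$ through the complementary identity $\Opt(\bC,s)=\Opt(\bC^{-1},d-s)-\log\det\bC$, so no extra correction term appears. I will check this point against the definition \eqref{eq:opt-gamma-complementary}.

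Averaging the two inequalities gives $\Opt$ minus a combined correction term. Because $\binom{d}{s}=\binom{d}{d-s}$, the two binomial terms average to exactly $\log\binom{d}{s}$. The $\log d$ terms combine as
\[
\tfrac12\big(s\log d+(d-s)\log d\big)=\tfrac12 d\log d.
\]
The remaining terms are $-\tfrac12\big[s\log s+(d-s)\log(d-s)\big]$. Collecting these yields exactly
\[
\Opt-\tfrac12\big[s\log s+(d-s)\log(d-s)-d\log d\big]-\log\tbinom{d}{s},
\]
which is the claimed bound.

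No step is a genuine obstacle. The only care needed is to confirm the sign conventions: Li et al.\ state their bounds for the maximization form, so negating them must give exactly the displayed lower bounds on our minimization values. I also need the edge case $s=d$ (or $d-s=0$), where $\Gamma^c$ is degenerate. I will handle it by the convention $0\log 0=0$, or I will restrict to $1\le s\le d-1$, noting that when $s=d$ linx is exact.
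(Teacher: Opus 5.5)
Your proposal is correct and follows the paper's argument exactly: plug the two integrality gaps of \cite[Corollary 1]{li_best_2023} into \cref{cor:linx-gamma-connection}(i) and average them, using $\binom{d}{s}=\binom{d}{d-s}$. You also point out that the full-rank assumption on $\bV$ is inherited from \cref{cor:linx-gamma-connection}; this is a fair point, and the statement leaves it implicit.
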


%% file: experiments.tex
\section{Numerical study}
\label{sec:numerical}

In this section, we present a preliminary numerical study comparing the quality of the bounds from various relaxations discussed in this paper.

We use two covariance matrices $\bC\in\S^n_+$ from the MESP literature with dimensions $n=90$ and $n=124$. 
The matrix with $n=90$ matrix is constructed from temperature measurements recorded at monitoring stations across the Pacific Northwest region of the United States \citep{anstreicher_efficient_2020}.
The matrix with $n=124$ 
originate from benchmark datasets 
derived from an environmental monitoring network redesign study 
\cite{hoffman_new_2001}.  
Both matrices are nonsingular and have been widely used as standard benchmarks in the development and testing of MESP algorithms 
{\citep{ko_exact_1995,lee_constrained_1998,anstreicher_using_1999,hoffman_new_2001,lee_linear_2003,anstreicher_masked_2004,burer_solving_2007,anstreicher_maximum-entropy_2018,li_best_2023,chen_generalized_2024}}. 
For each covariance matrix, we generate a series of standard MESP test instances by varying the subset size parameter $s$. 
{The factorization $\bC=\bV^\top\bV$ is computed via the Cholesky decomposition in  NumPy, which returns a lower-triangular matrix $\bV$ satisfying $\bC=\bV^\top\bV$. This factorization is numerically stable for symmetric positive definite matrices and is also used by \cite{li_best_2023}.}

We test the quality of various scaling techniques applied to the linx relaxation, i.e., our double-scaling from \cref{sec:linx-double-scaling} and o-scaling from \cite{anstreicher_efficient_2020} as well as g-scaling developed by 
\cite{chen_generalized_2024} (see \cref{sec:o-g-scaling}). 
In addition, we also compare the quality of scaling bounds with the other state-of-the-art MESP relaxations, namely $\Gamma$ relaxation \citep{nikolov_randomized_2015,li_best_2023} and its variants $\Gamma^c$ and $\Gamma^*$.

Recall that the saddle point  (SP) formulation of the o-scaled linx relaxation leads to the bound of $\Opt_{\textup{linx-o}}$ 
(see \eqref{eq:f-linx-o}). Similarly, $\Opt_{\textup{linx-g}}$ corresponds to the g-scaled linx relaxation 
(see \eqref{eq:f-linx-g}), which also admits a convex-concave SP form. 
For double-scaled linx relaxation, {for practical considerations, instead of using \eqref{eq:double-linx-convex-concave},} we {use a reformulated convex-concave SP representation based on the change of variables $(\bm\rho,\bm\omega) \mapsto (\exp(\bm\rho),\exp(\bm\omega))$ applied to} \eqref{eq:double-scaling-alt}. 
In addition to the scaled linx relaxations, we evaluate the $\Gamma$ relaxation given by \eqref{eq:Gamma_rel} (see \cref{cor:relaxation-h} and \eqref{eq:opt-gamma}), the complementary $\Gamma^c$ relaxation defined in \eqref{eq:opt-gamma-complementary},  as well as the $\Gamma^*$ relaxation, 
which  is defined as
\begin{align*}
  \Opt_{\Gamma^*} \coloneq\min_{\x\in\cX}\max\set{ {\objgg(\x,\bm0)} , {\objggc(\x,\bm0)} }
  = \min_{\x\in\cX}\max_{\alpha\in[0,1]}\set{\alpha {\objgg(\x,\bm0)} + (1-\alpha) {\objggc(\x,\bm0)} }. 
\end{align*}
Note that $\Gamma^*$ relaxation is a mixing of $\Gamma$ and $\Gamma^c$ relaxations (see 
\cite{chen_computing_2023}).
Also, recall that for standard MESP, $\Gamma$ relaxations do not benefit from scaling; see \cite[Theorem 3.4.14]{fampa_maximum-entropy_2022} and \cref{prop:g-scaling-Gamma-invariant}.

Both  $\Opt_{\Gamma}$ and $\Opt_{\Gamma^c}$ are standard convex minimization problems, whereas  $ \Opt_{\Gamma^*}$ and all scaled linx relaxations admit convex-concave SP formulations. We solve the convex minimization problems ($\Opt_{\Gamma}$ and $\Opt_{\Gamma^c}$) using the Mirror Descent algorithm  \citep{nemirovski_problem_1983}. The saddle point problems ($ \Opt_{\Gamma^*}$ and all scaled linx relaxations) are solved using the \emph{parameter-free non-ergodic extragradient} (PF-NE-EG) 
SP algorithm with non-monotone backtracking line search proposed in \cite[Algorithm 2]{shen_parameter-free_2026}; see \cref{sec:numerical:implementation} for full implementation details.

\subsection{Performance of scaling methods on the linx relaxations}\label{sec:linx-scalings}
We first compare the quality of 
o-scaling  ($\Opt_{\textup{linx-o}}$) from \cite{anstreicher_efficient_2020} and g-scaling ($\Opt_{\textup{linx-g}}$) developed by \cite{chen_generalized_2024}  (see \cref{sec:o-g-scaling})  against our double-scaling technique applied to the linx relaxation ($\Opt_{\textup{linx-d}}$) from \cref{sec:linx-double-scaling}. 
\cref{fig:scaling-linx-90,fig:scaling-linx-124,tab:scaling-linx-d90,tab:scaling-linx-d124}  
summarize our numerical results. In these figures, we assess relaxation quality by reporting the integrality gaps of the three methods, $\Opt_{\textup{linx-o}}$, $\Opt_{\textup{linx-g}}$, and $\Opt_{\textup{linx-d}}$, relative to the corresponding integer optimal values reported in \cite{anstreicher_efficient_2020}. 
We also present the algorithm’s solution times (in seconds) for each instance. The detailed numerical results underlying these figures, including the full data tables, are provided in \cref{tab:scaling-linx-d90,tab:scaling-linx-d124} in \cref{sec:app:numerical}.

\begin{figure}[htbp]
  \centering
  \includegraphics[scale=.4]{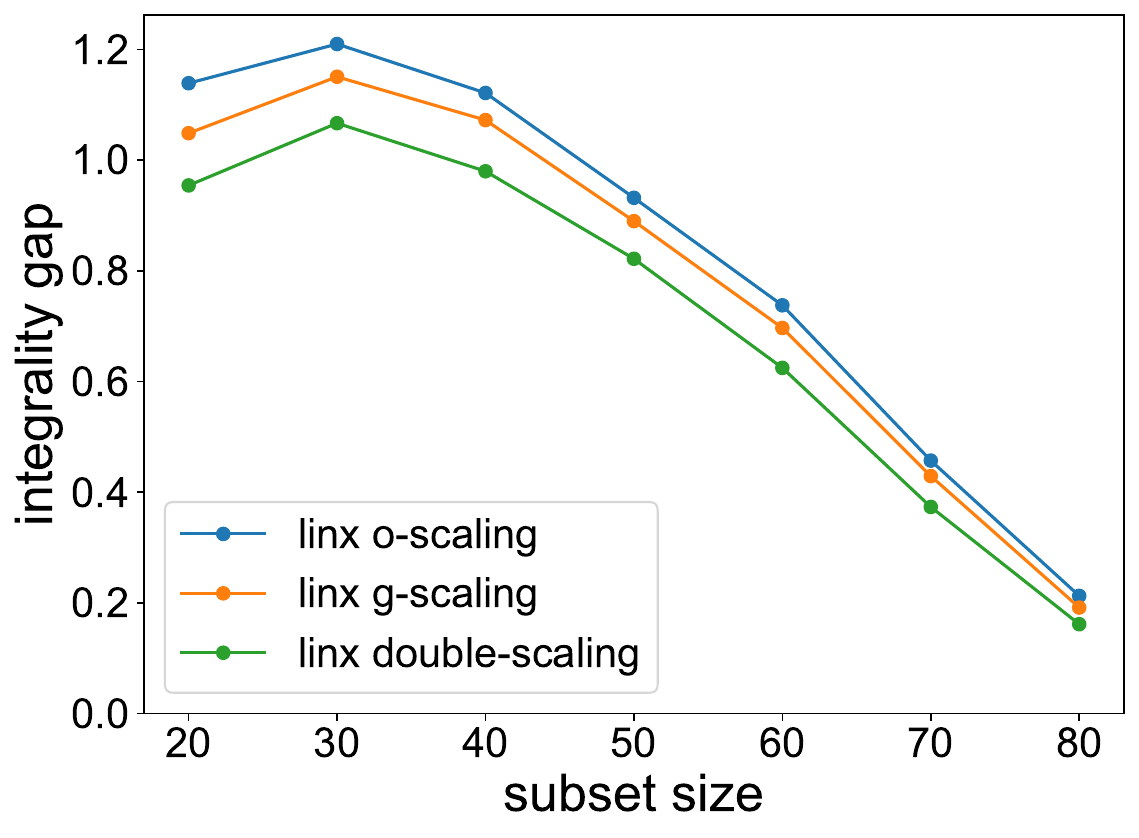}
  \includegraphics[scale=.4]{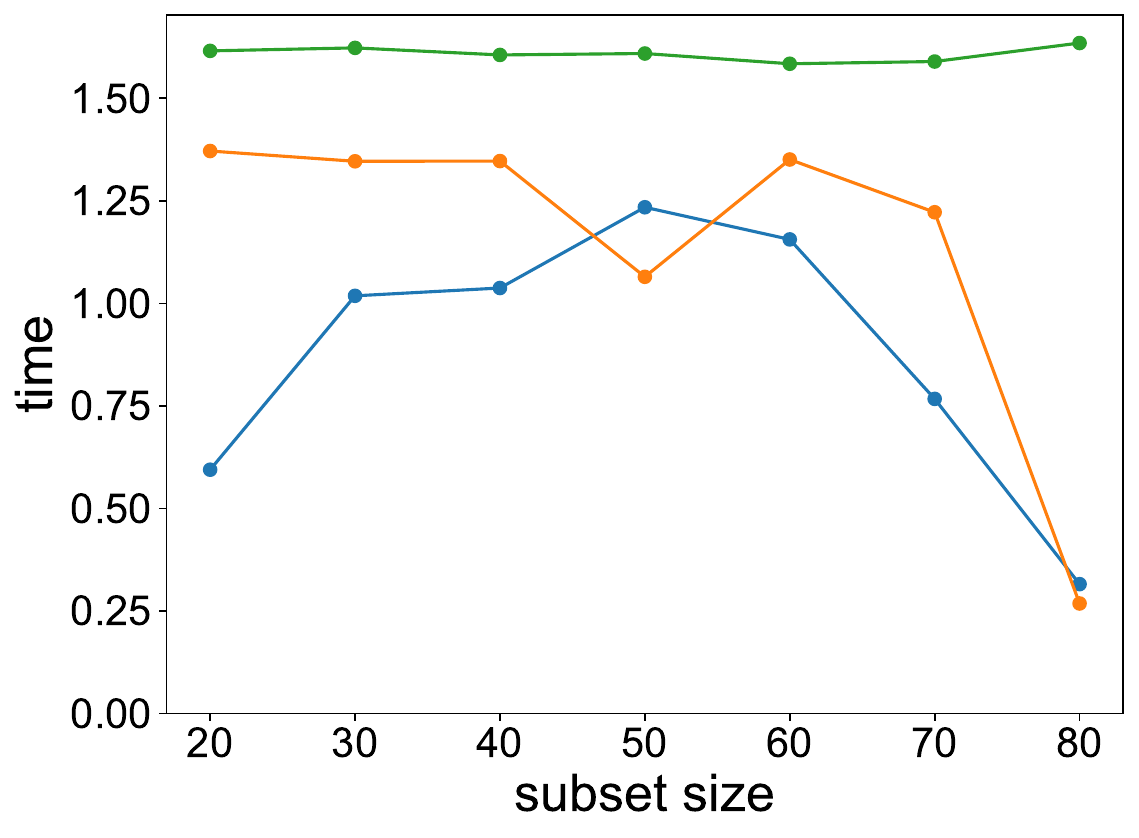}
  \caption{Performance of scaling methods applied to the linx relaxation on instances with $d=90$ and varying subset size $s$. }
  \label{fig:scaling-linx-90}
\end{figure}

\begin{figure}[htbp]
  \centering
  \includegraphics[scale=.4]{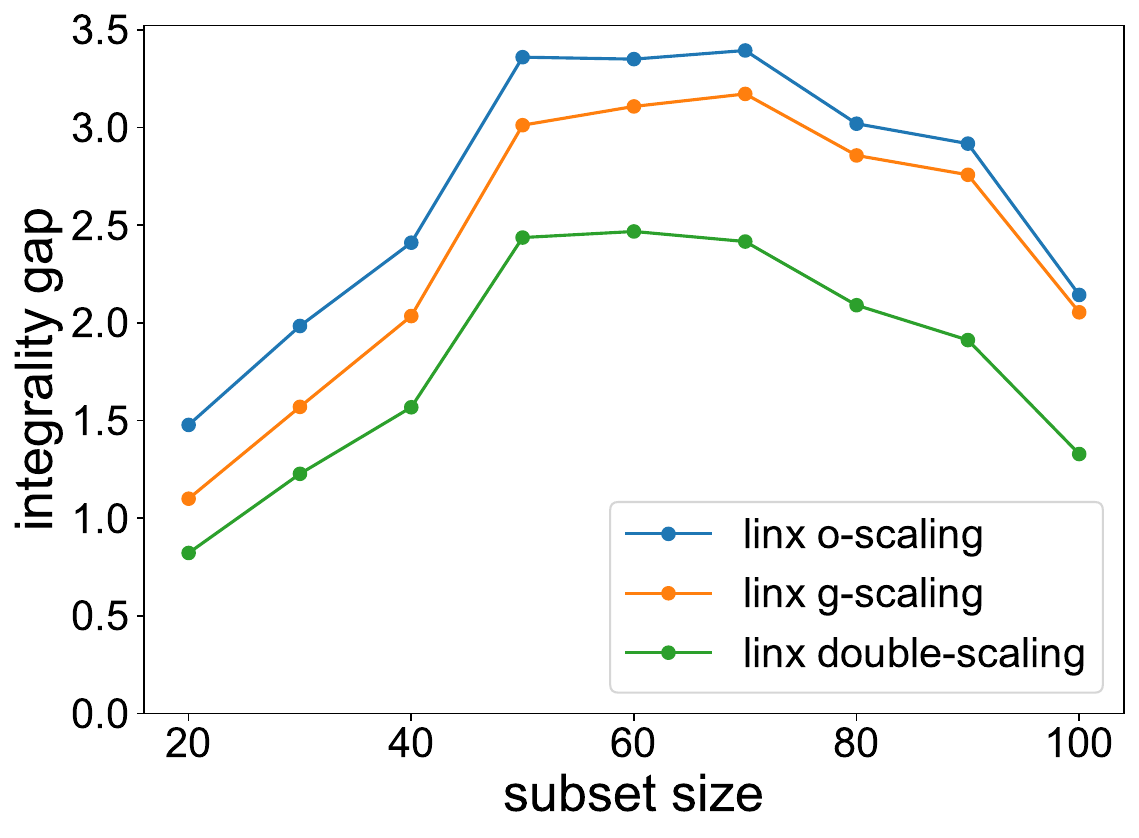}
  \includegraphics[scale=.4]{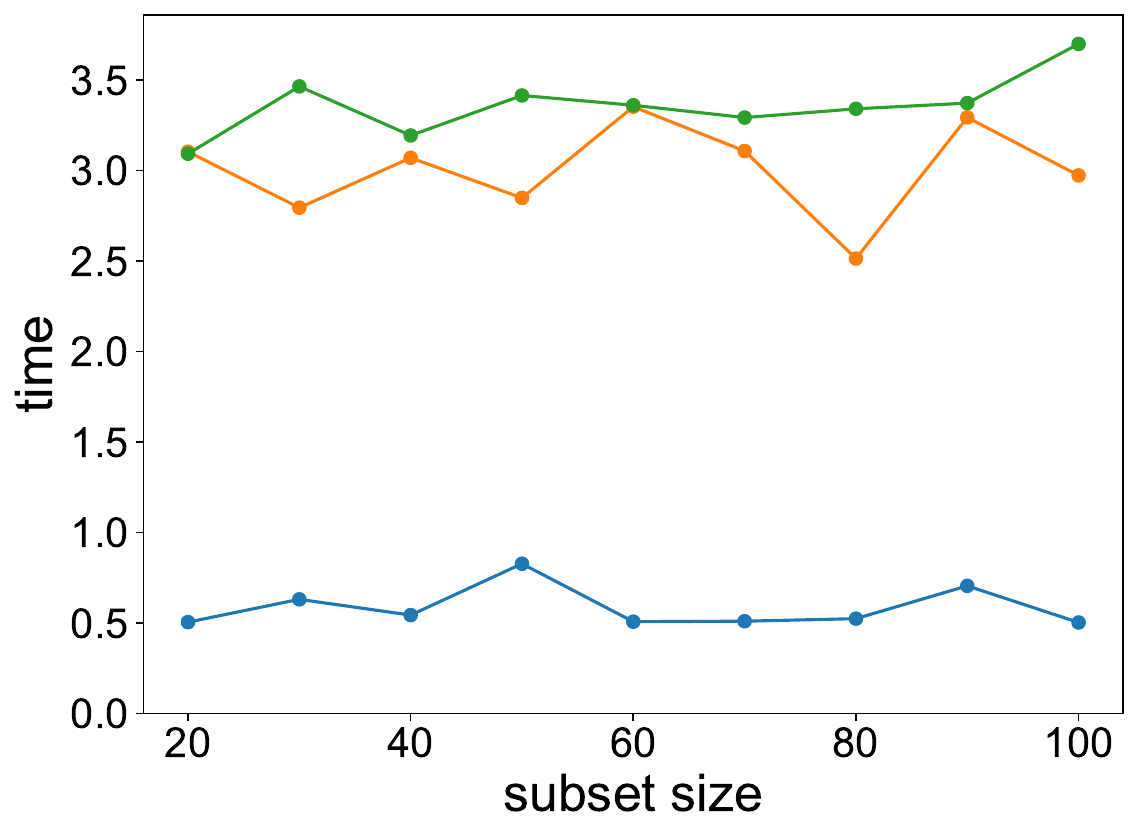}
  \caption{Performance of scaling methods applied to the linx relaxation on instances with $d=124$ and varying subset size $s$.}
  \label{fig:scaling-linx-124}
\end{figure}

In terms of relaxation quality, these results highlight that, for all instances, the double-scaled linx relaxation yields strictly tighter lower bounds for the standard MESP (as a minimization problem) than the g-scaled linx relaxation, which in turn strictly outperforms o-scaling, i.e., $\Opt_{\textup{linx-d}}> \Opt_{\textup{linx-g}}> \Opt_{\textup{linx-o}}$. Moreover, in most cases, particularly for instances with $d=124$, the improvement from double-scaling over g-scaling exceeds the improvement from g-scaling over o-scaling.  
Regarding computational performance, \cref{fig:scaling-linx-90,fig:scaling-linx-124} show that o-scaling is the fastest method, followed by g-scaling and then double-scaling (with the exception of the anomalies at $d=90$, $s=50$ and $s=80$). Although g-scaling incurs a noticeable increase in solution time relative to o-scaling, the additional cost of double-scaling over g-scaling is comparatively modest, especially for the larger instances with $d=124$.

\subsection{Comparison of the scaled linx relaxations and the \texorpdfstring{$\Gamma$}{Gamma} relaxations}

We next compare the quality of the bounds obtained from the scaled linx relaxations with the bounds obtained from $\Gamma$, $\Gamma^c$ and $\Gamma^*$ relaxations. 
These results are presented in \cref{fig:scaling-5algos-90,fig:scaling-5algos-124,tab:scaling-5-d90,tab:scaling-5-d124}. 
Comparing the $\Gamma$ relaxation and its variants, we observe that, for fixed $d$, as the subset size $s$ increases, the quality of the bound $\Opt_{\Gamma^c}$ 
improves, while the one from $\Opt_{\Gamma}$ 
deterioriates for the larger instances. As expected, $\Opt_{\Gamma^*}$ 
is always at least as strong as both $\Opt_{\Gamma}$ and $\Opt_{\Gamma^c}$, 
and in certain cases it 
strictly outperforms the best of the two 
(see e.g., $d=90$ and $s=70$, or $d=124$ and $s=60$). 
When the quality of $\Gamma$ relaxation bounds are compared against the linx relaxation strengthened with various scaling techniques, we observe that neither dominates the other.
For example, for the instances with $d=90$, $\Opt_{\textup{linx-d}}$ outperforms $\Opt_{\Gamma^*}$ when the subset size parameter is $s=50,60,70,80$, and $\Opt_{\textup{linx-d}}$ is almost the same as $\Opt_{\Gamma^*}$ when $s=40$. Furthermore, for the instances with $d=124$, $\Opt_{\textup{linx-d}}$ achieves the best quality among all six bounds at almost all subset size parameters $s$ shown in \cref{fig:scaling-5algos-90,fig:scaling-5algos-124}, except for $s=20$ (in this instance $\Opt_{\Gamma}$ and $\Opt_{\Gamma^*}$ perform better). 
In terms of solution time, 
the $\Opt_{\Gamma^c}$ relaxation is generally faster than $\Opt_{\Gamma}$, though this speed advantage sometimes comes at the expense of bound quality, especially for $d=90$. In contrast, the solution time for $\Opt_{\Gamma^*}$ shows more pronounced variability and can be the most computationally expensive method. 
Our proposed $\Opt_{\textup{linx-d}}$ demonstrates a favorable solution time profile. It is consistently faster than $\Opt_{\Gamma}$ and more stable than $\Opt_{\Gamma^*}$. Although it typically requires more time than $\Opt_{\Gamma^c}$,  this is expected to some extent, as both $\Opt_{\Gamma}$ and $\Opt_{\Gamma^c}$ are solved as standard convex minimization problems using Mirror Descent, whereas the other bounds are computed via the SP algorithm (PF-NE-EG) from \cite{shen_parameter-free_2026}. 

\begin{figure}[htbp]
  \centering
  \includegraphics[scale=.35]{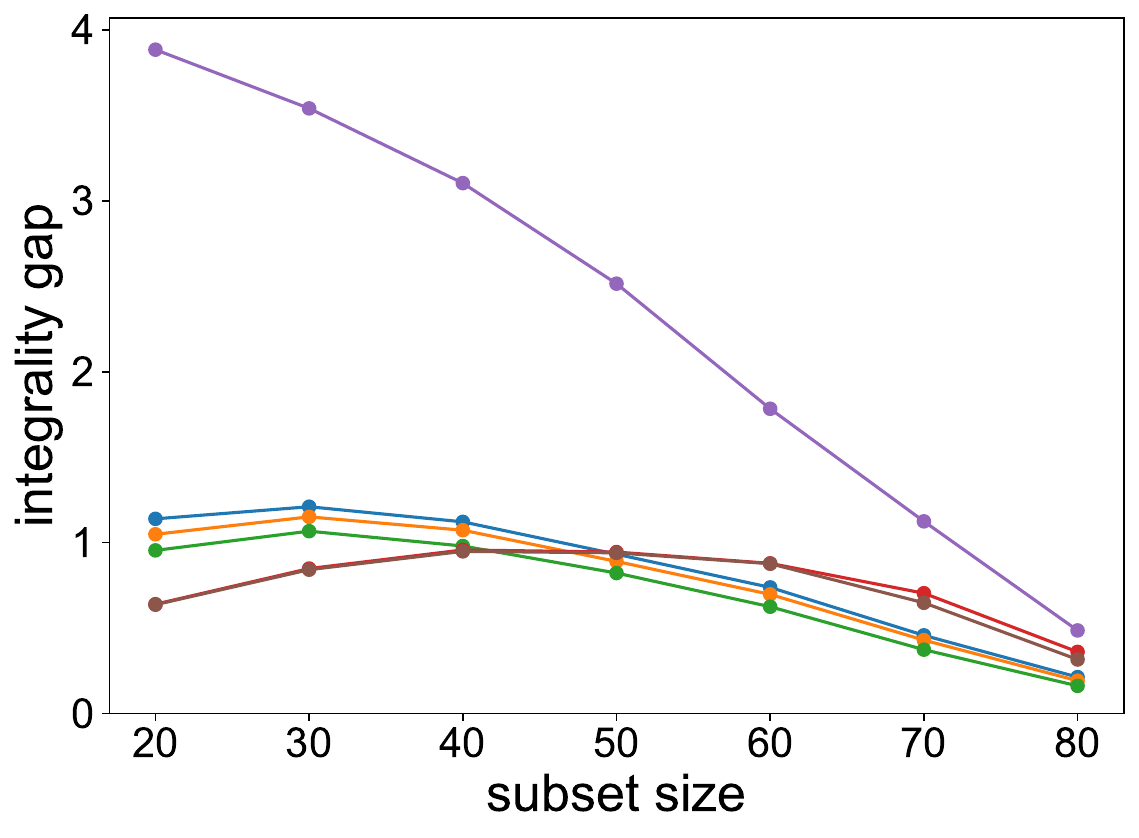}
  \includegraphics[scale=.35]{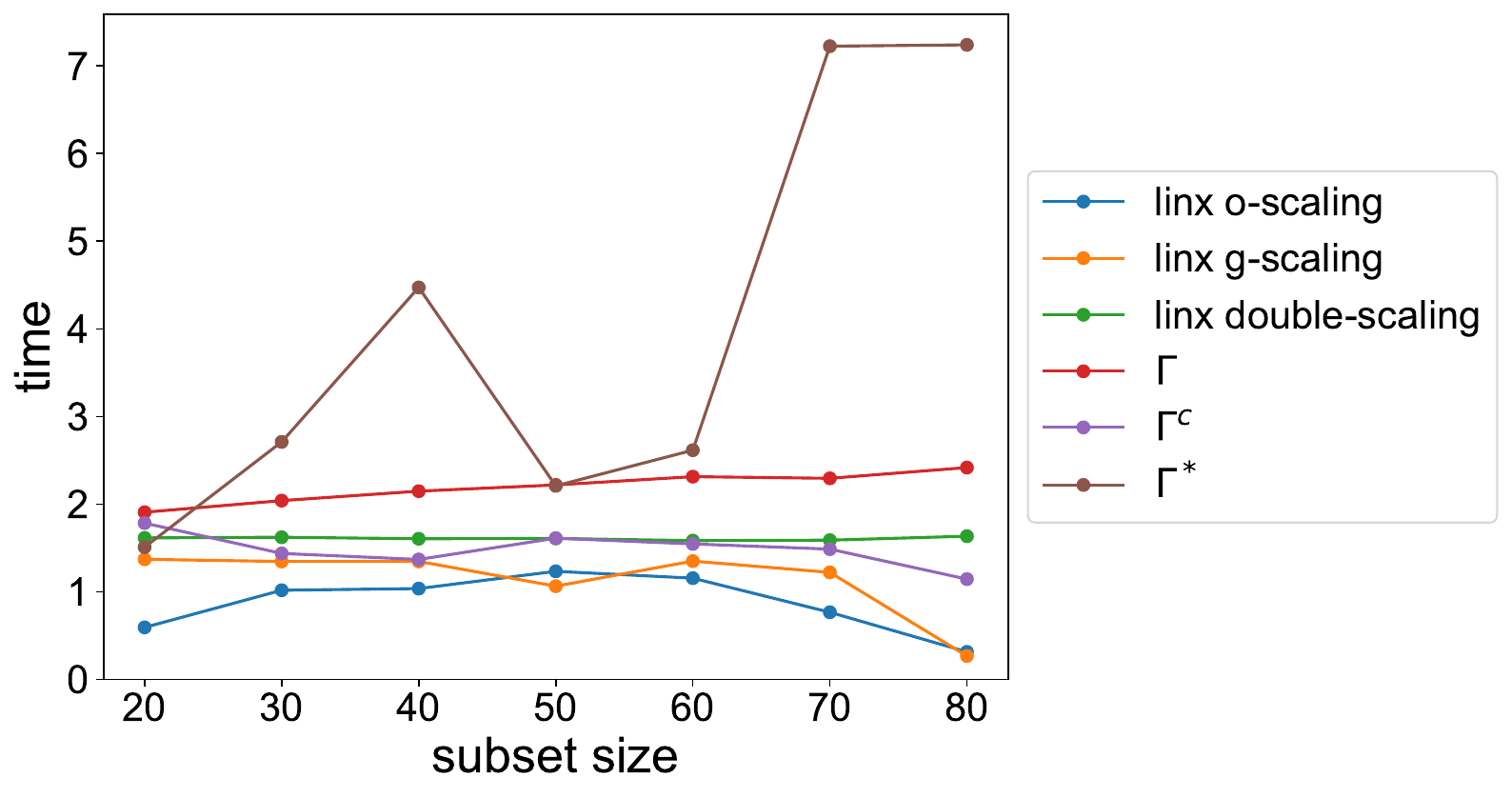}
  \caption{Performance of different MESP relaxations and bound enhancement techniques on instances with $d=90$.}
  \label{fig:scaling-5algos-90}
\end{figure}

\begin{figure}[htbp]
  \centering
  \includegraphics[scale=.35]{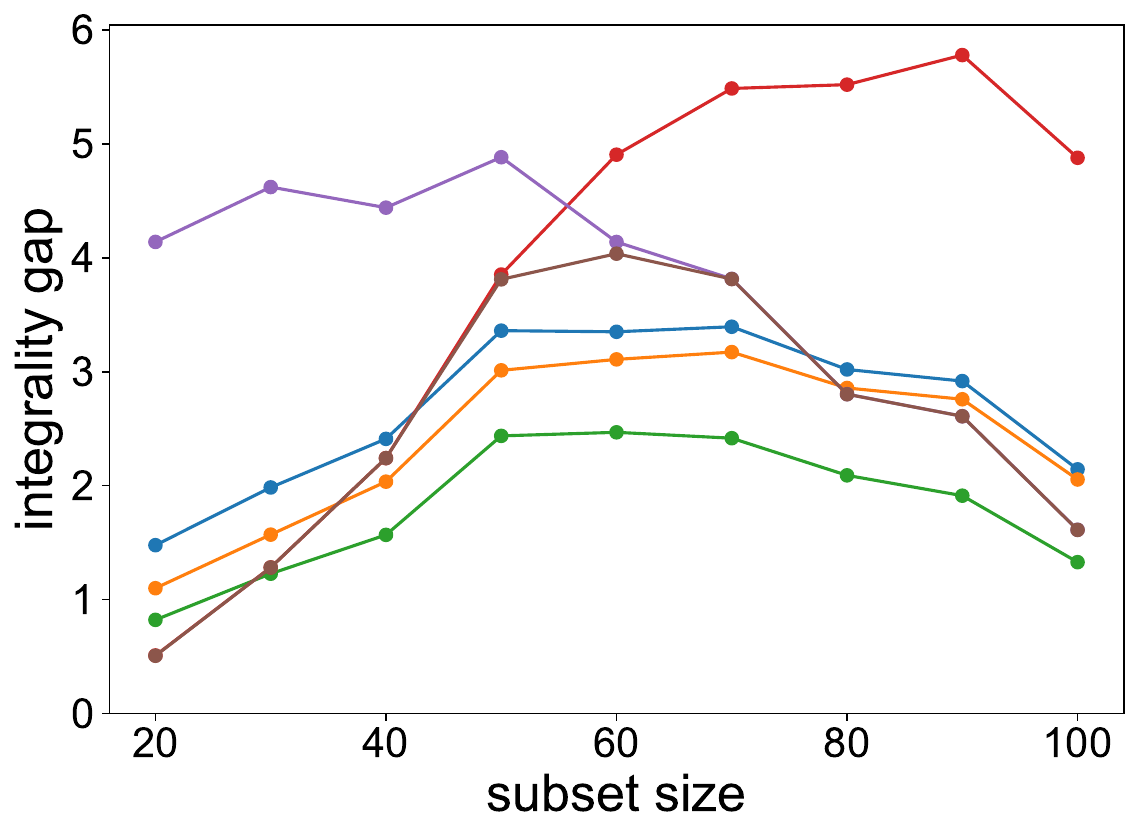}
  \includegraphics[scale=.35]{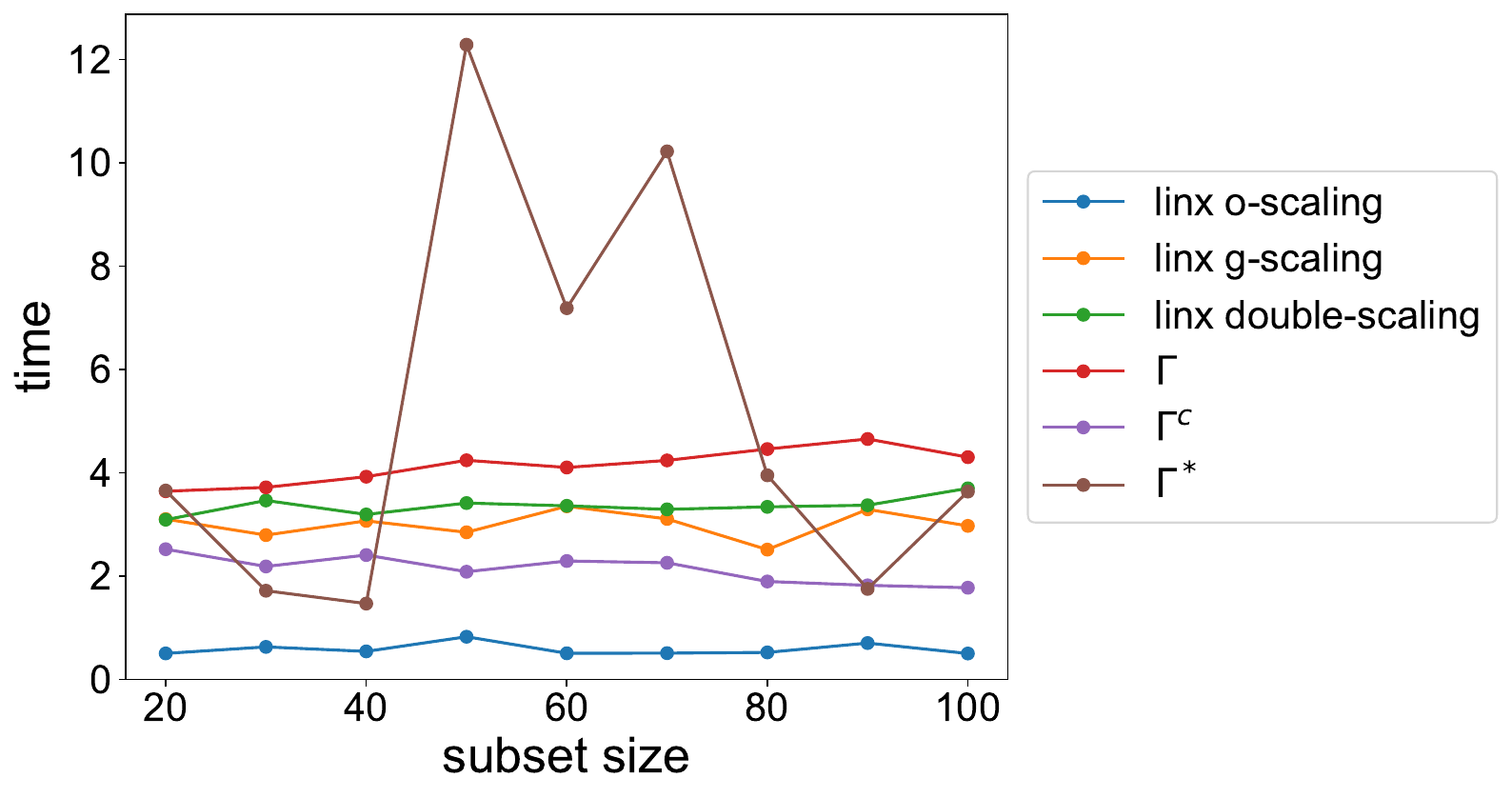}
  \caption{Performance of different MESP relaxations and bound enhancement techniques on instances with $d=124$.}
  \label{fig:scaling-5algos-124}
\end{figure}

%% file: appendix.tex
\newpage
\section{Relegated proofs and examples}
\label{sec:app:proofs}

\subsection{Proof of \texorpdfstring{\cref{cor:f+Omega_s:conv}}{Corollary~\protect\ref{cor:f+Omega_s:conv}}}
\begin{proof}
As $F_s(\bX)=f_s(\bm\lambda(\bX))$, the relation $t\ge F_s(\bX)$ holds if and only if $t\ge f_s(\bm\xi)$, where $\bm\xi\coloneqq \bm\lambda(\bX) \in\R^d_+\cap \K_{o,d}$. Thus, the given representation of $\epi(F_s)$ is justified. 

Next, recall that $\dom(F_s)=\dom(G_s)$ and this domain is simply the set of positive semidefinite matrices $\bX$ with $\rank(\bX) \ge s$, which is convex. 
Given a set $\Omega\subseteq\R^d\times\S^d$, let us define
\[
    \cF(\Omega) \coloneqq \set{ { (t,\bm\xi,\bX)\in\R\times\R^d\times\S^d }
    :
      \begin{array}{>{\displaystyle}l}
        t \geq f_s(\bm\xi), \\
        (\bm\xi,\bX)\in\Omega  
      \end{array}
    } .
\] 
Then, $\epi(F_s)=\Proj_{t,\bX}(\cF(\overline{\Omega}))$.  
Moreover,  the set $\cC \coloneqq \set{{ (t,\bm\xi)\in\R\times \K_{o,d} } :\ t\geq f_s(\bm\xi) }$ is closed, and using this set $\cC$ in \cref{prop:conv-general} we conclude that 
$\cl\conv(\cF(\overline{\Omega})) = \conv(\cF(\overline{\Omega})) = \cF(\widehat{\Omega})$. 
Then, as projection and convex hull operations commute, i.e., for any set $\cD$ we have $\Proj(\conv(\cD))=\conv(\Proj(\cD))$, we deduce that 
$ 
\conv(\epi(F_s)) 
= \conv\left( \Proj_{t,\bX}(\cF(\overline{\Omega})) \right)
= \Proj_{t,\bX}\left(\conv(\cF(\overline{\Omega})) \right)
=\Proj_{t,\bX}\left(\cF(\widehat{\Omega})\right)
.
$ 
Moreover, by definition of $\cF(\widehat{\Omega})$, we conclude
\begin{align*}
& \Proj_{t,\bX}\left(\cF(\widehat{\Omega})\right) \\
&= \set{(t,\bX)\in\R\times\S^d:~ \exists \bm\xi \textup{ s.t. }
        t \geq f_s(\bm\xi),~ (\bm\xi,\bX)\in\widehat{\Omega} } \\
&= \set{(t,\bX)\in\R\times\S^d_+:~ \exists \bm\xi \textup{ s.t. }
        t \geq f_s(\bm\xi),~ \bm\xi\majorize \bm\lambda(\bX), ~\bm\xi \in\K_{o,d} \cap\R^d_+ } \\
&= \set{(t,\bX)\in\R\times\S^d_+:~ 
        t \geq \inf_{\bm\xi}\set{f_s(\bm\xi):~ \bm\xi\majorize \bm\lambda(\bX), ~\bm\xi \in\K_{o,d} \cap\R^d_+} } \\  
&= \set{(t,\bX)\in\R\times\S^d_+:~ 
        t \geq g_s\left( \bm\lambda(\bX) \right) = G_s(\bX) } \\     
&= \epi(G_s).           
\end{align*}
This then justifies both the representation of $\epi(G_s)$ and the claim $\conv(\epi(F_s))=\epi(G_s)$.
\end{proof}

\subsection{Proof of \texorpdfstring{\cref{prop:P-relax-integrality}}{Proposition~\protect\ref{prop:P-relax-integrality}}}
\begin{proof}
Since $\overline{\Omega}\subseteq\widehat{\Omega}$, the ``$\subseteq$'' direction is immediate. To show the reverse direction, consider any $(t,\x)\in\cP(\widehat{\Omega}) \cap \left(\R\times\{0,1\}^d\right)$. Then, $\x\in\cX$ and there exists $\bm\xi\in\K_{o,d}$ such that $\bm\xi\majorize \bm\lambda(\bV\Diag(\x)\bV^\top)$ and $t\ge f_s(\bm\xi)$. Define $\bm\xi':=\bm\lambda(\bV\Diag(\x)\bV^\top)$ so that $\bm\xi'\in\K_{o,d}$ and $\bm\xi\majorize\bm\xi'$. 
Thus, $(\bm\xi',\bV\Diag(\x)\bV^\top)\in\overline{\Omega}$. 
As $\bm\xi\majorize\bm\xi'$,  
by \cref{lem:monotone} we have 
$f_s(\bm\xi)\geq f_s(\bm\xi')$, 
thus we have $t\geq f_s(\bm\xi)\geq f_s(\bm\xi')$. Therefore, $(t,\bm x)\in\cP(\overline{\Omega})$. 
\end{proof}

\renewcommand{\arraystretch}{1}   
\setlength{\arraycolsep}{.5em}

\begin{example}\label{ex:P+Omega_s:not_conv}
In this example, we illustrate that even when $\varphi(\cdot)=-\log(\cdot)$, there exist instances of \eqref{eq:mesp-general} where $\conv\left(\cP(\overline{\Omega})\cap(\R\times\set{0,1}^d)\right) \neq \cP(\widehat{\Omega})$.

  Consider an instance of  \eqref{eq:mesp-general} where $d=4$, $s=2$, $\cX=\{\x\in[0,1]^d:\ones^\top \x=s\}$, and 
  \begin{align*}
    \bV = \begin{pmatrix}
      1 & 1 &   &   \\
        & 1 &   &   \\
        &   & 1 &   \\
        &   &   & 1 
    \end{pmatrix},\ 
    \bC = \begin{pmatrix}
      1 & 1 &   &   \\
      1 & 2 &   &   \\
        &   & 1 &   \\
        &   &   & 1 
    \end{pmatrix}. 
  \end{align*}
  Then, $\cP(\overline{\Omega})\cap(\R\times\{0,1\}^d)$ consists of the following points: 
  \begin{align*}
    t \geq 0,&\qquad \x = (1,1,0,0) \\
    t \geq  0,&\qquad \x = (1,0,1,0) \\ 
    t \geq 0,&\qquad \x = (1,0,0,1) \\ 
    t \geq -\log2,&\qquad \x = (0,1,1,0) \\ 
    t \geq -\log2,&\qquad \x = (0,1,0,1) \\ 
    t \geq 0,&\qquad \x = (0,0,1,1).
  \end{align*}
  Thus, 
  $
  	\min_{t,\x}\{t-x_1:\, (t,\x)\in \conv(\cP(\overline{\Omega}))\}
	= \min_{t,\x}\{t-x_1:\, (t,\x)\in \cP(\overline{\Omega})\} = -1.
  $ 
 In contrast, we claim that $\min_{t,\x}\{t-x_1:\, (t,\x)\in \cP(\widehat{\Omega})\}<-1$. Consider the point
$\bar t=-\log(\tfrac{4+\sqrt2}{4})$ and $\bar \x = (1,  \frac12,  \frac14,  \frac14)$. We claim that $(\bar t, \bar \x)\in \cP(\widehat{\Omega})$. First, note that $\bar\x\in\cX$. Moreover,
  \begin{align*}
     \bar \bX &:=\bV\Diag(\bar \x)\bV^\top = \begin{pmatrix}
      \frac32 & \frac12 &   &   \\
      \frac12 & \frac12 &   &   \\
        &   & \frac14 &   \\
        &   &   & \frac14
    \end{pmatrix} 
    \quad \textup{and} \quad
    \bm\lambda(\bar \bX) = \left(\tfrac{2+\sqrt2}{2},\tfrac{2-\sqrt2}{2},\tfrac14,\tfrac14\right). 
  \end{align*}
  Thus, by taking $\bar{\bm\xi} := \left(\tfrac{2+\sqrt2}{2},\tfrac{3-\sqrt2}{2},0,0\right)$ we observe that $\bar{\bm\xi} \majorize \bm\lambda(\bar \bX)$, $\bar{\bm\xi} \in\K_{o,d}\cap\R^d_+$, $\|\bar{\bm\xi} \|_0=s$, and $\bar t = f_s (\bar{\bm\xi}) = -\sum_{i\in [s]} \log(\bar\xi_i)= -\log(\tfrac{2+\sqrt2}{2})-\log(\tfrac{3-\sqrt2}{2})= -\log\left(\tfrac{2+\sqrt2}{2} \cdot \tfrac{3-\sqrt2}{2} \right) = -\log(\tfrac{4+\sqrt2}{4})<0$. Hence, $\min_{t,\x}\{t-x_1:\, (t,\x)\in \cP(\widehat{\Omega})\}\leq \bar{t} - \bar{x}_1 = -\log(\tfrac{4+\sqrt2}{4}) -1 <-1$. 
  As $\min_{t,\x}\{t-x_1:\, (t,\x)\in \cP(\widehat{\Omega})\} < \min_{t,\x}\{t-x_1:\, (t,\x)\in \conv(\cP(\overline{\Omega})\cap(\R\times\set{0,1}^d))\}$ we conclude that $\conv(\cP(\overline{\Omega})\cap(\R\times\set{0,1}^d)) \neq \cP(\widehat{\Omega})$.
\end{example}

\renewcommand{\arraystretch}{1.5}   
\setlength{\arraycolsep}{1.5pt}

\subsection{Proof of \texorpdfstring{\cref{prop:Q+Omega_s:not_conv}}{Proposition~\protect\ref{prop:Q+Omega_s:not_conv}}}
\label{sec:proof-prop-Q+Omega_s}

\begin{proof}
Suppose $\bC$ with $\rank(\bC)=d$ and $2\le s\le d$ is given. 
Let us construct $(\bar t,\bar{\bm\xi},\bar\x)\in\cQ(\widehat{\Omega})$ that is not in $\conv(\cQ(\overline{\Omega}))$. 
  We first pick an arbitrary $\bar\x\in[0,1]^d$ such that $\ones^\top\bar\x=s$, and define $\bar\bX:=\bV\Diag(\bar\x)\bV^\top$. We will next identify a vector $\bar{\bm\xi}\in\K_{0,d}$ that majorizes $\bm\lambda (\bar\bX)$. Let us define $\bar{\bm\xi}$ as follows: 
  \begin{align*}
    \bar\xi_j := \begin{cases}
      \tr(\bar\bX) - (s-1)\epsilon, & \textup{if } j\in\set{1}, \\
      \epsilon, & \textup{if } j\in(1,s], \\
      0, & \textup{if } j\in(s,d], \\
    \end{cases}
  \end{align*}
  where $0<\epsilon\leq\lambda_s(\bar\bX)$ will be determined later. Note that as $\bar\x\ge0$, we have
  \[
  	\lambda_s(\bar\bX)=\lambda_s(\bV\Diag(\bar\x)\bV^\top)=\lambda_s(\Diag(\sqrt{\bar\x}) \bC \Diag(\sqrt{\bar\x}) )>0,
  \] 
  where the concluding inequality follows from $\bar\x\in\cX\subseteq\{\x\in[0,1]^d:\, \ones^\top\x=s\}$ which implies that $\rank(\Diag(\bar\x))\geq s$ and consequently using Sylvester's rank inequality together with $\rank(\bC)=d$ we deduce that $\rank(\Diag(\sqrt{\bar\x}) \bC \Diag(\sqrt{\bar\x}) )\geq\rank(\Diag(\bar\x))\geq s$. Thus, we can indeed pick such an $\epsilon \in (0,\lambda_s(\bar\bX)]$. Then we have $\ones^\top\bar{\bm\xi}=\ones^\top\bm\lambda(\bar\bX)=\tr(\bar\bX)$ and 
  \begin{align*}
    \sum_{i\in[j,d]}\bar\xi_i \leq \sum_{i\in[j,d]}\lambda_i(\bar\bX), \quad\forall j\in[d],
  \end{align*}
  i.e., $\bar{\bm\xi} \majorize \bm\lambda(\bar\bX)$. Finally, define $\bar t :=f_s(\bar{\bm\xi})<+\infty$ (because $\bar\xi_j>0$ for $j\in[s]$). 
  Then, as $\bar{\bm\xi}\in\K_{0,d}$, 
  we have $(\bar t,\bar{\bm\xi},
  \bar\x)\in\cQ(\widehat{\Omega})$. 
  Assume for contradiction that there exists some convex combination weights $\alpha_\ell$ such that $(\bar t,\bar{\bm\xi},
  \bar\x)=\sum_{\ell}\alpha_{\ell}(\bar t_{\ell},\bar{\bm\xi}^{\ell},
  \bar\x^{\ell})$ where $(\bar t_{\ell},\bar{\bm\xi}^{\ell}, 
  \bar\x^{\ell})\in\cQ(\overline{\Omega})$ for all $\ell$. In particular, $\epsilon=\bar{\xi}_s=\sum_{\ell}\alpha_{\ell}\bar{\xi}_s^{\ell}$, which implies that at least one of the $\bar{\bm\xi}^{\ell}$'s satisfies $\bar\xi_{s}^{\ell}\leq\epsilon$. On the other hand, as $(\bar t_{\ell},\bar{\bm\xi}^{\ell},
  \bar\x^{\ell})\in\cQ(\overline{\Omega})$ we deduce 
  \begin{align*}
   	\bar\xi_{s}^{\ell} & =\lambda_s(\bV\Diag(\bar\x^{\ell})\bV^\top) \\
	& = \lambda_s\! \left(\Diag(\sqrt{\bar\x^\ell}) \bC \Diag(\sqrt{\bar\x^\ell}) \right) \\
	& \geq \lambda_s\! \left( \lambda_d(\bC) \Diag(\bar\x^\ell) \right) \\
	& = \lambda_d(\bC)\ \lambda_s\! \left( \Diag(\bar\x^\ell) \right) \\
	& = \lambda_d(\bC)\ \bar x_{[s]}^{\ell}, 
  \end{align*}
  where the inequality follows from $\bC \succeq \lambda_d(\bC) \bI$ and thus $ \Diag(\sqrt{\bar\x^\ell})\bC  \Diag(\sqrt{\bar\x^\ell}) \succeq \lambda_d(\bC) \Diag(\bar\x^\ell)$. 
  Finally, to complete the proof we claim that $\bar x^{\ell}_{[s]}\geq\frac{1}{d}$. If not, we will have $\bar x^{\ell}_{[s]}<\frac{1}{d}$, which will lead to $\ones^\top\bar\x^{\ell}<(s-1)\times1+(d-s+1) \times \frac{1}{d}\leq s$, a contradiction. Therefore, putting things together we observe that $\epsilon\geq\bar\xi_s^{\ell}\geq\lambda_d(\bC)\frac{1}{d}$. Then, we can set  $0<\epsilon<\min\{\frac{1}{d}\lambda_d(\bC),\lambda_s(\bar\bX)\}$ and this leads to a contradiction. Hence, we conclude that $(\bar t,\bar{\bm\xi}, 
  \bar\x)\notin\conv(\cQ(\overline{\Omega}))$ for $\epsilon$ selected as such. 
\end{proof}

\subsection{Proof of \texorpdfstring{\cref{lem:exact-s-1}}{Lemma~\protect\ref{lem:exact-s-1}}}
\label{sec:proof-lem:exact-s-1}
\begin{proof}
  Recall that, by \cref{rem:sparse}, including the constraint $\|\bm\xi\|_0\leq s$ in the convex relaxation given by \eqref{eq:Gamma_rel} does not change its optimum value. Then, when $s=1$, we can simplify the relaxation given in \eqref{eq:Gamma_rel} as
  \begin{align*}
    & \min_{\x\in\cX,\bm\xi\in\R^d_+} \set{\varphi(\xi_1): 
      \begin{array}{>{\displaystyle}l} 
        \bm\xi\majorize\bm\lambda\left(\bV\Diag(\x)\bV^\top\right) \\
        \bm\xi \in \K_{o,d},\ \|\bm\xi\|_0\leq 1 
      \end{array}
    } \\
    &= \min_{\x\in\cX,\xi_1\in\R_+} \set{\varphi(\xi_1): 
      \xi_1 = \tr\left(\bV\Diag(\x)\bV^\top\right) 
    } \\ 
    &= \min_{\x\in\cX} \set{\varphi\left(\tr\left(\bV\Diag(\x)\bV^\top\right)\right) } \\
    &= \varphi\left(\max_{\x\in\cX} \set{\tr\left(\bV\Diag(\x)\bV^\top\right)} \right) \\
    &= \varphi\left(\max_{\x\in\cX} \set{\tr(\Diag(\x)\bC)} \right), 
  \end{align*}
  where the first equality follows from the majorization relation and $\|\bm\xi\|_0\leq 1$, and the third equality follows from the assumption that $\varphi(\cdot)$ is a nonincreasing function. 
  Thus, when $s=1$, solving \eqref{eq:Gamma_rel} reduces to simply solving the linear program given by $\max_{\x} \set{\tr(\Diag(\x)\bC): x\in\cX }$. 
  Since $\bA$ is totally unimodular and $\b$ is an integer vector such that $\cX\neq\emptyset$, by \cite[Theorem III.1.2.2]{nemhauser_integer_1988}, the optimal solution of this linear program will be integral. Then, by \cref{prop:relaxation-quality}, this implies that any optimum solution of the relaxation \eqref{eq:Gamma_rel} is an optimal solution of the original problem \eqref{eq:mesp-general} and vice versa. 
\end{proof}

\subsection{Proof of \texorpdfstring{\cref{prop:linx-complementary}}{Proposition~\protect\ref{prop:linx-complementary}}}
\label{sec:linx-complementary}
\begin{proof}
Recall $\cX_{d-s}\coloneqq\{\check\x\in[0,1]^d:\ones^\top\check\x=d-s\}$. Then, starting from the linx relaxation of the complementary problem given in \eqref{eq:linx-complementary} and applying double scaling technique to it results in 
  \begin{align*}
    &\min_{\check\x\in\cX_{d-s}} \max_{\bm\gamma,\bm\mu\in\R^d_{++}} \left\{-\frac12\log\det \left(\bC^{-1}\Diag(\bm\gamma)\Diag(\check\x)\bC^{-1}+\Diag(\bm\mu)(\bI_d-\Diag(\check\x)) \right) - \log\det\bC \vphantom{\sum_{i\in[d]}}\right. \\
    &\hspace{10em} \left. + \frac12\sum_{i\in[d]}\check{x}_i\log\gamma_i + \frac12\sum_{i\in[d]}(1-\check{x}_i)\log\mu_i \right\} \\
    &= \min_{\check\x\in\cX_{d-s}} \max_{\bm\gamma,\bm\mu\in\R^d_{++}} \left\{-\frac12\log\det\left( \Diag(\bm\gamma)\Diag(\check\x)+\bC\Diag(\bm\mu)(\bI_d-\Diag(\check\x))\bC \right) \vphantom{\sum_{i\in[d]}}\right. \\
    &\hspace{10em}\left. + \frac12\sum_{i\in[d]}\check{x}_i\log\gamma_i + \frac12\sum_{i\in[d]}(1-\check{x}_i)\log\mu_i \right\} \\
    &= \min_{\x\in\cX} \max_{\bm\gamma,\bm\mu\in\R^d_{++}} \left\{-\frac12\log\det \left( \Diag(\bm\gamma)\Diag(\ones-\x)+\bC\Diag(\bm\mu)\Diag(\x)\bC \right) \vphantom{\sum_{i\in[d]}}\right. \\
    &\hspace{10em}\left. + \frac12\sum_{i\in[d]}(1- x_i)\log\gamma_i + \frac12\sum_{i\in[d]} x_i\log\mu_i \right\}, 
  \end{align*}
  where the first equality follows from the multiplicative property of the determinant and the second equality from a change of variables of $\x\:=\ones - \check\x$ (note that $\check\x\in\cX_{d-s}$ if and only if $\x\in\cX$). As a result, the last problem above is exactly the same problem as the one obtained from double-scaling applied to linx relaxation \eqref{eq:double-scaling}. 
\end{proof}

\subsection{Proof of \texorpdfstring{\cref{lem:distinct-eigenvalues}}{Lemma~\protect\ref{lem:distinct-eigenvalues}}}
\label{sec:distinct-eigenvalues}
\begin{proof}
  First, consider $m=d$. 
  Let $\bar\nu_1 > \bar\nu_2 > \cdots > \bar\nu_d$. 
  By the Courant-Fischer Theorem, 
  \begin{align*}
    \lambda_i(\bM(\bar{\bm\nu}))
    &= \max_{S\subseteq\R^d,\dim(S)=i}\min_{\a\in S,\a\neq\bm0}\frac{\a^\top\bM(\bar{\bm\nu})\a}{\a^\top\a} \\
    &= \max_{S\subseteq\R^d,\dim(S)=i}\min_{\a\in S,\a\neq\bm0}\frac{\a^\top\bB^\top\Diag(\exp(\bar{\bm\nu}))\bB\a}{\a^\top\a} \\
    &= \max_{S\subseteq\R^d,\dim(S)=i}\min_{\bB^{-1}\a\in S,\a\neq\bm0}\frac{\a^\top\Diag(\exp(\bar{\bm\nu}))\a}{\a^\top\bB^{-\top}\bB^{-1}\a} \\
    &\geq \min_{\a\in\operatorname{span}\set{\e_1,\dots,\e_i}\setminus\set{0}}\frac{\a^\top\Diag(\exp(\bar{\bm\nu}))\a}{\a^\top(\bB\bB^\top)^{-1}\a} \\
    &\geq {e^{\bar\nu_i}}{\lambda_d(\bB\bB^\top)}, 
  \end{align*}
  {
  where the last inequality follows from the facts that for any $\bar{\nu}$ with nonincreasing coordinates and any vector $\a$ supported on only the first $i$ coordinates, we have $\a^\top\Diag(\exp(\bar{\bm\nu}))\a \geq e^{\bar\nu_i} \sum_{\ell\in[i]} a_\ell^2 =  e^{\bar\nu_i} \|\a\|_2^2$ and that $\a^\top(\bB\bB^\top)^{-1}\a \le \lambda_1\left( (\bB\bB^\top)^{-1} \right) \|\a\|_2^2 = \frac{1}{\lambda_d (\bB\bB^\top)} \|\a\|_2^2$. }
  
  Similarly, another form of the Courant-Fischer Theorem leads to 
  \begin{align*}
    \lambda_i(\bM(\bar{\bm\nu}))
    &= \min_{S\subseteq\R^d,\dim(S)=d-k+1}\max_{\a\in S,\a\neq\bm0}\frac{\a^\top\bM(\bar{\bm\nu})\a}{\a^\top\a} \\
    &\leq \max_{\a\in\operatorname{span}\set{\e_i,\dots,\e_d}\setminus\set{0}}\frac{\a^\top\Diag(\exp(\bar{\bm\nu}))\a}{\a^\top(\bB\bB^\top)^{-1}\a} 
    \leq {e^{\bar\nu_i}}{\lambda_1(\bB\bB^\top)}. 
  \end{align*}
 
  Therefore, when $\bar\nu_1, \dots, \bar\nu_d$ are sufficiently dispersed such that $\frac{\bar\nu_{i+1}}{\bar\nu_i}>\log\frac{\lambda_1(\bB\bB^\top)}{\lambda_d(\bB\bB^\top)}$, the eigenvalues are distinct. For the case where $m>d$, it suffices to take $\bar\nu_i\ll1$ for $i\in[d+1,m]$ and apply matrix perturbation theory. 
\end{proof}

\subsection{Proof of \texorpdfstring{\cref{prop:g-scaling-Gamma-invariant}}{Theorem~\protect\ref{prop:g-scaling-Gamma-invariant}}}
\label{sec:g-scaling-Gamma-invariant}

\begin{proof}
Recall $\objgg(\x,\bm\rho) = G_s\!\big(\bV\Diag(\exp({\bm\rho}))\Diag(\x)\bV^\top\big) + \x^\top \bm\rho$. Let $\x^*\in\arg\min_{\x\in\cX} \objgg(\x,\bm0)$ and define
\[
\bM^*:=\bV\Diag(\x^*)\bV^\top =\bQ^*\Diag(\bm\lambda^*){\bQ^*}^\top .
\]
Note $\bm\lambda^*\ge\bm0$. Let $\bm\beta^*:=\bm\beta(\bm\lambda^*)$ according to \eqref{eq:beta-map}. In particular, the definition of $\bm\beta^*$ guarantees 
$\bm\beta^*\geq\bm\lambda^*$ and $\bm\beta^*>\bm0$, because by \cref{lem:index-k} we have $\beta^*_i=\lambda^*_i$ for $i\in[k]$, $\beta^*_i\geq\lambda^*_{k+1}\geq\lambda^*_i$ for $i\in[k+1,d]$, and moreover, $\beta^*_i\geq\lambda^*_{k+1}\geq\lambda^*_s>0$ for $i\in[d]$ since $k+1\leq s$ and $\|\bm\lambda^*\|_0=\rank(\bM^*)\geq\|\x^*\|_0=s$. 

The first part of the proof will be dedicated to establishing the identity $\x^* + \x^* \circ \g^* = 0$ for a subgradient $\g^*$ of $\objgg(\x,\bm0)$ at $\x^*$. While the underlying logic is similar to \cite[Theorem 6.iv]{chen_generalized_2024}, we give a direct proof using standard convex optimization arguments instead of relying on generalized differentiability. 
The key insight, however, lies in the second part, building upon the convex-concavity structure of $\objgg$ we established in \cref{prop:gamma-g-scaling-convexity}.

As $G_s(\bX)=g_s(\bm\lambda(\bX))$ is a convex spectral function and $\varphi(\cdot)=-\log(\cdot)$, by \cref{lem:subdifferential}, a subgradient of $G_s$ at $\bM^*$ is given by
\[
\bY^* \coloneqq -\,\bQ^*\Diag(\bm\beta^*)^{-1}{\bQ^*}^\top.
\]
By the chain rule for the linear map $\x \mapsto \bV\Diag(\x)\bV^\top$, we obtain a subgradient of $\x \mapsto \objgg(\x,\bm0)$ at $\x^*$ as
\[
\g^* \coloneqq \diag\!\big(\bV^\top \bY^* \bV\big)
= -\diag\!\big(\bV^\top \bQ^*\Diag(\bm\beta^*)^{-1}{\bQ^*}^\top \bV\big).
\]
Since $\x^*$ is the minimizer of $\objgg(\x,\bm0)$, which is a convex function over $\cX$,  there exists a subgradient $\bar\g$ of $\objgg(\x,\bm0)$ at $\x^*$ satisfying the variational inequality
\[
\bar\g^\top(\x-\x^*) \ge 0, \qquad \forall \x\in\cX.
\]
We claim that indeed we can take $\bar\g=\g^*$. 
By \cref{lem:subdifferential}, all subgradients of $G_s$ at $\bM^*$ have the form
\[
{\bY(\bm\eta)} \coloneqq -\,\bQ^*(\Diag(\bm\beta^*)^{-1} + \Diag(\bm\eta)){\bQ^*}^\top,
\]
{for some} $\bm\eta\in\R^d_+$ and $\eta_i=0$ for all $i\in[r]$ where $r=\rank(\bM^*)$; that is $\bY^*- { \bY(\bm\eta) } \succeq 0$ and $\bY^*-{ \bY(\bm\eta) } $ is supported entirely on the nullspace of $\bM^*$, i.e., $(\bY^*- { \bY(\bm\eta) } )\u=0$ for all $\u\in\range(\bM^*)$.  
Also, using the chain rule, we conclude that there exists $\bar{\bm\eta}\in\R^d_+$ and $\bar{\eta}_i=0$ for all $i\in[r]$ such that
\[
\bar{\g} 
= -\diag\!\big(\bV^\top \bQ^* ( \Diag(\bm\beta^*)^{-1} + \Diag(\bar{\bm\eta}) ) {\bQ^*}^\top \bV\big).
\]
Now, by defining {$\bar{\bY} \coloneqq \bY(\bar{\bm\eta})$ and}  $S^*\coloneqq \set{i\in[d]:\, x^*_i>0 }$, we observe that $\bM^* = \sum_{i\in S^*} x^*_i \bV \e_i \e_i^\top \bV^\top$ and thus $\range(\bM^*) = \spann\{\bV \e_i: i\in S^*\}$. Then, together with $(\bY^*-\bar{\bY})\u=0$ for all $\u\in\range(\bM^*)$, we deduce for all $i\in S^*$ that $(\bY^*-\bar{\bY})\bV\e_i=0$ and so 
$
0= \e_i^\top \bV^\top (\bY^*-\bar{\bY})\bV\e_i= \e_i^\top \bV^\top \bQ^*  
{\Diag(\bar{\bm\eta})} 
 {\bQ^*}^\top \bV \e_i
$. 
Hence, we conclude that $g^*_i =\bar{g}_i$ for all $i\in S^*$. In addition, for any $i\not\in S^*$, we have $g^*_i - \bar{g}_i = \e_i^\top \bV^\top (\bY^*-\bar{\bY})\bV\e_i \ge 0$ as  $\bY^*-\bar{\bY}\succeq 0$. Then, for any $\x\in\cX$ we arrive at
\[
(\g^*-\bar{\g})^\top (\x-\x^*) = \sum_{i\not\in S^*} (g^*_i - \bar{g}_i) (x_i - x^*_i) =  \sum_{i\not\in S^*} (g^*_i - \bar{g}_i) x_i \ge 0,
\]
where the last equality follows from $x^*_i=0$ for all $i\not\in S^*$ and the last inequality follows from the definition of $\x\in\cX$ and  $g^*_i - \bar{g}_i\ge0$ for all $i\not\in S^*$. Therefore, we have
\[
(\g^*)^\top(\x-\x^*) \ge (\bar\g)^\top(\x-\x^*) \ge 0, \qquad \forall \x\in\cX.
\]

We will next construct a point $\widehat{\x}\in\cX$ that will enable our analysis. Define
\[
\bX^* \coloneqq \Diag(\x^*)^{1/2}\bV^\top \bQ^*\Diag(\bm\beta^*)^{-1}{\bQ^*}^\top \bV\Diag(\x^*)^{1/2}
\succeq 0,
\]
and let 
$ 
\widehat{\x} \coloneqq \diag(\bX^*).
$ 
Then, for all $i\in[d]$ we have
\[
\widehat{x}_i
= x_i^* \big(\bV^\top \bQ^*\Diag(\bm\beta^*)^{-1}{\bQ^*}^\top \bV\big)_{ii}
= - x_i^* g_i^*.
\]
We claim that $\widehat{\x} \in \cX$. 
First, $\widehat{x}_i \ge 0$ for all $i\in[d]$ since $\bX^*\succeq 0$ as $\bm\beta^*>\bm0$. 
Next, note that $\bX^*=\bZ^\top\bZ$, where $\bZ\coloneqq \Diag(\bm\beta^*)^{-1/2}{\bQ^*}^\top \bV\Diag(\x^*)^{1/2}$. As the nonzero eigenvalues of $\bX^*=\bZ^\top\bZ$ and the matrix $\bZ\bZ^\top= \Diag(\bm\beta^*)^{-1/2}{\bQ^*}^\top \bV\Diag(\x^*) \bV^\top \bQ^* \Diag(\bm\beta^*)^{-1/2} = \Diag(\bm\beta^*)^{-1} \Diag(\bm\lambda^*)$ are the same, we conclude that the eigenvalues of $\bX^*$ are 
\[
\lambda_i(\bX^*) = \frac{\lambda_i^*}{\beta_i^*},\qquad\forall i\in[d].
\]
Also, by construction of $\bm\beta^*$, we have $0 \le \lambda_i^*/\beta_i^* \le 1$, hence $\widehat{x}_i \le 1$.
Finally, using the definition of $\bm\beta^*$, we deduce
\[
\ones^\top \widehat{\x}
=
\tr(\bX^*)
=\sum_{i\in[d]} \lambda_i(\bX^*)
=\sum_{i\in[d]} \frac{\lambda_i^*}{\beta_i^*}
=s.
\]
Thus, indeed $\widehat{\x}\in\cX$.

Since $\widehat{\x}\in\cX$ and $\x^*$ minimizes $(\g^*)^\top \x$ over $\cX$, we have
$ 
(\g^*)^\top(\widehat{\x}-\x^*) \ge 0.
$ 
Substituting $\widehat{x}_i = -x_i^* g_i^*$ leads to
\begin{align*}
& 0 \le \sum_{i\in[d]} g_i^*(-x_i^* g_i^* - x_i^*)
= -\sum_{i\in[d]}  x_i^*(g_i^*)^2 - \sum_{i\in[d]}  x_i^* g_i^* \\
\iff & \sum_{i\in[d]}  x_i^*(g_i^*)^2 \le -\sum_{i\in[d]}  x_i^* g_i^* = \ones^\top \widehat{\x} = s.
\end{align*}
On the other hand,
\[
s^2
= \Big(\sum_{i\in[d]}  x_i^*(-g_i^*)\Big)^2
\le \Big(\sum_{i\in[d]}  x_i^*\Big)\Big(\sum_{i\in[d]} x_i^*(g_i^*)^2\Big)
= s \sum_{i\in[d]}  x_i^*(g_i^*)^2
\le s^2.
\]
Thus, equality holds throughout, and equality in Cauchy-Schwarz implies that $-g_i^*$ is constant over the support of $\x^*$. Let $-g_i^* = \eta$ for all $i$ such that $x_i^*>0$. 
Then
\[
s
= \sum_{i\in[d]}  \widehat{x}_i
=\sum_{i\in[d]}  x_i^*(-g_i^*)
= \eta \sum_{i\in[d]}  x_i^*
= \eta s,
\]
so $\eta=1$. Therefore, 
\begin{equation}\label{eq:opt-gamma-sol-structure}
\x^* \circ \g^* = -\x^*.
\end{equation}

{
Consider $\bm\rho \mapsto \objgg(\x^*,\bm\rho)$. We claim that using the same subgradient $\bY^*$, $\objgg(\x^*,\bm\rho)$ is differentiable with respect to $\bm\rho$, and its gradient at $\bm\rho=\bm0$ is
\[
\x^* + \x^* \circ \g^*.
\]
To see this, by fixing $\x=\x^*$ and defining
\[
\bX(\bm\rho) := \bV\Diag(\exp({\bm\rho}))\Diag(\x^*)\bV^\top,
\]
we have $\objgg(\x^*,\bm\rho) = G_s(\bX(\bm\rho)) + (\x^*)^\top \bm\rho$. As $\x^*$ has at least $s$ nonzero entries, $\rank(\bX(\bm\rho))\ge s$ for all $\bm\rho\in\R^d$ and so $\dom(G_s(\bX(\cdot))=\R^d$. By \cref{prop:gamma-g-scaling-convexity}, $G_s(\bX(\bm\rho))$ is concave in $\bm\rho$, and thus is continuous over its domain $\R^d$. 
Note also that the mapping $\bm\rho \mapsto \bX(\bm\rho)$ is smooth. Hence, by \cite[Theorem 10.6]{rockafellar_variational_1998}, we have
\begin{align*}
  \hat\partial_{\bm\rho} G_s(\bX(\bm\rho)) 
  \supseteq \set{\z\in\R^d: z_i=\left\langle\bY, \frac{\partial\bX}{\partial\rho_i}(\bm\rho)\right\rangle, \bY\in\partial G_s(\bX(\bm\rho))}, 
\end{align*}
where $\hat\partial_{\bm\rho} G_s(\bX(\bm\rho))$ denotes the regular subdifferential of $G_s(\bX(\cdot))$. Since $G_s(\bX(\cdot))$ is concave, its regular subdifferential is nonempty if and only if it is differentiable, in which case $\hat\partial_{\bm\rho} G_s(\bX(\bm\rho))=\{\nabla_{\bm\rho} G_s(\bX(\bm\rho))\}$ reduces to a singleton. Then, the above relation implies that this gradient can be computed by any subgradient in $\partial G_s(\bX(\bm\rho))$. For $\bm\rho=\bm0$, note that 
}
\[
\bY^* = -\,\bQ^*\Diag(\bm\beta^*)^{-1}{\bQ^*}^\top \in \partial G_s(\bX(\bm0)).
\]
A direct computation gives
\[
\frac{\partial \bX}{\partial \rho_i}(\bm0)
=
\bV \Diag(\x^* \circ \e_i)\bV^\top
=
x_i^* \bV \e_i \e_i^\top \bV^\top.
\]
Therefore, for all $i\in[d]$, we have
\[
\ip{ \bY^*,\, \frac{\partial \bX}{\partial \rho_i}(\bm0) }
=
x_i^* \ip{ \bY^*,\ \bV \e_i \e_i^\top \bV^\top } 
=
x_i^* (\bV^\top \bY^* \bV)_{ii}
= x_i^* g_i^*,
\]
where the last equality follows from $\g^* = \diag(\bV^\top \bY^* \bV)$.
This then leads to
$\nabla_{\bm\rho} G_s(\bX(\bm\rho))\big|_{\bm\rho=\bm0} = \x^*\circ\g^*$.
As $\objgg(\x^*,\bm\rho) = G_s(\bX(\bm\rho)) + (\x^*)^\top \bm\rho$, we arrive at
\[
\nabla_{\bm\rho} \objgg(\x^*,\bm0) = \x^*\circ\g^* + \x^*.
\]

Recalling from \eqref{eq:opt-gamma-sol-structure} that $\x^* + \x^* \circ \g^*=\bm0$, we arrive at $\nabla_{\bm\rho} \objgg(\x^*,\bm0) = \bm0$. As $\objgg(\x,\bm\rho)$ is concave in $\bm\rho\in\R^d$, this implies that $\bm\rho=\bm0$ maximizes it, i.e., 
\[
\objgg(\x^*,\bm\rho) \le \objgg(\x^*,\bm0), \qquad \forall \bm\rho\in\R^d.
\]

Since $\x^*$ minimizes $\objgg(\x,\bm0)$ over $\cX$, we have $\objgg(\x^*,\bm0) \le \objgg(\x,\bm0)$  for all $\x\in\cX$. 
Thus, 
\[
\objgg(\x^*,\bm\rho) \le \objgg(\x^*,\bm0) \le \objgg(\x,\bm0), \qquad \forall \bm\rho\in\R^d,
\]
i.e., $(\x^*,\bm0)$ is a saddle point of $\objgg(\x^*,\bm\rho)$ over $\cX\times\R^d$. By Sion’s theorem,
\[
\min_{\x\in\cX}\max_{\bm\rho\in\R^d} \objgg(\x,\bm\rho)
=\objgg(\x^*,\bm0)
=\min_{\x\in\cX} \objgg(\x,\bm0).
\]
\end{proof}

\subsection{Proof of \texorpdfstring{\cref{cor:linx-gamma-connection}}{Corollary~\protect\ref{cor:linx-gamma-connection}}}
\label{sec:linx-gamma-connection}

\begin{proof}
\begin{enumerate}[(i)]
\item 
Recall that $\Opt_{\linxo}  = \min_{\x\in\cX} \max_{\rho_0\in\R}  \set{ \objlo(\x,\rho_0)  } $, where $ \objlo(\x,\rho_0) $ is convex in $\x$ for every fixed $\rho_o$ and concave in $\rho_0$ for every fixed $\x\in\cX$.  As this saddle point function is continuous and admits a convex-concave structure,  all domains are closed and convex, and $\cX$ is bounded, there is at least one saddle point $(\x^*,\rho_0^*)$  associated with this problem. 
Then, $\Opt_{\linxo}  =\objlo(\x^*,\rho^*_0)$. As $\Opt_{\Gamma}  = \min_{\x\in\cX}  \objgg(\x,\bm0)$ and $\Opt_{\Gamma^c}  = \min_{\x\in\cX}  \objggc(\x,\bm0) $, we have 
\begin{align*}
\frac12 \left[ \Opt_{\Gamma} +\Opt_{\Gamma^c} \right] 
&\leq \frac12 \left[ \objgg(\x^*,\bm0) + \objggc(\x^*,\bm0) \right] \\
&\le \objld (\x^*, \kappa\ones,\bm0) \\
&= \objlo(\x^*,\kappa),
\end{align*}
where 
the second inequality follows from \cref{prop:connection} for a properly chosen $\kappa\in\R$, 
and the equality holds as $\objlo(\x,\kappa)=\objld(\x, \kappa\ones,\bm0)$. 
Now, since $(\x^*,\rho_0^*)$ is a saddle point associated with $\Opt_{\linxo}$, we have
\[
\objlo(\x^*,\rho_0) \leq \Opt_{\linxo} = \objlo(\x^*,\rho_0^*) \leq \objlo(\x,\rho_0^*),  \quad \forall \x\in\cX, \text{ and } \forall\rho_0 \in\R.
\]
Thus,
\[
\frac12 \left[ \Opt_{\Gamma} +\Opt_{\Gamma^c} \right] 
\le \objlo(\x^*,\kappa) 
\le \objlo(\x^*,\rho_0^*) 
= \Opt_{\linxo}.
\]

\item
Recall {from \eqref{eq:f-Gamma}, \eqref{eq:Opt-Gamma-c}-\eqref{eq:f-Gamma-c}, and \eqref{eq:double-linx-convex-concave}-\eqref{eq:d-scaling:reformulated}} that 
\begin{align*}
\Opt_{\Gamma\textup{-g}} & = \min_{\x\in\cX} \max_{\bm\rho\in\R^d}  \set{ \objgg(\x,\bm\rho)  } , \\
\Opt_{\Gamma^c\textup{-g}} & = \min_{\x\in\cX} \max_{\bm\omega\in\R^d}  \set{ \objggc(\x,\bm\omega)  } , \\
\Opt_{\linxd} & = \min_{\x\in\cX} \max_{\bm\rho,\bm\omega\in\R^d}  \set{ \objld(\x,\bm\rho,\bm\omega)  } ,
\end{align*}
and all these saddle point functions are convex-concave and continuous, all domains are closed and convex, and $\cX$ is bounded. Then, the saddle points associated with these problems must exist. Let $(\x^{\Gamma}, \bm\rho^{\Gamma})$, $(\x^{\Gamma^c}, \bm\omega^{\Gamma^c})$ and $(\x^*,\bm\rho^*,\bm\omega^*)$ be the saddle points associated with $\Opt_{\Gamma\textup{-g}}$, $\Opt_{\Gamma^c\textup{-g}}$ and $\Opt_{\linxd}$, respectively. Then, by definition of the saddle point we have, for all $\x\in\cX$,
\begin{align*}
\objgg(\x^{\Gamma},\bm\rho) \leq &\Opt_{\Gamma\textup{-g}} = \objgg(\x^{\Gamma},\bm\rho^{\Gamma}) \leq \objgg(\x,\bm\rho^{\Gamma}), \quad \forall \bm\rho\\
\objggc(\x^{\Gamma^c},\bm\omega) \leq &\Opt_{\Gamma^c\textup{-g}} = \objggc(\x^{\Gamma^c},\bm\omega^{\Gamma^c}) \leq \objggc(\x,\bm\omega^{\Gamma^c}),  \quad \forall \bm\omega \\
\objld(\x^*,\bm\rho,\bm\omega) \leq &\Opt_{\linxd} = \objld(\x^*,\bm\rho^*,\bm\omega^*) \leq \objld(\x,\bm\rho^*,\bm\omega^*),  \quad \forall \bm\rho,\bm\omega .
\end{align*}
Hence, summing up the first two relations above gives us, for all $\x\in\cX$, 
\begin{align*}
 \frac12\left[ \Opt_{\Gamma\textup{-g}} + \Opt_{\Gamma^c\textup{-g}} \right] 
 \le \frac12\left[ \objgg(\x,\bm\rho^{\Gamma}) + \objggc(\x,\bm\omega^{\Gamma^c}) \right] 
 \le  \objld(\x,\bm\rho^{\Gamma}+\kappa\ones,\bm\omega^{\Gamma^c}),
\end{align*}
where the last inequality follows from \cref{prop:connection} for a properly chosen $\kappa\in\R$.
Now, by plugging in $\x^*$ in this relation, we arrive at
\begin{align*}
 \frac12\left[ \Opt_{\Gamma\textup{-g}} + \Opt_{\Gamma^c\textup{-g}} \right] 
 \le \objld(\x^*,\bm\rho^{\Gamma}+\kappa\ones,\bm\omega^{\Gamma^c}) 
 \le \objld(\x^*,\bm\rho^*,\bm\omega^*) 
 = \Opt_{\linxd} ,
\end{align*}
where the last inequality and equality follow from the saddle point relations for $\Opt_{\linxd}$.
\end{enumerate}
 \end{proof}

\section{Further details on the numerical study}\label{sec:app:numerical}

\subsection{Implementation details}
\label{sec:numerical:implementation}

The experiments are all implemented in Python 3.9.21 on a server with a 3-Hz Intel Xeon Gold 5317 CPU and 124 GB memory. The code for the experiments is available at: \url{https://github.com/joyshen07/mesp-scaling}.

As the $\Gamma$ and $\Gamma^c$ relaxations, i.e., $\Opt_{\Gamma}$ and $\Opt_{\Gamma^c}$,  are standard convex minimization problems, we use the Mirror Descent algorithm \citep{nemirovski_problem_1983} 
for them.  
The required subgradients of these objective functions are computed explicitly  by \cite[Proposition 10]{li_best_2023}, and their boundedness follows from \cite[Lemma 4]{li_best_2023}.

{
The  convex-concave saddle point (SP) problems ($ \Opt_{\Gamma^*}$ and all scaled linx relaxations) are solved using the \emph{parameter-free non-ergodic extragradient} (PF-NE-EG) 
SP algorithm with non-monotone backtracking line search proposed in \cite[Algorithm 2]{shen_parameter-free_2026}. 
Recall that all three scaled linx relaxations results in SP problems of the form $\min_{\x\in\cX}\max_{\bm\rho\in \R^d} f(\x,\bm\rho)$, where $\cX$, i.e., the domain for $\x$ is bounded, but the domain for $\bm\rho$ is unbounded. 
Most classical SP algorithms assume bounded monotone operators (obtained by stacking the subgradient and supergradient) or global Lipschitz continuity, these assumptions fail in our setting, particularly for {g-scaling and} double-scaling, where gradients are neither uniformly bounded nor globally Lipschitz (see \cite[Remark 4]{shen_parameter-free_2026}).
Without the associated boundedness and continuity parameters not only the theoretical convergence guarantees of the classical SP algorithms fail, but also their stepsize selection becomes problematic. 
Moreover, the relatively few existing SP algorithms that can handle unbounded operators typically provide only ergodic (averaged iterate) convergence guarantees, which do not accurately reflect the superior non-ergodic (last-iterate) performance observed empirically. The PF-NE-EG algorithm overcomes these issues by being parameter-free and providing non-ergodic convergence guarantees.
}
{The details for computation of the subgradients associated with the corresponding SP functions and their bounds are given in \cref{sec:subgradient,sec:subgradient:d-scaled-linx}. 
} 

In our implementation of the PF-NE-EG 
algorithm with non-monotone backtracking line search, we set its parameter $\theta=0.9$, initial stepsize $\eta_0=0.1$, backtracking multiplier $\rho=0.9$, stepsize increment factor $\lambda_t=1+\frac1{\log(t+2)}$, and the starting solution to $\bar{\x}=\frac{s}{d}\ones$, and for scaled linx relaxations 
we initialize the scaling parameters with $\bar{\bm\rho}=\bar{\bm\omega}=\bm 0$ (i.e., $\bar{\bm\gamma}=\bar{\bm\mu}=\bar{\bm\Upsilon}=\ones$). 
For each relaxation, we ran this algorithm for a maximum number of $1000$ iterations.

\subsection{Supplementary numerical results}\label{sec:app:numerical:results}

In \cref{tab:scaling-linx-d90,tab:scaling-linx-d124,tab:scaling-5-d90,tab:scaling-5-d124}, the `LB' column shows lower bounds for each relaxation (so that they are indeed valid lower bounds of MESP) computed based on the solution output of the algorithm. 
The `gap' column is computed as the difference between the LB column, which itself is a lower bound provided by the corresponding relaxation, and the integer optimal values of MESP  provided in \cite{anstreicher_efficient_2020} for these problem instances. 
The `conv.' columns show a convergence measurement {for Mirror Descent  algorithm for convex minimization problems and  for the saddle point problems the PF-NE-EG algorithm, which is guaranteed to converge at a rate of $O(1/\sqrt{T})$ (see \cite{shen_parameter-free_2026}).} 
The `time' column reports the running time in seconds. 
The `\# iter.' column records the number of iterations until the algorithm stops, which is capped at $1000$ and can be less in the case of early stopping.

\begin{table}[htbp]
  \centering
  \resizebox{\columnwidth}{!}{
    \begin{tabular}{R|RRRRR|RRRRR|RRRRR}
      \toprule
      & \multicolumn{5}{c|}{linx o-scaling} & \multicolumn{5}{c|}{linx g-scaling} & \multicolumn{5}{c}{linx double-scaling} \\
      s & LB & gap & conv. & time & \# iter. & LB & gap & conv. & time & \# iter. & LB & gap & conv. & time & \# iter. \\
    \midrule  
  20 & -112.621 & 1.139 & 0.0000 &  0.59 &  489 & -112.531 & 1.049 & 0.0022 &  1.37 & 1000 & -112.436 & 0.954 & 0.0089 &  1.62 & 1000 \\
  30 & -162.749 & 1.210 & 0.0000 &  1.02 & 1000 & -162.689 & 1.150 & 0.0000 &  1.35 & 1000 & -162.606 & 1.067 & 0.0060 &  1.62 & 1000 \\
  40 & -211.090 & 1.121 & 0.0000 &  1.04 & 1000 & -211.041 & 1.072 & 0.0000 &  1.35 & 1000 & -210.949 & 0.980 & 0.0040 &  1.61 & 1000 \\
  50 & -258.092 & 0.932 & 0.0000 &  1.23 & 1000 & -258.050 & 0.890 & 0.0000 &  1.06 &  681 & -257.982 & 0.822 & 0.0038 &  1.61 & 1000 \\
  60 & -303.757 & 0.738 & 0.0000 &  1.16 &  912 & -303.716 & 0.697 & 0.0000 &  1.35 & 1000 & -303.644 & 0.625 & 0.0034 &  1.58 & 1000 \\
  70 & -347.928 & 0.457 & 0.0000 &  0.77 &  598 & -347.900 & 0.429 & 0.0000 &  1.22 &  742 & -347.844 & 0.373 & 0.0024 &  1.59 & 1000 \\
  80 & -390.210 & 0.213 & 0.0000 &  0.32 &  234 & -390.189 & 0.192 & 0.0000 &  0.27 &  155 & -390.158 & 0.161 & 0.0014 &  1.63 & 1000 \\
    \bottomrule
  \end{tabular}
  }
  \caption{Performance of scaling methods applied to the linx relaxation on instances with 
  $d=90$ and varying subset size $s$. 
  }
  \label{tab:scaling-linx-d90}
\end{table}

\begin{table}[htbp]
  \centering
  \resizebox{\columnwidth}{!}{
  \begin{tabular}{R|RRRRR|RRRRR|RRRRR}
    \toprule
      & \multicolumn{5}{c|}{linx o-scaling} & \multicolumn{5}{c|}{linx g-scaling} & \multicolumn{5}{c}{linx double-scaling} \\
    s & LB & gap & conv. & time & \# iter. & LB & gap & conv. & time & \# iter. & LB & gap & conv. & time & \# iter. \\
    \midrule
  20 &  -79.305 & 1.478 & 0.0000 &  0.51 &  210 &  -78.927 & 1.100 & 0.0010 &  3.10 & 1000 &  -78.649 & 0.822 & 0.0159 &  3.09 & 1000 \\
  30 & -108.684 & 1.984 & 0.0000 &  0.63 &  261 & -108.270 & 1.570 & 0.0013 &  2.80 & 1000 & -107.927 & 1.227 & 0.0120 &  3.47 & 1000 \\
  40 & -133.466 & 2.411 & 0.0000 &  0.54 &  221 & -133.090 & 2.035 & 0.0011 &  3.07 & 1000 & -132.623 & 1.568 & 0.0040 &  3.19 & 1000 \\
  50 & -152.858 & 3.360 & 0.0000 &  0.83 &  319 & -152.510 & 3.012 & 0.0006 &  2.85 & 1000 & -151.935 & 2.437 & 0.0021 &  3.42 & 1000 \\
  60 & -167.362 & 3.350 & 0.0000 &  0.51 &  205 & -167.120 & 3.108 & 0.0000 &  3.35 &  999 & -166.480 & 2.468 & 0.0025 &  3.36 & 1000 \\
  70 & -175.923 & 3.395 & 0.0000 &  0.51 &  201 & -175.700 & 3.172 & 0.0000 &  3.11 & 1000 & -174.944 & 2.416 & 0.0038 &  3.29 & 1000 \\
  80 & -178.111 & 3.020 & 0.0000 &  0.52 &  200 & -177.948 & 2.857 & 0.0000 &  2.51 &  695 & -177.181 & 2.090 & 0.0028 &  3.34 & 1000 \\
  90 & -174.180 & 2.918 & 0.0000 &  0.71 &  255 & -174.020 & 2.758 & 0.0000 &  3.29 & 1000 & -173.174 & 1.912 & 0.0038 &  3.37 & 1000 \\
 100 & -165.008 & 2.143 & 0.0000 &  0.50 &  187 & -164.919 & 2.054 & 0.0000 &  2.97 & 1000 & -164.194 & 1.329 & 0.0249 &  3.70 & 1000 \\
    \bottomrule
  \end{tabular}
  }
  \caption{Performance of scaling methods applied to the linx relaxation on instances with 
  $d=124$ and varying subset size $s$. 
  }
  \label{tab:scaling-linx-d124}
\end{table}

\begin{table}[htbp]
  \centering
  \rotatebox{90}{\begin{minipage}{\textheight}
  \resizebox{\textheight}{!}{ 
  \begin{tabular}{R|RRRRR|RRRRR|RRRRR|RRRRR}
    \toprule 
    & \multicolumn{5}{c|}{linx double-scaling} & \multicolumn{5}{c|}{$\Gamma$} & \multicolumn{5}{c|}{$\Gamma^c$} & \multicolumn{5}{c}{$\Gamma^*$} \\
    s & LB & gap & conv. & time & \# iter. & LB & gap & conv. & time & \# iter. & LB & gap & conv. & time & \# iter. & LB & gap & conv. & time & \# iter. \\
    \midrule
  20 & -112.436 & 0.954 & 0.0089 &  1.62 & 1000 & -112.121 & 0.639 & 0.0029 &  1.91 & 1000 & -115.366 & 3.884 & 0.0054 &  1.78 & 1000 & -112.118 & 0.636 & 0.0000 &  1.51 &  225 \\
  30 & -162.606 & 1.067 & 0.0060 &  1.62 & 1000 & -162.387 & 0.848 & 0.0067 &  2.04 & 1000 & -165.080 & 3.541 & 0.0128 &  1.44 & 1000 & -162.380 & 0.841 & 0.0000 &  2.71 &  352 \\
  40 & -210.949 & 0.980 & 0.0040 &  1.61 & 1000 & -210.926 & 0.957 & 0.0086 &  2.15 & 1000 & -213.072 & 3.103 & 0.0090 &  1.37 & 1000 & -210.917 & 0.948 & 0.0000 &  4.47 &  598 \\
  50 & -257.982 & 0.822 & 0.0038 &  1.61 & 1000 & -258.104 & 0.944 & 0.0044 &  2.22 & 1000 & -259.675 & 2.515 & 0.0006 &  1.61 & 1000 & -258.100 & 0.940 & 0.0000 &  2.21 &  312 \\
  60 & -303.644 & 0.625 & 0.0034 &  1.58 & 1000 & -303.897 & 0.878 & 0.0021 &  2.31 & 1000 & -304.802 & 1.783 & 0.0079 &  1.55 & 1000 & -303.895 & 0.876 & 0.0000 &  2.62 &  367 \\
  70 & -347.844 & 0.373 & 0.0024 &  1.59 & 1000 & -348.175 & 0.704 & 0.0014 &  2.30 & 1000 & -348.595 & 1.124 & 0.0041 &  1.49 & 1000 & -348.119 & 0.648 & 0.0000 &  7.22 & 1000 \\
  80 & -390.158 & 0.161 & 0.0014 &  1.63 & 1000 & -390.357 & 0.360 & 0.0022 &  2.42 & 1000 & -390.483 & 0.486 & 0.0026 &  1.15 & 1000 & -390.313 & 0.316 & 0.0001 &  7.24 & 1000 \\
    \bottomrule
  \end{tabular}
  }
  \caption{Performance of the linx and $\Gamma$ relaxation bounds on instances with $d=90$ and varying subset size $s$.}
  \label{tab:scaling-5-d90}
\vspace{2em}
  \resizebox{\textheight}{!}{ 
  \begin{tabular}{R|RRRRR|RRRRR|RRRRR|RRRRR}
    \toprule 
    & \multicolumn{5}{c|}{linx double-scaling} & \multicolumn{5}{c|}{$\Gamma$} & \multicolumn{5}{c|}{$\Gamma^c$} & \multicolumn{5}{c}{$\Gamma^*$} \\
    s & LB & gap & conv. & time & \# iter. & LB & gap & conv. & time & \# iter. & LB & gap & conv. & time & \# iter. & LB & gap & conv. & time & \# iter. \\
    \midrule
  20 &  -78.649 & 0.822 & 0.0159 &  3.09 & 1000 &  -78.336 & 0.509 & 0.0023 &  3.64 & 1000 &  -81.966 & 4.139 & 0.0003 &  2.52 & 1000 &  -78.334 & 0.507 & 0.0000 &  3.66 &  307 \\
  30 & -107.927 & 1.227 & 0.0120 &  3.47 & 1000 & -107.982 & 1.282 & 0.0005 &  3.72 & 1000 & -111.321 & 4.621 & 0.0001 &  2.19 & 1000 & -107.982 & 1.282 & 0.0000 &  1.72 &  148 \\
  40 & -132.623 & 1.568 & 0.0040 &  3.19 & 1000 & -133.297 & 2.242 & 0.0001 &  3.93 & 1000 & -135.495 & 4.440 & 0.0000 &  2.41 & 1000 & -133.296 & 2.241 & 0.0000 &  1.47 &  122 \\
  50 & -151.935 & 2.437 & 0.0021 &  3.42 & 1000 & -153.351 & 3.853 & 0.0002 &  4.24 & 1000 & -154.381 & 4.883 & 0.0000 &  2.09 & 1000 & -153.309 & 3.811 & 0.0000 & 12.29 &  988 \\
  60 & -166.480 & 2.468 & 0.0025 &  3.36 & 1000 & -168.917 & 4.905 & 0.0001 &  4.10 & 1000 & -168.151 & 4.139 & 0.0001 &  2.29 & 1000 & -168.048 & 4.036 & 0.0000 &  7.19 &  587 \\
  70 & -174.944 & 2.416 & 0.0038 &  3.29 & 1000 & -178.014 & 5.486 & 0.0001 &  4.24 & 1000 & -176.343 & 3.815 & 0.0000 &  2.26 & 1000 & -176.341 & 3.813 & 0.0000 & 10.22 &  783 \\
  80 & -177.181 & 2.090 & 0.0028 &  3.34 & 1000 & -180.611 & 5.520 & 0.0000 &  4.46 & 1000 & -177.894 & 2.803 & 0.0007 &  1.90 & 1000 & -177.893 & 2.802 & 0.0000 &  3.95 &  322 \\
  90 & -173.174 & 1.912 & 0.0038 &  3.37 & 1000 & -177.041 & 5.779 & 0.0001 &  4.66 & 1000 & -173.871 & 2.609 & 0.0000 &  1.82 & 1000 & -173.871 & 2.609 & 0.0000 &  1.75 &  141 \\
 100 & -164.194 & 1.329 & 0.0249 &  3.70 & 1000 & -167.743 & 4.878 & 0.0000 &  4.30 & 1000 & -164.478 & 1.613 & 0.0010 &  1.78 & 1000 & -164.477 & 1.612 & 0.0000 &  3.64 &  276 \\
    \bottomrule
  \end{tabular}
  }
  \caption{Performance of the linx and $\Gamma$ relaxation bounds on instances with $d=124$ and varying subset size $s$. }
  \label{tab:scaling-5-d124}
  \end{minipage}}
\end{table}

%% file: appendix_subgradient.tex
\section{Subgradient analysis for \texorpdfstring{$\Gamma$}{Gamma} relaxations}
\label{sec:subgradient}

{
We will use first-order methods to solve the $\Gamma$ relaxation and its variants. 
Recall that the equivalent definitions of $\Opt_{\Gamma}$ are given in \eqref{eq:opt-gamma:equivalence}, where $g_s$ and $h_s$ are defined in \eqref{eq:h-rep:non-sparse} and \eqref{def:h}, respectively. 
To analyze the subgradients involved with $\Gamma$ relaxations, 

in \cref{prop:subdifferential}, we show the exact characterization of the subdifferential set of $g_s=h_s$. 
\begin{proposition}\label{prop:subdifferential}
  Consider any $\bar{\bm\lambda}\in\R^d_+\cap\K_{o,d}$, where $r:=\|\bar{\bm\lambda}\|_0\geq s$. Then, the subdifferential of $g_s=h_s$ at $\bar{\bm\lambda}$ is given by 
  \begin{align}\label{eq:h-subdifferential}
    \partial g_s(\bar{\bm\lambda}) = \set{ \bm\mu\in\R^d:~ 
    \mu_i 
    \begin{cases} 
      = \varphi'(\bar{\lambda}_i), & \textup{if }i \in[k], \\
      = \varphi'(\frac{1}{s-k}\sum_{i\in[k+1,d]}\bar{\lambda}_i), & \textup{if }i \in (k,r], \\
      \leq \varphi'(\frac{1}{s-k}\sum_{i\in[k+1,d]}\bar{\lambda}_i), & \textup{if }i \in (r,d]. 
    \end{cases}
    }. 
  \end{align}
\end{proposition}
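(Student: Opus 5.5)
Our plan is to compute $\partial g_s(\bar{\bm\lambda})$ through the representation $g_s=h_s$ from \cref{prop:h-rep}, which is now a closed-form, permutation-symmetric convex function. We use the standard fact that for a convex function $g$, $\bm\mu\in\partial g(\bar{\bm\lambda})$ iff $\bm\mu^\top\bm\lambda - g(\bm\lambda) \le \bm\mu^\top\bar{\bm\lambda}-g(\bar{\bm\lambda})$ for all $\bm\lambda$, equivalently iff the directional derivative satisfies $g'(\bar{\bm\lambda};\bm\delta)\ge \bm\mu^\top\bm\delta$ for all directions $\bm\delta$. We assume $\varphi$ is differentiable (as for $-\log$ and $1/\xi$), write $\bar\tau:=\frac{1}{s-k}\sum_{i\in[k+1,d]}\bar\lambda_i$, and denote the right-hand side of \eqref{eq:h-subdifferential} by $\cM$.

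\textbf{Step 1 (directional derivative).} Fix $\bm\delta$. For small $t>0$ the sorted order of $\bar{\bm\lambda}+t\bm\delta$ may permute ties, but the index $k$ from \cref{lem:index-k} is locally stable in the sense that $h_s$ at nearby points equals
\[
\sum_{i\in[k]}\varphi(\lambda_{[i]})+(s-k)\varphi\Big(\tfrac{1}{s-k}\textstyle\sum_{i\in(k,d]}\lambda_{[i]}\Big),
\]
with the first $k$ largest entries taken among $\{1,\dots,k\}$ by strictness $\bar\lambda_k>\bar\tau\ge\bar\lambda_{k+1}$. A subtlety is that when $\bar\tau=\bar\lambda_{k+1}$, $k$ could increase under perturbation; but then the formula with index $k$ and with $k+1$ coincide to first order (the averaged block and $\lambda_{k+1}$ share value $\bar\tau$), so the derivative is unaffected. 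Rather than tracking this, we will use the equivalent $g_s$-representation, namely a minimum over the convex majorization set, together with convexity (so that one-sided derivatives exist). This yields
\[
g_s'(\bar{\bm\lambda};\bm\delta)=\sum_{i\in[k]}\varphi'(\bar\lambda_i)\delta_i+\varphi'(\bar\tau)\sum_{i\in(k,d]}\delta_i,
\]
valid for $\bm\delta$ with $\bar{\bm\lambda}+t\bm\delta\in\R^d_+$. For $i>r$, $\bar\lambda_i=0$, so only $\delta_i\ge 0$ are admissible; directions with $\delta_i<0$ leave $\dom(g_s)$, giving $g_s'=+\infty$ there.

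\textbf{Step 2 (both inclusions).} The condition $g_s'(\bar{\bm\lambda};\bm\delta)\ge\bm\mu^\top\bm\delta$ for all $\bm\delta$ is linear. For $i\le r$, testing $\pm\e_i$ forces equality, $\mu_i=\varphi'(\bar\lambda_i)$ for $i\le k$ and $\mu_i=\varphi'(\bar\tau)$ for $k<i\le r$. For $i>r$, only $+\e_i$ is tested, which gives $\mu_i\le\varphi'(\bar\tau)$. Conversely, any $\bm\mu\in\cM$ satisfies the inequality for every admissible $\bm\delta$ by summing coordinatewise. We should double-check the case $r=d$, where the third case of \eqref{eq:h-subdifferential} is vacuous and $g_s$ is differentiable, consistent with \cite[Proposition 2]{li_best_2023}.

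\textbf{Main obstacle.} The main difficulty is Step 1: rigorously justifying the directional-derivative formula when ties occur, namely $\bar\tau=\bar\lambda_{k+1}$ or repeated eigenvalues inside the blocks, and when $\varphi'$ may blow up at $0$. We would handle this by bounding $g_s$ above via the feasible $\bm\xi=\bar{\bm\lambda}'+t\bm\delta'$ in \eqref{eq:h-rep:non-sparse}, with $\bm\delta'$ built from $\bm\delta$ in the pattern of \eqref{eq:optimal-xi}. This upper bound gives the equality constraints on $\mu_i$ for $i\le r$. The bound $\mu_i\le\varphi'(\bar\tau)$ for $i>r$ would follow by perturbing $\bar\lambda_i$ upward from $0$, which keeps $k$ fixed and only increases the averaged block.
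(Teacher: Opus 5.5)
Your Step 2 is correct, and this directional-derivative route differs from the paper's. If $g_s'(\bar{\bm\lambda};\bm\delta)=L(\bm\delta):=\sum_{i\in[k]}\varphi'(\bar\lambda_i)\delta_i+\varphi'(\bar\tau)\sum_{i>k}\delta_i$ on the cone $\{\bm\delta:\delta_i\ge0 \text{ for } i>r\}$ and $+\infty$ off it, then testing $\pm\e_i$ gives exactly the right-hand side $\mathcal{M}$ of \eqref{eq:h-subdifferential}.

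The gap is in Step 1. You set aside the local analysis of the closed form $h_s$. Instead you bound $g_s(\bar{\bm\lambda}+t\bm\delta)$ by feasible points of the minimization \eqref{eq:h-rep:non-sparse}. Since $g_s$ is a minimum, this can only give $g_s'(\bar{\bm\lambda};\bm\delta)\le L(\bm\delta)$, hence only $\partial g_s(\bar{\bm\lambda})\subseteq\mathcal{M}$. That inclusion would be vacuous if the subdifferential happened to be empty. Your converse (``any $\bm\mu\in\mathcal{M}$ satisfies the inequality by summing coordinatewise'') needs $g_s'(\bar{\bm\lambda};\bm\delta)\ge L(\bm\delta)$, i.e., a \emph{lower} bound on $g_s$. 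Nothing in your plan supplies one.

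The feasible point you name also breaks in exactly the tie case you flag. Take $\bar\tau=\bar\lambda_{k+1}$, $s-k\ge2$ and $\bm\delta=\e_{k+1}$. The $(k+1)$-st partial sum of $\bar{\bm\lambda}'+t\bm\delta'$ grows by $t/(s-k)$, while the sum of the $k+1$ largest entries of $\bar{\bm\lambda}+t\e_{k+1}$ grows by $t$, so majorization fails (e.g., $\bar{\bm\lambda}=(5,1,\tfrac12,\tfrac12)$, $s=3$, $k=1$). For necessity you only need $\pm\e_i$, and putting the whole perturbation on one coordinate works uniformly:
\begin{itemize}
\item $\bar{\bm\lambda}'\pm t\e_i$ (re-sorted if needed) for $\pm\e_i$ with $i\le k$;
\item $\bar{\bm\lambda}'+t\e_{k+1}$ for $+\e_i$ with $i>k$;
\item $\bar{\bm\lambda}'-t\e_s$ for $-\e_i$ with $k<i\le r$.
\end{itemize}

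For sufficiency you have two options. One is to finish the local analysis: near $\bar{\bm\lambda}$, $h_s$ coincides with one of the closed forms indexed by $j\in\{k,\dots,k+m\}$, where $m$ counts tail entries equal to $\bar\tau$. So $k$ can jump by more than one, but all these forms share the first-order expansion $L$.

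The simpler option is a global supporting inequality.
\begin{itemize}
\item Let $c_i=\varphi'(\bar\lambda_i)$ for $i\in[k]$ and $c_i=\varphi'(\bar\tau)$ for $i>k$; then $\bm c$ is nonincreasing.
\item For any $\bm\lambda\in\dom(g_s)$, take its optimal sorted $s$-sparse $\bm\xi$ from \cref{prop:h-rep}. Convexity of $\varphi$ gives $g_s(\bm\lambda)=f_s(\bm\xi)\ge f_s(\bar{\bm\lambda}')+\bm c^\top(\bm\xi-\bar{\bm\lambda}')$, since both vectors vanish beyond index $s$. Here $f_s(\bar{\bm\lambda}')=g_s(\bar{\bm\lambda})$.
\item The rearrangement inequality together with $\bm\lambda\in\conv\{\bP\bm\xi\}$ gives $\bm c^\top\bm\xi\ge\bm c^\top\bm\lambda$.
\item Finally $\bm c^\top\bar{\bm\lambda}'=\bm c^\top\bar{\bm\lambda}$, because $(s-k)\bar\tau=\sum_{i>k}\bar\lambda_i$.
\end{itemize}
Hence $\bm c\in\partial g_s(\bar{\bm\lambda})$. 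Every $\bm\mu\in\mathcal{M}$ is $\bm c$ plus a nonpositive combination of the $\e_i$ with $i>r$, i.e., $\bm c$ plus a vector in the normal cone of $\R^d_+\supseteq\dom(g_s)$ at $\bar{\bm\lambda}$. Therefore $\mathcal{M}\subseteq\partial g_s(\bar{\bm\lambda})$.

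This is essentially the primal form of the paper's argument. The paper computes $g_s^*(\bm\mu)=\sum_{i\in[s]}\varphi^*(\mu_{[i]})$ and reads off $\partial g_s(\bar{\bm\lambda})$ as the maximizers in $g_s=g_s^{**}$, using the equality cases of a chain of inequalities. That duality argument gives both inclusions at once and never has to handle ties or shifts of the index $k$. Your repaired route avoids conjugates, at the cost of the case-by-case perturbations above.
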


\begin{proof}
{Given a convex function $\varphi$, we let $\varphi^*$ represent its Fenchel conjugate.}
  We first compute the Fenchel conjugate of $g_s$: 
  \begin{align*}
    g_s^*(\bm\mu) 
    &= \max_{\bm\lambda\in\R^d_+}\set{\langle\bm\lambda,\bm\mu\rangle - g_s(\bm\lambda)} \\
    &= \max_{\bm\lambda\in\R^d_+,\bm\xi\in\R_+^d}\set{\langle\bm\lambda,\bm\mu\rangle - f_s(\bm\xi):\bm\xi\majorize\bm\lambda,\ \bm\xi\in\K_{o,d},\ \|\bm\xi\|_0\leq s} \\
    &= \max_{\bm\xi\in\R_+^d}\set{\langle\bP_{\bm\mu}\bm\xi,\bm\mu\rangle - f_s(\bm\xi): \bm\xi\in\K_{o,d},\ \|\bm\xi\|_0\leq s} \\
    &= \max_{\bm\xi\in\R_+^d}\set{\sum_{i\in[s]}(\mu_{[i]}\xi_i - \varphi(\xi_i)): \bm\xi\in\K_{o,d},\ \|\bm\xi\|_0\leq s} \\
    &= \sum_{i\in[s]}\varphi^*(\mu_{[i]}). 
  \end{align*}
  Here, the second step follows from the definition of $g_s$. 
  The third step follows from the property of majorization that $\bm\xi\majorize\bm\lambda$ if and only $\bm\lambda\in\conv\{\bP\bm\xi:\bP\textup{ is a permutation matrix}\}$ by {\cref{thm:char:majorization}}, and $\bP_{\bm\mu}$ denotes the permutation matrix such that $\bP_{\bm\mu}\bm\xi$ is sorted in the same order as $\bm\mu$ and therefore their inner product achieves the maximum according to the rearrangement inequality. The last step follows from the observation that the maximization problem without the ordering constraint $\max_{\bm\xi\in\R_+^d}\set{\sum_{i\in[s]}(\mu_{[i]}\xi_i - \varphi(\xi_i))}=\sum_{i\in[s]}\varphi^*(\mu_{[i]})$ is attained by $\xi^*_i\in\partial\varphi^*(\mu_{[i]})$ for all $i$, and moreover as $\varphi^*$ is convex we observe that $\xi^*_i$ is ordered in nonincreasing fashion. 
  
  Next, we compute the Fenchel conjugate of $g_s^*$ as follows. {Recall that $g_s$ is a proper, closed, and convex function, and so $g_s=g_s^{**}$.} Thus, for any $\bm\lambda\in\R^d_+\cap\K_{o,d}$, {where $r\coloneqq\|\bm\lambda\|_0\geq s$, using the preceding expression for $g_s^*(\bm\mu)$,} we have 
  \begin{align*}
    g_s(\bm\lambda)
    = g_s^{**}(\bm\lambda) 
    &= \max_{\bm\mu\in\R^d}\set{\ip{\bm\mu,\,\bm\lambda} - g_s^*(\bm\mu)} \\
    &= \max_{\bm\mu\in\R^d}\set{\sum_{i\in[r]}\mu_i\lambda_i - \sum_{i\in[s]}\varphi^*(\mu_{[i]})} \\
    &= \max_{\bm\mu\in\R^d}\set{\sum_{i\in[r]}\mu_{[i]}\lambda_i - \sum_{i\in[s]}\varphi^*(\mu_{[i]})} \\
    &= \max_{\bm\mu\in\R^d}\set{\sum_{i\in[r]}\mu_i\lambda_i - \sum_{i\in[s]}\varphi^*(\mu_i):
    \, \begin{array}{l} \bm\mu_{1:r}\in\K_{o,r},\\   \mu_i\leq\mu_r,\forall i\in(r,d] \end{array} } \\
    &\leq \max_{\bm\mu\in\R^d,\bm\zeta\in\R^d_+\cap\K_{o,d}}\set{\sum_{i\in[r]}\mu_i\zeta_i - \sum_{i\in[s]}\varphi^*(\mu_i): \, \begin{array}{l}  \bm\mu_{1:r}\in\K_{o,r},\\  \mu_i\leq\mu_r,\forall i\in(r,d],\\  \bm\zeta\majorize\bm\lambda,\ \|\bm\zeta\|_0\leq s \end{array} } \\
    &= \max_{\bm\mu\in\R^d,\bm\zeta\in\R^d_+\cap\K_{o,d}}\set{\sum_{i\in[s]}[\mu_i\zeta_i - \varphi^*(\mu_i)]: \, \begin{array}{l}  \bm\mu_{1:r}\in\K_{o,r},\\  \mu_i\leq\mu_r,\forall i\in(r,d],\\  \bm\zeta\majorize\bm\lambda,\ \|\bm\zeta\|_0\leq s \end{array} } \\
    &\leq \max_{\bm\zeta\in\R^d_+\cap\K_{o,d}}\set{\sum_{i\in[s]}\varphi(\zeta_i):\bm\zeta\majorize\bm\lambda,\ \|\bm\zeta\|_0\leq s} \\
    &= g_s(\bm\lambda). 
  \end{align*}
  The fourth equation follows from the rearrangement inequality, in the fifth equation we made a change of variable, {and the sixth equation follows from noting $\|\bm\zeta\|_0\le s \le r$}. The first inequality follows from the fact that $\bm\mu_{1:r}\in\K_{o,r}$ and the definition of majorization, because  
  \begin{align*}
    \sum_{i\in[r]}\mu_i\zeta_i
    &= \mu_r\sum_{j\in[r]}\zeta_i + \sum_{i\in[r-1]}(\mu_i-\mu_{i+1})\sum_{j\in[i]}\zeta_j \\
    &\geq \mu_r\sum_{j\in[r]}\lambda_i + \sum_{i\in[r-1]}(\mu_i-\mu_{i+1})\sum_{j\in[i]}\lambda_j 
    = \sum_{i\in[r]}\mu_i\lambda_i, 
  \end{align*}
  and it is satisfied at equality if and only if either $\mu_i=\mu_{i+1}$ or $\sum_{j\in[i]}\lambda_j=\sum_{j\in[i]}\zeta_j$ for each $i\in[r-1]$. 
  The second inequality {follows from the definition of the Fenchel conjugate and it} is satisfied at equality if and only if $\mu_i\in\partial\varphi(\zeta_i)$ for all $i\in[s]$. 
  {By \cref{prop:h-rep},} the last equality is achieved if and only if $\bm\zeta$ is given by
  \begin{align*}
    \zeta_i = \begin{cases}
      \lambda_i, & \textup{if } i\in[k], \\
      \frac{1}{s-k}\sum_{i\in(k,d]}\lambda_i, & \textup{if } i\in(k,s], \\
      0, & \textup{if } i\in(s,d]. 
    \end{cases}
  \end{align*}
  Thus, the first inequality is satisfied at equality if and only if $\mu_i=\mu_s$ for $i\in(s,r]$, because $\sum_{j\in[i]}\lambda_j<\sum_{j\in[s]}\zeta_j=\sum_{j\in[i]}\zeta_j$ for $i\in[s,r)$, which implies $\mu_i=\mu_{i+1}$ for $i\in[s,r)$. By \cite[Theorem 1.4.1]{hiriart-urruty_fundamentals_2001}, the subdifferential of $g_s=h_s$ is the set of maximizers of $\max_{\bm\mu\in\R^d}\set{\ip{\bm\mu,\, \bm\lambda} - g_s^*(\bm\mu)}$, which is exactly \eqref{eq:h-subdifferential} by our analysis.
\end{proof}

To obtain the subdifferential of $g_s\circ\bm\lambda=h_s\circ\bm\lambda$, we introduce lemmas on spectral functions. 

We are now ready to give the subdifferential of the function $G_s(\bX(\x))=g_s(\bm\lambda(\bX(\x)))$. 

\begin{lemma}\label{lem:subdifferential}
  The subdifferential of the function $G_s(\bX(\x))=g_s(\bm\lambda(\bX(\x)))$ at the point $\x$ is given by
  \begin{align*}
    \set{ \bX^*(\bQ\Diag(\y)\bQ^\top): 
    \begin{array}{>{\displaystyle}l} 
      y_i 
    \begin{cases} 
      = \varphi'(\lambda_i), & \textup{if }i \in[k], \\
      = \varphi'(\frac{1}{s-k}\sum_{i\in[k+1,d]}\lambda_i), & \textup{if }i \in (k,r], \\
      \leq \varphi'(\frac{1}{s-k}\sum_{i\in[k+1,d]}\lambda_i), & \textup{if }i \in (r,d]. 
    \end{cases} \\
      \bQ \textup{ is orthogonal},\ \bQ\Diag(\bm\lambda)\bQ^\top = \bX(\x)
    \end{array}
    }, 
  \end{align*}
  where $\bX^*(\cdot)$ is the adjoint operator of the linear mapping $\bX(\cdot)$,
  {$k$ is the special index for $\bm\lambda(\bX(\x))$ from \cref{lem:index-k}, and $r\coloneqq \|\bm\lambda(\bX(\x))\|_0$.} 
\end{lemma}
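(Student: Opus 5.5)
The plan is to combine the chain rule for convex functions composed with linear maps with the spectral subdifferential formula of \cref{prop:eigenvalueFunctions}, using \cref{prop:subdifferential} for the vector-level subdifferential. The map $\x\mapsto\bX(\x)$ is linear (for instance $\bX(\x)=\bV\Diag(\x)\bV^\top$). By \cref{lem:h-convex}, $G_s=g_s\circ\bm\lambda$ is a closed proper convex function on $\S^d$. Hence, at any $\x$ where the chain rule qualification holds, we will use $\partial(G_s\circ\bX)(\x)=\bX^*\big(\partial G_s(\bX(\x))\big)$. The qualification is that the range of $\bX(\cdot)$ meets the relative interior of $\dom(G_s)$; this holds since matrices of rank at least $s$ include positive definite ones.

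The bulk of the work is computing $\partial G_s(\bM)$ at $\bM=\bX(\x)$. By \cref{prop:eigenvalueFunctions}, applied to the permutation-symmetric convex $g_s$, we have $\bY\in\partial G_s(\bM)$ if and only if two conditions hold: $\bm\lambda(\bY)\in\partial g_s(\bm\lambda(\bM))$, and $\bY,\bM$ are simultaneously diagonalized by some orthogonal $\bQ$ with $\bQ^\top\bM\bQ=\Diag(\bm\lambda(\bM))$ and $\bQ^\top\bY\bQ=\Diag(\bm\lambda(\bY))$. I will convert this into the stated form $\bY=\bQ\Diag(\y)\bQ^\top$, where $\bQ$ is any eigenbasis of $\bM$ ordered as $\bm\lambda$ and $\y\in\partial g_s(\bm\lambda)$ is given by \cref{prop:subdifferential}.

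There is an ordering subtlety. In \cref{prop:eigenvalueFunctions} the vector $\bm\lambda(\bY)$ is sorted decreasingly, whereas $\y$ from \eqref{eq:h-subdifferential} has a different order. Since $\varphi'$ is nondecreasing (as $\varphi$ is convex), the first $r$ entries of $\y$ are nondecreasing, and the tail entries lie below $y_r$. I will handle this with two observations.

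\begin{itemize}
\item $\partial g_s$ is permutation-covariant. If $\bm\lambda$ has ties, one can permute $\y$ within each block of equal $\lambda_i$'s.
\item $\bQ\Diag(\y)\bQ^\top$ can be rewritten as $\tilde\bQ\Diag(\bm\lambda(\bY))\tilde\bQ^\top$, where $\tilde\bQ$ is obtained from $\bQ$ by a permutation of columns. Simultaneous diagonalization is then needed only up to such reorderings.
\end{itemize}

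The cleanest route is the standard fact that $\bY\in\partial G_s(\bM)$ if and only if $\bY=\bQ\Diag(\y)\bQ^\top$ for some $\y\in\partial g_s(\bm\lambda(\bM))$ and some $\bQ$ diagonalizing $\bM$ in the order of $\bm\lambda(\bM)$ (Lewis's Theorem 3.1 in its unsorted form).

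\textbf{Sufficiency.} Given such $\bY$, I will verify the subgradient inequality directly. For any $\bZ$, write $G_s(\bZ)=g_s(\bm\lambda(\bZ))\ge g_s(\bm\lambda(\bM))+\ip{\y,\bm\lambda(\bZ)-\bm\lambda(\bM)}$. Combine this with von Neumann's trace inequality in the form $\ip{\bY,\bZ}\le \ip{\y^\downarrow,\bm\lambda(\bZ)}$, and with symmetry of $g_s$, which lets us replace $\y$ by its sorted version after permuting $\bm\lambda(\bZ)$ correspondingly.

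\textbf{Necessity.} This follows from \cref{prop:eigenvalueFunctions} together with the permutation covariance above.

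The main obstacle I expect is precisely this bookkeeping between sorted and unsorted eigenvalue vectors, including the nonuniqueness of $\bQ$ when $\bm\lambda(\bM)$ has repeated entries, especially the repeated zero eigenvalues beyond index $r$. In that case any orthogonal rotation within the null space is allowed. This is consistent with the tail entries $y_i\le\varphi'(\cdot)$ being free inequalities, and I will check this explicitly.

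Finally, I will apply $\bX^*$ to obtain the displayed set. For example, with $\bX(\x)=\bV\Diag(\x)\bV^\top$ this gives $\diag(\bV^\top\bQ\Diag(\y)\bQ^\top\bV)$.
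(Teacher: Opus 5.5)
Your plan is correct and matches the paper's proof. The paper likewise writes $\partial G_s(\bX)=\{\bQ\Diag(\y)\bQ^\top:\ \y\in\partial g_s(\bm\lambda),\ \bQ\Diag(\bm\lambda)\bQ^\top=\bX\}$ via Lewis's theorem, then finishes with \cref{prop:subdifferential} and the linear chain rule (cited as Hiriart-Urruty--Lemar\'echal, Theorem~4.2.1); your von Neumann-based sufficiency check and explicit qualification condition only add detail the paper omits.

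Two small corrections. First, since $\varphi'$ is nondecreasing and $\bm\lambda$ is sorted nonincreasingly, the first $r$ entries of $\y$ are \emph{nonincreasing}, not nondecreasing, so $\y$ is already similarly ordered with $\bm\lambda$; your sufficiency argument does not depend on this in any case. Second, the qualification holds because the range of $\bX(\cdot)$ itself contains a positive definite matrix (e.g., $\bX(\ones)=\bV\bV^\top\succ0$ when $\bV$ has full row rank), not merely because $\dom(G_s)$ contains one.
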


\begin{proof}
  By \cref{lem:conjugate-subdifferential}, we have
  \begin{align*}
    \partial G_s(\bX)
    &= \set{\bQ\Diag(\y)\bQ^\top:\y\in\partial g_s(\bm\lambda),\ \bQ \textup{ is orthogonal},\ \bQ\Diag(\bm\lambda)\bQ^\top = \bX}. 
  \end{align*}
 Then the conclusion follows from applying \cref{prop:subdifferential} for $\partial g_s(\bm\lambda)$ and \cite[Theorem 4.2.1]{hiriart-urruty_fundamentals_2001}. 
\end{proof}

Next, in \cref{lem:smooth-constant}, we provide an upper bound for the subgradient for $\bX(\x)=\bM\Diag(\x)\bM^\top$ for a generic matrix $\bM\in\S^d$. We will use \cref{lem:smooth-constant} in the discussion of specific relaxations of interest by substituting $\bM=\bV,\ \bW$ or $\bC$ later on. 

\begin{lemma}\label{lem:smooth-constant}
  Suppose $\bX(\x)=\bM\Diag(\x)\bM^\top$ for some full-rank matrix $\bM\in\R^{d\times d}$. Then, {all of} the subgradients of $G_s(\bX(\x))=g_s(\bm\lambda(\bX(\x)))$ at $\x$ is bounded above by $\cG\coloneqq \sqrt{d}\|\bM^\top\bM\|_2|\varphi'(\delta_s(\bM^\top\bM))|$, where $\delta_s(\bM^\top\bM)$ is defined in \eqref{eq:delta-constant}. 
\end{lemma}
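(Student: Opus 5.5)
The plan is to combine the explicit subdifferential description of \cref{lem:subdifferential} with a uniform bound on the relevant eigenvalue quantity. Fix $\x\in\cX$ and write $\bX(\x)=\bM\Diag(\x)\bM^\top=\bQ\Diag(\bm\lambda)\bQ^\top$. By \cref{lem:subdifferential}, every subgradient has the form $\g=\bX^*(\bQ\Diag(\y)\bQ^\top)$. The adjoint of $\x\mapsto\bM\Diag(\x)\bM^\top$ is $\bY\mapsto\diag(\bM^\top\bY\bM)$, so
\[
g_i=\e_i^\top\bM^\top\bQ\Diag(\y)\bQ^\top\bM\e_i .
\]
Here $y_i=\varphi'(\beta_i)$ on $[r]$, with $\beta_i$ as in \eqref{eq:beta-map}, and $y_i\le\varphi'(\beta_{k+1})$ on $(r,d]$.

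The first step is to bound $|y_i|$ by $|\varphi'(\delta_s(\bM^\top\bM))|$. Since $\varphi$ is convex and nonincreasing, $\varphi'\le0$ and $|\varphi'|$ is nonincreasing. It therefore suffices to show $\beta_i\ge\delta_s$ for $i\le r$. By \cref{lem:index-k}, $\beta_i\ge\beta_{k+1}\ge\lambda_{k+1}\ge\lambda_s$ for all $i$, so I would reduce everything to a lower bound on $\lambda_s(\bX(\x))$ that holds uniformly over $\x\in\cX$.

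For this lower bound I would follow the argument used in the proof of \cref{prop:Q+Omega_s:not_conv}. The nonzero spectrum of $\bX(\x)$ equals that of $\Diag(\sqrt{\x})\bM^\top\bM\Diag(\sqrt{\x})\succeq\lambda_d(\bM^\top\bM)\Diag(\x)$. Hence $\lambda_s(\bX(\x))\ge\lambda_d(\bM^\top\bM)\,x_{[s]}\ge\lambda_d(\bM^\top\bM)/d$, using the counting bound $x_{[s]}\ge 1/d$. I expect $\delta_s$ in \eqref{eq:delta-constant} to be essentially this quantity, or a sharper version of it such as $\min$ of $\lambda_s$ over $\cX$. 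I would adapt the argument to whatever that definition states.

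The main obstacle is the coordinates $i\in(r,d]$, where $\y$ is only bounded above and $|y_i|$ could be arbitrarily large. Two observations handle this:
\begin{enumerate}[(i)]
\item If $\x>\bm0$ and $\bM$ is full rank, then $r=d$, so these coordinates do not exist.
\item If $r<d$, the extra terms $y_i\,\q_i\q_i^\top$ act on $\ker\bX(\x)$. For $j$ with $x_j>0$, $\bM\e_j\in\range\bX(\x)$, so $g_j$ is unaffected by them. For $j$ with $x_j=0$, I would take the subgradient with $y_i=\varphi'(\beta_{k+1})$ on $(r,d]$, i.e.\ the minimal-norm/canonical one that the algorithm actually uses. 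I would also note, as in the proof of \cref{prop:g-scaling-Gamma-invariant}, that all other choices only shift boundary coordinates monotonically.
\end{enumerate}
So I would state the bound for this canonical choice, or for the interior of $\cX$, matching what the lemma intends.

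Finally, with $\bY=\bQ\Diag(\y)\bQ^\top$ and $\|\bY\|_2\le|\varphi'(\delta_s)|$, each coordinate satisfies
\[
|g_i|\le\|\bM\e_i\|_2^2\,\|\bY\|_2\le\|\bM^\top\bM\|_2\,|\varphi'(\delta_s)|.
\]
Then $\|\g\|_2\le\sqrt d\max_i|g_i|\le\cG$, which gives the stated bound.
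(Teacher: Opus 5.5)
\textbf{The gap is in the $\delta_s$ step.} The problem is that you pass from $\beta_{k+1}$ down to $\lambda_s(\bX(\x))$ \emph{before} minimizing over $\cX$. In \eqref{eq:delta-constant}, $\delta_s(\bM^\top\bM)$ is defined as $\min_{\x\in\cX\cap\{0,1\}^d}\lambda_s(\bX(\x))$, a minimum over binary points only. The inequality $\lambda_s(\bX(\x))\ge\delta_s(\bM^\top\bM)$ is false at fractional points of $\cX$.

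For example, take $\bM=\bI_2$ and $s=1$. The binary points give $\lambda_1=1$, so $\delta_1=1$. At $\x=(\tfrac12,\tfrac12)$, however, $\lambda_1(\bX(\x))=\tfrac12$, while $\beta_1=x_1+x_2=1$. The quantity you discarded is exactly the one that stays above $\delta_1$.

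So no adaptation of a route through $\lambda_s$ can reach the stated constant. What your argument actually proves is the bound $\sqrt d\,\|\bM^\top\bM\|_2\,|\varphi'(\lambda_d(\bM^\top\bM)/d)|$. This is weaker than $\cG$ in general: $\delta_s(\bM^\top\bM)\ge\lambda_d(\bM^\top\bM)\ge\lambda_d(\bM^\top\bM)/d$ by interlacing (\cref{rem:smooth-constant}), and $|\varphi'|$ is nonincreasing.

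\textbf{The missing idea is concavity of $\beta_{k+1}$.} Keep $\beta_{k+1}=\frac{1}{s-k}\sum_{i=k+1}^{d}\lambda_i(\bX(\x))$ and use the fact that it is concave in $\x$. From \cref{lem:index-k} one checks that $\beta_{k+1}=\min_{0\le j<s}\frac{1}{s-j}\sum_{i=j+1}^{d}\lambda_i(\bX(\x))$. The averages $t_j$ decrease strictly up to $j=k$ and are nondecreasing afterwards. Each term is a sum of the smallest eigenvalues of the linear matrix map $\x\mapsto\bX(\x)$, hence concave.

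Its minimum over $\cX$ is therefore attained at a vertex. For $\cX=\{\x\in[0,1]^d:\ones^\top\x=s\}$ the vertices are binary, and only there do you lower bound: $\beta_{k+1}\ge\lambda_{k+1}\ge\lambda_s\ge\delta_s$. For $i\in[k]$ you already have $\beta_i=\lambda_i>\beta_{k+1}$. This is the paper's argument, phrased there somewhat loosely.

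\textbf{The rest of your plan is sound.} Your treatment of the coordinates in $(r,d]$ is correct. Literally ``all'' subgradients are unbounded at boundary points because of the normal-cone directions. The paper's proof likewise only bounds the canonical choice \eqref{eq:y_i:subdifferential}, with $y_i=\varphi'(\beta_{k+1})$ on all of $[k+1,d]$. Your coordinatewise estimate $|g_i|\le(\bM^\top\bM)_{ii}\,\|\bY\|_2$ gives the same factor $\sqrt d\,\|\bM^\top\bM\|_2$ as the paper's Frobenius-norm chain.
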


\begin{proof}
  {Given $\bX(\x)$, let $\bQ$ be any orthogonal matrix such that $\bQ\Diag(\bm\lambda)\bQ^\top=\bX(\x)$, and let $\|\cdot\|_F$ denote the Frobenius norm.}  
  Then, by noting that $\bX^*(\bY)=\diag(\bM^\top\bY\bM)$ and using \cref{lem:subdifferential}, {any} subgradient of $G_s(\bX(\x))=g_s(\bm\lambda(\bX(\x)))$ at $\x$ is bounded by 
  \begin{align*}
    \|\bX^*(\bQ\Diag(\y)\bQ^\top)\|_2 
    &= \|\diag(\bM^\top\bQ\Diag(\y)\bQ^\top\bM)\|_2 \\
    &\leq \|\bm\lambda(\bM^\top\bQ\Diag(\y)\bQ^\top\bM)\|_2 \\
    &= \|\bM^\top\bQ\Diag(\y)\bQ^\top\bM\|_F \\
    &= \|\bQ^\top\bM\bM^\top\bQ\Diag(\y)\|_F \\
    &\leq \|\bQ^\top\bM\bM^\top\bQ\|_2\|\Diag(\y)\|_F \\
    &\leq \|\bM^\top\bM\|_2\|\y\|_2, 
  \end{align*}
  where 
  \begin{align}\label{eq:y_i:subdifferential}
    y_i = 
    \begin{cases} 
      \varphi'(\lambda_i(\bX(\x))), & \textup{if }i \in [k], \\
      \varphi'(\frac{1}{s-k}\sum_{i\in[k+1,d]}\lambda_i(\bX(\x))), & \textup{if }i\in [k+1,d], \\
    \end{cases}
  \end{align}
  {where $k$ is the special index from \cref{lem:index-k}.} 
  {Then, as $\bm\lambda(\bX(\x))\in\K_{o,d}$, from the definition of $k$ we have $k\le s-1$ and  so $\frac{1}{s-k}\sum_{i\in[k+1,d]}\lambda_i(\bX(\x))\geq\lambda_{k+1}(\bX(\x))\geq\lambda_s(\bX(\x))$ and thus}
  \begin{align}\label{eq:delta-constant}
    \min_{\x\in\cX}\set{\frac{1}{s-k}\sum_{i\in[k+1,d]}\lambda_i(\bX(\x))} \geq \min_{\x\in\cX\cap\{0,1\}^d}\lambda_s(\bX(\x)) \eqcolon\delta_s(\bM^\top\bM) > 0,
  \end{align}
  where in the first inequality we also used the concavity of the function $\sum_{i\in[k+1,d]}\lambda_i(\bX(\x))$ when we restrict the domain to binaries, and the last strict inequality follows from $\rank(\bX(\x))=s$. 
  By definition of $k$ in \cref{lem:index-k}, for any $i\in[k]$ we have $\lambda_i(\bX(\x))\geq\frac{1}{s-k}\sum_{j\in[k+1,d]}\lambda_j(\bX(\x)) \geq\delta_s(\bM^\top\bM)$. Then, since {$\varphi(\cdot)$ is convex and so} $\varphi'(\cdot)$ is nondecreasing and $0\ge \varphi'(\cdot)$, we conclude $0\geq y_i\geq\varphi'(\delta_s(\bM^\top\bM))$ for any $i\in[d]$. 
  Thus, $\|\y\|_2\leq\sqrt{d}|\varphi'(\delta_s(\bM^\top\bM))|$. Combined with the inequality $\|\bX^*(\bQ\Diag(\y)\bQ^\top)\|_2 \leq \|\bM^\top\bM\|_2\|\y\|_2$, this concludes the proof. 
\end{proof}

\begin{remark}\label{rem:smooth-constant}
  The constant $\delta_s(\bM^\top\bM)$ in \cref{lem:smooth-constant} can be difficult to evaluate in practice. Nevertheless, by Cauchy's interlacing theorem, for any $\x\in\cX\cap\{0,1\}^d$, the matrix $\bX(\x)=\bM\Diag(\x)\bM^\top$ for some full-rank matrix $\bM\in\R^{d\times d}$ satisfies $\lambda_s(\bX(\x)) \geq \lambda_d(\bM^\top\bM) > 0$. 
  Therefore, $\delta_s(\bM^\top\bM)\geq\lambda_d(\bM^\top\bM)$ gives a tractable lower bound for $\delta_s(\bM^\top\bM)$, which can in return be used to provide an upper bound on $\cG$ in \cref{lem:smooth-constant}. 
\end{remark}

Note that the objective function of the majorization-based $\Gamma$ relaxation \eqref{eq:Gamma_rel} (see also \eqref{eq:opt-gamma:equivalence}) is precisely 
$g_s(\bm\lambda(\bX(\x)))=\objgg(\x,\bm 0)$, where $\bX(\x) =\bV\Diag(\x)\bV^\top$. Then, applying \cref{lem:smooth-constant} with $\bM=\bV$ {and recalling that $\bC=\bV^\top\bV$, we conclude that the subgradients of this function is bounded above by} the constant $\cG_{\Gamma}\leq\sqrt{d}\|\bC\|_2|\varphi'(\delta_s(\bC))|$. 

Similarly, the objective function of the complementary $\Gamma^c$ relaxation \eqref{eq:opt-gamma-complementary}  
up to a constant difference of $-\log\det(\bC)$ is  $g_{d-s}(\bm\lambda(\bX(\check\x))) 
$, where $\bX(\check\x) =\bW\Diag(\check\x)\bW^\top$ and $\bW^\top\bW = \bC^{-1}$. Then, applying \cref{lem:smooth-constant} with $\bM=\bW$, we conclude that its subgradients are bounded above by the constant $\cG_{\Gamma^c} \leq\sqrt{d}\|\bC^{-1}\|_2|\varphi'(\delta_{d-s}(\bC^{-1}))|$. 

Finally recall that the objective function of the linx relaxation \eqref{eq:linx} 
up to a constant factor of $\frac12$ is given by $g_d(\bm\lambda(\bX(\x)))= 
\objld(\x,\bm0,\bm0)$, where $\bX(\x) = \bC\Diag(\x)\bC + \bI_d - \Diag(\x)$. Then, the norm of its subgradients can be bounded according to the following lemma. 

\begin{lemma}
  Suppose $\bX(\x) = \bC\Diag(\x)\bC + \bI_d - \Diag(\x)$. Then, any subgradient of $g_d(\bm\lambda(\bX(\x)))=
  \objld(\x,\bm0,\bm0)$ {at any point $\x\in\cX=\{\x\in[0,1]^d:\, \ones^\top\x = s\}$} is bounded above by $\cG_{\textup{linx}}=\sqrt{d}(\|\bC\|_2^2 + 1)|\varphi'(\delta_s(\bC^2))|$. 
\end{lemma}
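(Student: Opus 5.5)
We will follow the template of \cref{lem:smooth-constant}. By \cref{lem:subdifferential} with $s=d$ (so $k$ is the index of \cref{lem:index-k} with $s=d$; for $s=d$ one checks $k=d-1$ or the average equals $\lambda_d$, so in all cases $y_i=\varphi'(\lambda_i(\bX(\x)))$), every subgradient of $\x\mapsto g_d(\bm\lambda(\bX(\x)))$ has the form $\bX^*(\bQ\Diag(\y)\bQ^\top)$, where $\bQ\Diag(\bm\lambda)\bQ^\top=\bX(\x)$. The map here is affine, $\bX(\x)=\bC\Diag(\x)\bC-\Diag(\x)+\bI_d$, so its linear part has adjoint $\bY\mapsto\diag(\bC\bY\bC)-\diag(\bY)$. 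A subgradient is therefore $\diag(\bC\bY\bC)-\diag(\bY)$ with $\bY=\bQ\Diag(\y)\bQ^\top$.

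First, we bound the norm. By the triangle inequality and the same chain as in \cref{lem:smooth-constant},
\[
\|\diag(\bC\bY\bC)\|_2\le\|\bC\bY\bC\|_F\le\|\bC\|_2^2\|\y\|_2 ,
\]
and $\|\diag(\bY)\|_2\le\|\bY\|_F=\|\y\|_2$. Hence every subgradient has norm at most $(\|\bC\|_2^2+1)\|\y\|_2$.

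Second, we bound $\|\y\|_2\le\sqrt d\,|\varphi'(\delta)|$ for a uniform lower bound $\delta>0$ on $\lambda_d(\bX(\x))$ over $\x\in\cX$. Since $\varphi'$ is nondecreasing and nonpositive, this gives $0\ge y_i\ge\varphi'(\delta)$.

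The main obstacle is showing $\lambda_d(\bX(\x))\ge\delta_s(\bC^2)$. The plan has two parts.

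\textbf{Concavity step.} The function $\x\mapsto\lambda_d(\bX(\x))$ is concave, being the minimum eigenvalue of an affine matrix map. Its minimum over the polytope $\cX$ is therefore attained at a vertex, and the vertices are binary $\x$ with $\ones^\top\x=s$.

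\textbf{Binary case.} At binary $\x$ with support $S$, apply \cref{lem:linx:main-idea} with $\bQ=\bC$. After permuting coordinates, $\bX(\x)=\bC\Diag(\x)\bC+\bI-\Diag(\x)$ is similar to a block form with Schur complement structure. We will try to show
\[
\lambda_d(\bX(\x))\ge\min\{1,\ \lambda_s(\bC\Diag(\x)\bC)\} .
\]
Writing $\bC\Diag(\x)\bC=\bC_{:,S}\bC_{S,:}$, its nonzero eigenvalues coincide with those of $\bC_{S,:}\bC_{:,S}=(\bC^2)_{S,S}$. Matching this with the definition $\delta_s(\bM^\top\bM)=\min\lambda_s(\bM\Diag(\x)\bM^\top)$ taken with $\bM=\bC$ gives $\delta_s(\bC^2)$.

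The remaining issue is the comparison with $1$. If the lower bound $\min\{1,\cdot\}$ is not directly obtainable, we fall back on $\lambda_d(\bX(\x))\ge\lambda_d(\bC\Diag(\x)\bC+\bI-\Diag(\x))$, using an interlacing or compression argument on the range of $\bC_{:,S}$ and its orthogonal complement. We may need to interpret the stated constant as using $\min\{1,\delta_s(\bC^2)\}$, noting that $|\varphi'|$ is monotone. Combining the two steps yields $\cG_{\textup{linx}}=\sqrt d(\|\bC\|_2^2+1)|\varphi'(\delta_s(\bC^2))|$.
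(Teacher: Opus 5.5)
Your reduction steps match the paper's proof and are fine: the adjoint $\bY\mapsto\diag(\bC\bY\bC)-\diag(\bY)$, the norm bound $(\|\bC\|_2^2+1)\|\y\|_2$, the estimate $|y_i|\le|\varphi'(\lambda_d(\bX(\x)))|$, and concavity of $\lambda_d(\bX(\cdot))$ to reduce to binary vertices of $\cX$. The gap is the one step that carries all the content, namely a lower bound on $\lambda_d(\bX(\x))$ at binary $\x$. You never prove it (``we will try''), and your fallback $\lambda_d(\bX(\x))\ge\lambda_d(\bC\Diag(\x)\bC+\bI-\Diag(\x))$ is a tautology. Moreover, the inequality you aim for, $\lambda_d(\bX(\x))\ge\min\{1,\lambda_s(\bC\Diag(\x)\bC)\}$, is false. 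Take $d=2$, $s=1$, $\bC=\begin{pmatrix} a & b\\ b & 1\end{pmatrix}$ with $b^2=a/2$ and $0<a<1/2$ (so $\bC\succ0$), and $\x=\e_1$. Then $\bX(\e_1)=\begin{pmatrix} a^2 & ab\\ ab & 1+b^2\end{pmatrix}$ has determinant $a^2$ and largest eigenvalue at least $1+b^2$, so $\lambda_2(\bX(\e_1))<a^2$. On the other hand, $\lambda_1(\bC\Diag(\e_1)\bC)=\delta_1(\bC^2)=a^2+b^2<1$. So replacing $\delta_s(\bC^2)$ by $\min\{1,\delta_s(\bC^2)\}$ does not rescue the argument.

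Your suspicion about this step is nonetheless well founded. The paper closes it by citing Weyl's inequality for $\lambda_d(\bX(\x))\ge\lambda_s(\bC\Diag(\x)\bC)$. For the smallest eigenvalue, Weyl only gives $\lambda_d(\bA+\bB)\ge\lambda_d(\bA)+\lambda_d(\bB)$, which equals $0$ here. Already $\bC=c\bI_d$ with $c>1$ and $s<d$ gives $\lambda_d(\bX(\x))=1<c^2$.

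In fact the lemma is false as stated. In the example above, $\partial_{x_1}\bigl[-\log\det\bX(\x)\bigr]$ at $\e_1$ equals $(\bX^{-1})_{11}-(\bC\bX^{-1}\bC)_{11}=(1+b^2)/a^2-1>1/a^2-1$. Meanwhile $\cG_{\textup{linx}}=\sqrt2(\|\bC\|_2^2+1)/(a^2+b^2)<8/a$. So for small $a$ the bound fails, with or without the factor $\frac12$ relating this function to $\objld(\x,\bm0,\bm0)$.

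What is missing is a correct eigenvalue bound, and it must involve more than $\delta_s(\bC^2)$. For binary $\x$ with support $S$, order the coordinates as $(S,\bar{S})$. Then $\bX(\x)=\bL\bL^\top$ with $\bL=\begin{pmatrix}\bC_{S,S} & \bm0\\ \bC_{\bar{S},S} & \bI\end{pmatrix}$, hence $\lambda_d(\bX(\x))=\|\bL^{-1}\|_2^{-2}\ge\bigl(1+\sqrt{1+\|\bC\|_2^2}/\lambda_d(\bC)\bigr)^{-2}$ for $\bC\succ0$, using Cauchy interlacing for $\|\bC_{S,S}^{-1}\|_2$. Plugging this into your chain (and extending to all of $\cX$ by concavity) gives a valid, but different, constant.
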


\begin{proof}
  {Given $\bX(\x)$, let $\bQ$ be any orthogonal matrix such that $\bQ\Diag(\bm\lambda)\bQ^\top=\bX(\x)$.}  
Note that $\bX^*(\bY)=\diag(\bC\bY\bC-\bY)$. Thus, {by defining $\y$ as in  \eqref{eq:y_i:subdifferential}, defining $\bY\coloneqq \bQ\Diag(\y)\bQ^\top$, and} applying \cref{lem:subdifferential}, we have 
  \begin{align*}
    \|\bX^*(\bQ\Diag(\y)\bQ^\top)\|_2 
    = \|\diag(\bC\bY\bC - \bY)\|_2 
    \leq \|\diag(\bC\bY\bC)\|_2 + \|\diag(\bY)\|_2 
    \leq (\|\bC^2\|_2 + \|\bI_d\|_2)\|\y\|_2, 
  \end{align*}
  where the last step follows from the proof of \cref{lem:smooth-constant} with $\bM=\bC$ 
  and $\bM=\bI_d$, respectively. To bound $\|\y\|_2$, recall $|y_i|=|\varphi'(\lambda_i(\bX(\x)))|\leq|\varphi'(\lambda_d(\bX(\x)))|$ for all $i\in[d]$. Recall that $\lambda_d(\bX(\x))$ is a concave function of $\x$, and so its minimum must be attained at extreme points of $\cX$. {Since $\cX=\{\x\in[0,1]^d:\, \ones^\top\x = s\}$ has binary extreme points, we have} 
  \begin{align*}
    \min_{\x\in\cX}\lambda_d(\bX(\x)) 
    &= \min_{\x\in\cX\cap\{0,1\}^d}\lambda_d(\bX(\x)). 
  \end{align*}
  For any $\x\in\set{0,1}^d$, if $\ones^\top\x=s$, then $\rank(\bI_d-\Diag(\x))=d-s$. 
  By Weyl's inequality (see \citep[Corollary 4.3.5]{horn_matrix_1985})
  \begin{align*}
    \lambda_d(\bX(\x)) = \lambda_d(\bC\Diag(\x)\bC + \bI_d - \Diag(\x)) \geq \lambda_s(\bC\Diag(\x)\bC) \geq \delta_s(\bC^2), 
  \end{align*}
  where the last step follows from the definition of $\delta_s(\cdot)$ in \eqref{eq:delta-constant}. Therefore, $\lambda_i(\bX(\x))\geq\delta_s(\bC^2)$ for any $i\in[d]$. Following the proof of \cref{lem:smooth-constant} with $\bM=\bC$, this implies $\|\y\|_2\leq\sqrt{d}|\varphi'(\delta_s(\bC^2))|$. 
\end{proof}

\section{Subgradient analysis for the double-scaled linx relaxation}\label{sec:subgradient:d-scaled-linx}
We will use first-order methods to solve the double-scaled linx relaxation, i.e., \eqref{eq:d-scaling:reformulated}, and the other scaling variants of linx relaxation. Thus, we examine the gradients of $\objld(\x,\bm\rho,\bm\omega)$. 
\begin{lemma}\label{lem:linx-double-scaling-grad}
  Define $\bL:=\bL(\x,\bm\rho,\bm\omega)\coloneqq\bC\Diag(\exp(\bm\rho))\Diag(\x)\bC+\Diag(\exp(\bm\omega))(\bI_d-\Diag(\x))$.
  Then, for any $\x\in[0,1]^d$ and any $\bm\rho,\bm\omega$, the function $\objld(\x,\bm\rho,\bm\omega)$ satisfies 
  \begin{align*}
    \nabla_\x\objld(\x,\bm\rho,\bm\omega) &= -\frac12\exp(\bm\rho)\circ\diag(\bC\bL^{-1}\bC) + \frac12\exp(\bm\omega)\circ\diag(\bL^{-1}) + \frac12\bm\rho - \frac12\bm\omega, \\
    \nabla_{\bm\rho}\objld(\x,\bm\rho,\bm\omega) &= -\frac12\exp(\bm\rho)\circ\x\circ\diag(\bC\bL^{-1}{\bC}) + \frac12\x, \\
    \nabla_{\bm\omega}\objld(\x,\bm\rho,\bm\omega) &= -\frac12\exp(\bm\omega)\circ(\ones-\x)\circ\diag(\bL^{-1}) + \frac12(\ones-\x) .
  \end{align*}
\end{lemma}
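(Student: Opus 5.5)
This is a direct gradient computation. Write $\objld(\x,\bm\rho,\bm\omega) = -\frac12\log\det\bL + \frac12\sum_{i\in[d]}x_i\rho_i + \frac12\sum_{i\in[d]}(1-x_i)\omega_i$, where
\[
\bL=\sum_{i\in[d]} e^{\rho_i}x_i\,\bC\e_i\e_i^\top\bC+\sum_{i\in[d]} e^{\omega_i}(1-x_i)\,\e_i\e_i^\top .
\]
The plan has three steps.

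\emph{Invertibility.} First I establish that $\bL\succ0$ for every $\x\in[0,1]^d$ and all $\bm\rho,\bm\omega$, so that $\log\det$ is differentiable there. Set $\bD=\Diag(\exp(\bm\omega))^{-1/2}$. Then $\bD\bL\bD = \bD\bC\Diag(\exp(\bm\rho)\circ\x)\bC\bD + \bI_d-\Diag(\x)$. This is exactly the linx matrix of the scaled data, as in \eqref{eq:double-scaling-alt}. It is positive definite by the argument of \citep[Lemma 3.3.2]{fampa_maximum-entropy_2022}, which the proof of \cref{prop:d-scaling-linx-convexity} already invokes. Concretely, if $\v^\top(\cdot)\v=0$, then $x_i<1$ forces $v_i=0$, and $x_i>0$ forces $(\bC\bD\v)_i=0$. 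Since $\bC$ is full rank, this pins down $\v=0$. I expect this to be the only nonroutine point. If $\bC$ is singular, the statement should be read on the domain where $\bL\succ0$.

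\emph{Matrix derivative.} Next I use $d\log\det\bL=\langle\bL^{-1},d\bL\rangle$, which gives $\partial_\theta(-\tfrac12\log\det\bL)=-\tfrac12\tr(\bL^{-1}\partial_\theta\bL)$. The partial derivatives of $\bL$ are:
\begin{align*}
\partial_{x_i}\bL &= e^{\rho_i}\bC\e_i\e_i^\top\bC - e^{\omega_i}\e_i\e_i^\top,\\
\partial_{\rho_i}\bL &= e^{\rho_i}x_i\,\bC\e_i\e_i^\top\bC,\\
\partial_{\omega_i}\bL &= e^{\omega_i}(1-x_i)\,\e_i\e_i^\top.
\end{align*}
Using $\tr(\bL^{-1}\bC\e_i\e_i^\top\bC)=(\bC\bL^{-1}\bC)_{ii}$ (by symmetry of $\bC$) and $\tr(\bL^{-1}\e_i\e_i^\top)=(\bL^{-1})_{ii}$, each term is one diagonal entry.

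\emph{Linear terms.} Finally I add the derivatives of the linear part: $\tfrac12(\rho_i-\omega_i)$ with respect to $x_i$, $\tfrac12x_i$ with respect to $\rho_i$, and $\tfrac12(1-x_i)$ with respect to $\omega_i$. Stacking the coordinates with Hadamard products gives the three stated formulas.
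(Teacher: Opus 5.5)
Your proposal is correct and is essentially the paper's proof: write $\partial_\theta(-\tfrac12\log\det\bL)=-\tfrac12\langle\bL^{-1},\partial_\theta\bL\rangle$, compute $\partial_{x_i}\bL$, $\partial_{\rho_i}\bL$ and $\partial_{\omega_i}\bL$, read off the diagonal entries $(\bC\bL^{-1}\bC)_{ii}$ and $(\bL^{-1})_{ii}$, and add the derivatives of the linear terms. Your preliminary check that $\bL\succ0$ needs $\bC$ full rank (hence $\bC\succ0$), as you note; it is a worthwhile addition, since the paper's proof of this lemma takes invertibility for granted.
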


\begin{proof}
  Recall that $\bE_{ii}=\Diag(\e_i)$. The partial derivative of the $\log\det$ term in $\objld(\x,\bm\rho,\bm\omega)$ w.r.t.\ $x_i$ is 
  \begin{align*}
    -\frac12\langle\bL^{-1}, \frac{\partial\bL}{\partial x_i}\rangle 
    &= -\frac12\langle\bL^{-1}, \bC\Diag(\exp(\bm\rho))\bE_{ii}\bC - \Diag(\exp(\bm\omega))\bE_{ii}\rangle \\
    &= -\frac12\tr(\bC\bL^{-1}\bC\Diag(\exp(\bm\rho))\bE_{ii}) + \frac12\tr(\bL^{-1}\Diag(\exp(\bm\omega))\bE_{ii}) \\
    &= -\frac12\exp(\rho_i)(\bC\bL^{-1}\bC)_{ii} + \frac12\exp(\omega_i)(\bL^{-1})_{ii}. 
  \end{align*}
  Thus, the gradient of $\objld(\x,\bm\rho,\bm\omega)$ w.r.t. $\x$ is 
  \begin{align*}
    \nabla_\x\objld(\x,\bm\rho,\bm\omega)
    = -\frac12\exp(\bm\rho)\circ\diag(\bC\bL^{-1}\bC) + \frac12\exp(\bm\omega)\circ\diag(\bL^{-1}) + \frac12\bm\rho - \frac12\bm\omega. 
  \end{align*}
  The partial derivative w.r.t.\ $\rho_i$ is 
  \begin{align*}
    \frac{\partial\objld}{\partial\rho_i}
    = -\frac12\langle\bL^{-1}, \frac{\partial\bL}{\partial\rho_i}\rangle + \frac12x_i
    &= -\frac12x_i\exp(\rho_i)(\bC\bL^{-1}\bC)_{ii} + \frac12x_i. 
  \end{align*}
  Thus, the gradient w.r.t.\ $\bm\rho$ is 
  \begin{align*}
    \nabla_{\bm\rho}\objld(\x,\bm\rho,\bm\omega) = -\frac12\exp(\bm\rho)\circ\x\circ\diag(\bC\bL^{-1}{\bC}) + \frac12\x. 
  \end{align*}
  The partial derivative w.r.t.\ $\omega_i$ is 
  \begin{align*}
    \frac{\partial\objld}{\partial\omega_i}
    &= -\frac12\langle\bL^{-1}, \frac{\partial\bL}{\partial\omega_i}\rangle + \frac12(1-x_i)
    = -\frac12(1-x_i)\exp(\omega_i)(\bL^{-1})_{ii} + \frac12(1-x_i). 
  \end{align*}
  Thus, the gradient w.r.t.\ $\bm\omega$ is 
  \begin{align*}
    \nabla_{\bm\omega}\objld(\x,\bm\rho,\bm\omega) = -\frac12\exp(\bm\omega)\circ(\ones-\x)\circ\diag(\bL^{-1}) + \frac12(\ones-\x). 
  \end{align*}
\end{proof}

%% file: ref.bib
@article{anstreicher_maximum-entropy_2018,
  author  = {Anstreicher, Kurt M.},
  title   = {Maximum-entropy sampling and the {Boolean} quadric polytope},
  journal = {Journal of Global Optimization},
  year    = {2018},
  volume  = {72},
  number  = {4},
  pages   = {603--618}
}

@article{anstreicher_efficient_2020,
  author  = {Anstreicher, Kurt M.},
  title   = {Efficient Solution of Maximum-Entropy Sampling Problems},
  journal = {Operations Research},
  volume  = {68},
  number  = {6},
  pages   = {1826--1835},
  year    = {2020}
}

@inproceedings{anstreicher_continuous_1996,
  author    = {Anstreicher, Kurt M. and Fampa, Marcia and Lee, Jon and Williams, Joy},
  editor    = {Cunningham, William H. and McCormick, S. Thomas and Queyranne, Maurice},
  title     = {Continuous relaxations for Constrained Maximum-Entropy Sampling},
  booktitle = {Integer Programming and Combinatorial Optimization},
  year      = {1996},
  publisher = {Springer Berlin Heidelberg},
  address   = {Berlin, Heidelberg},
  pages     = {234--248}
}

@article{anstreicher_using_1999,
  title   = {Using continuous nonlinear relaxations to solve constrained maximum-entropy sampling problems},
  volume  = {85},
  number  = {2},
  journal = {Mathematical Programming},
  author  = {Anstreicher, Kurt M. and Fampa, Marcia and Lee, Jon and Williams, Joy},
  year    = {1999},
  pages   = {221--240}
}

@inproceedings{anstreicher_masked_2004,
  address   = {Heidelberg},
  title     = {A Masked Spectral Bound for Maximum-Entropy Sampling},
  booktitle = {mODa 7 --- Advances in Model-Oriented Design and Analysis},
  publisher = {Physica-Verlag HD},
  author    = {Anstreicher, Kurt M. and Lee, Jon},
  editor    = {Di Bucchianico, Alessandro and Läuter, Henning and Wynn, Henry P.},
  year      = {2004},
  pages     = {1--12}
}

@article{burer_solving_2007,
  title   = {Solving maximum-entropy sampling problems using factored masks},
  journal = {Mathematical Programming},
  volume  = {109},
  number  = {2-3},
  pages   = {263--281},
  year    = {2007},
  author  = {Burer, Samuel and Lee, Jon}
}

@article{chen_mixing_2021,
  title   = {Mixing convex-optimization bounds for maximum-entropy sampling},
  journal = {Mathematical Programming},
  author  = {Chen, Zhongzhu and Fampa, Marcia and Lambert, Amélie and Lee, Jon},
  year    = {2021},
  volume  = {188},
  number  = {2},
  pages   = {539--568}
}

@article{chen_computing_2023,
  author  = {Chen, Zhongzhu and Fampa, Marcia and Lee, Jon},
  title   = {On Computing with Some Convex Relaxations for the Maximum-Entropy Sampling Problem},
  journal = {INFORMS Journal on Computing},
  volume  = {35},
  number  = {2},
  pages   = {368--385},
  year    = {2023}
}

@article{chen_generalized_2024,
  title   = {Generalized scaling for the constrained maximum-entropy sampling problem},
  journal = {Mathematical Programming},
  author  = {Chen, Zhongzhu and Fampa, Marcia and Lee, Jon},
  year    = {2024},
  volume  = {212},
  number  = {1},
  pages   = {177--216}
}

@book{fampa_maximum-entropy_2022,
  series     = {Springer Series in Operations Research and Financial Engineering},
  title      = {Maximum-Entropy Sampling: Algorithms and Application},
  shorttitle = {Maximum-Entropy Sampling},
  author     = {Fampa, Marcia and Lee, Jon},
  year       = {2022}, 
	publisher  = {Spring Nature}
}

@book{hiriart-urruty_fundamentals_2001,
  address   = {Berlin, Heidelberg},
  title     = {Fundamentals of Convex Analysis},
  publisher = {Springer Berlin Heidelberg},
  author    = {Hiriart-Urruty, Jean-Baptiste and Lemaréchal, Claude},
  year      = {2001}
}

@inproceedings{hoffman_new_2001,
  author    = {Hoffman, A. and Lee, J. and Williams, J.},
  editor    = {Atkinson, Anthony C. and Hackl, Peter and Müller, Werner G.},
  title     = {New Upper Bounds for Maximum-Entropy Sampling},
  booktitle = {mODa 6 --- Advances in Model-Oriented Design and Analysis},
  year      = {2001},
  publisher = {Physica-Verlag HD},
  address   = {Heidelberg},
  pages     = {143--153}
}

@book{horn_matrix_1985,
  place     = {Cambridge},
  title     = {Matrix Analysis (2nd Edition)},
  publisher = {Cambridge University Press},
  author    = {Horn, Roger A. and Johnson, Charles R.},
  year      = {2013}
}

@article{ko_exact_1995,
  title   = {An Exact Algorithm for Maximum Entropy Sampling},
  volume  = {43},
  number  = {4},
  journal = {Operations Research},
  author  = {Ko, Chun-Wa and Lee, Jon and Queyranne, Maurice},
  year    = {1995},
  pages   = {684--691}
}

@article{krause_near-optimal_2008,
  author    = {Krause, Andreas and Singh, Ajit and Guestrin, Carlos},
  title     = {Near-Optimal Sensor Placements in Gaussian Processes: Theory, Efficient Algorithms and Empirical Studies},
  journal   = {Journal of Machine Learning Research},
  year      = {2008},
  volume    = {9},
  pages     = {235--284},
  publisher = {JMLR.org}
}

@article{lee_constrained_1998,
  author    = {Jon Lee},
  journal   = {Operations Research},
  number    = {5},
  pages     = {655--664},
  publisher = {INFORMS},
  title     = {Constrained Maximum-Entropy Sampling},
  volume    = {46},
  year      = {1998}
}

@article{lee_linear_2003,
  author  = {Lee, J. and Williams, J.},
  title   = {A linear integer programming bound for maximum-entropy sampling},
  journal = {Mathematical Programming},
  year    = {2003},
  volume  = {94},
  number  = {2--3},
  pages   = {247--256}
}

@article{lewis_convex_1996,
  author  = {Lewis, A. S.},
  title   = {Convex Analysis on the Hermitian Matrices},
  journal = {SIAM Journal on Optimization},
  volume  = {6},
  number  = {1},
  pages   = {164--177},
  year    = {1996},
}

@article{li_best_2023,
  author  = {Li, Yongchun and Xie, Weijun},
  title   = {Best Principal Submatrix Selection for the Maximum Entropy Sampling Problem: Scalable Algorithms and Performance Guarantees},
  journal = {Operations Research},
  volume  = {72},
  number  = {2},
  pages   = {493-513},
  year    = {2024}
}

@inproceedings{li_augmented_2024,
  author    = {Li, Yongchun},
  editor    = {Megow, Nicole and Basu, Amitabh},
  title     = {The Augmented Factorization Bound for Maximum-Entropy Sampling},
  booktitle = {Integer Programming and Combinatorial Optimization},
  year      = {2025},
  publisher = {Springer Nature Switzerland},
  address   = {Cham},
  pages     = {412--426}
}

@article{lin_eigenvalue_2011,
  author    = {Minghua Lin and Henry Wolkowicz},
  title     = {An eigenvalue majorization inequality for positive semidefinite block matrices},
  journal   = {Linear and Multilinear Algebra},
  volume    = {60},
  number    = {11-12},
  pages     = {1365--1368},
  year      = {2012},
  publisher = {Taylor \& Francis},
}

@book{marshall_inequalities_2011,
  title     = {Inequalities: Theory of Majorization and Its Applications},
  author    = {Marshall, Albert W and Olkin, Ingram and Arnold, Barry C},
  year      = {2011},
  publisher = {Springer}
}

@inbook{nemhauser_integer_1988,
  author    = {Nemhauser, George and Wolsey, Laurence},
  publisher = {John Wiley \& Sons, Ltd},
  title     = {Integral Polyhedra},
  booktitle = {Integer and Combinatorial Optimization},
  chapter   = {III.1},
  pages     = {533-607},
  year      = {1988}
}

@inproceedings{nikolov_randomized_2015,
  author    = {Nikolov, Aleksandar},
  title     = {Randomized Rounding for the Largest Simplex Problem},
  year      = {2015},
  publisher = {Association for Computing Machinery},
  address   = {New York, NY, USA},
  booktitle = {Proceedings of the Forty-Seventh Annual ACM Symposium on Theory of Computing},
  pages     = {861--870},
  numpages  = {10},
  location  = {Portland, Oregon, USA},
  series    = {STOC '15}
}

@article{shewry_maximum_1987,
  author    = {M. C. Shewry and H. P. Wynn},
  title     = {Maximum entropy sampling},
  journal   = {Journal of Applied Statistics},
  volume    = {14},
  number    = {2},
  pages     = {165--170},
  year      = {1987},
  publisher = {Taylor \& Francis}
}

@book{nemirovski_problem_1983,
  author    = {Nemirovski, Arkadi and Yudin, David},
  title     = {Problem Complexity and Method Efficiency in Optimization},
  publisher = {Wiley},
  year      = {1983},
  series    = {Wiley-Interscience Series in Discrete Mathematics}
}

@book{kilinc-karzan_essential_2025, 
  author    = {K{\i}l{\i}n{\c{c}}-Karzan, Fatma and Nemirovski, Arkadi},
  title     = {Essential Mathematics for Convex Optimization},
  publisher = {Cambridge University Press},
  address   = {Cambridge},
  year      = {2025}
}

@article{dudley_second_1977,
  author    = {Dudley, R. M.},
  title     = {On Second Derivatives of Convex Functions},
  journal   = {Mathematica Scandinavica},
  volume    = {41},
  number    = {1},
  pages     = {159--174},
  year      = {1977},
  publisher = {Mathematica Scandinavica}
}

@book{rockafellar_convex_1970,
  author    = {Rockafellar, R. Tyrrell},
  title     = {Convex Analysis},
  publisher = {Princeton University Press},
  year      = {1970},
}

@article{fampa_recent_2026,
  author  = {Fampa, Marcia and Lee, Jon},
  title   = {Recent Advances in Maximum-Entropy Sampling},
  journal = {Kuwait Journal of Science},
  volume  = {53},
  number  = {1},
  pages   = {100527},
  year    = {2026},
}

@book{rockafellar_variational_1998,
  author    = {R. Tyrrell Rockafellar and Roger J. B. Wets},
  title     = {Variational Analysis},
  series    = {Grundlehren der mathematischen Wissenschaften},
  volume    = {317},
  publisher = {Springer},
  address   = {Berlin, Heidelberg},
  year      = {1998},
  pages     = {XII + 736},
  edition   = {1},
}

@article{magnus_differentiating_1985,
  author  = {Jan R. Magnus},
  title   = {On Differentiating Eigenvalues and Eigenvectors},
  journal = {Econometric Theory},
  year    = {1985},
  volume  = {1},
  number  = {2},
  pages   = {179--191},
}

@misc{shen_parameter-free_2026,
  title        = {Parameter-Free Non-Ergodic Extragradient Algorithms for Solving Monotone Variational Inequalities},
  author       = {Shen, Lingqing and K\i{}l\i{}n\c{c}-Karzan, Fatma},
  year         = {2026},
  eprint       = {2604.07662},
  archivePrefix= {arXiv},
  url          = {https://arxiv.org/abs/2604.07662}
}
